\newcommand{\RR}{\mathbf{R}}
\renewcommand{\SS}{\mathbf{S}}
\newcommand{\JJ}{\mathbf{J}}
\newcommand{\cA}{\mathcal{A}}
\newcommand{\cE}{\mathcal{E}}
\newcommand{\cH}{\mathcal{H}}
\newcommand{\cI}{\mathcal{I}}
\newcommand{\cJ}{\mathcal{J}}
\newcommand{\cL}{\mathcal{L}}
\newcommand{\cM}{\mathcal{M}}
\newcommand{\cN}{\mathcal{N}}
\newcommand{\cP}{\mathcal{P}}
\newcommand{\cQ}{\mathcal{Q}}
\newcommand{\cR}{\mathcal{R}}
\newcommand{\cS}{\mathcal{S}}
\DeclareMathOperator{\ind}{ind}
\DeclareMathOperator{\nul}{nul}
\DeclareMathOperator{\grassmanian}{Gr}
\DeclareMathOperator{\support}{spt}
\DeclareMathOperator{\dist}{dist}
\DeclareMathOperator{\intr}{int}
\DeclareMathOperator{\loc}{loc}
\DeclareMathOperator{\divg}{div}
\DeclareMathOperator{\sing}{sing}
\DeclareMathOperator{\reg}{reg}
\DeclareMathOperator{\ricc}{Ric}
\DeclareMathOperator{\inj}{inj}
\DeclareMathOperator{\sect}{sect}
\DeclareMathOperator{\sff}{\mathrm{I\!I}}
\newcommand{\closure}[1]{\overline{#1}}
\newcommand{\indt}[1]{{\bf 1}_{#1}}
\newcommand{\restr}{\mathbin{\vrule height 1.6ex depth 0pt width
0.13ex\vrule height 0.13ex depth 0pt width 1.3ex}}
\newcommand{\weaklyto}{\rightharpoonup}
\newcommand{\duline}[1]{\underline{\underline{#1}}}
\newcommand{\energyunit}{h_0}
\theoremstyle{plain} \newtheorem{defi}{Definition}
\theoremstyle{plain} \newtheorem{rema}[defi]{Remark}
\theoremstyle{plain} \newtheorem{theo}[defi]{Theorem}
\theoremstyle{plain} \newtheorem{prop}[defi]{Proposition}
\theoremstyle{plain} \newtheorem{coro}[defi]{Corollary}
\theoremstyle{plain} \newtheorem{lemm}[defi]{Lemma}
\theoremstyle{plain} \newtheorem{exam}[defi]{Example}
\theoremstyle{plain} \newtheorem{conj}[defi]{Conjecture}
\theoremstyle{plain} \newtheorem*{theo*}{Theorem}
\theoremstyle{plain} 
\theoremstyle{plain} 
\theoremstyle{definition} \newtheorem*{clai}{Claim}
\numberwithin{defi}{section} 
\numberwithin{equation}{section} 
\title[Allen-Cahn min-max on surfaces]{Allen-Cahn min-max on surfaces}
\author{Christos Mantoulidis}
\address{Department of Mathematics, Massachusetts Institute of Technology, Cambridge, MA 02139}
\email{c.mantoulidis@mit.edu}
\begin{document}

\maketitle

\begin{abstract}
	We use a min-max procedure on the Allen-Cahn energy functional to construct geodesics on closed, 2-dimensional Riemannian manifolds, as motivated by the work of Guaraco \cite{Guaraco15}. Borrowing classical blowup and curvature estimates from geometric analysis, as well as novel Allen-Cahn curvature estimates due to Wang-Wei \cite{WangWei17}, we manage to study the fine structure of potential singular points at the diffuse level, and show that the problem reduces to that of understanding ``entire'' singularity models constructed by del Pino-Kowalczyk-Pacard \cite{DelPinoKowalczykPacard13} with Morse index 1. The argument is completed by a Morse index estimate on these singularity models. 
\end{abstract}

\tableofcontents

\section{Introduction} \label{sec:introduction}

The Allen-Cahn equation is an elliptic partial differential equation describing phase separation in multi-component alloy systems. It is:
\begin{equation} \label{eq:pde}
	\varepsilon \Delta u = \varepsilon^{-1} W'(u),	
\end{equation}
where $W$ is a double-well energy potential, and $\varepsilon > 0$.

\begin{defi} \label{defi:double.well.potential}
	A smooth map $W : \RR \to \RR$ is called a double-well potential provided:
	\begin{enumerate}
		\item $W$ is nonnegative, and vanishes at $t = \pm 1$:
			\begin{equation} \label{assu:double.well.potential.nonnegative} \tag{H1}
				W \geq 0, \; W(t) = 0 \iff t = \pm 1 \text{;}	
			\end{equation}
		\item $W$ has a unique critical point between its global minima, at $t = 0$, which is nondegenerate:
			\begin{equation} \label{assu:double.well.potential.saddle} \tag{H2}
				tW'(t) < 0 \text{ for } 0 < |t| < 1	\text{, and } W''(0) \neq 0 \text{;}
			\end{equation}
		\item $W$ is strictly convex near $\pm 1$:
			\begin{equation} \label{assu:double.well.potential.convex} \tag{H3}
				W''(t) \geq \kappa > 0 \text{ for } |t| > 1-\alpha, \; \alpha \in (0,1) \text{;}
			\end{equation}
	\end{enumerate}
	in this paper we will additionally assume:
	\begin{enumerate}
		\item[4)] $W$ is even:
			\begin{equation} \label{assu:double.well.potential.even} \tag{H4}
				W(t) = W(-t) \text{ for } t \in \RR.
			\end{equation}
	\end{enumerate}
\end{defi}

\begin{exam} \label{exam:canonical.potential}
	The standard double-well potential is
	\[ W(t) = \frac{1}{4} (1-t^2)^2; \]
	the corresponding equation \eqref{eq:pde} is
	\[ \varepsilon \Delta u = \varepsilon^{-1} (u^3 - u). \]
\end{exam}

There are strong parallels between the study of minimal surfaces and the study of phase transitions. Roughly speaking, the level sets
\[ \{ u_\varepsilon = t \}, \; t \in (-1, 1), \]
of solutions $u_\varepsilon$ to \eqref{eq:pde} converge (in a sense to be made precise) to minimal hypersurfaces, as $\varepsilon \to 0$. There has been a lot of work done in exploration of the analogy between these two equations; we refer the reader, as a starting point, to study the works of del Pino-Kowalczyk-Wei \cite{DelPinoKowalczykWei13}, del Pino-Kowalczyk-Wei-Yang \cite{DelPinoKowalczykWeiYang10}, Hutchinson-Tonegawa \cite{HutchinsonTonegawa00}, Tonegawa-Wickramasekera \cite{TonegawaWickramasekera12}, and references therein for more information.

In recent novel work, M. Guaraco \cite{Guaraco15} successfully used min-max theory together with the theory of phase transitions to give an alternate proof of the existence of minimal hypersurfaces in closed Riemannian manifolds of ambient dimension $3, 4, 5, 6, 7$. This result was originally proved in this generality by R. Schoen and L. Simon in \cite{SchoenSimon81} using the Almgren-Pitts min-max theory. 

Guaraco's method cannot extend to the construction of geodesics on two-dimensional surfaces, due to the potential formation of singular geodesic junctions. H. del Rio Guerra, C. Garza-Hume, and P. Padilla had successfully used the method of phase transitions to construct embedded geodesics in \cite{DelRioGuerraGarzaHumePadilla03} subject to a nonpositivity or nonnegativity condition on the Gauss curvature of the two-dimensional surface, which precisely allows to rule out singularities.

In this work, we circumvent the complication introduced by the formation of singularities by employing a diligent study of the diffuse (small $\varepsilon > 0$) problem prior to the singularity formation, instead of the limiting problem ($\varepsilon \to 0$) where singularities have formed. This is closer in spirit to the work of Y. Tonegawa and N. Wickramasekera \cite{TonegawaWickramasekera12} than to \cite{Guaraco15}. We are indeed successful in lifting the curvature condition---at the expense of embeddedness---by understanding the precise nature of the singularities that may occur:

\begin{theo} \label{theo:minmax.construction}
    Any closed Riemannian manifold $(\Sigma^2, g)$ admits a closed immersed geodesic with at most one self-intersection; the intersection, if it exists at all, is transverse.
\end{theo}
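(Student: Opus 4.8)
The plan is to adapt the Allen--Cahn min--max of Guaraco \cite{Guaraco15} to the closed surface $(\Sigma^2,g)$ and then to control the fine structure of the resulting limit network by a blow--up analysis carried out at the diffuse (small $\varepsilon$) level. For each $\varepsilon>0$ I would consider the Allen--Cahn energy
\[ E_\varepsilon(u)=\int_\Sigma\Big(\tfrac{\varepsilon}{2}\,|\nabla u|^2+\tfrac1\varepsilon\,W(u)\Big)\,d\vol_g \]
on $W^{1,2}(\Sigma)$: by \eqref{assu:double.well.potential.nonnegative}--\eqref{assu:double.well.potential.convex} it is a $C^2$ functional, it satisfies the Palais--Smale condition, and it has a mountain--pass geometry with the constants $\pm1$ as strict local minima. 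A one--parameter min--max produces, for every $\varepsilon$, a smooth solution $u_\varepsilon$ of \eqref{eq:pde} with $E_\varepsilon(u_\varepsilon)=\mathbf{c}_\varepsilon$ the min--max width and Morse index $\le1$ (the index bound is the crucial output, obtained as in \cite{Guaraco15}); feeding the one--dimensional heteroclinic profile of the signed distance to the level sets of a fixed Morse function on $\Sigma$ into the min--max gives $0<\Lambda_0\le\mathbf{c}_\varepsilon\le\Lambda_1<\infty$ uniformly in $\varepsilon$. The maximum principle gives $\|u_\varepsilon\|_{L^\infty}\le1$, and interior elliptic estimates give $\varepsilon|\nabla u_\varepsilon|\le C$ together with the monotonicity formula for the rescaled energy density.

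Letting $\varepsilon\to0$ along a subsequence, Hutchinson--Tonegawa \cite{HutchinsonTonegawa00} yields a stationary integral $1$--varifold $V$ in $(\Sigma,g)$ --- a stationary geodesic network --- with mass comparable to $\mathbf{c}_\varepsilon$, to which the rescaled energy measures of $u_\varepsilon$ converge. Feeding the Morse index bound into the stable regularity theory for Allen--Cahn limits of Tonegawa--Wickramasekera \cite{TonegawaWickramasekera12}, applied on balls away from at most boundedly many points, one obtains a finite set $\cS\subset\Sigma$ off which $\support\|V\|$ is a smooth embedded geodesic. The Wang--Wei curvature estimates \cite{WangWei17} enter here decisively: they give scale--invariant curvature control of the transition layers $\{u_\varepsilon=0\}$ away from $\cS$, which both forces multiplicity one on $\Sigma\setminus\cS$ and, more importantly, is what makes the blow--up analysis at the points of $\cS$ feasible.

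I expect the heart of the proof to be the analysis at a point $p\in\cS$, where the density $\Theta(\|V\|,p)$ is an integer $\ge2$. Using the energy and gradient bounds together with the Wang--Wei control, one should extract scales $r_\varepsilon\to0$ with $\varepsilon/r_\varepsilon$ bounded above and below along which the rescalings of $u_\varepsilon$ about $p$ (the rescaled metrics converging to the Euclidean one) converge to a nonconstant \emph{entire} solution $U$ of $\Delta U=W'(U)$ on $\RR^2$, of linear energy growth and Morse index $\le1$. By the classification of finite Morse index entire solutions in the plane --- the del Pino--Kowalczyk--Pacard solutions \cite{DelPinoKowalczykPacard13} and the surrounding theory --- $U$ is either the one--dimensional heteroclinic, which would force $\Theta(\|V\|,p)=1$ and contradict $p\in\cS$, or a $2k$--ended solution with $k\ge2$. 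The main obstacle, as the abstract signals, is to rule out $k\ge3$, i.e.\ to prove that a $2k$--ended del Pino--Kowalczyk--Pacard solution with $k\ge3$ has Morse index $\ge2$; this is the conjecturally sharp Morse index estimate on the singularity models. Granting it, $k=2$, so $U$ is a four--ended solution (of Morse index exactly $1$) whose nodal set is two embedded curves crossing once transversally; hence $V$ near $p$ consists of two smooth geodesic arcs crossing transversally at $p$.

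To conclude, a standard lower--semicontinuity and localization statement for the Morse index along the limit gives $\sum_{p\in\cS}\ind(U_p)\le\limsup_\varepsilon\ind(u_\varepsilon)\le1$; since a four--ended solution has Morse index $\ge1$, this forces $\#\cS\le1$. If $\cS=\emptyset$, or if some connected component of $\support\|V\|$ misses $\cS$, that component is a multiplicity--one embedded closed geodesic and we are done. If instead $\cS=\{p\}$ exhausts the support, then $\support\|V\|$ is a union of embedded geodesic arcs meeting only at $p$, where exactly two arcs cross transversally; following the four half--arcs through $p$ (continuing each one along its straight continuation at $p$), they either join into two smooth embedded closed geodesics crossing at $p$ --- each of which already proves the theorem --- or into a single immersed ``figure--eight'' closed geodesic with exactly one transverse self--intersection. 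In every case $(\Sigma,g)$ carries a closed immersed geodesic of the asserted kind.
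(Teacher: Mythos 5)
Your skeleton is the paper's: a Guaraco-type mountain pass giving nontrivial critical points $u_\varepsilon$ with $\ind(u_\varepsilon)\le 1$ and uniformly bounded energy; Hutchinson--Tonegawa convergence to a stationary integral $1$-varifold $V^\infty$; index localization plus Tonegawa's stable regularity to conclude $\#\,\sing\support\Vert V^\infty\Vert\le 1$; and a blow-up at the singular point classified through $\bigsqcup_k\cM_{2k}$ and the index bound $\ind(u)\ge k-1$. However, there is a genuine gap at the one step you dispatch in a sentence, namely the assertion that if the $O(\varepsilon)$-scale blow-up $U$ at $p\in\cS$ is one-dimensional then $\Theta^1(\Vert V^\infty\Vert,p)=1$ (and, dually, that a four-ended blow-up forces the varifold near $p$ to be two transverse geodesic arcs). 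The density at $p$ is computed from $\Vert V^\infty\Vert(B_\rho(p))$ at scales $\varepsilon\ll\rho\to 0$, while your blow-up only records the picture at scale $O(\varepsilon)$. Nothing in what you have written prevents several interfaces from collapsing onto $p$ while separating at intermediate scales $\varepsilon\ll\lambda_\varepsilon\ll 1$: in that scenario the $\varepsilon$-scale rescaling centered on one interface converges to the heteroclinic (density $1$, index $0$) even though $\Theta^1(\Vert V^\infty\Vert,p)\ge 3$, so your dichotomy never gets to see the extra ends or the extra index, and the contradiction you want never materializes. This is exactly the ``interfaces don't generally appear in the same $O(\varepsilon_i)$-scale'' difficulty that the introduction flags and that Proposition \ref{prop:index.1.main} is built to overcome. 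Closing it takes more than invoking Wang--Wei as a black box: the paper uses the decay $|\cA_i|\le c\,\varepsilon_i^{1/7}\delta_i^{-8/7}$ to propagate the direction of the level-set normal along the level curves from an annulus (where the tangent cone is visible) down toward the core, deduces that the smallest radius carrying index one satisfies $\liminf_i r_i^{-1}\varepsilon_i>0$, and then contradicts the integrality of the $1$-varifold density at a carefully chosen intermediate radius $\rho_i$ where the density ratio would have to take the non-integer value $\Theta^1(\Vert V^\infty\Vert,p_*)-\tfrac12$. Some version of this uniqueness-of-tangent-cone argument is indispensable; without it the third step of your program is unsupported.

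Two smaller points. First, once one knows $\Theta^1(\energyunit^{-1}\Vert V^\infty\Vert,p_*)=2$, you do not need to identify the blow-up as a four-ended solution to read off the local structure: an elementary GMT fact (Lemma \ref{lemm:app.gmt.density.2.point.1.varifold}) shows that a stationary integral $1$-varifold on a surface, singular at $p_*$, smooth elsewhere, with density $2$ at $p_*$, is two embedded geodesics crossing transversally there, which then yields your two-circles/figure-eight alternative. Second, ``multiplicity one on $\Sigma\setminus\cS$'' is neither established by the cited results (the limit may well carry even multiplicity away from $\partial^*\{u_\infty=+1\}$) nor needed for the conclusion.
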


\begin{rema}
	Such $(\Sigma^2, g)$ all contain closed and embedded geodesics. (See, e.g., the work of L. Lyusternik and L. Schnirelmann \cite{LyusternikSchnirelmann47}.) The point is that we are able to make use of phase transitions in this context.
\end{rema}

The most common way to study singularity formation in geometric analysis is to ``blow up'' the picture near the singular point and obtain, in the limit, a so-called {\bf singularity model}. This model is then studied in isolation, and results about it are used to understand the structure of the original singularity.

In the phase transition model, singularity models will correspond to nonconstant solutions $u : \RR^n \to \RR$ of \eqref{eq:pde}, with $|u| < 1$ and $\varepsilon = 1$. When $n = 2$, finite-index singularity models turn out to coincide with those constructed by M. del Pino, M. Kowalczyk, and F. Pacard in \cite{DelPinoKowalczykPacard13}. In Section \ref{sec:m2k.index.lower.bound} we confirm that the complexity of these singularity models at infinity has a direct effect on their Morse index (see Definition \ref{def:morse.index}). More specifically, we prove the following theorem on the Morse index of elements in the moduli space $\cM_{2k}$ of $2k$-ended solutions of \eqref{eq:pde} in $\RR^2$ with $\varepsilon = 1$ (see Definition \ref{defi:m2k}).

\begin{theo} \label{theo:m2k.index.lower.bound}
	Let $u \in \cM_{2k}$, $k \geq 2$. Then $\ind(u) \geq k-1$.
\end{theo}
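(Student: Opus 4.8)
The plan is to extract $k-1$ linearly independent, compactly supported test functions on which the second variation of the Allen-Cahn energy is negative, by exploiting the $2k$ ends of $u$. Recall that a $2k$-ended solution $u \in \cM_{2k}$ is asymptotic, along each of its $2k$ ends, to a one-dimensional heteroclinic profile $\mathbb{H}(t) = \tanh(t/\sqrt 2)$ (suitably oriented) supported near a half-line; the nodal set of $u$ consists of $k$ almost-straight curves, which can be thought of as $k$ ``lines'' whose directions at infinity are pairwise distinct. Each such line carries, as in the minimal surface analogy, a one-parameter family of translations that produces a Jacobi field of the linearized operator $L_u = -\Delta + W''(u)$, namely a directional derivative $\partial_e u$; this is a bounded but \emph{not} $L^2$ (nor compactly supported) element of the kernel of $L_u$. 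The standard mechanism for converting many such ``almost-Jacobi fields'' into index is the logarithmic cutoff / Lohkamp-type argument: along each line one builds a localized bump, and because the $k$ lines are genuinely transverse (their asymptotic directions differ), one can arrange $k-1$ of these bumps to live in essentially disjoint regions, hence be linearly independent and mutually orthogonal in the relevant quadratic form.

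More concretely, I would proceed as follows. \textbf{Step 1.} Fix $k-1$ of the $k$ nodal curves $\Gamma_1, \dots, \Gamma_{k-1}$, together with unit vectors $e_1, \dots, e_{k-1}$ transverse to the respective asymptotic directions. For a large radius $R$ and a logarithmic cutoff $\eta_R$ (equal to $1$ on $B_R$, supported in $B_{R^2}$, with $\int |\nabla \eta_R|^2 \to 0$), set $\varphi_i = \eta_R \cdot (\partial_{e_i} u)$. \textbf{Step 2.} Compute the quadratic form $Q_u(\varphi_i) = \int |\nabla \varphi_i|^2 + W''(u)\varphi_i^2$. Using $L_u(\partial_{e_i}u) = 0$ and integrating by parts, the bulk terms cancel and one is left with a boundary/gradient term controlled by $\int |\nabla \eta_R|^2 (\partial_{e_i} u)^2$. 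Since $\partial_{e_i} u$ decays exponentially away from the nodal set and along the $i$-th line decays in the transverse direction but is only bounded along it, the Wang-Wei curvature and decay estimates (as invoked earlier in the paper) give that this error is $o(1)$ as $R \to \infty$ — but one needs it to be \emph{strictly negative}, not merely small. \textbf{Step 3.} To get strict negativity, perturb: replace $\partial_{e_i} u$ by $\partial_{e_i} u + \delta \psi_i$ for a carefully chosen compactly supported $\psi_i$ localized where the nodal curve $\Gamma_i$ bends away from a true straight line (such bending is forced since a genuinely straight nodal line together with the asymptotics would make $u$ the one-dimensional solution, contradicting $k \geq 2$). The curvature of $\Gamma_i$ contributes a definite negative term to $Q_u$, exactly as in the Jacobi-field-from-translation computation on non-totally-geodesic minimal surfaces. \textbf{Step 4.} Check linear independence and $Q_u$-negative-definiteness of $\mathrm{span}\{\varphi_1, \dots, \varphi_{k-1}\}$: because the $k$ asymptotic directions are distinct, for $R$ large the functions $\varphi_i$ have supports overlapping only in a fixed compact core region; a direct estimate shows the off-diagonal entries of the matrix $(Q_u(\varphi_i,\varphi_j))$ are dominated by the diagonal, so the form is negative definite on the whole span. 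This yields $\ind(u) \geq k-1$.

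\textbf{Main obstacle.} The delicate point is Step 3, upgrading the non-positivity coming from the translation Jacobi fields to \emph{strict} negativity with the correct count $k-1$ (rather than, say, only $\lfloor k/2 \rfloor$ or something off by a constant). The translation fields $\partial_{e_i} u$ give $Q_u \le o(1)$, which on its own only yields $\ind(u) + \nul(u) \geq k-1$ or a bound on the index \emph{plus} nullity; one must rule out the possibility that these directions are genuine $L^2$ Jacobi fields or that the limiting form degenerates. This is where one needs the finer structure of $\cM_{2k}$: the moduli space is $2k$-dimensional, its tangent space is spanned by bounded Jacobi fields none of which (for $k \geq 2$) is in $L^2$, and the asymptotic expansion of $u$ along the ends — governed by the Toda-system description of del Pino-Kowalczyk-Pacard — dictates precisely how the cutoff errors behave. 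Pinning down that the ``interaction'' between ends always contributes with the favorable sign, uniformly, is the crux; I expect to borrow the relevant balancing/monotonicity from \cite{DelPinoKowalczykPacard13} and combine it with the Wang-Wei estimates to close the argument.
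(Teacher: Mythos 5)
Your proposal stalls at exactly the point you flag as the ``main obstacle,'' and that obstacle is not a technicality---it is the entire content of the theorem. The cutoff translation fields $\varphi_i = \eta_R\,\partial_{e_i}u$ only give $Q_u(\varphi_i) = o(1)$ as $R\to\infty$; nothing in Steps 1--2 produces a strictly negative direction, and Step 3 (``perturb where the nodal curve bends'') is a hope rather than an argument. Moreover the accounting ``one negative direction per end/per translation'' is the wrong one: in the minimal-surface analogy the catenoid carries three independent translation Jacobi fields yet has index exactly $1$; instability is governed not by the number of almost-Jacobi directions but by the nodal structure of a \emph{single} Jacobi field. There are also two structural problems. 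First, $\partial_{e_1}u,\dots,\partial_{e_{k-1}}u$ span a space of dimension at most $2$, so after multiplying by the one radial cutoff $\eta_R$ your $\varphi_i$ cannot be linearly independent once $k\geq 4$; localizing each one near a single nodal curve (which you would need anyway, since as written every $\varphi_i$ is supported near \emph{all} the interfaces, contradicting the disjointness claimed in Step 4) leaves you with functions that are approximately sign-definite multiples of $H'(\mathrm{dist})$, hence approximate first eigenfunctions with eigenvalue $\approx 0$, i.e.\ no negativity at all. Second, the zero set of $u$ need not consist of $k$ lines: only the $2k$ ends are controlled, and how they connect inside a compact set is not prescribed.

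The paper's proof takes a genuinely different route that resolves precisely this difficulty. It fixes one generic direction $\mathbf{e}$ and works with the single exact bounded Jacobi field $v=\langle\nabla u,\mathbf{e}\rangle$. A coloring argument on the $2k$ ends (the sign of $v$ at infinity along the $i$-th end is that of $\langle(-1)^{i+1}\JJ\mathbf{f}_i,\mathbf{e}\rangle$) shows $v$ changes sign at least $2k-2$ times at infinity, and an Euler-formula count on the planar graph $\cN=\{v=0\}$ converts this into $q\geq k$ nodal domains. Strict negativity then comes for free from domain monotonicity of the first eigenvalue: $v$ is sign-definite on each nodal domain and vanishes on its boundary, so any strictly larger domain is unstable (Lemma \ref{lemm:app.pde.unstable.past.nodal.domain}, the Ghoussoub--Gui mechanism), and the noncompact Courant-type theorem (Theorem \ref{theo:app.pde.courant.nodal.domain}, valid under quadratic area growth) assembles these into $\ind(u)\geq q-1\geq k-1$. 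If you want to salvage your approach, abandon the $k-1$ separate translation fields and instead analyze the nodal domains of one of them.
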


(Note added in proof process of manuscript: Liu-Wei \cite{LiuWei18} recently claimed that, for the sine-Gordon equation, $\ind(u) = \tfrac12 k(k-1)$.) 

Such results are common in minimal surface theory: the index of a minimal surface is related to its topology; we refer the reader to the work of O. Chodosh and D. Maximo \cite{ChodoshMaximo16} for more information on this. Theorem \ref{theo:m2k.index.lower.bound}, thus, helps further solidify the analogies between phase transitions and minimal surfaces. Closest in spirit to this theorem in minimal surface theory is a result of A. Grigor\'yan, Y. Netrusov, and S.-T. Yau \cite[Section 6.4]{GrigoryanNetrusovYau04}.

Given Theorem \ref{theo:m2k.index.lower.bound}, the strategy for the proof of Theorem \ref{theo:minmax.construction} is as follows (we refer the reader to Section \ref{sec:background} for all the notation):
\begin{enumerate}
	\item Produce a sequence $\{(u_i, \varepsilon_i)\}_{i=1,2,\ldots} \subset C^\infty(\Sigma) \times (0, \infty)$, $\lim_i \varepsilon_i = 0$, with controlled $E_{\varepsilon_i}$-energy, and where each $u_i$ is a nontrivial critical point of $E_{\varepsilon_i}$ with Morse index $\leq 1$.
	\item Up to a subsequence, show $\lim_i V_{\varepsilon_i}[u_i] = V^\infty$ with $\support \Vert V^\infty \Vert$ a geodesic network with at most one singular junction, $p_*$.
	\item Study the convergence $V_{\varepsilon_i}[u_i] \weaklyto V^\infty$ near $p_*$ using blowups reminiscent of minimal surface theory, and Theorem \ref{theo:m2k.index.lower.bound}'s classification of Morse index-1 singularity models in $\RR^2$, to show all singular points have density $2$ and are thus intersection points of immersed geodesics (see Appendix \ref{sec:app.gmt}).
\end{enumerate}

\begin{rema}
	After the completion of this manuscript, O. Chodosh and the author were able to obtain the following partial generalization of the aforementioned three-step program: one may study critical points $\{(u_i, \varepsilon_i)\}_{i=1,2,\ldots} \subset C^\infty(\Sigma) \times (0, \infty)$, $\lim_i \varepsilon_i = 0$, with Morse index $\leq I_0$, and conclude that:
	\begin{enumerate}
		\item[i)] at most $I_0$ singular points form, and
		\item[ii)] the sum, over each singularity, of its density minus 1, is $\leq I_0$.
	\end{enumerate}
	This result will not be presented in this paper for the sake of brevity (it doesn't require the introduction of new ideas, just an inductive argument) and because it has more limited geometric appeal (such limiting geodesic networks aren't necessarily smoothly immersed).
\end{rema}

The first step of this program appears verbatim in the work of M. Guaraco \cite{Guaraco15}. The second step follows from a classical covering argument in minimal surface theory. The difficulty is in the third step, where we need to perform blowups carefully: we cannot afford to lose density information in going from the local picture of $(u_i, \varepsilon_i)$ near $p_*$ to the blown up entire solution over $\RR^2$ with $\varepsilon = 1$. This is a particularly delicate matter, seeing as to how interfaces don't generally appear in the same $O(\varepsilon_i)$-scale. This is the content of Proposition \ref{prop:index.1.main}, which effectively shows that \emph{the convergent interfaces of $\{ u_i \}_{i=1,2,\ldots}$ all appear in the $O(\varepsilon_i)$-scale around the singularity} by way of a \emph{uniqueness of tangent cones}-type result. In order to prove this result, we make use of recent novel estimates of K. Wang and J. Wei \cite{WangWei17} for stable solutions of \eqref{eq:pde}.

\thanks{{\bf Acknowledgements.} The author would like to thank O. Chodosh for bringing the problem to his attention, and R. Schoen, R. Mazzeo, K. Wang, L. Simon, M. Guaraco, D. Cheng, and C. Li for useful conversations. The author would also like to thank the Department of Mathematics at the University of California, Irvine, where part of this research was carried out, as well as Stanford University and the Ric Weiland Graduate Fellowship at Stanford, which partially supported this research, alongside NSF grant DMS-1613603.

\section{Background} \label{sec:background}

\subsection{Notation} 

We point out the following notational choices, which may deviate from some papers in the literature:

\begin{enumerate}
	\item \emph{Completeness} and \emph{compactness} refer to metric completeness. Thus, a complete or compact manifold may or may not have boundary.
	\item Compact manifolds without boundary are referred to as being \emph{closed}.
	\item $C^k$ spaces are all endowed with the Banach space norm
		\[ |f|_{C^k(\Omega)} = \sum_{j=0}^k \Vert f \Vert_{L^\infty(\Omega)}; \]
		$C^k_{\loc}(\Omega)$ denotes the set of all functions $f : \Omega \to \RR$ which are in $C^k(K)$ for every compact $K \subset \subset \Omega$, and $C^k_c(\Omega)$ denotes the set of all functions $f : \Omega \to \RR$ which are in $C^k(\Omega)$ and $\support f \subset \subset \Omega$.
	\item $C^\infty(\Omega)$ denotes the Fr\'echet space generated by the intersection of all $C^k(\Omega)$, $k \in \{ 0, 1, \ldots \}$; $C^\infty_{\loc}(\Omega)$, $C^\infty_c(\Omega)$ are defined analogously.
	\item See Appendix \ref{sec:app.gmt} for notation relating to geometric measure theory.
\end{enumerate}

\subsection{Variational structure} 

\eqref{eq:pde} arises as the Euler-Lagrange equation for critical points of the energy functional
\begin{equation} \label{eq:energy}
E_\varepsilon[u] = \int \left( \frac{\varepsilon}{2} |\nabla u|^2 + \varepsilon^{-1} W(u) \right) \, d\upsilon \text{;} \end{equation} 
i.e., zeroes $u \in C^\infty_{\loc}$ of the first variation functional,
\begin{equation} \label{eq:first.variation}
	\delta E_\varepsilon[u]\{\zeta\} = \int \left( \varepsilon \langle \nabla u, \nabla \zeta \rangle - \varepsilon^{-1} W'(u) \zeta \right) \, d\upsilon.
\end{equation}
One can make precise the notions of {\bf stability} and {\bf Morse index} in the Allen-Cahn setting by turning to the {\bf second variation} operator,
\begin{equation} \label{eq:second.variation}
	\delta^2 E[u]\{\zeta, \psi\} \triangleq \int_M \left( \varepsilon \langle \nabla \zeta, \nabla \psi \rangle +  \varepsilon^{-1} W''(u) \zeta \psi \right) \, d\upsilon.
\end{equation}
Associated to $\delta^2 E[u]$ is the linear elliptic operator $- \varepsilon \Delta + \varepsilon^{-1} W''(u)$, corresponding to the {\bf Jacobi operator} in minimal surface theory. The Morse index of a critical point measures the number of linearly independent unstable directions for energy. From a physical perspective, unstable critical points are a lot less likely to be observed than stable ones. These notions are all standard in the compact setting, but one needs to be more careful in the noncompact setting. 

\begin{defi}[Stability, Morse index] \label{def:stable} \label{def:morse.index}
   Suppose $(\Sigma^n, g)$ is a complete Riemannian manifold, and $U \subset \Sigma \setminus \partial \Sigma$ is open. A critical point $u$ of $E_\varepsilon \restr U$ is said to be (linearly) stable on an open subset $U' \subset U$ if
    \[ \delta^2 E_\varepsilon[u]\{\zeta, \zeta\} \geq 0 \text{ for every } \zeta \in C^\infty_c(U'). \]
    It is said to have Morse index $k$ on $U'$, denoted $\ind(u; U') = k$, provided
	\begin{multline*}
		\max \{ \dim V : V \subset C^\infty_c(U') \text{ is a vector space such that} \\
		\delta^2 E_\varepsilon[u]\{ \zeta, \zeta \} < 0 \text{ for all } \zeta \in V \setminus \{0\} \} = k.
	\end{multline*}
	If $u$ is stable on $U'$, then $\ind(u; U') = 0$. We will write $\ind(u)$ when the choice of $U'$ is clear from the context. 
\end{defi}

In Section \ref{sec:app.pde.morse.index.spaces.quadratic.area.growth} of the appendix we show that $C^\infty_c$ above can be successfully replaced by $W^{1,2}_0$ under the assumption of quadratic area growth, and give access to many similar theorems as in the case of compact domains. Note that $\ind(u; U')$, as defined here, coincides with $\ind(\delta^2 E_\varepsilon[u] \restr U; U')$ in the appendix for $\varepsilon = 1$. Implicit in the notation $\ind(u; U') = k$ is a choice for the parameter $\varepsilon$; this choice will always refer to the unique $\varepsilon$ for which $u$ is a critical point of $E_\varepsilon \restr U$.

\subsection{Geometric structure} 

To any $u \in C^{\infty}_{\loc}(\Sigma)$ we associate the following ($n-1$)-{\bf varifold}, i.e., a Radon measure on the Grassmanian of $(n-1)$-planes of $\Sigma$:
\[ V_\varepsilon[u]\{f\} \triangleq \int_\Sigma \varepsilon |\nabla u(x)|^2 f(x, T_x) \, d\upsilon(x), \; f \in C^0_c(\grassmanian_{n-1}(\Sigma)); \]
here $T_x$ denotes the tangent hyperplane at $x$ to the level set $\{ u = u(x) \}$. See Section \ref{sec:app.gmt} in the appendix for a brief introduction to the language of varifolds, and \cite{Simon83} for a more thorough treatment.

\begin{rema}
	The integrand is only relevant at $x \in \Sigma$ with $\nabla u(x) \neq 0$, at which points $T_x \{ u = u(x) \}$ is well-defined by the implicit function theorem.	
\end{rema}

We also define a notion of an {\bf enhanced second fundamental form} for the Allen-Cahn problem, a non-symmetric 2-tensor that makes sense for all solutions of the Allen-Cahn equation.

\begin{defi}[Enhanced second fundamental form] \label{def:a.tensor}
    Let $(\Sigma^n, g)$ be a Riemannian manifold, and $u \in C^\infty_{\loc}(\Sigma)$. Let $\Sigma_u = \Sigma \setminus \{ \nabla u = 0 \}$. We define a non-symmetric 2-tensor $\cA$ via
    \[ \cA = |\nabla u|^{-1}(\nabla^2 u - \nabla^2 u(\cdot, \nu) \otimes \nu^\flat), \]
    where $\nu = |\nabla u|^{-1} \nabla u$ is the (oriented) unit normal to the level sets of $u$ and $\nu^\flat$ its dual 1-form, taken with respect to the metric on $\Sigma$.
\end{defi}

The lemma below is the result of a straightforward computation:

\begin{lemm} \label{lemm:a.tensor}
    Let $(\Sigma^n, g)$ be a Riemannian manifold, $u \in C^\infty_{\loc}(\Sigma)$, $x \in \Sigma \setminus \{ \nabla u = 0 \}$, $\mathbf{X}$, $\mathbf{Y} \in T_x \Sigma$. Then
    \[ \cA(\mathbf{X}, \mathbf{Y}) = \frac{\nabla^2 u(\mathbf{X}, \mathbf{Y}^T)}{|\nabla u|}, \]
    where $\mathbf{Y}^T$ denotes the tangential projection of $\mathbf{Y}$ onto the tangent space of the level set of $u$ through $x$. Moreover, the squared norm of the non-symmetric 2-tensor satisfies
    \[ |\cA|^2 = |\sff|^2 + |\nabla^T \log |\nabla u||^2, \]
    where $\sff$ denotes the second fundamental form of the level set of $u$ through $x$, and $\nabla^T$ denotes the tangential gradient on the level set.
\end{lemm}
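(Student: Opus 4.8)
The plan is to verify both identities by a direct computation in an adapted frame; the only thing requiring care is tracking which block of the Hessian $\nabla^2 u$ contributes to which term on the right-hand side.

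For the pointwise formula, I would simply decompose $\mathbf{Y} = \mathbf{Y}^T + \iprod{\mathbf{Y}}{\nu}\,\nu$, where $\mathbf{Y}^T$ is the orthogonal projection onto $T_x\{u = u(x)\} = \nu^\perp$. Since $\nu^\flat(\mathbf{Y}) = \iprod{\nu}{\mathbf{Y}}$, substituting into the definition of $\cA$ shows that the term $\nabla^2 u(\mathbf{X},\nu)\,\nu^\flat(\mathbf{Y})$ precisely cancels the normal component, leaving $\cA(\mathbf{X},\mathbf{Y}) = |\nabla u|^{-1}\nabla^2 u(\mathbf{X},\mathbf{Y}^T)$.

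For the norm identity, fix an orthonormal frame $e_1,\dots,e_{n-1},e_n=\nu$ at $x$ with $e_1,\dots,e_{n-1}$ tangent to the level set $L = \{u = u(x)\}$. By the pointwise formula, $\cA(e_i,e_n) = 0$ for every $i$, while $\cA(e_i,e_j) = |\nabla u|^{-1}\nabla^2 u(e_i,e_j)$ for $1 \le j \le n-1$, so
\[ |\cA|^2 = \sum_{i,j=1}^{n-1}\big(|\nabla u|^{-1}\nabla^2 u(e_i,e_j)\big)^2 + \sum_{j=1}^{n-1}\big(|\nabla u|^{-1}\nabla^2 u(\nu,e_j)\big)^2. \]
The two remaining ingredients are: (a) for $\mathbf{X},\mathbf{Y}$ tangent to $L$, differentiating $\nu = |\nabla u|^{-1}\nabla u$ and using $\iprod{\nabla u}{\mathbf{Y}} = 0$ gives $\sff(\mathbf{X},\mathbf{Y}) = |\nabla u|^{-1}\nabla^2 u(\mathbf{X},\mathbf{Y})$, so the first double sum is $|\sff|^2$; and (b) differentiating $|\nabla u|^2 = \iprod{\nabla u}{\nabla u}$ in a direction $\mathbf{X}$ yields $\nabla_{\mathbf{X}}\log|\nabla u| = |\nabla u|^{-1}\nabla^2 u(\mathbf{X},\nu)$, so taking $\mathbf{X} = e_j$ tangent to $L$ and using symmetry of the Hessian identifies the second sum with $|\nabla^T\log|\nabla u||^2$. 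Adding (a) and (b) gives the claim.

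The computation is routine throughout; the one place I expect to have to be careful is step (b), i.e.\ correctly matching the mixed Hessian entries $\nabla^2 u(\nu,e_j)$ to the tangential derivatives of $\log|\nabla u|$ (rather than to the purely normal entry $\nabla^2 u(\nu,\nu)$, which drops out of $\cA$ entirely, or to a sign slip from the convention for $\sff$).
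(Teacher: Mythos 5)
Your computation is correct and is exactly the "straightforward computation" the paper alludes to without writing out: the cancellation of the normal component of $\mathbf{Y}$ gives the first identity, and the block decomposition of the Hessian in an adapted orthonormal frame (tangential block giving $|\sff|^2$, mixed block giving $|\nabla^T \log|\nabla u||^2$, the purely normal entry being annihilated by $\cA$) gives the second. Nothing is missing.
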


J. Hutchinson and Y. Tonegawa made precise in \cite{HutchinsonTonegawa00} the sense in which solutions of \eqref{eq:pde} behave like minimal hypersurfaces.

\begin{theo}[Hutchinson-Tonegawa {\cite[Theorem 1]{HutchinsonTonegawa00}}, cf. Guaraco {\cite[Appendix B]{Guaraco15}}] \label{theo:main.known}
    Suppose $(\Sigma^n, g_\infty)$ is a complete Riemannian manifold, $U \subset \Sigma \setminus \partial \Sigma$ is open, $\{ (u_i, \varepsilon_i) \}_{i=1,2,\ldots} \subset C_{\loc}^\infty(U) \times (0,\infty)$, $\lim_i \varepsilon_i = 0$, $\{ g_i \}_{i=1,2,\ldots} \subset \operatorname{Met}(U)$, and $\lim_i g_i = g_\infty$ in $C^\infty_{\loc}(U)$. Assume, additionally, that each $u_i$ is a critical point of $E_{\varepsilon_i} \restr (U, g_i)$ and that 
    \[ |u_i| < 1 \text{ on } U, \text{ and } (E_{\varepsilon_i} \restr (U,g_i))[u_i] \leq E_0, \text{ for } i = 1, 2, \ldots \]
    The following all hold true after perhaps passing to a subsequence:
    \begin{enumerate}
        \item $\lim_i u_i = u_\infty$ in $L^1_{\loc}(U)$, $u_\infty \in BV_{\loc}(U)$, $u_\infty = \pm 1$ a.e. on $U$,
        \item $\lim_i V_{\varepsilon_i}[u_i] \restr \grassmanian_{n-1}(U) = V^\infty \restr \grassmanian_{n-1}(U)$,
        \item $\lim_i (E_{\varepsilon_i} \restr (U, g_i))[u_i] = \Vert V^\infty \Vert(U)$,
        \item for all $t \in (-1,1)$,
            \[ \lim_i \{ u_i = t \} \cap U = \support \Vert V^\infty \Vert \cap U \]
            locally in the Hausdorff topology,
        \item $\lim_i \xi_{\varepsilon_i}[u_i] = 0$ in $L^1_{\loc}(U)$, where
            \[ \xi_{\varepsilon_i}[u_i] \triangleq \frac{\varepsilon_i}{2} |\nabla u_i|^2 - \frac{W(u_i)}{\varepsilon_i}, \]
        \item $\energyunit^{-1} V^\infty \restr \grassmanian_1(U)$ is a stationary integral 1-varifold; here
            \[ \energyunit \triangleq \frac{1}{\sqrt{2}} \int_{-1}^1 \sqrt{W}, \]
        \item $\support \Vert V^\infty \Vert \cap U$ consists of two portions: $\partial^* \{ u_\infty = +1 \} \cap U$, where the multiplicity of $\energyunit^{-1} V^\infty$ is odd; (b) $\support \Vert V^\infty \Vert \cap U \setminus \partial^* \{u_\infty = +1\}$, where the multiplicity of $\energyunit^{-1} V^\infty$ is even.
    \end{enumerate}
\end{theo}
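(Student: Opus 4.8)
The plan is to recover this statement along the lines of Modica, Ilmanen, and Hutchinson--Tonegawa, adding the curvature corrections needed on a variable background $(\Sigma, g_i)$ as in Guaraco's appendix.

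First I would extract the $BV$-limit. Setting $\Phi(t) = \int_0^t \sqrt{2W(s)}\,ds$, the Modica--Mortola/Young inequality $|\nabla(\Phi\circ u_i)| \le \tfrac{\varepsilon_i}{2}|\nabla u_i|^2 + \varepsilon_i^{-1}W(u_i)$ together with the uniform bound $(E_{\varepsilon_i}\restr(U,g_i))[u_i]\le E_0$ shows $\{\Phi\circ u_i\}$ is bounded in $\bv_{\loc}(U)$, so a subsequence converges in $L^1_{\loc}$; since $\int_K W(u_i)\,d\upsilon_{g_i} \le E_0\,\varepsilon_i \to 0$ on compacta, $W(u_\infty)=0$ a.e., hence $u_\infty = \pm 1$ a.e. and $u_\infty\in\bv_{\loc}(U)$, which is (1). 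Next I would pass the energy measures $\mu_i = (\tfrac{\varepsilon_i}{2}|\nabla u_i|^2 + \varepsilon_i^{-1}W(u_i))\,d\upsilon_{g_i}$ to a limit. Computing the divergence of the Allen--Cahn stress-energy tensor $T_i = \varepsilon_i\,du_i\otimes du_i - (\tfrac{\varepsilon_i}{2}|\nabla u_i|^2 + \varepsilon_i^{-1}W(u_i))\,g_i$ at a critical point yields $\divg_{g_i} T_i = (\text{curvature terms})\cdot\varepsilon_i|\nabla u_i|^2$, which is locally bounded because $\sup_i|g_i|_{C^2}<\infty$ locally and the energy is bounded; testing against radial fields produces the usual almost-monotonicity of $r\mapsto e^{\Lambda r}r^{-(n-1)}\mu_i(B_r(x))$ for $r$ below a fixed radius. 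This gives uniform local mass bounds, hence $\mu_i\to\mu_\infty$ as Radon measures and, by the compactness theorem for varifolds of locally bounded mass, $V_{\varepsilon_i}[u_i]\to V^\infty$ with $\Vert V^\infty\Vert = \mu_\infty$: this is (5), and (2) is then essentially the definition of $\Vert V^\infty\Vert(U)$. The density lower bound from monotonicity, together with the fact that any ball meeting a level set $\{u_i = t\}$, $t\in(-1,1)$, carries a fixed minimum of $\mu_i$-mass, upgrades this to the Hausdorff convergence of level sets in (3).

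The analytic heart is the vanishing of the discrepancy $\xi_{\varepsilon_i}[u_i]\to 0$ in $L^1_{\loc}(U)$, i.e.\ (4). I would obtain it from Modica's gradient estimate $\tfrac{\varepsilon}{2}|\nabla u|^2 \le \varepsilon^{-1}W(u)$ (valid for bounded entire solutions of \eqref{eq:pde}, with a lower-order error on $\Sigma$ controlled by ambient curvature and $\varepsilon_i$) combined with the monotonicity formula: the monotone quantity cannot have a limit as $r\downarrow 0$ at $\Vert V^\infty\Vert$-a.e.\ point unless the ``discrepancy part'' of the stress-energy identity integrates to zero in the limit, which forces $\xi_{\varepsilon_i}\to 0$ locally in $L^1$ (alternatively, one runs the Hutchinson--Tonegawa blow-up argument and uses the classification of one-dimensional profiles). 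Equipartition $\varepsilon_i|\nabla u_i|^2\,d\upsilon_{g_i}\weaklyto\Vert V^\infty\Vert$ and $2\varepsilon_i^{-1}W(u_i)\,d\upsilon_{g_i}\weaklyto\Vert V^\infty\Vert$ follows. Feeding $\xi_{\varepsilon_i}\to 0$ back into the first-variation identity and recognizing $\varepsilon_i|\nabla u_i|^2(g_i - \nu_i\otimes\nu_i)$ as the varifold first-variation integrand shows $\delta V^\infty = 0$ in the limit $\varepsilon_i\to 0$; dividing by $\energyunit$ and invoking the integrality established below gives that $\energyunit^{-1}V^\infty$ is a stationary integral $1$-varifold, which is (6).

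The remaining point, and the one I expect to be the main obstacle, is (7): that $\energyunit^{-1}\Vert V^\infty\Vert$ has integer density $\Vert V^\infty\Vert$-a.e., with parity governed by $\partial^*\{u_\infty=+1\}$. The idea is a blow-up at the natural $\varepsilon_i$-scale: fix $x\in\support\Vert V^\infty\Vert$ where the tangent measure of $\Vert V^\infty\Vert$ is (for a.e.\ such $x$) a multiplicity-$m$ hyperplane, pick $x_i\to x$, and set $\tilde u_i(y) = u_i(\exp_{x_i}(\varepsilon_i y))$, which solves the $\varepsilon=1$ equation on balls $B_{R_i}$, $R_i\to\infty$, in metrics converging to the flat one; the monotonicity-controlled density at $x$ bounds the energy of $\tilde u_i$ on compacta, so $\tilde u_i\to\tilde u_\infty$ in $C^2_{\loc}$, an entire $\varepsilon=1$ solution on $\RR^n$. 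Since $\{u_i=t\}$ Hausdorff-converges to $\support\Vert V^\infty\Vert$, near $x$ the function $\tilde u_\infty$ must be one-dimensional and transition $m$ times between $\pm1$ along the common normal, i.e.\ (after the clean-up lemmas that rule out hidden oscillation and energy trapped in non-flat interfaces at the blow-up scale) a stack of $m$ parallel translates of the heteroclinic profile, each contributing $\energyunit$ per unit area, so the density is $m\,\energyunit$; and $m$ is odd exactly when $\tilde u_\infty$, hence $u_\infty$, changes phase across $x$, i.e.\ when $x\in\partial^*\{u_\infty=+1\}$, giving (7). The delicate part is precisely the clean-up — showing no definite fraction of the energy of $\tilde u_i$ escapes to infinity or hides in oscillating interfaces at scale $\varepsilon_i$ — which is where the stable-solution estimates and the fine structure of the monotone quantity do the real work, as in Hutchinson--Tonegawa and Tonegawa--Wickramasekera, and it is the step I would budget the most effort for.
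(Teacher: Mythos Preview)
The paper does not prove this theorem at all: it is quoted from Hutchinson--Tonegawa (with the Riemannian adaptation handled by Guaraco's appendix) and used as a black box. So there is no ``paper's own proof'' to compare against; your sketch is a reconstruction of the original literature argument, and for (1)--(6) it is broadly faithful to what Hutchinson--Tonegawa do.

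That said, your account of (7) has a real gap. You write that the $\varepsilon_i$-scale blow-up $\tilde u_\infty$ ``must be one-dimensional and transition $m$ times between $\pm 1$,'' and then invoke ``stable-solution estimates\ldots as in Tonegawa--Wickramasekera'' for the clean-up. Neither ingredient is available here: the $u_i$ are only assumed to be critical points, not stable, so Tonegawa--Wickramasekera does not apply; and concluding that an entire bounded-energy solution of the $\varepsilon=1$ equation on $\RR^n$ is one-dimensional is essentially a De Giorgi--type statement, which is far stronger than what is being proved and is false in general without extra hypotheses. The actual Hutchinson--Tonegawa integrality argument avoids classifying the blow-up: it works measure-theoretically, using the discrepancy vanishing and a slicing/coarea analysis in the direction normal to the approximate tangent plane to count, with multiplicity, how many times $u_i$ crosses a fixed level along almost every normal line, and shows that each crossing contributes exactly $\energyunit$ to the limiting mass. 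The parity statement then comes from matching this count to whether $u_\infty$ flips sign across the interface. If you want to reconstruct (7), that is the mechanism to aim for; the one-dimensional-blow-up picture is heuristic, not the proof.
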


\begin{rema}
	$V_{\varepsilon_i}[u_i]$, and $\xi_{\varepsilon_i}[u_i]$ are computed with respect to $g_i$.
\end{rema}

If the $u_i$ in Theorem \ref{theo:main.known} are endowed with additional variational properties, then the limiting varifold $V^\infty$ may have additional regularity. For instance, if the $u_i$ are additionally assumed to be stable critical points on $U$ then from the combined work of Y. Tonegawa \cite{Tonegawa05}, Y. Tonegawa and N. Wickramasekera \cite{TonegawaWickramasekera12}, and M. Guaraco \cite{Guaraco15}, we know that there exists a relatively closed subset $S \subset \support \Vert V^\infty \Vert$ such that
\begin{enumerate}
    \item $S = \emptyset$ if $n \leq 7$,
    \item $S$ is finite if $n = 8$,
    \item $S$ has Hausdorff dimension $\leq n-8$ if $n \geq 9$, and
\end{enumerate}
and such that $\support \Vert V^\infty \Vert \setminus S$ is a smooth embedded stable minimal hypersurface in $U$. (See Theorem \ref{theo:index.0.main} for a precise statement when $n = 2$.) By the work of M. Guaraco \cite{Guaraco15}, this regularity goes through for any nonzero uniform upper bound on the Morse index when $n \geq 3$. However, it does not extend to $n = 2$ precisely because of the possible formation of singular junctions---which we study in this paper.

\section{Singularity models} \label{sec:singularity.models}

\subsection{One-dimensional solutions and De Giorgi's conjectures} \label{sec:degiorgi.conjectures} \label{sec:heteroclinic.solution} 

One-dimensional heteroclinic solutions $H : \RR \to \RR$ of \eqref{eq:pde} on $\RR$ with $\varepsilon = 1$,
\begin{equation} \label{eq:heteroclinic.solution}
	H' = \sqrt{2 W \circ H}, \; H(0) = 0,	
\end{equation}
are foundational in the theory of phase transitions. Note that there is a natural way to lift these heteroclinic solutions to higher dimensional Euclidean spaces. Namely, for any Riemannian manifold $(M^{n-1}, g)$, the function $M^n \times \RR \ni (p, t) \mapsto H(t)$, with $H : \RR \to \RR$ as in \eqref{eq:heteroclinic.solution}, is known as a ``one-dimensional'' heteroclinic solution of the Allen-Cahn equation on the product manifold $(M^{n-1} \times \RR, g + dt^2)$. Such lifts of of the heteroclinic solution play a role very similar to the one hyperplanes play in minimal surface theory. Specifically, for any choice of parameters $(\mathbf{e}, \beta) \in \SS^{n-1} \times \RR$, we get the one-dimensional entire solution
\begin{equation} \label{eq:one.dimensional.rn}
	\RR^n \ni \mathbf{x} \mapsto H(\langle \mathbf{x}, \mathbf{e} \rangle - \beta).
\end{equation}
Like planes in minimal surface theory, one-dimensional solutions stand out the most among all entire solutions to \eqref{eq:pde} in $\RR^n$ due to their simplicity and rigidity. De Giorgi conjectured \cite{DeGiorgi79} that one-dimensional solutions are the only ``monotone'' entire solutions in low enough dimensions $n$ of Euclidean space:

\begin{conj}[De Giorgi conjecture, monotone] \label{conj:degiorgi.monotone.conjecture}
	If $u : \RR^n \to \RR$ is a solution of \eqref{eq:pde} such that $\varepsilon = 1$, $|u| < 1$, $\langle \nabla u, \mathbf{e}_n \rangle > 0$, and $n \leq 8$, then $u$ is one-dimensional, i.e., of the form \eqref{eq:one.dimensional.rn}.
\end{conj}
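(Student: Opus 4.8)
Although the statement is phrased as a conjecture, its cases $n \le 8$ are known (with an extra hypothesis when $n \ge 4$), and I would establish them by treating two dimension ranges separately: $n \le 3$ via stability and a Liouville-type theorem, and $4 \le n \le 8$ by reducing to the Bernstein theorem for entire minimal graphs. Two inputs serve both. First, \emph{monotonicity forces stability}: the function $\varphi := \langle \nabla u, \mathbf{e}_n \rangle > 0$ is a positive solution of the Jacobi equation $-\Delta \varphi + W''(u)\varphi = 0$ (cf. \eqref{eq:second.variation}), and the existence of a positive Jacobi field gives $\delta^2 E_1[u]\{\zeta, \zeta\} \ge 0$ for all $\zeta \in C^\infty_c(\RR^n)$, i.e. $\ind(u) = 0$. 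Second, every bounded monotone solution obeys the Euclidean-volume energy bound $E_1[u](B_R) \le C R^{n-1}$, obtained by comparison with the foliation of a neighborhood of the graph of $u$ by its vertical translates.

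For $n \le 3$ I would run the Ghoussoub--Gui / Ambrosio--Cabr\'e argument. Since $\varphi > 0$ everywhere, the ratios $\sigma_j := \langle \nabla u, \mathbf{e}_j \rangle / \varphi$ are globally defined, and differentiating \eqref{eq:pde} shows they solve the degenerate elliptic equation $\divg(\varphi^2 \nabla \sigma_j) = 0$. Testing this against $\sigma_j \psi^2$ and integrating by parts yields $\int \varphi^2 |\nabla \sigma_j|^2 \psi^2 \le C \int \langle \nabla u, \mathbf{e}_j \rangle^2 |\nabla \psi|^2 \le C \int |\nabla u|^2 |\nabla \psi|^2$; choosing logarithmic cutoffs $\psi = \psi_R$ and using the energy bound, a dyadic estimate of the annular integral shows the right-hand side $\to 0$ as $R \to \infty$ precisely when $n \le 3$ — this is where the dimension restriction enters in this range. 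Hence each $\sigma_j$ is constant, $\nabla u$ is everywhere parallel to a fixed $\mathbf{e} \in \SS^{n-1}$, so $u = u(\langle \mathbf{x}, \mathbf{e} \rangle)$; the profile is a bounded monotone solution of $v'' = W'(v)$, hence a translate of $H$, and $u$ has the form \eqref{eq:one.dimensional.rn}. Equivalently, one may feed $\zeta = |\nabla u| \psi_R$ into the stability inequality and use the splitting $|\cA|^2 = |\sff|^2 + |\nabla^T \log |\nabla u||^2$ from Lemma \ref{lemm:a.tensor} to conclude $\cA \equiv 0$, which again forces one-dimensionality.

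For $4 \le n \le 8$ the plan, following Savin, is: (i) upgrade the monotone solution to a \emph{local minimizer} of $E_1$ — the ordered translates $u(\cdot + t \mathbf{e}_n)$, $t \in \RR$, sweep out a calibrating foliation, provided $\lim_{x_n \to \pm\infty} u(x', x_n) = \pm 1$ (without which the translates cover only a strict subinterval of $(-1,1)$); (ii) blow down, setting $u_\lambda(\mathbf{x}) := u(\lambda \mathbf{x})$ and letting $\lambda \to \infty$: minimality, the density estimates of Caffarelli--C\'ordoba, and $\Gamma$-convergence of the rescaled energies to the perimeter functional force $\{u_\lambda = 0\}$ — which by monotonicity is a graph over $\{x_n = 0\}$ — to converge to an entire minimal graph in $\RR^n$; (iii) invoke the Bernstein theorem, valid for minimal graphs over $\RR^{n-1}$ exactly when $n \le 8$, to conclude the limit is a hyperplane; (iv) run Savin's improvement-of-flatness iteration to bootstrap ``the level sets of $u$ are asymptotically flat'' into ``$u$ is one-dimensional,'' giving \eqref{eq:one.dimensional.rn}.

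The main obstacle is step (iv): Savin's flatness-improvement theorem is the Allen--Cahn counterpart of De Giorgi's $\varepsilon$-regularity theorem for minimal surfaces and genuinely requires new machinery (Harnack-type estimates for level sets, viscosity and sliding arguments). A secondary but conceptually important gap, for the statement \emph{exactly as phrased} with no hypothesis on the limits of $u$, lies in step (i): monotonicity guarantees that $\bar u^\pm(x') := \lim_{x_n \to \pm\infty} u(x', x_n)$ exist and are bounded \emph{stable} solutions of \eqref{eq:pde} on $\RR^{n-1}$, but showing $\bar u^\pm \equiv \pm 1$ is a classification question for stable solutions one dimension down, still open for $n - 1$ as large as $8$; this is why the known proofs for $4 \le n \le 8$ impose the extra limiting hypothesis. (For $n \ge 9$ no such proof can exist: del Pino--Kowalczyk--Wei have constructed monotone, non-one-dimensional entire solutions.)
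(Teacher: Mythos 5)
The statement you were asked to prove is labeled a \emph{conjecture} in the paper, and the paper does not prove it: Section \ref{sec:degiorgi.conjectures} only records the known partial results (Ghoussoub--Gui for $n=2$, Ambrosio--Cabr\'e and Farina--Mari--Valdinoci for $n=3$, and Savin for $n\leq 8$ \emph{under the extra asymptotic hypothesis} \eqref{eq:limit.at.infinity}). Your writeup is an accurate reconstruction of exactly those results. The $n\leq 3$ portion is correctly structured: monotonicity gives a positive Jacobi field and hence stability; the energy bound $E_1[u](B_R)\leq CR^{n-1}$ together with the Berestycki--Caffarelli--Nirenberg-type Liouville theorem for $\divg(\varphi^2\nabla\sigma_j)=0$ (with logarithmic cutoffs, which is where $n\leq 3$ enters via quadratic energy growth) forces the $\sigma_j$ to be constant and $u$ to be one-dimensional; the alternative route through the stability inequality and the splitting of $|\cA|^2$ in Lemma \ref{lemm:a.tensor} is exactly the Farina--Mari--Valdinoci / Section \ref{sec:index.0.local.results} point of view that the paper itself favors.

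For $4\leq n\leq 8$ you have correctly diagnosed that your argument does not close, and for the right reason: Savin's theorem (Theorem \ref{theo:degiorgi.conjecture.savin}) requires the limits $\lim_{x_n\to\pm\infty}u(\mathbf{x}',x_n)=\pm 1$ in order to upgrade monotonicity to local minimality via the sliding/calibration argument, and removing that hypothesis is precisely the open content of Conjecture \ref{conj:degiorgi.monotone.conjecture} in that range. So as a proof of the statement \emph{as phrased}, your proposal is necessarily incomplete --- but the gap you identify is the genuine open problem, not an oversight, and the paper itself makes no claim beyond what you have written. Be aware, though, that steps (ii)--(iv) of your Savin outline (density estimates, $\Gamma$-convergence of blow-downs, and especially the improvement-of-flatness iteration) are each substantial theorems in their own right; as written they are citations rather than proofs, which is appropriate here only because the statement is being surveyed rather than established.
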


This conjecture is inspired by the Bernstein theorem in minimal surface theory, which states that hyperplanes are the only minimal hypersurfaces that are graphical over a hyperplane $P^{n-1} \subset \RR^n$, $n \leq 8$. We discuss what is known about this conjecture in the next few paragraphs. We also point out that a closely related conjecture is:

\begin{conj}[De Giorgi conjecture, minimizers] \label{conj:degiorgi.minimizing.conjecture}
	If $u : \RR^n \to \RR$ is a solution of \eqref{eq:pde} such that $\varepsilon = 1$, $|u| < 1$, $u$ is energy minimizing among all compactly supported perturbations, and $n \leq 7$, then $u$ is one-dimensional, i.e., of the form  \eqref{eq:one.dimensional.rn}.
\end{conj}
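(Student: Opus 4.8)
\emph{Proof proposal (for the minimizers statement, which is in fact a theorem).} The plan is to blow $u$ down to an area-minimizing hypersurface, invoke the classification of entire minimizing hypersurfaces in low dimensions, and then propagate the resulting flatness back down to the unit scale to force $u$ to be heteroclinic. Fix a global minimizer $u$ of $E_1$ among compactly supported perturbations, with $|u| < 1$; after translation we may assume $0 \in \{u = 0\}$ (the zero set is nonempty since $u$ is nonconstant). The quantitative inputs I would use are all classical and due to Caffarelli--C\'ordova: (i) the \emph{energy bound} $E_1[u; B_R] \le C(n,W)\, R^{n-1}$ for all $R \ge 1$, obtained by comparing $u$ with competitors that push toward $\pm 1$ inside $B_R$ at the cost of a boundary layer on $\partial B_R$; (ii) the \emph{density (clearing-out) estimates}: there are $\delta_0, c_0 > 0$ so that $u(x_0) \ge -1 + \delta_0$ implies $|\{ u > -1+\delta_0 \} \cap B_r(x_0)| \ge c_0 r^n$ for all $r \le \tfrac12 \dist(x_0, \partial B_R)$, and symmetrically; these prevent loss of phase in any limit. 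I would also keep in hand Modica's gradient bound $\tfrac12 |\nabla u|^2 \le W(u)$.

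Next, set $u_\lambda(\mathbf{x}) := u(\lambda \mathbf{x})$; each $u_\lambda$ minimizes $E_{1/\lambda}$ on every ball, and by (i) has uniformly bounded rescaled energy. By the $\Gamma$-convergence of $\energyunit^{-1} E_\varepsilon$ to the perimeter functional (Modica--Mortola, Modica, Sternberg) together with the density estimates (ii), along a subsequence $\lambda_j \to \infty$ one gets $u_{\lambda_j} \to \chi_E - \chi_{\RR^n \setminus E}$ in $L^1_{\loc}(\RR^n)$, with $\partial E$ area-minimizing in $\RR^n$, with $\{ u_{\lambda_j} = 0 \}$ converging to $\partial E$ locally in the Hausdorff topology, and with convergence of the normalized energies; moreover $0 \in \partial E$ by (ii). Since $n \le 7$, the only entire minimizing hypersurfaces in $\RR^n$ are hyperplanes (a consequence of Simons' theorem and Bombieri--De Giorgi--Giusti, via the monotonicity formula for minimal surfaces and its rigidity case), so $\partial E$ is a hyperplane through the origin. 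Unwinding the Hausdorff convergence, $\{u = 0\}$ is then \emph{flat at all large scales}: for every $\delta > 0$ there is $R_\delta$ so that $\{ u = 0 \} \cap B_R$ lies in a $\delta R$-neighborhood of some hyperplane whenever $R \ge R_\delta$.

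Finally, I would upgrade flatness to rigidity using Savin's improvement-of-flatness and interior curvature estimates for Allen--Cahn energy minimizers (for $n \le 3$ the softer De Giorgi / Ambrosio--Cabr\'e and Alberti--Ambrosio--Cabr\'e arguments suffice): flatness of $\{u = 0\}$ at every large scale forces $\{u = 0\}$ to be an honest hyperplane $\{ \langle \mathbf{x}, \mathbf{e} \rangle = \beta \}$, forces $\nabla u \neq 0$ everywhere, and presents $u$ as monotone along $\mathbf{e}$; the strong maximum principle and minimality (a sliding argument comparing $u$ with its $\mathbf{e}$-translates) then pin $u$ down to its one-dimensional profile, which solves the ODE \eqref{eq:heteroclinic.solution}, so $u$ has the form \eqref{eq:one.dimensional.rn}. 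The first two steps are by now classical; \textbf{the main obstacle} is precisely this last step, i.e.\ converting ``the zero set is flat at every scale'' into ``$u$ is exactly heteroclinic'' for the \emph{semilinear} problem, which is the content of Savin's $\varepsilon$-regularity theory and admits no soft proof in general dimension. The hypothesis $n \le 7$ is sharp: via the Simons cone one can construct a nontrivial entire minimizer in $\RR^8$ (Pacard--Wei), so both the cone classification and the conclusion genuinely fail for $n \ge 8$.
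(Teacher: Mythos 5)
The statement you are proving is labeled a \emph{Conjecture} in the paper and is not proved there at all: the author simply records (Theorem \ref{theo:degiorgi.conjecture.savin}, citing Savin) that the minimizers version is known to be true for $n \leq 7$, with the low-dimensional cases going back to Ghoussoub--Gui and Ambrosio--Cabr\'e/Farina--Mari--Valdinoci. Your write-up is an accurate roadmap of how that known proof goes --- energy growth and density estimates (Caffarelli--C\'ordoba, not ``C\'ordova''), blow-down via $\Gamma$-convergence to an entire area-minimizing boundary, the Simons/Bombieri--De Giorgi--Giusti classification of such boundaries for $n \leq 7$, and then Savin's improvement of flatness to convert large-scale flatness of $\{u=0\}$ into exact one-dimensionality. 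But as you yourself concede, the last step is the entire analytic content of the result: without Savin's $\varepsilon$-regularity scheme (a delicate Harnack-inequality-in-measure iteration for the semilinear problem) the argument does not close, so what you have is a correct survey of the proof architecture with its decisive ingredient black-boxed, which is essentially the same position the paper takes by citing \cite{Savin09} rather than an independent proof. Two small corrections: the nontrivial entire \emph{minimizers} in $\RR^n$, $n \geq 8$, showing sharpness are due to Liu--Wang--Wei \cite{LiuWangWei16} (Pacard--Wei \cite{PacardWei13} produced \emph{stable}, not minimizing, counterexamples); and for $n=2,3$ the paper's preferred soft route to the minimizing case is via the stability inequality with logarithmic cutoffs (Ghoussoub--Gui, Ambrosio--Cabr\'e, Farina--Mari--Valdinoci), which avoids the blow-down entirely in those dimensions.
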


N. Ghoussoub and C. Gui \cite{GhoussoubGui98}, confirmed Conjecture \ref{conj:degiorgi.monotone.conjecture} for $n = 2$. Their proof can be adapted to confirm Conjecture \ref{conj:degiorgi.minimizing.conjecture} The techniques of this paper offer that extension, too; see, e.g., Proposition \ref{prop:m2.characterization}.

\begin{theo}[Ghoussoub-Gui {\cite[Theorem 1.1]{GhoussoubGui98}}]  \label{theo:degiorgi.conjecture.n2}
	Conjectures \ref{conj:degiorgi.monotone.conjecture}, \ref{conj:degiorgi.minimizing.conjecture} are true when $n = 2$.
\end{theo}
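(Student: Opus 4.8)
The plan is to adapt the Ghoussoub-Gui energy method; the only step that is genuinely special to $n=2$ is a logarithmic-cutoff estimate at the end. Throughout, $u$ is a solution of \eqref{eq:pde} on $\RR^2$ with $\varepsilon = 1$ and $|u| < 1$, satisfying either $\langle \nabla u, \mathbf e_2\rangle > 0$ everywhere (monotone case) or energy minimality under compactly supported perturbations (minimizing case); I want to show that $\nabla u$ is everywhere parallel to one fixed vector. By interior elliptic estimates $u$ is smooth with $|\nabla u|$ bounded on $\RR^2$, and $W''(u)$ is a bounded potential since $|u|<1$. Differentiating \eqref{eq:pde} in a constant direction $\mathbf e\in\SS^1$ shows $v_{\mathbf e} := \langle\nabla u,\mathbf e\rangle$ solves the linearized equation $-\Delta v_{\mathbf e} + W''(u)\,v_{\mathbf e} = 0$. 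The first real input is a strictly positive Jacobi field $\varphi$: in the monotone case $\varphi := v_{\mathbf e_2} > 0$ already works; in the minimizing case $u$ is stable, and I would invoke the classical equivalence between stability of $-\Delta + W''(u)$ on $\RR^2$ and the existence of a positive solution of the homogeneous equation (one can exhaust $\RR^2$ by balls, solve the Dirichlet problem with positive boundary data on each, and pass to the limit after a Harnack normalization; alternatively, appeal to Proposition \ref{prop:m2.characterization}) to obtain a smooth $\varphi > 0$ on $\RR^2$ with $-\Delta\varphi + W''(u)\,\varphi = 0$.

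With $\varphi > 0$ in hand, for each $\mathbf e\in\SS^1$ set $\sigma_{\mathbf e} := v_{\mathbf e}/\varphi$. Using the two linearized equations one gets the divergence-form identity
\[ \divg\!\big(\varphi^2\,\nabla\sigma_{\mathbf e}\big) = \varphi\,\Delta v_{\mathbf e} - v_{\mathbf e}\,\Delta\varphi = \varphi\,W''(u)\,v_{\mathbf e} - v_{\mathbf e}\,W''(u)\,\varphi = 0 \quad\text{on }\RR^2 . \]
Now comes the Liouville step. Fix $R \geq 2$ and take a logarithmic cutoff $\eta = \eta_R\in C^\infty_c(\RR^2)$ with $\eta\equiv 1$ on $B_R$, $\eta\equiv 0$ outside $B_{R^2}$, and $|\nabla\eta|\leq C(|x|\log R)^{-1}$ on $B_{R^2}\setminus B_R$. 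Testing the identity against $\sigma_{\mathbf e}\eta^2$, integrating by parts, and absorbing with Young's inequality (using $\varphi^2\sigma_{\mathbf e}^2 = v_{\mathbf e}^2$) gives
\[ \int_{\RR^2}\varphi^2|\nabla\sigma_{\mathbf e}|^2\eta^2 \;\leq\; 4\int_{\RR^2} v_{\mathbf e}^2|\nabla\eta|^2 \;\leq\; \frac{C}{(\log R)^2}\int_{B_{R^2}\setminus B_R}\frac{|\nabla u|^2}{|x|^2} \;\leq\; \frac{C}{\log R}, \]
where the last bound is a one-line computation in polar coordinates using that $|\nabla u|$ is bounded. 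Letting $R\to\infty$ forces $\nabla\sigma_{\mathbf e}\equiv 0$, i.e. $\langle\nabla u,\mathbf e\rangle = c_{\mathbf e}\,\varphi$ for a constant $c_{\mathbf e}$, and this holds for every $\mathbf e\in\SS^1$.

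Taking $\mathbf e = \mathbf e_1,\mathbf e_2$ yields $\nabla u = \varphi\,\mathbf a$ for a fixed $\mathbf a\in\RR^2$, and $\mathbf a\neq 0$ ($\mathbf a\cdot\mathbf e_2 = c_{\mathbf e_2} > 0$ in the monotone case; in the minimizing case $u$ is nonconstant, since the only constant solution with $|u|<1$ is $u\equiv 0$, which does not minimize because $W''(0)<0$). Hence $\nabla u$ is everywhere parallel to $\mathbf a$, so $u = \psi(\langle x,\mathbf a\rangle)$ with $\psi$ monotone and $|\psi|<1$; the one-dimensional ODE $\psi'' = W'(\psi)$ together with the phase-line analysis under \eqref{assu:double.well.potential.nonnegative}--\eqref{assu:double.well.potential.saddle} (limits must be critical points of $W$, and monotone bounded orbits connect $-1$ to $+1$ with $\tfrac12(\psi')^2 = W(\psi)$) shows $\psi$, and hence $u$, has the form \eqref{eq:one.dimensional.rn}, up to the reflections allowed by \eqref{assu:double.well.potential.even}.

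The main obstacle is the Liouville step, and specifically the fact that it closes \emph{only} because $n=2$: with merely the trivial quadratic bound $\int_{B_R}|\nabla u|^2\leq CR^2$, the logarithmic cutoff produces $\int_{B_{R^2}\setminus B_R}|x|^{-2}|\nabla u|^2\leq C\log R$, which is exactly enough to defeat the $(\log R)^2$ in the denominator. In $\RR^n$ with $n\geq 3$ the analogous argument instead needs an a priori \emph{linear} energy bound $\int_{B_R}|\nabla u|^2\leq CR^{n-1}$ (the Ambrosio-Cabr\'e input when $n=3$), which is not available from the hypotheses alone. The only other nontrivial point is producing the positive Jacobi field $\varphi$ in the minimizing case; this is soft once stability is known, but it is where the minimizing hypothesis is actually used.
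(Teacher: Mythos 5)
Your argument is correct, and it is the standard Ghoussoub--Gui proof (with the Berestycki--Caffarelli--Nirenberg logarithmic-cutoff Liouville theorem for $\divg(\varphi^2\nabla\sigma)=0$ at its core); the paper itself supplies no proof of this theorem, only the citation to \cite{GhoussoubGui98}, so there is nothing to diverge from. The one point worth noting is that for Conjecture \ref{conj:degiorgi.minimizing.conjecture} the paper instead points to its own machinery --- Proposition \ref{prop:m2.characterization}, where stability plus the noncompact Courant nodal domain theorem (Theorem \ref{theo:app.pde.courant.nodal.domain}) forces $\langle\nabla u,\mathbf{e}\rangle$ to have a sign, reducing to the monotone case --- whereas you construct the positive Jacobi field directly from stability \`a la Fischer-Colbrie--Schoen; both routes are valid and use the same underlying two-dimensionality.
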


L. Ambrosio and X. Cabr\'e \cite{AmbrosioCabre00} confirmed Conjecture \ref{conj:degiorgi.monotone.conjecture} for $n = 3$. Their proof, too, can be adapted to confirm Conjecture \ref{conj:degiorgi.minimizing.conjecture} in the same dimension. We refer to the work of Farina-Mari-Valdinoci \cite{FarinaMariValdinoci13} for this adaptation, which is close in spirit to the geometric measure theoretic approach in this paper.

\begin{theo}[Ambrosio-Cabr\'e {\cite[Theorem 1.2]{AmbrosioCabre00}}, Farina-Mari-Valdinoci {\cite[Theorem 1]{FarinaMariValdinoci13}}] \label{theo:degiorgi.conjecture.n3}
	Conjectures \ref{conj:degiorgi.monotone.conjecture}, \ref{conj:degiorgi.minimizing.conjecture} are true when $n = 3$. 
\end{theo}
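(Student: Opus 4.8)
Here is how I would prove Theorem~\ref{theo:degiorgi.conjecture.n3}.

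The plan is to run the Liouville-type scheme of Ghoussoub--Gui \cite{GhoussoubGui98} and Ambrosio--Cabr\'e \cite{AmbrosioCabre00}, with the geometric-measure-theoretic adaptations of Farina--Mari--Valdinoci \cite{FarinaMariValdinoci13} handling the minimizing case, isolating the two ingredients that confine the argument to low dimension. Let $u$ be a bounded (hence, by elliptic regularity, smooth) entire solution of \eqref{eq:pde} with $\varepsilon = 1$ and $|u| < 1$; Modica's gradient estimate $\tfrac12 |\nabla u|^2 \leq W(u)$ holds automatically. The first step is the \emph{energy estimate}
\begin{equation} \label{eq:proposal.energy}
	\int_{B_R} \left( \tfrac12 |\nabla u|^2 + W(u) \right) d\upsilon \leq C R^{n-1} = C R^2, \qquad R \geq 1.
\end{equation}
For energy minimizers this is immediate: compare $u$ to a competitor agreeing with $u$ outside $B_R$, equal to a well $\pm 1$ on $B_{R-1}$, and interpolating across the unit shell $B_R \setminus B_{R-1}$; such a competitor has $E_1$-energy $O(|\partial B_R|) = O(R^{n-1})$, and this works in every dimension. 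For monotone solutions ($\partial_n u > 0$) it is precisely Ambrosio--Cabr\'e's comparison estimate, which uses the monotone rearrangement in $x_n$ to bound the transverse gradient by the one-dimensional heteroclinic energy slicewise; it is here that $n \leq 3$ (respectively $n \leq 8$ in Conjecture~\ref{conj:degiorgi.monotone.conjecture}) enters for monotone solutions.

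Next I would pass to the linearized picture. Differentiating \eqref{eq:pde} in $x_i$ shows $\sigma_i := \partial_i u$ solves $\Delta \sigma_i = W''(u) \sigma_i$. When $u$ is monotone, $\varphi := \partial_n u > 0$ solves the same equation, which (via the substitution $\zeta = \varphi \psi$) forces $u$ to be stable; when $u$ is minimizing, stability is automatic. From stability one obtains the Sternberg--Zumbrun geometric inequality: testing the second variation \eqref{eq:second.variation} with $\zeta = |\nabla u| \eta$, $\eta \in C^\infty_c(\RR^n)$, and combining the Bochner formula $\tfrac12 \Delta |\nabla u|^2 = |\nabla^2 u|^2 + W''(u) |\nabla u|^2$ with the identity $|\nabla^2 u|^2 - \big| \nabla |\nabla u| \big|^2 = |\nabla u|^2 |\cA|^2$ recorded in Lemma~\ref{lemm:a.tensor}, one gets
\begin{equation} \label{eq:proposal.sz}
	\int_{\{\nabla u \neq 0\}} |\cA|^2 \, |\nabla u|^2 \, \eta^2 \, d\upsilon \;\leq\; \int |\nabla u|^2 \, |\nabla \eta|^2 \, d\upsilon
\end{equation}
(with a routine extra cutoff near $\{\nabla u = 0\}$); equivalently, in the monotone case, $v_i := \sigma_i / \varphi$ satisfies $\divg(\varphi^2 \nabla v_i) = 0$ with $\int_{B_R} \varphi^2 v_i^2 = \int_{B_R} \sigma_i^2 \leq C R^2$ by \eqref{eq:proposal.energy}. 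Now take $\eta$ to be the logarithmic cutoff, $\eta \equiv 1$ on $B_{\sqrt{R}}$, $\eta \equiv 0$ off $B_R$, $|\nabla \eta| \leq C (r \log R)^{-1}$ on the shell. Splitting $B_R \setminus B_{\sqrt{R}}$ into dyadic annuli and applying \eqref{eq:proposal.energy} on each, the right-hand side of \eqref{eq:proposal.sz} is bounded by $C (\log R)^{-2}$ times a sum of $O(\log R)$ uniformly bounded terms, hence by $C (\log R)^{-1} \to 0$ --- precisely because the exponent $n - 1$ equals $2$. Letting $R \to \infty$ gives $\cA \equiv 0$ on $\{\nabla u \neq 0\}$ (equivalently, each $v_i$ is constant).

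It remains to upgrade $\cA \equiv 0$ to one-dimensionality. The set $\{ \nabla u \neq 0 \}$ is dense: on a ball where $\nabla u \equiv 0$, $u$ would be a constant in $(-1,1)$, hence $\equiv 0$ by unique continuation for $\Delta u = W'(u)$, contradicting nonconstancy; in the monotone case $\nabla u \neq 0$ everywhere outright. On this set, $\cA \equiv 0$ forces each regular level set to be totally geodesic --- an open piece of an affine hyperplane --- with $|\nabla u|$ constant along it; in the monotone case the level sets $\{u = c\}$ are entire hyperplanes. Distinct level hyperplanes cannot intersect, so they are parallel, whence $u = \psi(\langle \mathbf{x}, \mathbf{e}\rangle)$ for a fixed $\mathbf{e} \in \SS^{n-1}$, with $\psi$ solving $\psi'' = W'(\psi)$, $|\psi| < 1$ (and $\psi' > 0$ in the monotone case); thus $\psi$ is a translate of the heteroclinic $H$ and $u$ has the form \eqref{eq:one.dimensional.rn}. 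In the minimizing case this last reduction needs more care, since a minimizer's level sets need not be connected a priori: here one invokes that blow-downs of a minimizer converge to an area-minimizing boundary, which in $\RR^3$ is a hyperplane by the classical Bernstein theorem, together with a sliding/maximum-principle argument, to rule out level-set junctions and recover the parallel-hyperplane foliation.

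I expect the main obstacle to be the energy estimate \eqref{eq:proposal.energy} in the monotone case: it is the one place where the dimensional restriction is genuinely felt for monotone solutions, and Ambrosio--Cabr\'e's argument for it --- controlling the full gradient by the heteroclinic energy via the monotone rearrangement --- is the technical heart of the theorem. For minimizers the estimate is free, and the difficulty instead migrates to the final step, namely deducing a foliation by parallel hyperplanes from the asymptotic flatness of the interfaces, where the Bernstein theorem in $\RR^3$ is the essential input.
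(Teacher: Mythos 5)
The paper does not prove this statement: Theorem \ref{theo:degiorgi.conjecture.n3} is quoted as an external result, with the monotone case attributed to Ambrosio--Cabr\'e and the minimizing case to Farina--Mari--Valdinoci. Your sketch correctly reproduces the strategy of those references --- energy growth $\lesssim R^{n-1}$, stability, the Sternberg--Zumbrun inequality (the flat case of Lemma \ref{lemm:stability.inequality}), the logarithmic cutoff forcing $\cA \equiv 0$ when $n-1 = 2$, and the reduction to parallel level hyperplanes --- so there is nothing in the paper to diverge from. One clarification: the Ambrosio--Cabr\'e energy estimate for monotone solutions is not a slicewise rearrangement bound but a sliding argument, comparing $E_{B_R}[u(\cdot, \cdot + t)]$ as $t \to \infty$ with the energy of the limit profile $\overline{u}(\mathbf{x}') = \lim_{x_3 \to \infty} u$, which is then classified (constant or one-dimensional) by the two-dimensional case, Theorem \ref{theo:degiorgi.conjecture.n2}; the restriction $n \leq 3$ for the full conclusion is imposed by the logarithmic-cutoff step, exactly as you note, not by the energy estimate itself.
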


O. Savin \cite{Savin09} confirmed the remaining cases of Conjecture \ref{conj:degiorgi.minimizing.conjecture} as well as a weaker form of the remaining cases of Conjecture \ref{conj:degiorgi.monotone.conjecture}.

\begin{theo}[Savin {\cite[Theorems 2.3, 2.4]{Savin09}}] \label{theo:degiorgi.conjecture.savin}
	Conjecture \ref{conj:degiorgi.minimizing.conjecture} is true. Conjecture \ref{conj:degiorgi.monotone.conjecture} under the additional asymptotic assumption
	\begin{equation} \label{eq:limit.at.infinity}
		\lim_{t \to \infty} u(\mathbf{x}', t) = \lim_{t \to \infty} (-u(\mathbf{x}', -t)) = 1 \text{ for all } \mathbf{x}' \in \RR^{n-1}.
	\end{equation}
\end{theo}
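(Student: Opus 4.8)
The plan is to reduce the monotone case (under the asymptotic assumption \eqref{eq:limit.at.infinity}) to the minimizing case, and then to prove the minimizing case by a blow-down argument combined with a De Giorgi-style improvement-of-flatness estimate for the level sets.

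\textbf{Reduction of the monotone case.} Suppose $u$ is as in Conjecture~\ref{conj:degiorgi.monotone.conjecture}, $\langle \nabla u, \mathbf{e}_n \rangle > 0$, and \eqref{eq:limit.at.infinity} holds. The vertical translates $u_s(\mathbf{x}) := u(\mathbf{x}', x_n + s)$, $s \in \RR$, are all solutions of \eqref{eq:pde}; they are strictly ordered by monotonicity, and \eqref{eq:limit.at.infinity} guarantees that their graphs sweep out all of $\RR^n \times (-1,1)$ as $s$ ranges over $\RR$. A solution sitting inside such a foliation by solutions is a local minimizer: the foliation produces a calibration (the Allen--Cahn analogue of the calibration by a foliation in minimal surface theory), giving $E_1[u; B_R] \le E_1[v; B_R]$ for every competitor $v$ agreeing with $u$ near $\partial B_R$. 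Hence $u$ is energy minimizing and the monotone case follows from the minimizing case. This is exactly where \eqref{eq:limit.at.infinity} is used; without it the foliation need not be complete.

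\textbf{Blow-down to a flat cone.} Now let $u$ be a nonconstant minimizer with $|u| < 1$. Record the standard a priori estimates: interior bounds $|\nabla u| \le C$; exponential decay $1 - |u(\mathbf{x})| \le C e^{-c\,\dist(\mathbf{x}, \{u=0\})}$; the linear energy bound $E_1[u; B_R(\mathbf{x}_0)] \le C R^{n-1}$, from comparison with a competitor equal to $\pm 1$ away from $\partial B_R$; and Modica's monotonicity formula for $R \mapsto R^{1-n} E_1[u; B_R(\mathbf{x}_0)]$. The rescalings $u_R(\mathbf{x}) := u(R\mathbf{x})$ minimize the energy with parameter $\varepsilon = R^{-1}$, so by Modica's $\Gamma$-convergence theorem for minimizers and the uniform density estimates, along a subsequence $u_R \to \chi_E - \chi_{E^c}$ in $L^1_{\loc}(\RR^n)$ with $\partial E$ locally perimeter minimizing and $\{u_R = t\} \to \partial E$ locally in Hausdorff distance for each $t \in (-1,1)$; equality in the monotonicity formula in the limit forces $\partial E$ to be a perimeter-minimizing cone. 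Since $n \le 7$, by Simons' theorem this cone is a hyperplane through the origin. Consequently, for every $\eta > 0$ there is $R_\eta$ so that for $R \ge R_\eta$ there is a unit vector $\mathbf{e}_R$ with $\{u=0\} \cap B_R \subset \{|\langle \mathbf{x}, \mathbf{e}_R\rangle| \le \eta R\}$.

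\textbf{The crux: improvement of flatness, and conclusion.} The main step, and the one I expect to be the principal obstacle, is the geometric Harnack/improvement-of-flatness estimate: there are $\eta_0 > 0$, $\theta \in (0, \tfrac{1}{2})$ and $C$, depending only on $n$ and $W$, such that if $u$ is a minimizer with $\{u=0\} \cap B_1 \subset \{|\langle \mathbf{x}, \mathbf{e}\rangle| \le \eta\}$ and $\eta \le \eta_0$, then $\{u=0\} \cap B_\theta \subset \{|\langle \mathbf{x}, \mathbf{e}'\rangle| \le \tfrac{1}{2}\theta\eta\}$ for some unit vector $\mathbf{e}'$ with $|\mathbf{e}' - \mathbf{e}| \le C\eta$. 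One proves this by compactness and linearization: if it failed there would be minimizers $u_j$ with flatness $\eta_j \downarrow 0$ not improving at scale $\theta$; rescaling the zero set anisotropically by $\eta_j^{-1}$ in the $\mathbf{e}$-direction and passing to the limit (using the gradient estimate, the exponential decay away from the interface, and comparison with tilted and translated one-dimensional solutions $\mathbf{x} \mapsto H(\langle \mathbf{x}, \mathbf{e}\rangle - \beta)$ as barriers) yields a bounded solution of a linear, uniformly elliptic equation on the hyperplane $\mathbf{e}^\perp$ — essentially the Jacobi equation with principal part the Laplacian on $\mathbf{e}^\perp$ — which by interior elliptic regularity is $O(\theta^2)$-close to affine at scale $\theta$, a contradiction. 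Running this viscosity-type argument directly at the level of \eqref{eq:pde}, where the diffuse interface has positive thickness unlike a sharp minimal surface, is the delicate heart of the matter. Finally, combine with the blow-down: for any fixed base point and $R$ large the normalized flatness of $\{u=0\}$ in $B_R$ is $\le \eta_0$, and iterating the improvement down the dyadic scales $B_R, B_{\theta R}, B_{\theta^2 R}, \dots$ shows the normalized flatness at scale $\theta^k R$ is $\le \theta^k$ times that at scale $R$; letting the scales run, the flatness of $\{u=0\}$ at unit scale around the base point is $0$, and since the flatness estimate controls $u$ itself and not merely its zero set, $u$ coincides with a one-dimensional solution in every unit ball, hence everywhere, and is of the form \eqref{eq:one.dimensional.rn}. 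Together with the reduction above, this also settles the stated case of Conjecture~\ref{conj:degiorgi.monotone.conjecture}.
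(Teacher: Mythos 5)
This theorem is quoted from Savin's paper \cite{Savin09}; the manuscript offers no proof of it, so there is no internal argument to compare yours against. Your outline does reproduce the broad strategy of Savin's actual proof: reduction of the monotone case under \eqref{eq:limit.at.infinity} to the minimizing case via the ordered family of vertical translates, blow-down to a perimeter-minimizing cone which is a hyperplane for $n \leq 7$ by Simons' theorem, and an improvement-of-flatness iteration for the level sets. So as a reconstruction of the cited result the architecture is right.

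Two caveats on the "crux" step, which is where your sketch is too optimistic. First, the compactness needed in the improvement-of-flatness argument is not obtained from elliptic estimates on $u$: the anisotropically rescaled level sets are not graphs of solutions of any PDE, and the equicontinuity required to pass to the limiting linearized (Laplace) equation is precisely the content of Savin's Harnack inequality for flat level sets, proved by a separate sliding/comparison argument with one-dimensional barriers; this is the hard part of \cite{Savin09}, and your sketch treats it as routine. Second, the improvement of flatness for the diffuse problem only iterates down to the interface scale: the hypothesis requires the absolute width of the trapping slab to stay above a fixed constant comparable to the transition-layer thickness, so the iteration cannot literally drive the unit-scale flatness of $\{ u = 0 \}$ to zero. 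One instead concludes that $\{ u = 0 \}$ lies in a bounded-width slab about a fixed hyperplane at every scale, and a final $O(1)$-scale argument (sliding against the solutions \eqref{eq:one.dimensional.rn}, or equivalently the statement recorded as Theorem \ref{theo:wang.allard}) is needed to upgrade this to $u$ being exactly one-dimensional.
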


Both conjectures are known to fail for the standard double-well potential $W = \frac{1}{4}(1-u^2)^2$ in dimensions higher than those mentioned---as is also the case in minimal surface theory. Specifically, in the setting of phase transitions:
\begin{enumerate}
	\item Del Pino-Kowalczyk-Wei \cite{DelPinoKowalczykWei11}  constructed monotone solutions in $\RR^n$, $n \geq 9$, which are not one dimensional.
	\item Liu-Wang-Wei \cite{LiuWangWei16} constructed energy-minimizing solutions in $\RR^n$, $n \geq 8$, which are not one-dimensional.
	\item Liu-Wang-Wei's construction already yields stable counterexamples to the ``one-dimensional'' conjecture, but we also mention that Pacard-Wei \cite{PacardWei13} constructed stable solutions in $\RR^n$, $n \geq 8$, which are not one-dimensional.
\end{enumerate}

In closing, we mention the following result, which serves as a parallel to Allard's regularity theorem from minimal surface theory. It follows in a straightforward manner from the work of K. Wang \cite{Wang14}, where he also provided an alternate proof of Savin's theorem.

\begin{theo}[cf. Wang {\cite[Theorem 9.1]{Wang14}}] \label{theo:wang.allard}
	Suppose $u : \RR^n \to \RR$ is a solution of \eqref{eq:pde} with $\varepsilon = 1$, $|u| < 1$, and
	\[ \lim_{R \uparrow \infty} \frac{(E_1 \restr B_R(\mathbf{0}))[u]}{\energyunit R^{n-1}} = 1. \]
	Then $u$ is one-dimensional.
\end{theo}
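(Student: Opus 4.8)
The plan is to perform the ``blow-down at infinity'' of $u$, identify the resulting tangent object as a multiplicity-one hyperplane, and then bootstrap this flatness down to every scale. First I would rescale: $\tilde u_R(\mathbf x) := u(R\mathbf x)$ solves \eqref{eq:pde} with $\varepsilon = \varepsilon_R := R^{-1} \to 0$, and on each fixed ball $B_\rho(\mathbf 0)$ the hypothesis reads $(E_{\varepsilon_R} \restr B_\rho(\mathbf 0))[\tilde u_R] = R^{1-n}(E_1 \restr B_{R\rho}(\mathbf 0))[u] \to \energyunit\,\rho^{n-1}$, i.e. the Allen--Cahn energy carried by a single hyperplane through $\mathbf 0$. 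The basic tool here is the Allen--Cahn monotonicity formula: for an entire solution the discrepancy $\xi_1[u] = \tfrac12|\nabla u|^2 - W(u)$ is $\le 0$ (Modica's gradient estimate, using \eqref{assu:double.well.potential.nonnegative}), and this sign is exactly what makes $r \mapsto \Theta_u(\mathbf x, r) := \energyunit^{-1} r^{1-n}(E_1 \restr B_r(\mathbf x))[u]$ monotone nondecreasing. Hence $\Theta_u(\mathbf x,\infty) := \lim_{r\to\infty}\Theta_u(\mathbf x,r)$ exists; it equals $1$ at $\mathbf x = \mathbf 0$ by hypothesis; and comparing $B_r(\mathbf x) \subset B_{r+|\mathbf x|}(\mathbf 0)$ gives $\Theta_u(\mathbf x,r) \le 1$ for \emph{every} $\mathbf x$ and $r$.

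Next I would feed the blow-down into Hutchinson--Tonegawa (Theorem \ref{theo:main.known}), applied on each ball with the Euclidean metric: along a subsequence $R_i \to \infty$, $V_{\varepsilon_{R_i}}[\tilde u_{R_i}] \to V^\infty$ with $\energyunit^{-1}V^\infty$ a stationary integral $(n-1)$-varifold whose mass ratio over balls centered at $\mathbf 0$ is constant in the radius. The rigidity case of the monotonicity formula for stationary integral varifolds then forces $\energyunit^{-1}V^\infty$ to be a cone with vertex $\mathbf 0$; since its density there is the least possible --- that of a single hyperplane --- Allard's regularity theorem forces it to be a \emph{multiplicity-one hyperplane} $P$. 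Uniqueness of the limit promotes this to a genuine limit, and items (1), (3), (7) of Theorem \ref{theo:main.known} then give $R^{-1}\{u = t\} \to P$ locally in the Hausdorff topology for all $t \in (-1,1)$ and $\tilde u_R \to \operatorname{sgn}(\langle\mathbf x,\mathbf e\rangle)$ in $L^1_{\loc}$, for $\mathbf e$ a unit normal of $P$; so $u$ has a single interface, flat at infinity. Running the same analysis from other base points $\mathbf x$ gives $\Theta_u(\mathbf x,\infty) = 1$ everywhere: the value cannot lie in $(0,1)$ since a nonzero stationary integral cone has density $\ge 1$ at its vertex, and cannot be $0$ since clearing-out would then force $u \to \pm1$ exponentially outside a compact set, hence $u$ of finite total energy, contradicting $\Theta_u(\mathbf 0,\infty) = 1$.

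The final step --- upgrading ``flat blow-down'' to ``$u$ is one-dimensional'' --- is the genuinely hard one, and it does \emph{not} follow from the density bound alone: that bound controls only the leading-order energy and is insensitive to lower-order defects, which a priori a non--one-dimensional solution could hide. The route is the Allen--Cahn analogue of De Giorgi's improvement-of-flatness argument (the one underlying the Bernstein theorem). The uniform bound $\Theta_u(\mathbf x, r) \le 1$ keeps $u$ below the $\varepsilon$-regularity threshold of K. Wang \cite{Wang14} at every point and every scale, so near its single interface $u$ is, after rescaling, quantitatively close to a one-dimensional profile $H(\langle\cdot,\mathbf e_{\mathbf x,r}\rangle - \beta_{\mathbf x,r})$; Wang's estimates then supply an \emph{improvement of flatness}, i.e. the excess over the best-fitting one-dimensional profile contracts by a fixed factor upon passing to a definite smaller scale. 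Iterating downward from $r = \infty$ --- where, by the blow-down analysis, the excess vanishes --- forces the interface to be a hyperplane and hence $u = H(\langle\mathbf x, \mathbf e\rangle - \beta)$. (Equivalently, this shows $u$ is a local minimizer of $E_1$, so that Savin's resolution of the minimizing De Giorgi conjecture, Theorem \ref{theo:degiorgi.conjecture.savin}, valid in every dimension, applies directly.) I expect this transfer of infinitesimal flatness at infinity into rigidity at all finite scales to be the main obstacle: the soft geometric-measure-theoretic inputs --- monotonicity, Allard, Hutchinson--Tonegawa --- get one only as far as the flat blow-down, and it is Wang's quantitative curvature and ``sheet'' estimates from \cite{Wang14} that do the rest.
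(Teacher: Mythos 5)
Your proposal is correct and, at its core, is the same proof as the paper's: everything reduces to Wang's $\varepsilon$-regularity theorem \cite[Theorem 9.1]{Wang14}. The paper applies it directly --- the density hypothesis puts the rescalings $u(R\,\cdot/R_A)$ below the threshold $1+\tau_A$ for all large $R$, Wang's theorem makes the level sets $C^{1,\alpha_A}$ graphs over a fixed hyperplane with a \emph{scale-invariant} bound $(R/R_A)^{\alpha_A}[\nabla h^t]_{C^{\alpha_A}} \leq K_A$, and sending $R \uparrow \infty$ kills the H\"older seminorm at any fixed scale, so each level set is a hyperplane and unique continuation finishes. Your improvement-of-flatness iteration ``downward from $r=\infty$'' is exactly this mechanism in different clothing. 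The one structural difference is your preliminary varifold blow-down (Modica's estimate, monotonicity, Hutchinson--Tonegawa, Allard); it is correct, and it is the natural way to see where the flatness comes from, but it is redundant here since Wang's theorem already extracts the single flat interface directly from the density ratio, and the paper omits it. Two small cautions: your parenthetical alternative asserts without justification that $u$ is a local minimizer (this does not follow from the density bound), and Theorem \ref{theo:degiorgi.conjecture.savin} as cited is the $n\leq 7$ statement --- the dimension-free result you would want is Savin's flatness theorem for minimizers with asymptotically flat level sets, which is only available after minimality is established. Neither caution affects your main route.
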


We provide a proof of this Theorem, as it is not explicitly written down in the literature in a way that is directly applicable.

\begin{proof}[Proof of Theorem \ref{theo:wang.allard}]
	Without loss of generality, after precomposing with a rigid motion, we may assume that $u(\mathbf{0}) = 0$. Let $\varepsilon_A$, $\tau_A$, $\alpha_A \in (0, 1)$, $R_A$, $K_A > 0$ be as in \cite[Theorem 9.1]{Wang14}. By construction, 
	\begin{equation} \label{eq:m2.characterization.i}
		\lim_{R \uparrow \infty} \energyunit^{-1} \omega_1^{-1} R^{-1} (E \restr B_R(\mathbf{0}))[u] = 1.
	\end{equation}
	Therefore, there exists $R_0 \geq \varepsilon_A^{-1} R_A$ such that, for all $R \geq R_0$, the rescaled function
	\[ u_R(\mathbf{x}) \triangleq u \left( \frac{R\mathbf{x}}{R_A} \right), \; \mathbf{x} \in B_R(\mathbf{0}), \]
	satisfies
	\[ h_0^{-1} \omega_1^{-1} R^{-1} (E \restr B_R(\mathbf{0}))[u] \leq 1+\tau_A, \]
	\[ \Delta u_R(\mathbf{x}) = \left( \frac{R}{R_A} \right)^2 W'(u) \text{ in } B_R(\mathbf{0}), \]
	and $\varepsilon \triangleq \frac{R_A}{R} \leq \varepsilon_A$. Invoking \cite[Theorem 9.1]{Wang14}, and passing to a subsequence $R_j \uparrow \infty$, there exists a fixed hyperplane $P \subset \RR^n$, such that, for all $t \in [-1/2,1/2]$, $j = 1, 2, \ldots$,
	\begin{multline*}
		\{ u_{R_j} = t \} \cap \{ (\mathbf{x}',x_n) \in \RR^{n-1} \times \RR : |\mathbf{x}'| \leq 1 \} \\
		= \{ (\mathbf{x}', h^t_j(\mathbf{x}')) : |\mathbf{x}'| \leq 1 \},
	\end{multline*}
	where $h_j^t : \closure{B}_1^{n-1}(\mathbf{0}) \to \RR$ is such that $|h_j^t|_{C^{1,\alpha_A}(B_1)} \leq K_A$. Undoing the scaling, this implies that for all $t \in [-1/2,1/2]$, $j = 1, 2, \ldots$,
	\begin{multline*}
		\{ u = t \} \cap \left\{ (\mathbf{x}', x_n) \in \mathbf{R}^{n-1} \times \RR : |\mathbf{x}'| \leq \frac{R_j}{R_A} \right\} \\
		= \left\{ (\mathbf{x}', h^t(\mathbf{x}')) : |\mathbf{x}'| \leq \frac{R_j}{R_A} \right\},
	\end{multline*}
	where $h^t$ is a $C^{1,\alpha_A}$ function, whose gradient satisfies the scale-invariant estimate
	\[ \left( \frac{R}{R_A} \right)^{\alpha_A} [\nabla h^t]_{C^{\alpha_A}(B_{R_j/R_A}(\mathbf{0}))} \leq K_A. \]
	Letting $j \uparrow \infty$, we conclude that $h^t$ is constant for each $t \in [-1/2,1/2]$. Thus, $u$ is one-dimensional by unique continuation.
\end{proof}

\subsection{Moduli space $\cM_{2k}$ of $2k$-ended solutions in $\RR^2$} \label{sec:m2k}

M. del Pino, M. Kowalczyk, and F. Pacard defined in \cite{DelPinoKowalczykPacard13} a space of solutions of \eqref{eq:pde} with $\varepsilon = 1$ on $\RR^2$, $\cM_{2k}$, that looks from infinity like a collection of $2k$ copies of the one-dimensional heteroclinic solution. We recall the construction of this space here (after \cite{DelPinoKowalczykPacard13}) for the sake of completeness.

Fix $k \in \{1, 2, \ldots \}$. We denote by $\Lambda^{2k}$ (denoted $\Lambda^{2k}_{\operatorname{ord}}$ in \cite{DelPinoKowalczykPacard13}) the space of ordered $2k$-tuples $\lambda = (\lambda_1, \ldots, \lambda_{2k})$ of oriented affine lines on $\RR^2$, parametrized as
\[ \lambda_j = (r_j, \mathbf{f}_j) \in \RR \times \SS^1, \; j = 1, \ldots, 2k, \]
where $\mathbf{f}_j = (\cos \theta_j, \sin \theta_j)$, and
\[ \theta_1 < \ldots < \theta_{2k} < 2\pi + \theta_1. \]
For $\lambda \in \Lambda^{2k}$, we denote
\begin{equation} \label{eq:m2k.minimum.angle}
	\theta_\lambda \triangleq \frac{1}{2} \min \{ \theta_2 - \theta_1, \ldots, \theta_{2k} - \theta_{2k-1}, 2\pi + \theta_1 - \theta_{2k} \}.	
\end{equation}
Fix $\lambda \in \Lambda^{2k}$. For large $R > 0$ and all $j = 1, \ldots, 2k$, there exists $s_j \in \RR$ such that $r_j \JJ \mathbf{f}_j + s_j \mathbf{f}_j \in \partial B_R(\mathbf{0})$, the half-lines $\lambda_j^+ \triangleq r_j \JJ \mathbf{f}_j + s_j \mathbf{f}_j + \RR_+ \mathbf{f}_j$ are disjoint and contained in $\RR^2 \setminus B_R(\mathbf{0})$, and the minimum distance of any two distinct $\lambda_i^+$, $\lambda_j^+$ is $\geq 4$. (Here, $\JJ \in \operatorname{End}(\RR^2)$ is the counterclockwise rotation map by $\frac{\pi}{2}$.) The affine half-lines $\lambda_1^+, \ldots, \lambda_{2k}^+$ and the circle $\partial B_R(\mathbf{0})$ induce a decomposition of $\RR^2$ into $2k+1$ open sets,
\begin{multline} \label{eq:m2k.r2.decomposition}
	\Omega_0 \triangleq B_{R+1}(\mathbf{0}) \text{, and} \\
	\Omega_j \triangleq \bigcap_{i \neq j} \{ \mathbf{x} \in \RR^2 \setminus B_{R-1}(\mathbf{0}) : \dist(\mathbf{x}, \lambda_j^+) < \dist(\mathbf{x}, \lambda_i^+) + 2 \}, \\
	j = 1, \ldots, 2k
\end{multline}
Note that these open sets are not disjoint. Then, we define $\chi_{\Omega_0}, \ldots, \chi_{\Omega_{2k}}$ to be a smooth partition of unity of $\RR^2$ subordinate to $\Omega_0, \ldots, \Omega_{2k}$, and such that
\begin{multline} \label{eq:m2k.r2.decomposition.disjoint}
	\chi_{\Omega_0} \equiv 1 \text{ on } \Omega_0' \triangleq B_{R-1}(\mathbf{0}), \text{ and } \\
	\chi_{\Omega_j} \equiv 1 \text{ on } \Omega_j' \triangleq \cap_{i\neq j} \{ \mathbf{x} \in \RR^2 \setminus B_{R+1}(\mathbf{0}) : \dist(\mathbf{x}, \lambda_j^+) < \dist(\mathbf{x}, \lambda_i^+) - 2\}, \\
	j = 1, \ldots, 2k.
\end{multline}
Note that these new open sets are disjoint. Without loss of generality, $|\chi_{\Omega_j}| + |\nabla \chi_{\Omega_j}| + |\nabla^2 \chi_{\Omega_j}| \leq c_1$ for all $j = 0, \ldots, 2k$, with $c_1 = c_1(\theta_\lambda)$. Finally, we define
\begin{equation} \label{eq:m2k.approximate.solution}
	u_\lambda \triangleq \sum_{j=1}^{2k} (-1)^{j+1} \chi_{\Omega_j} H(\dist^s(\cdot, \lambda_j)),	
\end{equation}
where $\dist^s(\cdot, \lambda_j)$ denotes the signed distance to $\lambda_j$, taking $\JJ\mathbf{f}_j$ to be the positive direction. Here, $H$ is the heteroclinic solution \eqref{eq:heteroclinic.solution}.

\begin{defi}[Del Pino-Kowalczyk-Pacard {\cite[Definition 2.2]{DelPinoKowalczykPacard13}}] \label{defi:m2k}
	For $k \geq 1$, we denote
	\begin{equation} \label{eq:m2k.s2k}
		\cS_{2k} \triangleq \bigcup_{\lambda \in \Lambda^{2k}} \{ u \in C^\infty(\RR^2) : u - u_\lambda \in W^{2,2}(\RR^2) \}.
	\end{equation}
	We endow $\cS_{2k}$ with the weak topology of the operator
	\begin{equation} \label{eq:m2k.J}
		\cJ : \cS_{2k} \to W^{2,2}(\RR^2) \times \Lambda^{2k}, \; \cJ(u) \triangleq (u-u_\lambda, \lambda).
	\end{equation}
	Finally, we define the space of ``$2k$-ended solutions'' to be
	\begin{equation} \label{eq:m2k}
		\cM_{2k} \triangleq \{ u \in \cS_{2k} \text{ satisfying } \eqref{eq:pde} \text{ with } \varepsilon = 1 \}.
	\end{equation}
\end{defi}

\begin{exam} \label{exam:m2.one.dimensional}
	Elements of $\cM_2$ are the lifts to $\RR^2$ of one-dimensional heteroclinic solutions \eqref{eq:heteroclinic.solution},
	\[ \RR^2 \ni \mathbf{x} \mapsto H(\langle \mathbf{x}, \mathbf{e} \rangle - \beta) \text{, with } (\mathbf{e}, \beta) \in \SS^1 \times \RR. \]
	See Theorem \ref{theo:wang.allard} and/or Proposition \ref{prop:m2.characterization}.
\end{exam}

The following result, due to M. del Pino, M. Kowalczyk, and F. Pacard, significantly improves the a priori $W^{2,2}$ decay of $u - u_\lambda$ to an exponential decay:

\begin{theo}[Del Pino-Kowalczyk-Pacard {\cite[Theorem 2.1]{DelPinoKowalczykPacard13}}] \label{theo:m2k.refined.asymptotics}
	Let $u_0 \in \cM_{2k}$. There exists a neighborhood $U \subset \cM_{2k}$ and a $\delta = \delta(u_0) > 0$ such that
	\begin{equation} \label{eq:m2k.refined.asymptotics.i}
		\cJ(U) \subseteq e^{-\delta |\mathbf{x}|} W^{2,2}(\RR^2) \times \Lambda^{2k},
	\end{equation}
	and, moreover, such that the restricted map
	\begin{equation} \label{eq:m2k.refined.asymptotics.ii}
		\cJ|_U : U \to e^{-\delta |\mathbf{x}|} W^{2,2}(\RR^2) \times \Lambda^{2k}	
	\end{equation}
	is continuous with respect to the corresponding topologies; here, $\cJ$ is the map defined in \eqref{eq:m2k.J}.
\end{theo}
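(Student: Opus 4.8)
The plan is to run the argument of del Pino--Kowalczyk--Pacard \cite[Theorem 2.1]{DelPinoKowalczykPacard13}, whose only genuine subtlety is the translational Jacobi mode along each end. Fix $u_0 \in \cM_{2k}$ with associated lines $\lambda_0 \in \Lambda^{2k}$, and let $U$ be a small neighborhood of $u_0$ in $\cM_{2k}$; shrinking $U$, every $u \in U$ has associated lines $\lambda = \lambda(u)$ close to $\lambda_0$, so the geometric constants attached to the construction of $u_\lambda$ — notably $\theta_\lambda$ of \eqref{eq:m2k.minimum.angle} and the constant $c_1$ bounding $\{\chi_{\Omega_j}\}$ — are uniform over $U$; this uniformity is exactly what will yield the continuity of $\cJ|_U$ at the end, so I postpone it. Set $\psi := u - u_\lambda \in W^{2,2}(\RR^2)$. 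Two-dimensional Sobolev embedding gives $\psi \in C^{0,\alpha}_{\loc}$ with $\psi(\mathbf{x}) \to 0$ as $\Vert \mathbf{x}\Vert \to \infty$, and local elliptic estimates for the equation below bootstrap this to $\psi \to 0$ in $C^2$ at infinity. Using that $W$ is even, see \eqref{assu:double.well.potential.even}, one checks that on each region $\Omega_j'$ of \eqref{eq:m2k.r2.decomposition.disjoint} the approximate solution $u_\lambda = (-1)^{j+1}H(\dist^s(\cdot,\lambda_j))$ is an \emph{exact} solution of \eqref{eq:pde} with $\varepsilon = 1$ (the line $\lambda_j$ is straight, so $\Delta = \partial_s^2 + \partial_z^2$ in Fermi coordinates $(s,z)$ and $H'' = W'\circ H$), while the error $E := \Delta u_\lambda - W'(u_\lambda)$ is compactly supported on the finitely many overlap regions near $\partial B_R(\mathbf{0})$ and is pointwise $O(e^{-c\Vert\mathbf{x}\Vert})$ in the angular gaps between consecutive ends (there both $u_\lambda$ and $\Delta u_\lambda$ are exponentially close to a constant in $\{\pm 1\}$). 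Thus $E \in e^{-\delta_0\Vert\mathbf{x}\Vert}L^\infty(\RR^2)$ for some $\delta_0 = \delta_0(\lambda) > 0$, and $\psi$ solves $\Delta\psi - W''(u_\lambda)\psi = Q(\psi) - E$, where $Q(\psi) := W'(u_\lambda+\psi) - W'(u_\lambda) - W''(u_\lambda)\psi = O(\psi^2)$.

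Next comes the decay estimate, carried out region by region. Where $|u| > 1-\alpha$ — which, by the $C^2$ convergence $\psi \to 0$ at infinity, contains everything outside a fixed tubular neighborhood $T = \bigcup_j T_j$ of width $O(1)$ around $\bigcup_j \lambda_j^+$ — hypothesis \eqref{assu:double.well.potential.convex} makes $v := 1 - |u| \geq 0$ a subsolution of $\Delta v \geq \frac{\kappa}{2} v$ (using $W'(\pm 1) = 0$, $W''(\pm 1) \geq \kappa$, $v \to 0$), so a barrier comparison gives $|\psi| \lesssim e^{-\delta\,\dist(\mathbf{x},\,\cup_j\lambda_j^+)}$ there, which is $\lesssim e^{-\delta'\Vert\mathbf{x}\Vert}$ in the gaps since $\dist(\mathbf{x},\cup_j\lambda_j^+) \gtrsim \theta_\lambda\Vert\mathbf{x}\Vert$ there; on the bounded core $\psi$ is simply bounded. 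It remains to bound $\psi$ inside each tube $T_j$, where we may assume $E$ vanishes for $\Vert\mathbf{x}\Vert$ large. There I would pass to Fermi coordinates $(s,z)$ = (arclength along $\lambda_j$, signed distance), write $u = (-1)^{j+1}H(z) + \psi_j$, and decompose $\psi_j(s,z) = h_j(s)H'(z) + \psi_j^\perp(s,z)$ with $\psi_j^\perp(s,\cdot)\perp H'$ in $L^2(\RR_z)$. The one-dimensional operator $\partial_z^2 - W''(H(z))$ has top eigenvalue $0$, with eigenfunction $H' > 0$, and — by \eqref{assu:double.well.potential.convex}, which forces its essential spectrum into $(-\infty,-\kappa]$ — a spectral gap below: it is $\leq -c_0 < 0$ on $\{H'\}^\perp$. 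Testing the $\psi_j^\perp$-equation against $e^{2\delta s}\psi_j^\perp$ (with a cutoff in $s$, controlled using $\psi_j^\perp \in L^2$), this gap absorbs the weight for $\delta$ small and yields $\Vert\psi_j^\perp\Vert_{W^{2,2}(T_j\cap\{s>\sigma\})} \lesssim e^{-\delta\sigma}$.

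The remaining piece is the translational component $h_j$. Projecting the $\psi_j$-equation onto $H'$ in $L^2(\RR_z)$, the oddness of $H$ together with the evenness of $W$ makes the would-be $O(h_j^2)$ term vanish, leaving a scalar ODE $h_j'' = O(|h_j|e^{-\delta s}) + O(e^{-2\delta s})$ on a half-line; and since $u - u_\lambda \in W^{2,2}(\RR^2)$ forces $h_j \in H^2(\RR_+)$, hence $h_j, h_j' \to 0$ at $+\infty$, integrating this ODE inward from $+\infty$ yields $h_j = O(e^{-2\delta s})$. Reassembling the core, gap, and tube estimates (and upgrading the pointwise bounds to $W^{2,2}$ ones by local elliptic theory on unit balls and summing a geometric series) gives $\cJ(u) \in e^{-\delta\Vert\mathbf{x}\Vert}W^{2,2}(\RR^2)\times\Lambda^{2k}$ with $\delta = \delta(u_0)$. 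Finally, retracing the argument, every constant depended only on $u_0$ through $\lambda_0$, and every estimate was Lipschitz in the $W^{2,2}\times\Lambda^{2k}$-data, which gives the continuity of the refined map $\cJ|_U$.

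I expect the translational component — the reduced ODE for $h_j$ — to be the main obstacle, for the standard reason in such gluing problems: translation invariance of $H$ pushes the essential spectrum of the full two-dimensional linearization about a straight end down to $0$, so that linearization is \emph{not} invertible in exponentially weighted spaces and exponential decay of this component cannot be read off from the equation alone. It has to be extracted from the reduced ODE, which in turn requires both the structural cancellation of the quadratic term (via the evenness of $W$) and the $\to 0$ boundary conditions at $+\infty$ supplied by the a priori global $W^{2,2}$ membership. A secondary, bookkeeping-level difficulty is tracking the uniformity of all constants over $U$ in order to conclude the asserted continuity.
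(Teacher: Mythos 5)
This theorem is imported verbatim from del Pino--Kowalczyk--Pacard \cite[Theorem 2.1]{DelPinoKowalczykPacard13}; the paper offers no proof of it, so there is no internal argument to compare against, and the expected ``proof'' here is essentially the citation. Your sketch does follow the right general architecture for reproving it (exponentially small error of $u_\lambda$, region-by-region decay, spectral gap on $\{H'\}^\perp$ in the tubes, a reduced equation for the kernel component), and you have correctly identified the translational mode as the crux.

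However, your treatment of that crux has a genuine gap. Parity only kills the \emph{quadratic} term in the projected equation: $\int W'''(H)(H')^3\,dz=0$ because $W'''(H(z))$ is odd and $H'$ is even (using \eqref{assu:double.well.potential.even} and the oddness of $H$). It does not kill the cubic contributions. The third-order Taylor term $\tfrac16 W''''(u_\lambda)\psi^3$ projects onto $H'$ with coefficient proportional to $\int W''''(H)(H')^4\,dz$, whose integrand is even and which is generically nonzero; in addition, the equation for $\psi_j^\perp$ has an $O(h_j^2)$ source (the projection of $\tfrac12 W'''(H)h_j^2(H')^2$ onto $\{H'\}^\perp$ does \emph{not} vanish by parity), so $\psi_j^\perp$ is slaved to $h_j$ at order $h_j^2$ and the cross term $h_jH'\psi_j^\perp$ feeds another $O(h_j^3)$ back into the reduced equation. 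The reduced ODE is therefore of the form $h_j''=c\,h_j^3+O(|h_j|e^{-\delta s})+O(e^{-2\delta s})$, not the linear-plus-forcing equation you wrote, and $h_j''=c\,h_j^3$ admits solutions $h_j\sim s^{-1}$ which lie in $H^2(\RR_+)$ away from the origin --- so the hypotheses you invoke ($h_j\in H^2$, $h_j,h_j'\to0$) do not exclude merely polynomial decay, and integrating inward from $+\infty$ does not yield exponential decay. Closing this requires either showing that the two $O(h_j^3)$ contributions cancel after correctly slaving $\psi_j^\perp$ (equivalently, reparametrizing geometrically as $u\approx H(z-h_j(s))$, which removes the cubic term from the reduced equation), or importing additional structure such as the balancing/Hamiltonian identities --- and this is precisely where the real work in \cite{DelPinoKowalczykPacard13} lies. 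The concluding claim that ``every estimate was Lipschitz in the data,'' yielding continuity of $\cJ|_U$, is also asserted rather than argued, but that is secondary to the gap above.
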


We conclude this section by remarking that $\cM_{2k}$ of $2k$-ended solutions in $\RR^2$ exhausts all finite Morse index solutions with linear energy growth, which correspond precisely to our desired singularity models in $\RR^2$. The ``$\subseteq$'' direction of \eqref{eq:finite.index.equivalent.m2k} is precisely \cite[Theorem 2.8]{KowalczykLiuPacard12}. The ``$\supseteq$'' direction essentially follows from the work of K. Wang in \cite{Wang15}; we include here the necessary argument that transports one from the setting of \cite{Wang15} to that of $\cM_{2k}$.

\begin{prop} \label{prop:finite.index.equivalent.m2k}
	The following equality of sets holds true
	\begin{multline} \label{eq:finite.index.equivalent.m2k}
		\bigsqcup_{k=1}^\infty \cM_{2k} = \Big\{ u \in C^\infty_{\loc}(\RR^2) \cap L^\infty(\RR^2) \text{ satisfying } \eqref{eq:pde} \text{ with } \varepsilon = 1, \\
		\ind(u) < \infty, \text{ and } 0 < \limsup_{R \uparrow \infty} R^{-1} (E_1 \restr B_1)[u] < \infty \Big\}.
	\end{multline}
\end{prop}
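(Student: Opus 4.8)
The inclusion ``$\subseteq$'' is short. If $u \in \cM_{2k}$, then $u - u_\lambda \in W^{2,2}(\RR^2)$ for some $\lambda \in \Lambda^{2k}$, and since $\dim \RR^2 = 2$ the Sobolev embedding gives $u - u_\lambda \to 0$ at infinity and $\nabla(u-u_\lambda) \in L^2(\RR^2)$. Comparing energies on $B_R$ we get $E_1(u; B_R) = E_1(u_\lambda; B_R) + O(\snorm{u - u_\lambda}_{W^{1,2}} \, \snorm{\nabla u_\lambda}_{L^2(B_R)}) = E_1(u_\lambda; B_R) + O(\sqrt{R})$, while $E_1(u_\lambda; B_R)$ grows linearly in $R$ (each of the $2k$ heteroclinic ends contributes a term of size $\sim \energyunit R$ and the core contributes $O(1)$); hence $0 < \limsup_{R \uparrow \infty} R^{-1} E_1(u; B_R) < \infty$. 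Finiteness of $\ind(u)$ for $u \in \cM_{2k}$ is \cite[Theorem 2.8]{KowalczykLiuPacard12}. So only the inclusion ``$\supseteq$'' requires an argument.

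For ``$\supseteq$'', fix $u$ in the right-hand side of \eqref{eq:finite.index.equivalent.m2k}. The plan is as follows. First, since $\ind(u) < \infty$, a basis of the unstable cone consists of finitely many compactly supported functions, so there is $R_1 > 0$ with $u$ stable on $\RR^2 \setminus \closure{B}_{R_1}$ in the sense of Definition \ref{def:stable}. Second, I would import the structural description of finite Morse index (equivalently, stable-at-infinity with linear energy growth) solutions of \eqref{eq:pde} in the plane due to K.~Wang \cite{Wang15}: outside a sufficiently large ball $\closure{B}_{R_0}$, $R_0 \geq R_1$, one has $|u| < 1$, the ``interface'' $\{|u| \leq 1 - \alpha\}$ (for some fixed $\alpha \in (0,1)$) is contained in a bounded tubular neighborhood of finitely many pairwise disjoint, properly embedded $C^{2,\alpha}$ curves $\Gamma_1, \ldots, \Gamma_N \subset \RR^2 \setminus \closure{B}_{R_0}$ each of which is a graph of vanishing $C^{2,\alpha}$ norm over a half-line $\lambda_j^+$ with oriented affine axis $\lambda_j = (r_j, \mathbf{f}_j) \in \RR \times \SS^1$, along $\Gamma_j$ the function $u$ converges in suitable coordinates to a signed copy $\pm H$ of the heteroclinic \eqref{eq:heteroclinic.solution}, and $u \to \pm 1$ off these neighborhoods. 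A parity count on $\partial B_R$ for $R$ large --- crossing some $\Gamma_j$ flips $\sign u$, and $u|_{\partial B_R}$ is single-valued --- forces $N = 2k$ to be even, and $N \geq 2$ because $N = 0$ would make $E_1(u; B_R)$ bounded in $R$, contradicting $\limsup_{R} R^{-1} E_1(u; B_R) > 0$. Reindexing so that $\mathbf{f}_j = (\cos \theta_j, \sin \theta_j)$ with $\theta_1 < \cdots < \theta_{2k} < \theta_1 + 2\pi$ (possible since the $\Gamma_j$ are disjoint with distinct asymptotic half-lines), and running the del Pino--Kowalczyk--Pacard construction \eqref{eq:m2k.approximate.solution} with a sufficiently large radius, produces a candidate $\lambda \in \Lambda^{2k}$ and its approximate solution $u_\lambda$.

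It then remains to prove $w := u - u_\lambda \in W^{2,2}(\RR^2)$, which by Definition \ref{defi:m2k} places $u$ in $\cS_{2k}$, hence --- since $u$ solves \eqref{eq:pde} with $\varepsilon = 1$ --- in $\cM_{2k}$, as desired. On the compact core $w$ is smooth, hence in $W^{2,2}$ there. Elsewhere $w$ satisfies a linear equation $\Delta w = W''(\zeta) w + r$, with $\zeta$ between $u$ and $u_\lambda$ and $r$ an exponentially small term coming from $u_\lambda$ being only an approximate solution. A fixed distance away from every $\Gamma_j$ the potential $W''(\zeta)$ is $\geq \kappa_0 > 0$, since there $\zeta$ is close to $\pm 1$, where $W$ is strictly convex by \eqref{assu:double.well.potential.convex}; near each $\Gamma_j$ the transverse one-dimensional model operator $-\partial_z^2 + W''(H(z))$ on $\RR$ has $0$ as a simple, isolated eigenvalue --- with eigenfunction $H'$, arising from translation invariance --- separated by a gap of size $\kappa_0 > 0$ from its essential spectrum. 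Decomposing $w$ along each end into its $H'$-component and $L^2$-orthocomplement, the orthocomplement decays exponentially in the arclength variable by the spectral gap, while the $H'$-component --- which records the bending of $\Gamma_j$ relative to $\lambda_j^+$ --- decays because $\Gamma_j$ is asymptotically straight. Altogether $w$ and $\nabla w$ decay exponentially along the ends and in the ``$\pm 1$'' sectors (any decay of order $O(\snorm{\mathbf{x}}^{-1/2 - \epsilon})$ along the ends would already suffice), so $w \in W^{2,2}(\RR^2)$.

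I expect the main obstacle to be exactly this last, quantitative step. Concretely, one must either extract from \cite{Wang15} an estimate on $u$ minus the heteroclinic profile along each end that already decays at least polynomially, or run the linearization-plus-spectral-gap bootstrap sketched above, and in either case take some care reconciling Wang's per-end coordinate description with the partition-of-unity form of $u_\lambda$ in \eqref{eq:m2k.approximate.solution}. This step is morally a qualitative version of the refined asymptotics in Theorem \ref{theo:m2k.refined.asymptotics}; indeed, once membership in $\cM_{2k}$ is established, that theorem upgrades the decay of $w$ to exponential at a uniform rate near $u$. Everything else --- the stability-at-infinity reduction, the parity count, the ordering of the ends, and the matching of conventions --- is routine.
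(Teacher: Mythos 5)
Your overall architecture is the same as the paper's: cite \cite{Wang15} for the structure of finite-index, linear-energy-growth solutions at infinity, build the approximate solution $u_\lambda$, and then verify $u - u_\lambda \in W^{2,2}(\RR^2)$. (For ``$\subseteq$'' the paper simply cites \cite[Theorem 2.8]{KowalczykLiuPacard12} for the whole direction; your energy comparison is a reasonable supplement but not needed.) Where you defer, the paper is concrete: the quantitative step you flag as ``the main obstacle'' is resolved by taking the first of your two routes, namely extracting the decay directly from \cite{Wang15}. Specifically, \cite[Theorem 3.3]{Wang15} already gives \emph{exponential} convergence $|h_i(r) - \tau_i| \leq C_0 e^{-C_0^{-1}r}$ of each end's graphing function to a constant, and exponential closeness of $u$ to the signed heteroclinic $(-1)^{i+1}H(r\sin(\theta-\theta_i)-\tau_i)$ in each sector; elliptic regularity upgrades this to $\nabla u$ and $\nabla^2 u$; and \cite[Proposition 2.1]{Gui12} supplies exponential decay of $u\mp1$ and its derivatives in the ``convex'' sectors away from the ends. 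Summing these estimates gives $u - u_\lambda \in W^{2,2}$ with $\lambda_i = (\tau_i, \mathbf{f}(\theta_i))$ at once, with no need for the spectral-gap bootstrap.

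One caution about your proposed alternative route: the step ``the $H'$-component decays because $\Gamma_j$ is asymptotically straight'' is the soft spot. Asymptotic straightness (vanishing of $h_j'$, or of the curvature) does not by itself imply that $h_j(r)$ converges to a finite constant $\tau_j$, let alone with a rate --- a logarithmic drift is consistent with vanishing $C^{2,\alpha}$ graph norm over a half-line yet would destroy $L^2$ membership of $w$ and even the definition of the affine line $\lambda_j$. So that convergence-with-rate is itself part of what must be proven, and it is exactly the content of \cite[Theorem 3.3]{Wang15}; if you run the bootstrap instead, you must couple the equation for the orthocomplement to an ODE/Toda-type analysis for the $H'$-component rather than invoking straightness. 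Since you explicitly offered ``extract from \cite{Wang15}'' as the other branch, your proposal lands on the paper's proof once that branch is taken.
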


\begin{rema} \label{rema:finite.index.conjecture}
	In the setting of this paper we only need to study singularity models that arise from blowups with linear energy growth, so the energy assumption holds true. However, Wang-Wei have recently announced \cite{WangWei17} that the 
	\[ \lim_{R \uparrow \infty} R^{-1} (E_1 \restr B_1)[u] < \infty \]
	assumption above is entirely unnecessary, showing it is automatically true whenever $\ind(u) < \infty$. Their proof makes use of strong curvature estimates, which they derive, and which we will also need in our study of singularity formation; see Sections \ref{sec:wang.wei.curvature.estimates}, \ref{sec:index.1.singularity.formation}.
\end{rema}

\begin{proof}[Proof of Proposition \ref{prop:finite.index.equivalent.m2k}]
	By virtue of \cite[Theorems 1.2, 1.3]{Wang15}, we know that there exist $k = k(u) \geq 1$, $R = R(u) > 1$, disjoint, embedded curves $\Gamma_1, \ldots, \Gamma_{2k}$, and angles
	\[ \underline{\varphi}_1 < \overline{\varphi}_1 = \underline{\varphi}_2 < \ldots < \overline{\varphi}_{2k-1} = \underline{\varphi}_{2k} < \overline{\varphi}_{2k} = 2\pi + \underline{\varphi}_1 \]
	such that
	\begin{equation} \label{eq:finite.index.equivalent.m2k.i}
		\{ u = 0 \} \setminus B_R(\mathbf{0}) = \sqcup_{i=1}^{2k} \Gamma_i,
	\end{equation}
	where
	\[ \Gamma_i \subset S_i \triangleq \left\{ r \mathbf{f}(\theta) : r \geq R \text{ and } \theta \in (\underline{\varphi}_i, \overline{\varphi}_i) \right\} \text{ for } i = 1, \ldots, 2k \text{;} \]
	here, $\mathbf{f}(\theta) = (\cos \theta, \sin \theta) \in \RR^2$.	Following the argument in \cite[Theorem 1.3]{Wang15}, we can write each $\Gamma_i$ as a smooth graph over a ray $\rho_i \triangleq \left\{ r \mathbf{f}(\theta_i) : r \geq R \right\}$ with $\theta_i \in (\underline{\varphi}_i, \overline{\varphi}_i)$, after possibly enlarging $R$; i.e.,
	\[ \Gamma_i = \left\{ r \mathbf{f}(\theta_i) + h_i(r) \JJ \mathbf{f}(\theta_i) : r \geq R \right\} \]
	with $h_i : C^\infty([R, \infty))$, for all $i = 1, \ldots, 2k$. We also have
	\begin{equation} \label{eq:finite.index.angle.sum.zero}
		\sum_{i=1}^{2k} \mathbf{f}(\theta_i) = \mathbf{0} \in \RR^2	
	\end{equation}
	by \cite[Theorem 1.1 (v)]{Wang15}. Next, \cite[Theorem 3.3]{Wang15} implies
	\begin{equation} \label{eq:finite.index.equivalent.m2k.ii}
		|h_i(r) - \tau_i| \leq C_0 e^{-C_0^{-1} r}
	\end{equation}
	for some $\tau_i = \tau_i(u) \in \RR$ and $C_0 = C_0(u) > 0$, and, up to a possible change of sign, that
	\begin{equation} \label{eq:finite.index.concave.sup.estimate}
		\left| u(r \mathbf{f}(\theta)) - (-1)^{i+1} H(r \sin (\theta - \theta_i) - \tau_i) \right| \leq C_1 e^{-C_1^{-1} r}
	\end{equation}
	for some $C_1 = C_1(u) > 0$ and for all $r \geq R$, $\theta \in (\underline{\varphi}_i, \overline{\varphi}_i)$, $i = 1, \ldots, 2k$. (``Up to a possible change of sign'' means that $(-1)^{i+1}$ may have to be replaced by a $(-1)^i$.)

	From elliptic regularity, \eqref{eq:finite.index.concave.sup.estimate} readily implies
	\begin{equation} \label{eq:finite.index.concave.c1.estimate}
		\left| \nabla u(r \mathbf{f}(\theta)) - (-1)^i H'(r \sin (\theta - \theta_i) - \tau_i) \JJ \mathbf{f}(\theta_i) \right| \leq C_2 e^{-C_2^{-1} r}		
	\end{equation}
	and
	\begin{multline} \label{eq:finite.index.concave.c2.estimate}
		\left| \nabla^2 u( r \mathbf{f}(\theta)) - (-1)^i H''(r \sin (\theta - \theta_i) - \tau_i) \JJ \mathbf{f}(\theta_i) \otimes \JJ \mathbf{f}(\theta_i) \right| \\
		\leq C_2 e^{-C_2^{-1}r},	
	\end{multline}
	for $C_2 = C_2(u, W)$, and all $r \geq R$, $\theta \in (\underline{\varphi}_i, \overline{\varphi}_i)$, $i = 1, \ldots, 2k$. From \cite[Proposition 2.1]{Gui12}, for any $0 < \varepsilon < \frac{1}{2} \min_{i=1,\ldots,2k} \{ \overline{\varphi}_i - \theta_i, \theta_i - \underline{\varphi}_i \}$, \eqref{eq:finite.index.equivalent.m2k.i}, and \eqref{eq:finite.index.equivalent.m2k.ii},
	\begin{multline} \label{eq:finite.index.convex.estimate}
		\sup \left\{ \sum_{\ell=0}^2 \left| \nabla^\ell u (r \mathbf{f}(\theta)) \right| : r \geq R, |\theta-\theta_i| \geq \varepsilon \text{ for all } i = 1, \ldots, 2k \right\} \\
		\leq C_3 e^{-C_3^{-1}r},
	\end{multline}
	where $C_3 = C_3(W, \varepsilon, u)$.
	It follows from \eqref{eq:finite.index.concave.sup.estimate}, \eqref{eq:finite.index.concave.c1.estimate}, \eqref{eq:finite.index.concave.c2.estimate}, and  \eqref{eq:finite.index.convex.estimate}, that $u - u_\lambda \in W^{2,2}$ if $u_\lambda$ is an approximate $2k$-ended solution with $\lambda = (\lambda_1, \ldots, \lambda_{2k}) \in \Lambda^{2k}$ given by $\lambda_i \triangleq (\tau_i, \mathbf{f}(\theta_i))$, $i = 1, \ldots, 2k$. The result follows.
\end{proof}

\subsection{Effects of topology at infinity on the Morse index in $\cM_{2k}$} \label{sec:m2k.index.lower.bound}

In this section we prove Theorem \ref{theo:m2k.index.lower.bound}, which relates the Morse index of $2k$-ended solutions to \eqref{eq:pde} to their structure at infinity. We assume $k \geq 2$, because elements of $\cM_2$ are all known to be stable (see Theorem \ref{theo:wang.allard} and/or Proposition \ref{prop:m2.characterization}).

To prove this theorem, we will need to obtain a precise pointwise understanding of kernel elements of the Jacobi operator, seeing as to how they will play a significant role in the relevant variational theory:

\begin{defi} \label{defi:jacobi.fields}
	If $u$ is a critical point of $E$ in $U$, then the space of its Jacobi fields consists of all functions $v$ that satisfy $- \Delta v + W''(u) v = 0$ in $U$ in the classical sense.
\end{defi}

Denote $R$, $\lambda \in \Lambda^{2k}$, and $u_\lambda$ the objects associated with $u$ by its construction as an element of $\cM_{2k}$ in Section \ref{sec:m2k}. Also, denote $\lambda = (\lambda_1, \ldots, \lambda_{2k})$, with $\lambda_i = (\tau_i, \mathbf{f}_i) \in \RR \times \SS^1$. Recall from \cite[(2.16)]{DelPinoKowalczykPacard13} that:
\begin{equation} \label{eq:m2k.index.lower.bound.i}
	\sum_{i=1}^{2k} \mathbf{f}_i = \mathbf{0},	
\end{equation}
and that, after possibly enlarging $R > 0$, $\{ u = 0 \} \setminus B_R(\mathbf{0})$ decomposes into $2k$ disjoint curves $\Gamma_i$, $i = 1, \ldots, 2k$, and, for some $\delta < \theta_\lambda(u)$, $\Gamma_i \subset S(\mathbf{f}_i, \delta/2, R)$ with $S(\mathbf{f}_i, \delta, R)$ all pairwise disjoint. Here,
\begin{equation} \label{eq:m2k.index.lower.bound.ii}
	S(\mathbf{e}, \theta, R) \triangleq \{ r \mathbf{f} : r \geq R, \dist_{\SS^1}(\mathbf{f}, \mathbf{e}) < \theta \}.
\end{equation}
Finally, using Theorem \ref{theo:m2k.refined.asymptotics} we see that, perhaps after shrinking $\delta > 0$ and enlarging $R > 0$, and perhaps after an ambient rigid motion,
\begin{equation} \label{eq:m2k.index.lower.bound.iii}
	\frac{\nabla u}{|\nabla u|} \approx (-1)^{i+1} \JJ \mathbf{f}_i \text{ in } S(\mathbf{f}_i, \delta/2, R).
\end{equation}
		
Lay out $\mathbf{f}_1, \ldots, \mathbf{f}_{2k} \in \SS^1$, and color them red (negative) or blue (positive) depending on the sign of $\langle (-1)^{i+1} \JJ \mathbf{f}_i, \mathbf{e} \rangle$. Here, $\mathbf{e} \in \SS^1$ is a fixed direction, chosen generically, so that $\langle \JJ \mathbf{f}_i, \mathbf{e} \rangle \neq 0 \text{ for all } i = 1, \ldots, 2k$. We will temporarily need the following generalization of $\JJ$:
\[ \JJ_\theta \in \operatorname{End}(\RR^2) \text{ acting by } \begin{bmatrix} \cos \theta & -\sin \theta \\ \sin \theta & \cos \theta \end{bmatrix}. \]
There exist unique $\varphi_1, \ldots, \varphi_{2k} \in (0, 2\pi)$ such that $\mathbf{f}_{i+1} = \JJ_{\varphi_i}(\mathbf{f}_i)$ for all $i = 1, \ldots, 2k$. It's easy to see that
\begin{equation} \label{eq:m2k.index.lower.bound.iv}
	\varphi_i \in (0, \pi) \text{ for all }	 i = 1, \ldots, 2k
\end{equation}
by combining \eqref{eq:m2k.index.lower.bound.i} with $k \geq 2$ (recall that we're assuming $k \geq 2$).

\begin{clai}
	If $\mathbf{f}_{2\ell-1}$, $\mathbf{f}_{2\ell}$ have the same color, blue, then
	\[ \JJ_{-\varphi_{2\ell-1}} \mathbf{e}, \mathbf{f}_{2\ell-1}, \mathbf{e} \text{ lie counterclockwise on } \SS^1 \text{ in the order listed;} \]
	else, if their common color is red, then
	\[ \JJ_{-\varphi_{2\ell-1}} (-\mathbf{e}), \mathbf{f}_{2\ell-1}, -\mathbf{e} \text{ lie counterclockwise  on } \SS^1 \text{ in the order listed.} \] 
\end{clai}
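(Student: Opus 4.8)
The plan is to reduce the claim to an elementary angle computation on $\SS^1$. Since all the hypotheses and the conclusion are unchanged when $\mathbf{e}$ and all of $\mathbf{f}_1, \ldots, \mathbf{f}_{2k}$ are rotated by a common angle, I would first normalize $\mathbf{e} = (1,0)$, write $\mathbf{f}_i = (\cos\theta_i, \sin\theta_i)$, and abbreviate $\varphi \triangleq \varphi_{2\ell-1} \in (0, \pi)$ (the membership being \eqref{eq:m2k.index.lower.bound.iv}), so that $\theta_{2\ell} \equiv \theta_{2\ell-1} + \varphi \pmod{2\pi}$. With this normalization a one-line computation gives $\langle (-1)^{i+1} \JJ \mathbf{f}_i, \mathbf{e} \rangle = (-1)^i \sin \theta_i$, so that, using the genericity $\sin \theta_i \neq 0$: the odd-index vector $\mathbf{f}_{2\ell-1}$ is blue precisely when $\theta_{2\ell-1} \bmod 2\pi \in (\pi, 2\pi)$, the even-index vector $\mathbf{f}_{2\ell}$ is blue precisely when $\theta_{2\ell} \bmod 2\pi \in (0, \pi)$, and in each case red is the complementary condition.

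Next I would handle the red case by symmetry: replacing $\mathbf{e}$ by $-\mathbf{e}$ (still a generic direction) interchanges the two colors while carrying the blue-blue assertion to the red-red assertion verbatim, so it suffices to prove the blue-blue case.

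The core step is then a short arc-membership computation. Assume $\mathbf{f}_{2\ell-1}$ and $\mathbf{f}_{2\ell}$ are both blue and set $\alpha \triangleq \theta_{2\ell-1} \bmod 2\pi \in (\pi, 2\pi)$; then $\alpha + \varphi \in (\pi, 3\pi)$, and since $(\alpha + \varphi) \bmod 2\pi = \theta_{2\ell} \bmod 2\pi$ must lie in $(0, \pi)$ by blueness of $\mathbf{f}_{2\ell}$, while $\alpha + \varphi = 2\pi$ is excluded by genericity, we are forced into $\alpha + \varphi \in (2\pi, 3\pi)$, i.e. $\alpha \in (2\pi - \varphi, 2\pi)$. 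Finally I would read off the cyclic order: the points $\JJ_{-\varphi} \mathbf{e}$, $\mathbf{f}_{2\ell-1}$, $\mathbf{e}$ sit at angles $2\pi - \varphi$, $\alpha$, $2\pi$, which are distinct since $0 < \varphi < \pi$, and the membership $\alpha \in (2\pi - \varphi, 2\pi)$ says exactly that traversing $\SS^1$ counterclockwise from $\JJ_{-\varphi} \mathbf{e}$ one reaches $\mathbf{f}_{2\ell-1}$ strictly before returning to $\mathbf{e}$---which is the claim.

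I expect no genuine obstacle here: the computation is routine, and the only places requiring care are the bookkeeping of residues modulo $2\pi$ and the correct reading of ``lie counterclockwise in the order listed'' as ``three distinct points occurring in that cyclic order''---which is why I would isolate the reduction $\mathbf{e} \mapsto -\mathbf{e}$ and the arc-membership step as above.
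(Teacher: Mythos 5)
Your proof is correct and takes essentially the same route as the paper: the paper's intersection $\mathbf{f}_1 \in \JJ_{-\frac{\pi}{2}}(\cP) \cap \JJ_{\frac{\pi}{2}-\varphi_1}(\cP)$ is exactly your arc-membership statement $\alpha \in (\pi,2\pi) \cap (2\pi-\varphi,3\pi-\varphi) = (2\pi-\varphi,2\pi)$, written with explicit angles instead of rotated half-circles. Your handling of the red case by the substitution $\mathbf{e} \mapsto -\mathbf{e}$ is a slightly cleaner packaging of the paper's ``similar argument,'' but it is the same idea.
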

\begin{proof}
	Without loss of generality, we assume $\ell = 1$. Recall that the respective colors are determined by the signs of $\langle \JJ_{\frac{\pi}{2}} \mathbf{f}_{1}, \mathbf{e} \rangle$ and $\langle -\JJ_{\frac{\pi}{2}} \mathbf{f}_{2}, \mathbf{e} \rangle = \langle \JJ_{-\frac{\pi}{2}+\varphi_{1}} \mathbf{f}_{1}, \mathbf{e} \rangle$.
	
	Denote $\cP \triangleq \{ \mathbf{f} \in \SS^1 : \langle \mathbf{f}, \mathbf{e} \rangle > 0 \}$. 	If both colors are blue, then
	\[ \JJ_{\frac{\pi}{2}} \mathbf{f}_{1} \in \cP \iff \mathbf{f}_{1} \in \JJ_{-\frac{\pi}{2}}(\cP) \]
	and
	\[ \JJ_{-\frac{\pi}{2}+\varphi_{1}} \mathbf{f}_{1} \in \cP \iff \mathbf{f}_{1} \in \JJ_{\frac{\pi}{2} - \varphi_{1}}(\cP), \]
	i.e.,
	\[ \mathbf{f}_{1} \in \JJ_{-\frac{\pi}{2}}(\cP) \cap \JJ_{\frac{\pi}{2} - \varphi_{1}}(\cP). \]
	Using \eqref{eq:m2k.index.lower.bound.iv}, we see that the three vertices $\JJ_{-\varphi_{1}} \mathbf{e}$, $\mathbf{f}_{1}$, and $\mathbf{e}$, must lie counterclockwise in this order on $\SS^1$.
	
	If both colors are red, then, by a similar argument,
	\[ \mathbf{f}_1 \in \JJ_{-\frac{\pi}{2}}(-\cP) \cap \JJ_{\frac{\pi}{2}-\varphi_1}(-\cP), \]
	and we see that the three vertices $\JJ_{-\varphi_1}(-\mathbf{e})$, $\mathbf{f}_1$, and $-\mathbf{e}$, must lie counterclockwise in this order on $\SS^1$. 
\end{proof}

In a completely analogous manner, one also checks that:

\begin{clai}
	If $\mathbf{f}_{2\ell}$, $\mathbf{f}_{2\ell+1}$ have the same color, blue, then
	\[ \JJ_{-\varphi_{2\ell}} (-\mathbf{e}), \mathbf{f}_{2\ell}, -\mathbf{e} \text{ lie counterclockwise  on } \SS^1 \text{ in the order listed;} \] 
	else, if their common color is red, then
	\[ \JJ_{-\varphi_{2\ell}} \mathbf{e}, \mathbf{f}_{2\ell}, \mathbf{e} \text{ lie counterclockwise on } \SS^1 \text{ in the order listed.} \]
\end{clai}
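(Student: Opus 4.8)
The plan is to reproduce the argument of the previous Claim essentially line by line, merely tracking how the shift in parity of the indices affects the sign conventions. Without loss of generality I would take $\ell = 1$ and compare $\mathbf{f}_2$ (an even index) with $\mathbf{f}_3$ (an odd index). By the colouring rule, the colour of $\mathbf{f}_2$ is the sign of $\langle (-1)^{2+1}\JJ\mathbf{f}_2, \mathbf{e}\rangle = \langle \JJ_{-\pi/2}\mathbf{f}_2, \mathbf{e}\rangle$, while the colour of $\mathbf{f}_3$ is the sign of $\langle (-1)^{3+1}\JJ\mathbf{f}_3, \mathbf{e}\rangle = \langle \JJ_{\pi/2}\mathbf{f}_3, \mathbf{e}\rangle$. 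The key structural difference from the previous Claim is that now the ``$-\JJ$'' convention is attached to the \emph{first} of the two vectors and the ``$+\JJ$'' convention to the \emph{second}, rather than the other way around; this swap is precisely the source of the replacement $\mathbf{e} \leftrightarrow -\mathbf{e}$ in the statement.

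The next step is to rewrite both colour conditions as conditions on $\mathbf{f}_2$ alone. Substituting $\mathbf{f}_3 = \JJ_{\varphi_2}\mathbf{f}_2$ and using $\JJ_\alpha\JJ_\beta = \JJ_{\alpha+\beta}$ together with $\langle \JJ_\alpha\mathbf{x},\mathbf{y}\rangle = \langle\mathbf{x},\JJ_{-\alpha}\mathbf{y}\rangle$, the colour of $\mathbf{f}_3$ becomes the sign of $\langle\JJ_{\pi/2+\varphi_2}\mathbf{f}_2,\mathbf{e}\rangle$. With $\cP = \{\mathbf{f}\in\SS^1 : \langle\mathbf{f},\mathbf{e}\rangle>0\}$ exactly as in the previous proof, the hypothesis ``both blue'' then reads $\mathbf{f}_2 \in \JJ_{\pi/2}(\cP)\cap\JJ_{-\pi/2-\varphi_2}(\cP)$, and ``both red'' reads the same with $\cP$ replaced by $-\cP$ throughout.

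Finally I would appeal to \eqref{eq:m2k.index.lower.bound.iv}, namely $\varphi_2\in(0,\pi)$, which guarantees that in each case the intersection of the two open half-circles above is a single nonempty arc. Parametrizing $\SS^1$ by the angle measured from $\mathbf{e}$, this arc is $(\pi-\varphi_2,\pi)$ in the blue case and $(-\varphi_2,0)$ in the red case; in the blue case its endpoints are exactly the angular positions of $\JJ_{-\varphi_2}(-\mathbf{e})$ and $-\mathbf{e}$, listed in increasing (i.e. counterclockwise) order, while in the red case they are those of $\JJ_{-\varphi_2}\mathbf{e}$ and $\mathbf{e}$. Since $\mathbf{f}_2$ then lies strictly inside the arc, the asserted cyclic orderings follow immediately, and the case of general $\ell$ is identical after relabeling. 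I do not expect a genuine obstacle here; the only thing to watch is the orientation bookkeeping --- what $\JJ_\alpha(\cP)$ denotes as a subarc of $\SS^1$ and the direction in which angles increase --- which has to be kept consistent with the previous Claim's proof precisely because the parity flip interchanges the roles of the two vectors.
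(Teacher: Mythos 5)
Your proof is correct and is exactly the argument the paper intends: the paper omits this claim's proof entirely, stating only that it is checked ``in a completely analogous manner'' to the preceding claim, and your write-up carries out precisely that analogue --- the parity flip moves the sign $(-1)^{i+1}$ so that the colour conditions become $\mathbf{f}_{2\ell} \in \JJ_{\pi/2}(\cP)\cap\JJ_{-\pi/2-\varphi_{2\ell}}(\cP)$ (resp.\ with $-\cP$), and the resulting arcs $(\pi-\varphi_{2\ell},\pi)$ and $(-\varphi_{2\ell},0)$ give the stated cyclic orderings. No gaps.
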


We now make the following key observation:

\begin{clai}
	There exist at least $2k-2$ groups of consecutive same-colored vertices.
\end{clai}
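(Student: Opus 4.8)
The plan is to turn the statement---reading ``a group of consecutive same-colored vertices'' as a maximal block of consecutive equal-color $\mathbf{f}_i$'s on $\SS^1$---into a count of sign changes of an auxiliary cyclic sequence. Set
\[ s_i \triangleq \sign \langle \JJ \mathbf{f}_i, \mathbf{e} \rangle \in \{+1, -1\}, \qquad i = 1, \dots, 2k, \]
which is well defined by the generic choice of $\mathbf{e}$, and record the color of $\mathbf{f}_i$ as $c_i \triangleq (-1)^{i+1} s_i$, say $+1$ for blue and $-1$ for red. Since $(-1)^{i+1}(-1)^{i+2} = -1$, one has $c_i c_{i+1} = -s_i s_{i+1}$ for every $i$ (indices cyclic mod $2k$), so $\mathbf{f}_i$ and $\mathbf{f}_{i+1}$ carry different colors precisely when $s_i = s_{i+1}$. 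Consequently the number of $i$ at which the cyclic color sequence $(c_i)$ changes equals $2k$ minus the number of sign changes of $(s_i)$. The whole claim therefore reduces to showing that $(s_i)$ has exactly two sign changes: once we know this, $(c_i)$ changes at $2k-2 \geq 2$ places, hence is non-constant, and for a non-constant cyclic sequence the number of maximal constant blocks equals the number of change points---so there are exactly $2k-2$ groups, in particular at least $2k-2$.

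I would bound the number of sign changes of $(s_i)$ from above and below. For the upper bound: the $\mathbf{f}_i$ are given in counterclockwise order with total angular variation $< 2\pi$, while the height function $\SS^1 \ni \mathbf{f} \mapsto \langle \JJ \mathbf{f}, \mathbf{e} \rangle$ vanishes at exactly the two antipodal points $\pm \mathbf{e}$, neither of which is any $\mathbf{f}_i$ (genericity again); a sign change of $(s_i)$ between $\mathbf{f}_i$ and $\mathbf{f}_{i+1}$ forces a zero of this function on the open arc joining them, and these arcs are pairwise disjoint, so there are at most two sign changes. For the lower bound: were there no sign change, all the $\mathbf{f}_i$ would lie in one of the two open half-planes $\{\mathbf{x} : \langle \JJ \mathbf{x}, \mathbf{e}\rangle > 0\}$, $\{\mathbf{x} : \langle \JJ\mathbf{x}, \mathbf{e}\rangle < 0\}$, contradicting $\sum_{i=1}^{2k} \mathbf{f}_i = \mathbf{0}$ from \eqref{eq:m2k.index.lower.bound.i}; and the number of sign changes of a cyclic $\pm1$-sequence is always even, so it must be exactly two.

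I do not expect a genuine obstacle here: the content is entirely the coloring bookkeeping together with the two soft facts just invoked (a zero count for a linear height function on $\SS^1$, and the fact that $\sum_i \mathbf{f}_i = \mathbf{0}$ forbids the $\mathbf{f}_i$ from lying in a common open half-plane). As a byproduct the argument also shows there are precisely two adjacent monochromatic pairs $(\mathbf{f}_i, \mathbf{f}_{i+1})$, which are exactly the two configurations dissected in the two preceding Claims; those are what one then feeds into the construction of the $k-1$ negative directions required for Theorem \ref{theo:m2k.index.lower.bound}.
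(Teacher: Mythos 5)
Your proof is correct, and it takes a genuinely different route from the paper's. The paper proves this claim by first establishing the two preceding Claims (which pin down, via explicit counterclockwise orderings involving $\JJ_{-\varphi_i}\mathbf{e}$, where a same-colored adjacent pair can sit), deducing the mutual exclusions \eqref{eq:m2k.index.lower.bound.v}--\eqref{eq:m2k.index.lower.bound.vi}, and then running a case analysis on whether three consecutive vertices share a color. You bypass all of that: the identity $c_i c_{i+1} = -s_i s_{i+1}$ converts the count of monochromatic adjacencies into the count of sign changes of $s_i = \sign\langle \JJ\mathbf{f}_i,\mathbf{e}\rangle$, and the latter is pinned to exactly two by the intermediate value theorem (the height function $\mathbf{f}\mapsto\langle\JJ\mathbf{f},\mathbf{e}\rangle$ has only the two zeros $\pm\mathbf{e}$, distributed among the $2k$ disjoint arcs), the balancing condition \eqref{eq:m2k.index.lower.bound.i} ruling out zero sign changes, and cyclic parity. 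What your approach buys is a shorter, more transparent argument and a sharper conclusion --- \emph{exactly} $2k-2$ groups, equivalently exactly two monochromatic adjacencies --- which renders the two preceding Claims logically unnecessary for this step; note also that you use \eqref{eq:m2k.index.lower.bound.i} directly where the paper routes the same information through \eqref{eq:m2k.index.lower.bound.iv} and the angle-chasing of the preceding Claims. What the paper's approach buys is the explicit localization of the two monochromatic pairs relative to $\pm\mathbf{e}$ (the content of those two Claims), which is not needed downstream --- the counting argument for $q\geq k$ only consumes the number $2k-2$ --- so nothing is lost. The only point worth making explicit when writing this up is the dictionary between ``groups'' (maximal monochromatic blocks) and ``change points,'' which you state correctly for a non-constant cyclic sequence, with non-constancy supplied by $k\geq 2$.
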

\begin{proof}[Proof of claim]
	Within the space of valid colorings,
	\begin{equation} \label{eq:m2k.index.lower.bound.v}
		\{\text{existence of blue } \mathbf{f}_{2\ell-1}, \mathbf{f}_{2\ell} \} \cap \{ \text{existence of red } \mathbf{f}_{2m}, \mathbf{f}_{2m+1} \} = \emptyset.
	\end{equation}
	This follows by combining the previous two claims. Likewise
	\begin{equation} \label{eq:m2k.index.lower.bound.vi}
		\{\text{existence of red } \mathbf{f}_{2\ell-1}, \mathbf{f}_{2\ell} \} \cap \{ \text{existence of blue } \mathbf{f}_{2m}, \mathbf{f}_{2m+1} \} = \emptyset.
	\end{equation}
	There are now the following cases to consider:
	\begin{enumerate}
		\item There exist three consecutive same-colored vertices. Then, by combining the previous two claims and engaging in elementary angle-chasing, it follows that there do not exist any more consecutive same-colored vertices. In this case, it follows that there are precisely $2k-2$ groups of consecutive same-colored vertices.
		\item There are no three consecutive same-colored vertices. Then, together with \eqref{eq:m2k.index.lower.bound.v}, \eqref{eq:m2k.index.lower.bound.vi}, it follows that there are at least $2k-2$ groups of consecutive same-colored vertices.
	\end{enumerate}
	This concludes the proof of the claim.
\end{proof}

Given this claim, differentiate \eqref{eq:pde} in the direction of $\mathbf{e} \in \SS^1$. We see that $v \triangleq \langle \nabla u, \mathbf{e} \rangle$ satisfies
\begin{equation} \label{eq:m2k.index.lower.bound.vii}
	\Delta v = W''(u) v \text{ in } \RR^2.	
\end{equation}

Define
\begin{align*}
	\cN & \triangleq \{ v = 0 \} \text{ (the ``nodal set''),} \\
	\cS & \triangleq \cN \cap \{ \nabla v = \mathbf{0} \} \text{ (the ``singular set'').}
\end{align*}
By the implicit function theorem, $\cN \setminus \cS$ consists of smooth, injectively immersed curves in $\RR^2$. By Bers' theorem (see, e.g., \cite{BersJohnSchechter79}), $\cS$ consists of at most countably many points and, for each $p \in \cS$, there exists $r = r(p)$ such that, up to a diffeomorphism of $B_r(p)$,
\begin{multline} \label{eq:m2k.index.lower.bound.viii}
	\cN \cap B_r(p) \approx \text{the zero set of a} \\
	\text{homogeneous even-degree harmonic polynomial.}
\end{multline}

Denote $\Omega_1, \ldots, \Omega_q \subset \RR^2 \setminus \cN$ the nodal domains (i.e., connected components of $\{ v \neq 0 \}$), labeling so that $\Omega_1, \ldots, \Omega_p$ are the unbounded ones, and $\Omega_{p+1}, \ldots, \Omega_q$ are the bounded ones. By virtue of our precise understanding of $\cN$, $\cS$, as discussed above, we know that they are all open, connected, Lipschitz domains.

\begin{rema} \label{rema:m2k.index.lower.bound.q.finite}
	The notation used here implicitly asserts that there are finitely many nodal domains. This follows, a posteriori, by the proof of the following claim and \cite[Theorem 2.8]{KowalczykLiuPacard12}.
\end{rema}

\begin{clai}
	$\ind(u) \geq q-1$.
\end{clai}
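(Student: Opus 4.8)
\emph{Proof strategy.} The plan is to produce an explicit $(q-1)$-dimensional subspace of $W^{1,2}_0(\RR^2)$ on which the second variation $Q := \delta^2 E[u]\{\cdot,\cdot\}$ is negative definite; by the variational theory of Section~\ref{sec:app.pde.morse.index.spaces.quadratic.area.growth} (applicable since $\RR^2$ has quadratic, indeed linear, area growth, and $C^\infty_c$ is dense in $W^{1,2}_0$), this yields $\ind(u)\ge q-1$. Note at the outset that $v=\langle\nabla u,\mathbf{e}\rangle$ is \emph{bounded} on $\RR^2$ — by interior gradient estimates for $\Delta u=W'(u)$, using $|u|<1$ — and solves \eqref{eq:m2k.index.lower.bound.vii}.

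I will use two structural facts about the nodal domains $\Omega_1,\dots,\Omega_q$, encoded in their \emph{adjacency graph} $G$ (a vertex per $\Omega_j$, an edge $\{i,j\}$ whenever $\closure{\Omega_i}\cap\closure{\Omega_j}$ contains an arc of $\cN\setminus\cS$). First, $G$ is \emph{connected}: otherwise $\RR^2$ would be a union of two nonempty closed sets meeting only inside the at-most-countable set $\cS$, which is impossible because $\RR^2$ minus a countable set is connected. Second, since $\nabla v\neq\mathbf 0$ along $\cN\setminus\cS$ the function $v$ changes sign across each such arc, so $\sign v|_{\Omega_j}$ is a proper $2$-colouring of $G$; in particular any spanning tree of $G$ is bipartite, so its (positively) weighted incidence matrix on $q$ vertices and $q-1$ edges has rank $q-1$.

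Fix a spanning tree $T$ of $G$. For each edge $e=\{i,j\}$ of $T$ choose a small ball $D_e$ centred at a point of the regular arc $\closure{\Omega_i}\cap\closure{\Omega_j}\cap(\cN\setminus\cS)$, with the $D_e$ pairwise disjoint, each meeting $\cN$ in a single smooth arc $\gamma_e$ on which $\nabla v\neq\mathbf 0$, and small enough that $Q>0$ on $C^\infty_c(D_e)\setminus\{0\}$ (true once $\operatorname{diam}D_e$ is small, since $W''(u)$ is bounded); fix $\psi_e\in C^\infty_c(D_e)$ with $\psi_e>0$ on $\gamma_e$. Let $\eta_R$ be a logarithmic cut-off ($\eta_R\equiv 1$ on $B_R(\mathbf 0)$, $\eta_R\equiv 0$ off $B_{R^2}(\mathbf 0)$, $\|\nabla\eta_R\|\le 2/(r\log R)$); since $v$ is bounded, $\int_{\RR^2} v^2\|\nabla\eta_R\|^2\le C(\log R)^{-1}\to 0$. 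Take $R$ large enough that $\bigcup_e D_e\subset B_R(\mathbf 0)$ and set $w_j := |v|\,\eta_R\,\mathbf 1_{\Omega_j}$, which is Lipschitz and compactly supported, hence in $W^{1,2}_0(\RR^2)$. The crux is to compute, by integration by parts — using that $v$ solves \eqref{eq:m2k.index.lower.bound.vii} on each $\Omega_j$ and vanishes on $\partial\Omega_j$, and that $\Delta|v|$ carries a positive singular part $2|\partial_\nu v|\,\cH^1\!\restr\gamma_e$ across each $\gamma_e$ — the Gram matrix of $Q$ in the $2q-1$ linearly independent functions $w_1,\dots,w_q,\psi_{e_1},\dots,\psi_{e_{q-1}}$: one finds $Q(w_j)=\int_{\Omega_j}v^2\|\nabla\eta_R\|^2$, $\langle w_i,w_j\rangle_Q=0$ for $i\neq j$, $\langle\psi_e,\psi_{e'}\rangle_Q=0$ for $e\neq e'$, and $\langle w_j,\psi_e\rangle_Q=-\int_{\gamma_e}\psi_e|\partial_\nu v|\,d\cH^1$ if $j$ is an endpoint of $e$ and $0$ otherwise, so the matrix is the block form $\bigl[\begin{smallmatrix} D_R & -B \\ -B^\top & C\end{smallmatrix}\bigr]$ with $D_R=\operatorname{diag}\!\big(\int_{\Omega_j}v^2\|\nabla\eta_R\|^2\big)$ positive semidefinite of norm $\le C(\log R)^{-1}$, $C=\operatorname{diag}(Q(\psi_e))$ positive definite, and $B$ the weighted incidence matrix of $T$. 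Since $\operatorname{rank} B=q-1$, the Schur-complement (Haynsworth) inertia formula with respect to $C$ shows that the matrix with $D_R$ replaced by $0$ has inertia $(q-1,\,q-1,\,1)$; its $q-1$ negative eigenvalues are bounded away from $0$ by a constant independent of $R$, so for $R$ large the positive-semidefinite perturbation $\operatorname{diag}(D_R,0)$ keeps at least $q-1$ of them strictly negative. The resulting $(q-1)$-dimensional negative subspace gives $\ind(u)\ge q-1$.

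The main obstacle is the Gram-matrix computation, and in particular the sign bookkeeping for the off-diagonal terms: one must check that \emph{both} endpoints of each tree edge $e$ contribute the \emph{same} negative number $-\int_{\gamma_e}\psi_e|\partial_\nu v|\,d\cH^1$ to $\langle w_\cdot,\psi_e\rangle_Q$, which is exactly what makes $B$ a genuine incidence matrix — and the single null direction of the limiting form the "shadow" of the global Jacobi field $v=\sum_j\pm w_j$, which itself fails to lie in $W^{1,2}_0$. The remaining ingredients — the two point-set facts about $G$, the logarithmic cut-off estimate, and the Schur-complement and eigenvalue-perturbation steps — are routine. Finally, if $q=\infty$ one runs the same argument on an arbitrary finite connected subfamily of the $\Omega_j$ to obtain $\ind(u)\ge N$ for every finite $N\le q-1$; this cannot happen since $\ind(u)<\infty$, so $q<\infty$ a posteriori (cf.\ Remark~\ref{rema:m2k.index.lower.bound.q.finite}).
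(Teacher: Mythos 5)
Your proof is correct, but it takes a genuinely different route from the paper's. The paper argues via the two appendix results: each \emph{bounded} nodal domain carries a nontrivial Dirichlet kernel element (the restriction of $v$), each pair of consecutive \emph{unbounded} nodal domains carries one unit of index by Lemma \ref{lemm:app.pde.unstable.past.nodal.domain} (the Ghoussoub--Gui log-cutoff trick), and the noncompact Courant nodal domain theorem (Theorem \ref{theo:app.pde.courant.nodal.domain}) assembles these into $\ind(u)\ge (q-p)+(p-1)=q-1$; the unbounded domains are processed iteratively in counterclockwise order, sacrificing bounded pieces at each stage to keep the test domains disjoint. You instead build an explicit $(q-1)$-dimensional negative subspace in one shot: truncated copies $|v|\eta_R\indt{\Omega_j}$ of the Jacobi field coupled, across a spanning tree of the nodal adjacency graph, to localized bumps straddling the nodal arcs, followed by a Schur-complement inertia computation. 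Your Gram-matrix entries check out (the key sign identity $\langle w_j,\psi_e\rangle_Q=\sigma_j\int_{\gamma_e}\psi_e\,\partial_\nu v=-\int_{\gamma_e}\psi_e|\partial_\nu v|$ holds at both endpoints of $e$ because the outward normal derivative of $v$ has sign opposite to $\sigma_j$ on $\partial\Omega_j$), the matrix $M_0$ is indeed independent of $R$ so its negative eigenvalues are uniformly separated from zero, and your treatment of $q=\infty$ matches the role of Remark \ref{rema:m2k.index.lower.bound.q.finite}. What your approach buys: it treats bounded and unbounded nodal domains uniformly, avoids the counterclockwise ordering bookkeeping and the repeated "sacrifice a bounded portion" step, and makes the instability mechanism (and the single null direction, the shadow of $v$ itself) completely explicit; it essentially re-proves the two appendix lemmas in a unified linear-algebraic package. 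What it costs: it is longer, and it still leans on the quadratic-area-growth framework of Appendix \ref{sec:app.pde.morse.index.spaces.quadratic.area.growth} to pass from $W^{1,2}_0$ back to $C^\infty_c$ in Definition \ref{def:morse.index} (alternatively, mollify the Lipschitz functions $w_j$ directly). One cosmetic remark: the bipartiteness of the adjacency graph is not actually needed for $\mrank B=q-1$ — the unsigned, positively weighted incidence matrix of \emph{any} tree on $q$ vertices has full column rank $q-1$ (peel off leaves), so that observation can be dropped.
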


\begin{proof}[Proof of claim]
	First, it's standard that for every bounded nodal domain $\Omega_{p+1}, \ldots, \Omega_q$ we have
	\begin{equation} \label{eq:m2k.index.lower.bound.ix}
		\nul(u; \Omega_i) \geq 1 \text{ for all } i = p+1, \ldots, q.
	\end{equation}
	Now we move on to unbounded nodal domains. It is not hard to see that we have at least two such. Suppose that $\Omega_1$ is an unbounded nodal domain, and suppose $\Omega_2$ is its counterclockwise neighboring unbounded nodal domain. By \eqref{eq:m2k.index.lower.bound.viii}, $v$ attains opposite signs on $\Omega_1$, $\Omega_2$. Thus, $v$ is a bounded, sign-changing Jacobi field in $\Omega_{12} \triangleq \intr{\closure{\Omega}_1 \cup \closure{\Omega}_2}$, which is itself an open, connected, unbounded Lischitz domain. By Lemma \ref{lemm:app.pde.unstable.past.nodal.domain}, $\ind(u; \Omega_{12}) \geq 1$. Since $\Omega_{12}$ is unbounded, we have
	\[ \ind(u; \widetilde{\Omega}_2) \geq 1 \text{ for some bounded } \widetilde{\Omega}_2 \subsetneq \Omega_{12} \]	
	which is itself open, connected, and Lipschitz. Denote $\widetilde{\Omega}_1 = \emptyset$.
	
	Proceeding similarly (and labeling accordingly) in the counterclockwise direction, we can construct disjoint, bounded, open, connected, Lipschitz $\widetilde{\Omega}_3, \ldots, \widetilde{\Omega}_p$, 
	\begin{equation} \label{eq:m2k.index.lower.bound.x}
		\ind(u; \widetilde{\Omega}_i) \geq 1 \text{ for all } i = 2, \ldots, p,
	\end{equation}
	where $\widetilde{\Omega}_i \subset \intr{(\closure{\Omega}_{i-1} \cup \closure{\Omega}_i)}$. More precisely, at each stage $i$ we have to sacrifice a bounded portion of $\closure{\Omega}_1 \cup \cdots \cup \closure{\Omega}_{i-1} \setminus (\widetilde{\Omega}_1 \cup \cdots \cup \widetilde{\Omega}_{i-1})$ to give rise to a negative eigenvalue on a slight enlargement of $\Omega_i$, which is bounded and disjoint from $\widetilde{\Omega}_1 \cup \cdots \cup \widetilde{\Omega}_{i-1}$.
	
	The claim follows by combining \eqref{eq:m2k.index.lower.bound.ix}, \eqref{eq:m2k.index.lower.bound.x}, and Theorem \ref{theo:app.pde.courant.nodal.domain}.
\end{proof}

We now estimate $q-1$ from below. It will be convenient to assume that $\cS$ and the set of connected components of $\cN \setminus \cS$ are both finite sets---refer to Remark \ref{rema:m2k.index.lower.bound.infinities} for the minor necessary adjustments to deal with the general case. From Euler's formula for planar graphs, we know that
\begin{equation} \label{eq:m2k.index.lower.bound.xi}
	q = 1 + |\{ \text{connected components of } \cN \setminus \cS \}| - |\cS|,
\end{equation}
where $|\cdot|$ denotes the cardinality of a set. By \eqref{eq:m2k.index.lower.bound.viii}, every connected component $\Gamma$ of $\cN \setminus \cS$ is a smooth curve with
\begin{equation} \label{eq:m2k.index.lower.bound.xii}
	|\partial \Gamma| = 0, 1, \text{ or } 2,
\end{equation}
depending on whether $\Gamma$ is infinite in both directions, one direction, or is finite. Counting the set of pairs $(v, e)$ of vertices and edges in $\cN$ in two ways, we see that

\begin{clai}
	$q \geq k$.
\end{clai}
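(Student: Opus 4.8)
The plan is to turn the color pattern of the ends $\mathbf{f}_1,\dots,\mathbf{f}_{2k}$ into a lower bound on the number $m$ of unbounded branches of the nodal set $\cN$ of $v=\siprod{\nabla u}{\mathbf{e}}$, and then feed this into Euler's formula \eqref{eq:m2k.index.lower.bound.xi} together with the handshake identity for the graph $\cN$ (with a vertex adjoined at infinity). Writing $E\triangleq|\{\text{connected components of }\cN\setminus\cS\}|$ and letting $m$ be the number of \emph{unbounded} connected components of $\cN\setminus B_R(\mathbf{0})$ for $R\gg1$, the goal is to show $m\geq 2k-2$ and $q\geq 1+\tfrac{m}{2}$, which together give $q\geq k$.

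First I would analyze the behavior at infinity. Recall that, after enlarging $R$, $\{u=0\}\setminus B_R(\mathbf{0})$ decomposes into $2k$ disjoint, nearly-radial curves $\Gamma_1,\dots,\Gamma_{2k}$ with $\Gamma_i\subset S(\mathbf{f}_i,\delta/2,R)$, so that $\partial B_R(\mathbf{0})\cup\Gamma_1\cup\dots\cup\Gamma_{2k}$ divides $\RR^2\setminus B_R(\mathbf{0})$ into $2k$ unbounded regions $D_1,\dots,D_{2k}$, with $D_i$ bounded by $\Gamma_i$ and $\Gamma_{i+1}$. On $\Gamma_i$ we have $|\nabla u|\approx\sqrt{2W(0)}>0$ and, by \eqref{eq:m2k.index.lower.bound.iii}, $\nabla u/|\nabla u|\approx(-1)^{i+1}\JJ\mathbf{f}_i$, so $v$ is nowhere zero on $\Gamma_i$ with sign equal to the color of $\mathbf{f}_i$; consequently $\cN\setminus B_R(\mathbf{0})=\bigsqcup_i(\cN\cap D_i)$. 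Whenever $\mathbf{f}_i$ and $\mathbf{f}_{i+1}$ carry different colors, $v$ has opposite signs on the two components $\Gamma_i,\Gamma_{i+1}$ of $\partial D_i$, hence $\cN\cap D_i$ meets every sufficiently large circle and thus contains an unbounded component; unbounded components lying in distinct $D_i$ are distinct. Since the Claim above that there exist at least $2k-2$ groups of consecutive same-colored vertices says precisely that at least $2k-2$ of the indices $i$ have $\mathbf{f}_i,\mathbf{f}_{i+1}$ differently colored, this yields $m\geq 2k-2$.

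Next I would run the topological count. Adjoining a point $\infty$ to $\RR^2$ makes $\cN\cup\{\infty\}$ a graph on $S^2$ with vertex set $\cS\cup\{\infty\}$, $E$ edges, and the $q$ nodal domains as faces, so Euler's formula is exactly \eqref{eq:m2k.index.lower.bound.xi}, $q\geq 1+E-|\cS|$. By Bers' theorem \eqref{eq:m2k.index.lower.bound.viii} every $p\in\cS$ has degree $\geq 4$, while $\deg(\infty)=m$; and since each component of $\cN\setminus\cS$ is diffeomorphic to $\RR$ or $\SS^1$, each contributes at most two unbounded pieces to $\cN\setminus B_R(\mathbf{0})$, so $m\leq 2E$. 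The handshake identity on $S^2$ gives $2E=\deg(\infty)+\sum_{p\in\cS}\deg(p)\geq m+4|\cS|$, i.e. $|\cS|\leq\tfrac14(2E-m)$, whence
\begin{align*}
q &\geq 1+E-|\cS| \;\geq\; 1+E-\tfrac14(2E-m) \;=\; 1+\tfrac{E}{2}+\tfrac{m}{4} \\
&\geq 1+\tfrac{m}{4}+\tfrac{m}{4} \;=\; 1+\tfrac{m}{2} \;\geq\; k,
\end{align*}
using $E\geq\tfrac{m}{2}$ and then $m\geq 2k-2$.

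The step I expect to be the main obstacle is the first one: extracting from the refined asymptotics (Theorem \ref{theo:m2k.refined.asymptotics}) and \eqref{eq:m2k.index.lower.bound.iii} the exact sign of $v$ along each interface $\Gamma_i$ and the clean picture that $\cN$ outside a large ball is confined to the regions $D_i$, with a genuine escaping branch in each differently-colored $D_i$ — in particular controlling the exponentially small gradient in the ``convex'' regions between ends so that no branches are spuriously created or destroyed. Once the sign pattern of $v$ at infinity is matched to the coloring and $m\geq 2k-2$ is in hand, the rest is the Euler/incidence bookkeeping above. The standing assumptions that $\cS$ and the set of components of $\cN\setminus\cS$ are finite (and the absence of closed-loop components of $\cN$) are removed as indicated in the surrounding discussion, since such features only increase $q$; finiteness of $E$ is in any case available a posteriori from \cite{KowalczykLiuPacard12}.
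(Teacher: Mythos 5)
Your proof is correct and follows essentially the same route as the paper's: converting the $2k-2$ color changes at infinity into $\geq 2k-2$ unbounded branches of $\cN$, and then combining Euler's formula with the edge--vertex incidence (handshake) count, with a degree lower bound at each $p\in\cS$ coming from \eqref{eq:m2k.index.lower.bound.viii}. The only cosmetic differences are that you invoke $\deg(p)\geq 4$ where the paper gets by with $\deg(p)\geq 2$ (i.e. $|\cA|\geq 2|\cS|$), and that you spell out the sign analysis of $v$ along the ends, which the paper merely asserts.
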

\begin{proof}
	The fact that there exist at least $2k-2$ groups of consecutive same-colored vertices implies that there exists $R > 0$ sufficiently large so that $\cS \subset B_R(\mathbf{0})$ and $\cN \setminus B_R(\mathbf{0})$ has at least $2k-2$ components. By a straightforward counting argument combined with \eqref{eq:m2k.index.lower.bound.xii}, this implies
	\begin{equation} \label{eq:m2k.index.lower.bound.xiii}
		2k-2 \leq \sum_{\ell=0}^2 (2-\ell) \cdot |\{ \text{connected components } \Gamma \subset \cN \setminus \cS : |\partial \Gamma| = \ell \}|.
	\end{equation}
	On the other hand, by counting the elements of the set
	\[ \cA \triangleq \{ (p, \Gamma) : p \in \cS, \; \Gamma =  \text{connected component of } \cN \setminus \cS \text{ incident to } p \} \]
	in one way, we find that
	\begin{equation} \label{eq:m2k.index.lower.bound.xiv}
		|\cA| = \sum_{\ell=0}^2 \ell \cdot |\{ \text{connected components } \Gamma \subset \cN \setminus \cS : |\partial \Gamma| = \ell \}|.
	\end{equation}
	Adding \eqref{eq:m2k.index.lower.bound.xiii}, \eqref{eq:m2k.index.lower.bound.xiv}, and rearranging, we get
	\begin{multline} \label{eq:m2k.index.lower.bound.xv}
		2 \cdot |\{ \text{connected components of } \cN \setminus \cS \}| \geq |\cA| + 2k-2 \\
		\iff |\{ \text{connected components of } \cN \setminus \cS \}| \geq \frac{1}{2} |\cA| + k-1.
	\end{multline}
	Plugging \eqref{eq:m2k.index.lower.bound.xv} into \eqref{eq:m2k.index.lower.bound.xi} yields the estimate
	\begin{equation} \label{eq:m2k.index.lower.bound.xvi}
		q \geq k + \frac{1}{2} |\cA| - |\cS|.
	\end{equation}
	On the other hand, because of \eqref{eq:m2k.index.lower.bound.viii}, each $p \in \cS$ contributes at least two elements to $\cA$; i.e., $|\cA| \geq 2 \cdot |\cS|$. The claim follows.
\end{proof}

\begin{rema} \label{rema:m2k.index.lower.bound.infinities}
	The proof above assumed that
	\[ |\cS| + |\{ \text{connected components of } \cN \setminus \cS \}| < \infty, \]
	so let us discuss the necessary adjustments for it to go through in the general case. By the finiteness of $q$ (see Remark \ref{rema:m2k.index.lower.bound.q.finite}), we know that there exists a large enough radius $R$ so that $\Omega_i \cap B_R(\mathbf{0})$ is connected and nonempty for every $i = 1, \ldots, q$. By the local finiteness of $\cS$, we may further arrange for $\partial B_R(\mathbf{0}) \cap \cS = \emptyset$ and for all intersections $\partial \Omega_i \cap \partial B_R(\mathbf{0})$, $i = 1, \ldots, 2k$, to be transverse. The finite planar graph arrangement contained within $B_R(\mathbf{0})$ has the same number of faces as the original infinite planar graph arrangement. We may, therefore, repeat the previous proof, starting at Remark \ref{rema:m2k.index.lower.bound.q.finite}, discarding all elements of $\cS$ and components of $\cN \setminus \cS$ that lie fully outside of $B_R(\mathbf{0})$, and identifying $\partial B_R(\mathbf{0})$ with infinity.
\end{rema}

Combining everything above, we obtain the thesis of Theorem \ref{theo:m2k.index.lower.bound}.

There is a finer characterization of $\cM_2$ in terms of Morse index than the one in Theorem \ref{theo:m2k.index.lower.bound}:

\begin{prop} \label{prop:m2.characterization}
	The following are all equivalent:
	\begin{enumerate}
		\item $u \in \cM_2$;
		\item $u(\mathbf{x}) \equiv H(\langle \mathbf{x}, \mathbf{e} \rangle - \beta)$ where $H$ is as in \eqref{eq:heteroclinic.solution}, and $(\mathbf{e}, \beta) \in \SS^1 \times \RR$;
		\item $u \in C^\infty_{\loc}(\RR^2) \cap L^\infty(\RR^2)$ satisfies \eqref{eq:pde}, with $\varepsilon = 1$ and $\langle \nabla u, \mathbf{e} \rangle > 0$ for some fixed $\mathbf{e} \in \SS^1$;
		\item $u \in C^\infty_{\loc}(\RR^2) \cap L^\infty(\RR^2)$ is a nonconstant minimizer of the energy in \eqref{eq:energy}, with $\varepsilon = 1$, among compact perturbations;
		\item $u \in C^\infty_{\loc}(\RR^2) \cap L^\infty(\RR^2)$ is a nonconstant stable critical point of the energy in \eqref{eq:energy}, with $\varepsilon = 1$; i.e., $\ind(u) = 0$.
	\end{enumerate}
\end{prop}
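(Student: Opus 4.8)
\emph{Strategy.} The plan is to show that each of $(1),(3),(4),(5)$ is equivalent to $(2)$, which is the most explicit characterization. All the implications out of $(2)$ are trivial or classical, while the three substantive converses $(3)\Rightarrow(2)$, $(5)\Rightarrow(2)$, $(1)\Rightarrow(2)$ have clean inputs (Ghoussoub--Gui; the geometric Poincar\'e inequality; Theorem~\ref{theo:wang.allard}, respectively). Throughout I use that a bounded entire solution of \eqref{eq:pde} satisfies $|u|\leq 1$ by the maximum principle, with $|u|<1$ unless $u\equiv\pm 1$, and that $|\nabla u|\in L^\infty(\RR^2)$ by interior elliptic estimates together with the translation invariance of \eqref{eq:pde}.

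\emph{Out of $(2)$.} If $u=H(\langle\mathbf x,\mathbf e\rangle-\beta)$ then $\langle\nabla u,\mathbf e\rangle=H'=\sqrt{2W\circ H}>0$, which is $(3)$. One-dimensional heteroclinics minimize $E_1$ among compact perturbations in every dimension --- the translates $\{H(\langle\mathbf x,\mathbf e\rangle-\beta')\}_{\beta'\in\RR}$ foliate $\RR^2$ by solutions of \eqref{eq:pde} and hence calibrate $E_1$, a classical fact --- which is $(4)$; and $(4)\Rightarrow(5)$ since minimizers are stable. For $(2)\Rightarrow(1)$, pick $\lambda=(\lambda_1,\lambda_2)\in\Lambda^2$ whose two oriented lines both equal $\{\langle\mathbf x,\mathbf e\rangle=\beta\}$ with opposite orientations; then $u-u_\lambda=O(e^{-c\Vert\mathbf x\Vert})$ (the partition of unity in \eqref{eq:m2k.approximate.solution} equals $1$ off a fixed ball, and $u_\lambda$ is exponentially close to $u$ near the origin as well), so $u-u_\lambda\in W^{2,2}(\RR^2)$, i.e.\ $u\in\cS_2$; since $u$ solves \eqref{eq:pde}, $u\in\cM_2$.

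\emph{Into $(2)$.} For $(3)\Rightarrow(2)$: $u$ is nonconstant since $\langle\nabla u,\mathbf e\rangle>0$, hence $|u|<1$, so Ghoussoub--Gui (Theorem~\ref{theo:degiorgi.conjecture.n2}) gives $u(\mathbf x)=v(\langle\mathbf x,\mathbf e'\rangle)$ with $v$ bounded and monotone; a phase-plane analysis of $v''=W'(v)$ using \eqref{assu:double.well.potential.nonnegative}--\eqref{assu:double.well.potential.saddle} (so $v(\pm\infty)\in\{-1,+1\}$ are distinct) identifies $v$ with $\pm H(\cdot-\beta)$, which is $(2)$ after a sign change in $\mathbf e'$. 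For $(5)\Rightarrow(2)$, I would invoke the geometric Poincar\'e inequality: testing the stability inequality for the nonconstant $u$ against $\zeta=|\nabla u|\,\psi$ and using $|\cA|^2=|\sff|^2+|\nabla^T\log|\nabla u||^2$ from Lemma~\ref{lemm:a.tensor} yields $\int_{\{\nabla u\neq 0\}}|\cA|^2|\nabla u|^2\psi^2\leq\int|\nabla u|^2|\nabla\psi|^2$; with $\psi$ the logarithmic cutoff on $B_{R^2}\setminus B_R$ (available because $n=2$) and $\Vert\nabla u\Vert_{L^\infty(\RR^2)}<\infty$, the right side is $O(1/\log R)\to 0$, so $\cA\equiv 0$ on $\{\nabla u\neq 0\}$, i.e.\ the level sets of $u$ are straight lines with $|\nabla u|$ constant along each; after the routine treatment of $\{\nabla u=0\}$ and unique continuation, $u$ is one-dimensional. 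For $(1)\Rightarrow(2)$: if $u\in\cM_2$ then $u-u_\lambda\in W^{2,2}(\RR^2)\hookrightarrow C^0\cap L^\infty(\RR^2)$, so $u$ is a bounded entire solution with $|u|<1$; since $\Vert\nabla u_\lambda\Vert_{L^2(B_R)}^2\lesssim R$ while $\nabla(u-u_\lambda)\in L^2(\RR^2)$, we get $E_1[u;B_R]=E_1[u_\lambda;B_R]+O(R^{1/2})$, and because the two ends of $u_\lambda$ are antipodal by $\mathbf f_1+\mathbf f_2=\mathbf 0$ (cf.\ \eqref{eq:m2k.index.lower.bound.i}), $u_\lambda$ carries a single multiplicity-one interface; hence $R^{-1}E_1[u;B_R]$ tends to the value of one multiplicity-one line, the energy hypothesis of Theorem~\ref{theo:wang.allard} holds, and $u$ is one-dimensional.

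\emph{Main obstacle.} The delicate point, common to $(1)\Rightarrow(2)$ and $(5)\Rightarrow(2)$, is to ensure that no interface multiplicity is lost at infinity, i.e.\ that the whole energy of $u$ is carried by a single multiplicity-one line so that Theorem~\ref{theo:wang.allard} may be invoked (resp.\ so that $u$ is rigid). For $u\in\cM_2$ this is forced by the $W^{2,2}$-asymptotics and the balancing constraint $\mathbf f_1+\mathbf f_2=\mathbf 0$; for a stable $u$ it is precisely the two-dimensional De Giorgi rigidity, which here is produced by feeding the curvature identity of Lemma~\ref{lemm:a.tensor} into the geometric Poincar\'e inequality. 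Everything else reduces to routine ODE or elliptic bookkeeping, or to the cited theorems.
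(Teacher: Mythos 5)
Your proof is correct, and it closes the cycle of implications by a partially different route than the paper. The paper handles $(2)\iff(3)\iff(4)$ by citing the $n=2,3$ De Giorgi theorems (Theorems \ref{theo:degiorgi.conjecture.n2}, \ref{theo:degiorgi.conjecture.n3}), gets $(4)\implies(5)$ trivially, and closes the loop with $(5)\implies(3)$: for stable $u$ the bounded Jacobi field $\langle\nabla u,\mathbf e\rangle$ cannot change sign by the noncompact Courant nodal domain theorem (Theorem \ref{theo:app.pde.courant.nodal.domain}), so some directional derivative is positive. You instead prove $(5)\implies(2)$ directly via the stability inequality of Lemma \ref{lemm:stability.inequality} tested against $|\nabla u|$ times a logarithmic cutoff --- exactly the Farina--Mari--Valdinoci route that the paper flags as ``an alternative proof of $(5)\implies(2)$'' --- and you supply a calibration argument for $(2)\implies(4)$ rather than leaning on the cited De Giorgi theorems. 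What your route buys is self-containedness and the avoidance of any appearance of circularity (the paper's Section \ref{sec:degiorgi.conjectures} points back to this very proposition as supplying the minimizing extension of Ghoussoub--Gui); what the paper's route buys is brevity and a uniform use of the nodal-domain machinery already set up in Appendix \ref{sec:app.pde.morse.index.spaces.quadratic.area.growth}. The $(1)\iff(2)$ equivalence is handled identically in both (Theorem \ref{theo:wang.allard}), with you filling in the verification of its energy-density hypothesis. Two small points to tidy: in the expansion $E_1[u;B_R]=E_1[u_\lambda;B_R]+O(R^{1/2})$ you must also Taylor-expand the potential term, using that $W'(u_\lambda)$ decays exponentially off the interface so that $\int_{B_R}W'(u_\lambda)(u-u_\lambda)=O(R^{1/2})$ (the crude bound $\int_{B_R}|u-u_\lambda|=O(R)$ is not good enough); and in $(5)\implies(2)$, after concluding that $u$ is one-dimensional you still need the phase-plane step (as in your $(3)\implies(2)$) to exclude periodic profiles and identify $v$ with $\pm H(\cdot-\beta)$.
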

\begin{proof}
	$(2) \implies (1)$ is clear. 
	
	$(1) \implies (2)$ is a consequence of \cite{Gui12}. 
	
	It remains to show $(2) \iff (3) \iff (4) \iff (5)$. The first three of these equivalences follow from Theorems \ref{theo:degiorgi.conjecture.n2}, \ref{theo:degiorgi.conjecture.n3}. Next, $(4) \implies (5)$ is trivial by the definition of stability. Finally, $(5) \implies (3)$ follows because $\langle \nabla u, \mathbf{e} \rangle$ is an $L^\infty$ Jacobi field, so, by Theorem \ref{theo:app.pde.courant.nodal.domain}, it is either identically zero or has constant sign. Since $u$ is not constant, there will exist at least one $\mathbf{e} \in \SS^1$ for which $\langle \nabla u, \mathbf{e} \rangle > 0$. For an alternative proof of $(5) \implies (2)$, see the work of Farina-Mari-Valdinoci \cite{FarinaMariValdinoci13}.
\end{proof}

\section{Local results} \label{sec:local.results}

\subsection{General critical points}

In this section we collect preliminary results about general critical points that we will to invoke throughout the remainder of the paper. Key to transferring the local results of Hutchinson and Tonegawa to the manifold setting is an almost-monotonicity inequality from \cite[Appendix B]{Guaraco15}:
\begin{equation} \label{eq:monotonicity.identity} \frac{d}{dr} \Big[ e^{mr} r^{1-n} (E_\varepsilon \restr B_{r}(p))[u] \Big] \geq e^{m\rho} r^{1-n} \int_{B_r(p)} (-\xi_\varepsilon[u]) \, d\upsilon	
\end{equation}
for $r \in (0, \iota_\Sigma)$, $m = m(\sup_U |\sect_{\Sigma,g}|) > 0$. One then derives Lemmas \ref{lemm:xi.upper.bound} and \ref{lemm:upper.density.bounds} from it in the same way as in \cite{HutchinsonTonegawa00}.

\begin{lemm}[Hutchinson-Tonegawa {\cite[Proposition 3.3]{HutchinsonTonegawa00}}, cf. Guaraco {\cite[Appendix B]{Guaraco15}}] \label{lemm:xi.upper.bound}
	Let $(\Sigma^n, g)$ be a complete Riemannian manifold, $U \subset \Sigma \setminus \partial \Sigma$ be open, $\varepsilon > 0$, and $u$ be a critical point of $E_\varepsilon \restr U$ with $|u| \leq 1$. If $\varepsilon \leq \varepsilon_0$, then
	\[ \sup_{U'} \xi_\varepsilon[u] \leq c_0 \]
	for all $U' \subset \subset U$, where
	\begin{align*} 
		c_0 & = c_0(\sup_U |\sect_{\Sigma,g}|, \inf_U \inj_{\Sigma,g},  \dist_g(U', \partial U), W), \\
		\varepsilon_0 & = \varepsilon_0(\sup_U |\sect_{\Sigma,g}|, \inf_U \inj_{\Sigma,g}, \dist_g(U', \partial U), W).
	\end{align*}
\end{lemm}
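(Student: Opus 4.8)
The plan is to adapt, essentially verbatim, the argument of Hutchinson--Tonegawa \cite[Proposition 3.3]{HutchinsonTonegawa00}. Two points are worth flagging at the outset. First, the estimate is not a soft consequence of elliptic regularity: rescaling about $p\in U'$ by $\varepsilon$ turns $u$ into a solution $v$ of $\Delta v = W'(v)$ on a large, almost-flat ball with $|v|\le 1$, and the interior gradient estimate $|\nabla v|\le C(W)$ only yields $\xi_\varepsilon[u]\le C(W)\varepsilon^{-1}$, which is useless as $\varepsilon\downarrow 0$; one must exploit the structure of the equation, in the spirit of Modica's gradient estimate. Second, the only genuinely new feature relative to the Euclidean case is the ambient curvature, which enters in exactly two places --- as the $e^{mr}$ factor in the almost-monotonicity inequality \eqref{eq:monotonicity.identity}, and as a Ricci term in the Bochner computation below --- and both are controlled by $\sup_U|\sect_{\Sigma,g}|$.

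\emph{Step 1: a differential inequality for the discrepancy.} Write $P\triangleq\xi_\varepsilon[u]$. Combining Bochner's identity for $|\nabla u|^2$ with \eqref{eq:pde} and $\nabla\Delta u = \varepsilon^{-2}W''(u)\nabla u$, the two $\varepsilon^{-1}W''(u)|\nabla u|^2$ contributions cancel and one is left with
\[ \Delta P = \varepsilon|\nabla^2 u|^2 + \varepsilon\,\ricc(\nabla u,\nabla u) - \varepsilon^{-3}(W'(u))^2. \]
At a point with $\nabla u\ne 0$, decompose $\nabla^2 u$ in the frame adapted to the level set of $u$ (unit normal $\nu = |\nabla u|^{-1}\nabla u$); expanding $\nabla^2 u(\nu,\nu) = \Delta u + \langle\nabla P,\nu\rangle/(\varepsilon|\nabla u|)$ and $\nabla^2 u(\nu,e) = \langle\nabla P,e\rangle/(\varepsilon|\nabla u|)$ for $e\perp\nu$, one sees that the bad term $\varepsilon^{-3}(W'(u))^2 = \varepsilon(\Delta u)^2$ is exactly cancelled by the $\nu\nu$-part of $\varepsilon|\nabla^2 u|^2$, leaving --- after discarding the remaining nonnegative Hessian terms ---
\[ \Delta P \ge \frac{2\,\Delta u}{|\nabla u|}\,\langle\nabla P,\nu\rangle - c\bigl(\,\sup_U|\sect_{\Sigma,g}|\,\bigr)\,\varepsilon|\nabla u|^2 \quad\text{on }\{\nabla u\ne 0\}, \]
while $P = -\varepsilon^{-1}W(u)\le 0$ on $\{\nabla u = 0\}$. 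Thus $P$ is a subsolution of a (possibly degenerate) drift-Laplacian up to a curvature error --- this is the key mechanism.

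\emph{Step 2: localization.} From here one proceeds as in \cite{HutchinsonTonegawa00}: test the inequality of Step~1 against a cutoff $\varphi$ that equals $1$ on $U'$ and is supported in a slightly larger $U''\subset\subset U$, with $|\nabla\varphi|,|\nabla^2\varphi|$ controlled by $\dist_g(U',\partial U)^{-1}$, and close the estimate using \eqref{eq:monotonicity.identity} to bound the energy-type terms that appear. Equivalently one can argue by blow-up: if the bound failed there would exist $\varepsilon_i\downarrow 0$, critical points $u_i$ of $E_{\varepsilon_i}\restr U$ with $|u_i|\le 1$, and $p_i\in U'$ with $\varepsilon_i\,\xi_{\varepsilon_i}[u_i](p_i)$ bounded away from $0$; rescaling $v_i(y)\triangleq u_i(\exp_{p_i}(\varepsilon_i y))$ on balls of radius $\sim\dist_g(U',\partial U)/\varepsilon_i\to\infty$ (legitimate because $\varepsilon_i\to 0$ while $\inf_U\inj_{\Sigma,g}>0$ and $\sup_U|\sect_{\Sigma,g}|<\infty$, so the pulled-back rescaled metrics converge to the Euclidean one in $C^\infty_{\loc}$ and the Step~1 curvature error is $O(\varepsilon_i^2)$), elliptic estimates produce a subsequential $C^2_{\loc}$ limit $v_\infty$ solving $\Delta v_\infty = W'(v_\infty)$ on $\RR^n$ with $|v_\infty|\le 1$ and $\xi_1[v_\infty](\mathbf{0})>0$, contradicting the gradient estimate $\tfrac12|\nabla v_\infty|^2\le W(v_\infty)$ for entire bounded solutions (Modica); quantitatively, one uses $\xi_1[v](\mathbf{0})\le C(W,n)R^{-1}$ on $B_R$, which follows from the Step~1 inequality by comparison. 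Either way, $\varepsilon_0$ must be taken small enough (in terms of $\sup_U|\sect_{\Sigma,g}|$, $\inf_U\inj_{\Sigma,g}$, $\dist_g(U',\partial U)$) for the relevant balls to be geodesically normal and the curvature error absorbed.

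The main obstacle is the degeneracy of the Step~1 inequality at $\{\nabla u = 0\}$, i.e.\ where $|u|$ is close to $\pm 1$: there one must invoke the strict convexity \eqref{assu:double.well.potential.convex} to see that the drift $\Delta u/|\nabla u| = \varepsilon^{-2}W'(u)/|\nabla u|$ stays of size $O(\varepsilon^{-1})$ rather than blowing up --- as is checked by comparison with the one-dimensional profile $H$, using $W''\ge\kappa$ --- and likewise to run the barrier estimates for $1-|u|$ near the wells. The rest is bookkeeping: propagating the curvature dependence through \eqref{eq:monotonicity.identity}, the Bochner term, and the normal-coordinate expansion so that $c_0,\varepsilon_0$ depend only on $\sup_U|\sect_{\Sigma,g}|$, $\inf_U\inj_{\Sigma,g}$, $\dist_g(U',\partial U)$, and $W$, which is a routine transcription of \cite{HutchinsonTonegawa00}.
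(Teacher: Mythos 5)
Your proposal follows the same route as the paper, which for this lemma gives no argument of its own and simply defers to \cite{HutchinsonTonegawa00} and Guaraco's Appendix B: the Modica-type differential inequality for the discrepancy $P=\xi_\varepsilon[u]$ (Bochner, cancellation of the $W''$ terms, and cancellation of $\varepsilon(\Delta u)^2$ against the $\nu\nu$-component of the Hessian), control of the drift on the positivity set of $P$ via \eqref{assu:double.well.potential.convex}, and an interior comparison in the $\varepsilon$-rescaled picture, with the ambient geometry entering only through the Ricci term and the $O(\varepsilon^2)$ errors in normal coordinates. Your Step 1 computation is correct.

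One caveat: the ``soft'' blow-up alternative in Step 2 would fail if taken literally. If the conclusion fails, you obtain $p_i\in U'$ with $\xi_{\varepsilon_i}[u_i](p_i)\to\infty$, but this does \emph{not} force $\varepsilon_i\,\xi_{\varepsilon_i}[u_i](p_i)$ to stay bounded away from $0$ (e.g.\ $\xi_{\varepsilon_i}[u_i](p_i)=\varepsilon_i^{-1/2}$ is consistent with both), so the rescaled limit may satisfy $\xi_1[v_\infty](\mathbf{0})=0$ and Modica's inequality yields no contradiction; compactness alone only proves $\xi_\varepsilon[u]=o(\varepsilon^{-1})$. The quantitative interior estimate $\xi_1[v](\mathbf{0})\le C(n,W)R^{-1}$ on $B_R$, which you state parenthetically, is therefore not an optional refinement but the entire content of the lemma, and it is exactly what the comparison-function argument of \cite{HutchinsonTonegawa00} delivers. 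Relatedly, the cutoff variant should not need \eqref{eq:monotonicity.identity} to ``bound energy-type terms'': no energy bound is hypothesized in this lemma (unlike Lemma \ref{lemm:upper.density.bounds}), and the argument is purely a pointwise maximum-principle/comparison one; the almost-monotonicity inequality is the manifold ingredient for the density bounds, not for this estimate.
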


\begin{coro} \label{coro:grad.u.harnack.inequality}
	Let $(\Sigma^n, g)$ be a complete Riemannian manifold, $U \subset \Sigma \setminus \partial \Sigma$ be open, $\varepsilon > 0$, and $u$ be a critical point of $E_\varepsilon \restr U$ with $|u| \leq 1$. If $\varepsilon \leq \varepsilon_1$, then
	\[ \varepsilon |\nabla \log (u^2 -1)| \leq c_1 \text{ on } U' \setminus \{ \nabla u = 0 \}, \]
	for all $U' \subset \subset U$, where
	\begin{align*} 
		c_1 & = c_1(\sup_U |\sect_{\Sigma,g}|, \inf_U \inj_{\Sigma,g}, \dist_g(U', \partial U), W), \\
		\varepsilon_1 & = \varepsilon_1(\sup_U |\sect_{\Sigma,g}|, \inf_U \inj_{\Sigma,g}, \dist_g(U', \partial U), W).
	\end{align*}
\end{coro}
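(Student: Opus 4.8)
The statement should be read with $\log(u^2-1)$ meaning $\log|u^2-1|=\log(1-u^2)$, which makes sense exactly where $|u|<1$; since $u^2$ has an interior maximum wherever $|u|=1$, on $U'\setminus\{\nabla u=0\}$ one automatically has $|u|<1$ and
\[
	\bigl|\nabla\log(1-u^2)\bigr|=\frac{2|u|\,|\nabla u|}{1-u^2}\le\frac{2|\nabla u|}{1-u^2},
\]
so it suffices to bound $\varepsilon|\nabla u|/(1-u^2)$. The plan is to split $U'$ into the ``transition'' region $\{|u|\le 1-\tfrac{\alpha}{2}\}$ and the two ``bulk'' regions near $u=\pm 1$. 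First I would fix an intermediate $U'\subset\subset U''\subset\subset U$ with $\dist_g(U',\partial U'')$ and $\dist_g(U'',\partial U)$ both comparable to $\dist_g(U',\partial U)$, apply Lemma~\ref{lemm:xi.upper.bound} on $U''$, and use $\xi_\varepsilon[u]=\tfrac{\varepsilon}{2}|\nabla u|^2-\varepsilon^{-1}W(u)$ together with $0\le W\le\sup_{[-1,1]}W$ to obtain the crude bound $\varepsilon^2|\nabla u|^2\le 2W(u)+2\varepsilon c_0\le M^2$ on $U''$ for $\varepsilon\le\varepsilon_0$, with $M$ depending only on the permitted quantities. On $\{|u|\le 1-\tfrac{\alpha}{2}\}$ one has $1-u^2\ge\tfrac{\alpha}{2}$, hence $\varepsilon|\nabla u|/(1-u^2)\le 2M/\alpha$; that region is done.

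It remains to treat the regions near $u=\pm 1$, which $u\mapsto -u$ interchanges via \eqref{assu:double.well.potential.even}, so I treat $\{u\ge 1-\tfrac{\alpha}{2}\}$. There $1-u^2=(1-u)(1+u)$ with $1+u\ge 1$, so it is enough to bound $\varepsilon|\nabla u|/(1-u)=\varepsilon|\nabla\log(1-u)|$. The key point is that the crude bound $|\nabla u|\le M/\varepsilon$ forces persistence of the bulk: if $u(x_0)\ge 1-\tfrac{\alpha}{2}$, then integrating $\nabla u$ along $g$-geodesics from $x_0$ shows $u>1-\alpha$ on the $g$-ball of radius $r_*\varepsilon$ around $x_0$, where $r_*\triangleq\alpha/(4M)$ is independent of $\varepsilon$. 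I would then rescale by $\Psi(y)\triangleq\exp_{x_0}(\varepsilon y)$, setting $g_\varepsilon\triangleq\varepsilon^{-2}\Psi^*g$ and $\tilde u\triangleq u\circ\Psi$; for $\varepsilon$ small — depending on $\sup_U|\sect_{\Sigma,g}|$, $\inf_U\inj_{\Sigma,g}$, $r_*$, and $\dist_g(U',\partial U'')$ — the metric $g_\varepsilon$ is uniformly elliptic with controlled $C^1$ norm on $B_{r_*}(\mathbf{0})$, $\tilde u$ solves the $\varepsilon=1$ equation $\Delta_{g_\varepsilon}\tilde u=W'(\tilde u)$, one has $1-\alpha\le\tilde u\le 1$ on $B_{r_*}(\mathbf{0})$ with $\tilde u(\mathbf{0})<1$ (since $u(x_0)<1$ on $U'\setminus\{\nabla u=0\}$), and the goal becomes the scale-invariant estimate $|\nabla\log(1-\tilde u)|_{g_\varepsilon}(\mathbf{0})\le c_1$.

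Now $\tilde v\triangleq 1-\tilde u\ge 0$ on $B_{r_*}(\mathbf{0})$ with $\tilde v(\mathbf{0})>0$. Using \eqref{assu:double.well.potential.convex} I would write $W'(1-t)=-t\,\Phi(t)$ with $\Phi(t)\triangleq t^{-1}\int_0^t W''(1-s)\,ds$, which extends smoothly across $t=0$ and satisfies $\kappa\le\Phi\le\Lambda\triangleq\sup_{[1-\alpha,1]}W''$ on $[0,\alpha]$; then $\tilde v$ solves $\Delta_{g_\varepsilon}\tilde v=\Phi(\tilde v)\,\tilde v$ on $B_{r_*}(\mathbf{0})$, a uniformly elliptic equation with bounded zeroth-order coefficient whose right-hand side is $\ge 0$, so the strong minimum principle upgrades $\tilde v\ge 0$ to $\tilde v>0$ on all of $B_{r_*}(\mathbf{0})$. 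By the De Giorgi--Nash--Moser Harnack inequality, $\sup_{B_{r_*/2}(\mathbf{0})}\tilde v\le C_H\,\tilde v(\mathbf{0})$ with $C_H$ depending only on $n$, $\Lambda$, $r_*$ and the (now universal) ellipticity of $g_\varepsilon$; and interior $W^{2,p}$ or Schauder estimates for $\Delta_{g_\varepsilon}\tilde v=\Phi(\tilde v)\tilde v$, whose right-hand side is $\le\Lambda\sup_{B_{r_*/2}(\mathbf{0})}\tilde v$, give $|\nabla\tilde v(\mathbf{0})|_{g_\varepsilon}\le C\sup_{B_{r_*/2}(\mathbf{0})}\tilde v$ with $C$ of the same type. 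Combining, $|\nabla\log(1-\tilde u)|_{g_\varepsilon}(\mathbf{0})=|\nabla\tilde v(\mathbf{0})|_{g_\varepsilon}/\tilde v(\mathbf{0})\le C_H\,C$; undoing the rescaling gives $\varepsilon|\nabla u(x_0)|/(1-u(x_0))\le C_H\,C$, and since $x_0$ was arbitrary this finishes $\{u\ge 1-\tfrac{\alpha}{2}\}$. Tracing all dependencies back through $r_*$, $M$, $c_0$ and $\varepsilon_0$ shows that the resulting $c_1$ and $\varepsilon_1$ depend only on $\sup_U|\sect_{\Sigma,g}|$, $\inf_U\inj_{\Sigma,g}$, $\dist_g(U',\partial U)$ and $W$, as claimed.

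The main obstacle is that the soft energy estimate alone cannot close the argument near $u=\pm 1$: Lemma~\ref{lemm:xi.upper.bound} only controls $\varepsilon^2|\nabla u|^2-2W(u)$, and the leftover $2\varepsilon c_0$, once divided by $(1-u^2)^2$, diverges there. The essential content is therefore the Harnack estimate of the previous paragraph, and the genuinely delicate step is the rescaling bookkeeping: one must work at exactly the $O(\varepsilon)$ scale so that the $O(\varepsilon^{-2})$-size zeroth-order coefficient of the equation for $1-u$ becomes $O(1)$, while simultaneously using the crude gradient bound to keep $1-u$ positive on a rescaled ball of $\varepsilon$-independent radius — without this, the Harnack and interior-estimate constants would degenerate as $\varepsilon\to 0$.
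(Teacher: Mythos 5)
Your proof is correct. On the transition region it coincides with the paper's argument: both of you divide the discrepancy bound of Lemma~\ref{lemm:xi.upper.bound} by the square of $1-u^2$ and use that the denominator is bounded below there. Near $|u|=1$ you take a genuinely different route. The paper continues with the same inequality and invokes Taylor's theorem together with \eqref{assu:double.well.potential.nonnegative} and \eqref{assu:double.well.potential.convex}, which bounds $2W(u)/(1-u)^2$ by $\sup_{[1-\alpha,1]}W''$; as you correctly point out, this controls only the $W$-term, and the leftover $O(\varepsilon)$ error from $\xi_\varepsilon[u]\le c_0$, once divided by $(1-u)^2$, is not bounded in the regime $1-|u|\lesssim\sqrt{\varepsilon}$ --- precisely the regime occurring near the extrema of $u$. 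Your replacement --- propagating $u>1-\alpha$ over a ball of radius comparable to $\varepsilon$ via the crude gradient bound, rescaling to unit scale, and applying the Harnack inequality and interior gradient estimates to the nonnegative solution $\tilde v=1-\tilde u$ of the uniformly elliptic equation $\Delta_{g_\varepsilon}\tilde v=\Phi(\tilde v)\,\tilde v$ with $\kappa\le\Phi\le\sup_{[1-\alpha,1]}W''$ --- is the standard mechanism behind such scale-invariant logarithmic gradient bounds, and it genuinely covers the region the shorter argument leaves open; the price is importing De Giorgi--Nash--Moser and Schauder theory at the $\varepsilon$-scale. Your bookkeeping of the dependencies of $c_1$ and $\varepsilon_1$ is consistent with the statement.
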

\begin{proof}
	Without loss of generality, assume $u \geq 0$. By Lemma \ref{lemm:xi.upper.bound},
	\begin{align*} 
		& 2 c_0 \varepsilon \geq 2 \varepsilon \xi_\varepsilon[u] = \varepsilon^2 |\nabla u|^2 - 2W(u) = \frac{\varepsilon^2 |\nabla u|^2}{(u-1)^2} - \frac{2W(u)}{(u-1)^2} \\
		& \qquad \implies \frac{\varepsilon^2 |\nabla u|^2}{(u-1)^2} \leq 2 c_0 \varepsilon + \frac{2W(u)}{(u-1)^2}.
	\end{align*}
	This alone is sufficient to obtain the result when $0 \leq u \leq 1-\alpha$, so let's suppose $1 - \alpha < u \leq 1$. The result follows from Taylor's theorem, \eqref{assu:double.well.potential.nonnegative}, and \eqref{assu:double.well.potential.convex}.
\end{proof}

\begin{lemm}[Hutchinson-Tonegawa {\cite[Proposition 3.4]{HutchinsonTonegawa00}}, cf. Guaraco {\cite[Appendix B]{Guaraco15}}] \label{lemm:upper.density.bounds}
    Let $(\Sigma^n, g)$ be a complete Riemannian manifold, $U \subset \Sigma \setminus \partial \Sigma$ be open, $\varepsilon > 0$, and $u$ be a critical point of $E_\varepsilon\restr U$ with $|u| \leq 1$ and $(E_\varepsilon \restr U)[u] \leq E_0$. If $\varepsilon \leq \varepsilon_2$, then 
    \[ 2 \Vert V_\varepsilon[u] \Vert(B_r(p)) \leq (E_\varepsilon \restr B_r(p))[u] \leq c_2 r^{n-1} \text{ for } r \in (0, r_2] \]
	for all $U' \subset \subset U$, $p \in U'$, $r < r_2$, where
	\begin{align*} 
		c_2 & = c_2(n, \sup_U |\sect_{\Sigma,g}|, \inf_U \inj_{\Sigma,g}, \dist_g(U', \partial U), W, E_0), \\
		\varepsilon_2 & = \varepsilon_2(\sup_U |\sect_{\Sigma,g}|, \inf_U \inj_{\Sigma,g}, \dist_g(U', \partial U), W, E_0), \\
		r_2 & = r_2(\sup_U |\sect_{\Sigma,g}|, \inf_U \inj_{\Sigma,g}, \dist_g(U', \partial U)).
	\end{align*}
\end{lemm}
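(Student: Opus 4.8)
The statement packages two estimates, and I would handle them separately. The left-hand bound $2\Vert V_\varepsilon[u]\Vert(B_r(p)) \le (E_\varepsilon\restr B_r(p))[u]$ is immediate from the definitions: $\Vert V_\varepsilon[u]\Vert$ is the Radon measure $\varepsilon|\nabla u|^2\,d\upsilon$, which is controlled by the energy density $\tfrac\varepsilon2|\nabla u|^2 + \varepsilon^{-1}W(u)$ since $W\ge 0$ by \eqref{assu:double.well.potential.nonnegative}. So the whole point is the scaling bound $(E_\varepsilon\restr B_r(p))[u] \le c_2 r^{n-1}$, which is the Allen--Cahn analogue of the boundedness of the upper density of a stationary varifold; as in \cite{HutchinsonTonegawa00}, it comes from integrating the almost-monotonicity inequality \eqref{eq:monotonicity.identity}.

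Here is the plan for the scaling bound. Fix $U'\subset\subset U$ and $p\in U'$, set $d := \dist_g(U',\partial U)$, and introduce the monotone-type quantity $\Theta(r) := e^{mr} r^{1-n}(E_\varepsilon\restr B_r(p))[u]$. I would first choose $r_2 = r_2(\sup_U|\sect_{\Sigma,g}|,\inf_U\inj_{\Sigma,g},d)>0$ small enough that: (i) $r_2 < \iota_\Sigma$, so \eqref{eq:monotonicity.identity} applies on $(0,r_2]$; (ii) $r_2 < \tfrac12 d$, so that $B_{r_2}(p)$ lies inside a single $U''\subset\subset U$ for every $p\in U'$; and (iii) geodesic balls of radius $\le r_2$ have volume at most $2\omega_n r^n$, which holds by a lower sectional — hence lower Ricci — curvature bound together with Bishop--Gromov. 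Then I would impose $\varepsilon \le \varepsilon_2 := \varepsilon_0$, with $\varepsilon_0$ the threshold of Lemma \ref{lemm:xi.upper.bound} applied on $U''$, so that $\xi_\varepsilon[u] \le c_0$ on $B_{r_2}(p)$ with $c_0 = c_0(\sup_U|\sect|,\inf_U\inj,d,W)$.

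With these choices, \eqref{eq:monotonicity.identity}, the bound $-\xi_\varepsilon[u] \ge -c_0$, and the volume bound give, for $r\in(0,r_2]$,
\[ \Theta'(r) \;\ge\; -\,c_0\, e^{m r_2}\, r^{1-n}\, |B_r(p)| \;\ge\; -\,A\, r, \qquad A := 2\omega_n c_0 e^{m r_2}. \]
Integrating from $r$ to $r_2$ and using $(E_\varepsilon\restr B_{r_2}(p))[u] \le (E_\varepsilon\restr U)[u] \le E_0$,
\[ \Theta(r) \;\le\; \Theta(r_2) + \tfrac{A}{2} r_2^2 \;\le\; e^{m r_2} r_2^{1-n} E_0 + \tfrac{A}{2} r_2^2 \;=:\; c_2, \]
so that $(E_\varepsilon\restr B_r(p))[u] = e^{-mr} r^{n-1}\Theta(r) \le c_2 r^{n-1}$, with $c_2 = c_2(n,\sup_U|\sect|,\inf_U\inj,d,W,E_0)$ exactly as claimed. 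Combined with the left-hand bound, this finishes the lemma.

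The one place that needs care — and the only place the hypotheses $\varepsilon\le\varepsilon_2$ and the curvature bound are genuinely used — is that $\xi_\varepsilon[u]$ need not have a sign, so $\Theta$ is not literally monotone; one has to absorb the discrepancy error $\int_{B_r(p)}(-\xi_\varepsilon[u])$ via Lemma \ref{lemm:xi.upper.bound} and the volume comparison for small balls. Once \eqref{eq:monotonicity.identity} is in hand (it is established in \cite[Appendix B]{Guaraco15}), everything else — computing $\Theta'$, the Bishop--Gromov comparison, the final integration — is routine, which is why the author can simply say it follows "in the same way as in \cite{HutchinsonTonegawa00}."
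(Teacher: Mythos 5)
Your treatment of the main estimate $(E_\varepsilon\restr B_r(p))[u]\le c_2 r^{n-1}$ is correct and is exactly the route the paper intends: the text offers no written proof beyond the remark that the lemma follows from the almost-monotonicity inequality \eqref{eq:monotonicity.identity} ``in the same way as in \cite{HutchinsonTonegawa00}'', and your argument---absorb the discrepancy term via Lemma \ref{lemm:xi.upper.bound}, bound the volume of small geodesic balls by Bishop--Gromov, and integrate $\Theta'(r)\ge -Ar$ from $r$ to $r_2$---is precisely that derivation.

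The one place you are too quick is the left-hand inequality. From the definition $\Vert V_\varepsilon[u]\Vert=\varepsilon|\nabla u|^2\,\upsilon$ and $W\ge 0$ one gets pointwise $\varepsilon|\nabla u|^2\le 2\left(\tfrac{\varepsilon}{2}|\nabla u|^2+\varepsilon^{-1}W(u)\right)$, i.e.\ $\Vert V_\varepsilon[u]\Vert(B_r(p))\le 2\,(E_\varepsilon\restr B_r(p))[u]$---a factor of $4$ weaker than the stated $2\Vert V_\varepsilon[u]\Vert(B_r(p))\le (E_\varepsilon\restr B_r(p))[u]$. The latter is \emph{not} immediate, and in fact fails wherever equipartition $\tfrac{\varepsilon}{2}|\nabla u|^2=\varepsilon^{-1}W(u)$ holds (there $\varepsilon|\nabla u|^2$ equals the energy density), and asymptotically as $\varepsilon\to 0$, where both $\Vert V_{\varepsilon_i}[u_i]\Vert$ and the energy measure converge to $\Vert V^\infty\Vert$. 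Since every later use of the lemma only invokes $\Vert V_\varepsilon[u]\Vert(B_r(p))\le c\,r^{n-1}$, the constant is immaterial and the ``$2$'' is best read as a typo in the statement rather than a gap in your argument; but you should not describe the inequality as written as ``immediate from the definitions.''
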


The lemma below due to Tonegawa goes through verbatim as in the Euclidean case:

\begin{lemm}[Tonegawa {\cite[Lemma 2.5]{TonegawaWickramasekera12}}] \label{lemm:grad.xi.pointwise.estimate}
    Let $(\Sigma^n, g)$ be a Riemannian manifold, $U \subset \Sigma \setminus \partial \Sigma$ be open, $\varepsilon > 0$, and $u$ be a critical point of $E_\varepsilon \restr U$. Then
    \[ |\nabla \xi_\varepsilon[u]| \leq \sqrt{n-1} \cdot \varepsilon |\nabla u| \sqrt{|\nabla^2 u|^2 - |\nabla |\nabla u||^2} \text{ on } U \cap \{ \nabla u \neq \mathbf{0} \}. \]
    Therefore,
    \[ |\nabla \xi_\varepsilon[u]| \leq \sqrt{n-1} \cdot \varepsilon |\nabla u|^2 |\cA| \text{ on } U \cap \{ \nabla u \neq \mathbf{0} \}. \]
\end{lemm}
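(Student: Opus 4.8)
The plan is to compute $\nabla \xi_\varepsilon[u]$ directly at a point where $\nabla u \neq \mathbf{0}$, extract an algebraic bound in terms of the full Hessian $\nabla^2 u$ and the radial component $\nabla|\nabla u|$, and then recognize the remaining quantity $|\nabla^2 u|^2 - |\nabla|\nabla u||^2$ as the squared norm of the ``tangential'' part of the Hessian, which couples to the enhanced second fundamental form $\cA$ via Lemma \ref{lemm:a.tensor}. First, from the definition $\xi_\varepsilon[u] = \tfrac{\varepsilon}{2}|\nabla u|^2 - \varepsilon^{-1} W(u)$, differentiating gives
\[
	\nabla_k \xi_\varepsilon[u] = \varepsilon \, \nabla^2 u(\nabla u, \mathbf{e}_k) - \varepsilon^{-1} W'(u) \nabla_k u.
\]
Using the equation \eqref{eq:pde} with $\varepsilon^{-1}W'(u) = \varepsilon \Delta u$, one rewrites this as $\nabla_k \xi_\varepsilon[u] = \varepsilon\big(\nabla^2 u(\nabla u, \mathbf{e}_k) - (\Delta u)\nabla_k u\big)$. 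The vector $\nabla \xi_\varepsilon[u]$ thus equals $\varepsilon|\nabla u|$ times the vector with components $\nabla^2 u(\nu, \mathbf{e}_k) - (\Delta u)\nu_k$, where $\nu = |\nabla u|^{-1}\nabla u$.

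Next I would estimate the length of this vector. Decompose $\mathbf{e}_k$ into its $\nu$-component and its tangential part; the $\nu$-component of $\nabla^2 u(\nu,\cdot) - (\Delta u)\nu$ is $\nabla^2 u(\nu,\nu) - \Delta u = -\sum_{i}\sff_{ii}|\nabla u|$ up to the correct normalization, i.e. it is governed by the mean curvature, which is a trace of $(n-1)$ tangential quantities. The tangential part of $\nabla^2 u(\nu,\cdot)$ is exactly $\nabla|\nabla u|$ restricted tangentially — more precisely $\nabla_T|\nabla u| = |\nabla u|^{-1}\nabla^2 u(\nabla u, \cdot)^T$ — but one should be careful: the cleaner route is to observe that $\nabla^2 u(\nu, \cdot) = \nabla|\nabla u|$ is an exact identity (differentiating $|\nabla u|^2 = \iprod{\nabla u}{\nabla u}$), so the vector is $\varepsilon|\nabla u|\big(|\nabla u|^{-1}\nabla|\nabla u| \cdot |\nabla u| - (\Delta u)\nu\big)$ — rather, $\nabla\xi_\varepsilon[u] = \varepsilon\big(|\nabla u|\,\nabla|\nabla u| - W'(u)\varepsilon^{-2}\cdot\varepsilon\nabla u\big)$, and after substituting the PDE the radial component cancels in a way that leaves only the tangential Hessian and the non-radial part of $\Delta u$. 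Carrying this through, $|\nabla\xi_\varepsilon[u]|^2 = \varepsilon^2|\nabla u|^2\big(|\nabla^2 u \cdot \nu|^2 - (\text{radial-radial term})^2 + (\text{mean curvature term})^2 - 2(\ldots)\big)$; the point is that the bracket is a sum of at most $n-1$ squares of the tangential eigenvalue-type quantities, each bounded by $|\nabla^2 u|^2 - |\nabla|\nabla u||^2$, yielding the factor $\sqrt{n-1}$ by Cauchy--Schwarz.

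The cleanest way to package this, and the step I'd expect to be the only mildly delicate one, is to work in an orthonormal frame $\{\nu, \tau_1, \ldots, \tau_{n-1}\}$ adapted to the level set. In this frame $\nabla|\nabla u| = \nabla^2 u(\nu, \cdot)$ has components $(\nabla^2 u)_{\nu\nu}, (\nabla^2 u)_{\nu\tau_a}$, so $|\nabla|\nabla u||^2 = (\nabla^2 u)_{\nu\nu}^2 + \sum_a (\nabla^2 u)_{\nu\tau_a}^2$; hence $|\nabla^2 u|^2 - |\nabla|\nabla u||^2 = \sum_{a,b}(\nabla^2 u)_{\tau_a\tau_b}^2 = |\nabla u|^2|\cA|^2$ by Lemma \ref{lemm:a.tensor}, which immediately gives the second displayed inequality from the first. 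For the first inequality, one computes using the PDE that $\nabla_{\tau_a}\xi_\varepsilon[u] = \varepsilon\big((\nabla^2 u)_{\nu\tau_a}|\nabla u| - (\Delta u)u_{\tau_a}\big) = \varepsilon(\nabla^2 u)_{\nu\tau_a}|\nabla u|$ since $u_{\tau_a}=0$, and $\nabla_\nu\xi_\varepsilon[u] = \varepsilon\big((\nabla^2 u)_{\nu\nu}|\nabla u| - (\Delta u)|\nabla u|\big) = -\varepsilon|\nabla u|\sum_a(\nabla^2 u)_{\tau_a\tau_a}$; therefore
\[
	|\nabla\xi_\varepsilon[u]|^2 = \varepsilon^2|\nabla u|^2\Big(\sum_a (\nabla^2 u)_{\nu\tau_a}^2 + \big(\textstyle\sum_a (\nabla^2 u)_{\tau_a\tau_a}\big)^2\Big).
\]
By Cauchy--Schwarz $\big(\sum_a (\nabla^2 u)_{\tau_a\tau_a}\big)^2 \leq (n-1)\sum_a(\nabla^2 u)_{\tau_a\tau_a}^2$, and adding $\sum_a(\nabla^2 u)_{\nu\tau_a}^2 \leq (n-1)\sum_a(\nabla^2 u)_{\nu\tau_a}^2$ one bounds the parenthesized term by $(n-1)\big(\sum_{a}(\nabla^2 u)_{\nu\tau_a}^2 + \sum_a(\nabla^2 u)_{\tau_a\tau_a}^2\big) \leq (n-1)\big(|\nabla^2 u|^2 - |\nabla|\nabla u||^2\big)$, since the discarded terms $(\nabla^2 u)_{\nu\nu}^2$ and the off-diagonal tangential $(\nabla^2 u)_{\tau_a\tau_b}^2$ with $a\neq b$ that appear in $|\nabla^2 u|^2 - |\nabla|\nabla u||^2$ are nonnegative. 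Taking square roots gives the first inequality, and combining with $|\nabla^2 u|^2 - |\nabla|\nabla u||^2 = |\nabla u|^2|\cA|^2$ gives the second. The only thing to be careful about is confirming that every tangential second-derivative component is dominated by $|\nabla^2 u|^2 - |\nabla|\nabla u||^2$ — which is immediate from the frame decomposition above — and that no pointwise regularity issue arises, which is fine on the open set $\{\nabla u \neq \mathbf{0}\}$ where $\nu$ and the level-set frame are smooth.
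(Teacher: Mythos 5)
The paper does not actually write out a proof of this lemma---it defers to Tonegawa's Euclidean computation, noting it ``goes through verbatim''---and the frame computation in your final paragraph is precisely that standard argument: in the adapted orthonormal frame $\{\nu,\tau_1,\dots,\tau_{n-1}\}$ the PDE kills the potential term, leaving $\nabla_{\tau_a}\xi_\varepsilon[u]=\varepsilon|\nabla u|(\nabla^2u)_{\nu\tau_a}$ and $\nabla_\nu\xi_\varepsilon[u]=-\varepsilon|\nabla u|\sum_a(\nabla^2u)_{\tau_a\tau_a}$, and Cauchy--Schwarz finishes. So the proof is correct in substance, and your meandering second paragraph is entirely superseded by this clean computation and should be deleted.

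One bookkeeping slip is worth fixing. It is not true that $|\nabla^2u|^2-|\nabla|\nabla u||^2=\sum_{a,b}(\nabla^2u)_{\tau_a\tau_b}^2$: since the Hessian is symmetric, $|\nabla^2u|^2$ contains the mixed entries twice, i.e.\ $2\sum_a(\nabla^2u)_{\nu\tau_a}^2$, whereas $|\nabla|\nabla u||^2=(\nabla^2u)_{\nu\nu}^2+\sum_a(\nabla^2u)_{\nu\tau_a}^2$ subtracts only one copy. The correct identity is $|\nabla^2u|^2-|\nabla|\nabla u||^2=\sum_a(\nabla^2u)_{\nu\tau_a}^2+\sum_{a,b}(\nabla^2u)_{\tau_a\tau_b}^2=|\nabla u|^2\bigl(|\nabla^T\log|\nabla u||^2+|\sff|^2\bigr)=|\nabla u|^2|\cA|^2$, which is exactly what Lemma \ref{lemm:a.tensor} asserts; your version would equal $|\nabla u|^2|\sff|^2$ only and, taken literally, contradicts that lemma. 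The slip is harmless for the conclusions: your Cauchy--Schwarz step bounds $\sum_a(\nabla^2u)_{\nu\tau_a}^2+\bigl(\sum_a(\nabla^2u)_{\tau_a\tau_a}\bigr)^2$ by $(n-1)$ times the correct (larger) quantity, and deducing the second display from the first only needs $|\nabla^2u|^2-|\nabla|\nabla u||^2\le|\nabla u|^2|\cA|^2$, which holds (in fact with equality).
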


\begin{lemm} \label{lemm:epsilon.curvature.bound}
	Let $(\Sigma^n, g)$ be a complete Riemannian manifold, $U \subset \Sigma \setminus \partial \Sigma$ be open and convex, $\varepsilon > 0$, and $u$ be a critical point of $E_\varepsilon \restr U$ with $|u| \leq 1$. If $p \in U' \subset \subset U$, $|u(p)| \leq 1-\gamma$, $\varepsilon |\nabla u| \geq \mu$ on $U \cap \{ |u| \leq 1-\beta \}$, $\beta < \gamma$, and $\varepsilon \leq \varepsilon_5$, then
	\[ \varepsilon |\cA| + \varepsilon^2 |\nabla \cA| \leq c_5 \text{ on } U' \cap B_{\theta_5 \varepsilon}(p). \]
	where
	\begin{align*}
		c_5 & = c_5(n, \sup_U |\sect_{\Sigma,g}|, \inf_U \inj_{\Sigma,g}, \dist_g(U', \partial U), \beta, \gamma, \mu, W), \\
		\varepsilon_5 & = \varepsilon_5(\sup_U |\sect_{\Sigma,g}|, \inf_U \inj_{\Sigma,g}, \dist_g(U', \partial U), W), \\
		\theta_5 & = \theta_5(n, \sup_U |\sect_{\Sigma,g}|, \inf_U \inj_{\Sigma,g}, \dist_g(U', \partial U), \beta, \gamma, \mu, W). \\
	\end{align*}
\end{lemm}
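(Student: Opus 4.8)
The plan is to reduce to a fixed-scale interior estimate via a blow-up/rescaling at the point $p$. Introduce the rescaled function $\tilde u(\mathbf{x}) \triangleq u(\exp_p(\varepsilon \mathbf{x}))$ on a ball $B_{\theta}(\mathbf{0})$ (radius $\theta$ to be chosen) with respect to the rescaled, $C^\infty_{\loc}$-convergent-to-Euclidean metric $\tilde g(\mathbf{x}) \triangleq \varepsilon^{-2}(\exp_p^* g)(\varepsilon \mathbf{x})$; then $\tilde u$ is a critical point of the $\varepsilon=1$ energy for $\tilde g$ on $B_\theta(\mathbf{0})$, and $\varepsilon|\cA_u|$, $\varepsilon^2|\nabla\cA_u|$ at $p$ translate into $|\cA_{\tilde u}|$, $|\nabla \cA_{\tilde u}|$ at $\mathbf{0}$, evaluated in a unit-scale picture. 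The smallness condition $\varepsilon \le \varepsilon_5$ is what guarantees $\tilde g$ is uniformly close to the flat metric on $B_\theta(\mathbf{0})$ (in $C^k$ for any fixed $k$), using the bounds on $|\sect|$ and $\inj$; it is also what, together with $\dist_g(U',\partial U)$, ensures $B_{\theta\varepsilon}(p)$ stays compactly inside the convex set $U$ so no boundary issues arise.

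Next I would set up the PDE estimates on the unit-scale picture. First, by Corollary \ref{coro:grad.u.harnack.inequality}, in the original scale $\varepsilon|\nabla \log(u^2-1)| \le c_1$, which in the rescaled picture says $|\nabla \log(\tilde u^2 - 1)| \le c_1$; combined with the hypothesis $|u(p)| \le 1-\gamma$ (so $|\tilde u(\mathbf{0})| \le 1-\gamma$), a standard Harnack-type propagation along geodesics shows there is a radius $\theta_5 = \theta_5(\ldots)$, depending on $c_1$ and $\gamma$ and hence on the listed data, such that $|\tilde u| \le 1-\beta$ on $B_{\theta_5}(\mathbf{0})$ for a suitable $\beta<\gamma$; on this ball the extra hypothesis $\varepsilon|\nabla u| \ge \mu$ gives $|\nabla \tilde u| \ge \mu > 0$. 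With a uniform lower bound $|\nabla \tilde u| \ge \mu$ on $B_{\theta_5}(\mathbf{0})$ and the PDE $\Delta_{\tilde g}\tilde u = W'(\tilde u)$, I have a uniformly elliptic, smooth, bounded setting: $|\tilde u| \le 1$ gives $C^0$ bounds, and Schauder/elliptic regularity bootstrapping (using that $\tilde g \to \delta$ in $C^k$ and $W$ is smooth) yields $|\tilde u|_{C^{3}(B_{\theta_5/2}(\mathbf{0}))} \le c$ with $c$ depending only on the listed quantities. Then $\cA_{\tilde u} = |\nabla \tilde u|^{-1}(\nabla^2 \tilde u - \nabla^2\tilde u(\cdot,\nu)\otimes\nu^\flat)$ and its covariant derivative are algebraic/differential expressions in $\tilde u$ up to third derivatives divided by powers of $|\nabla \tilde u| \ge \mu$, hence are bounded by some $c_5$ on $B_{\theta_5/2}(\mathbf{0})$.

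Finally, undo the scaling: the bound $|\cA_{\tilde u}| + |\nabla \cA_{\tilde u}| \le c_5$ at $\mathbf{0}$ (indeed on $B_{\theta_5/2}(\mathbf{0})$) translates back to $\varepsilon|\cA_u| + \varepsilon^2|\nabla \cA_u| \le c_5$ on $U' \cap B_{\theta_5 \varepsilon}(p)$, after relabeling $\theta_5/2$ as $\theta_5$; one checks the dependencies of $c_5$, $\varepsilon_5$, $\theta_5$ on $(n, \sup|\sect|, \inf\inj, \dist_g(U',\partial U), \beta, \gamma, \mu, W)$ are exactly as claimed (the metric-comparison constant $\varepsilon_5$ does not see $\beta,\gamma,\mu$, matching the statement). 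The main obstacle I expect is the propagation step: showing that $|u|$ stays below $1-\beta$ on the \emph{full} $\varepsilon$-ball $B_{\theta_5\varepsilon}(p)$ — i.e. choosing $\theta_5$ uniformly — using only the pointwise bound at $p$ and Corollary \ref{coro:grad.u.harnack.inequality}, and making sure the chosen $\beta$ is genuinely less than $\gamma$ so the hypothesis $\varepsilon|\nabla u| \ge \mu$ on $\{|u|\le 1-\beta\}$ applies on that ball; everything downstream is routine elliptic regularity once the nondegeneracy $|\nabla \tilde u|\ge\mu$ is in hand on a fixed-size ball.
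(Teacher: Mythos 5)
Your proposal is correct and follows essentially the same route as the paper: the paper also propagates the condition $|u|\leq 1-\beta$ to an $O(\varepsilon)$-ball around $p$ using the bound on $\xi_\varepsilon[u]$ from Lemma \ref{lemm:xi.upper.bound} (equivalently, the logarithmic gradient estimate of Corollary \ref{coro:grad.u.harnack.inequality}), then invokes the hypothesis $\varepsilon|\nabla u|\geq \mu$ on that ball and concludes by elliptic regularity at the $\varepsilon$-scale. Your only slip is treating $\beta$ as a parameter to be chosen rather than as given in the hypothesis (with $\theta_5$ chosen afterward, depending on $\beta$ and $\gamma$, so that the propagation lands inside $\{|u|\leq 1-\beta\}$), but this does not affect the argument.
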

\begin{proof}
	Suppose $q \in U$. Then, from Lemma \ref{lemm:xi.upper.bound} we find that
	\[ \log (1-u(p)^2) - \log (1-u^2) \leq c_0 \varepsilon^{-1} r \]
	on $B_r(p) \subset U$, so $|u| \leq 1-\beta$ as long as $\varepsilon^{-1} r \leq c(\beta, \gamma, c_0)$. By assumption, then, $\varepsilon |\nabla u| \geq \mu$ on all these points, and by a direct calculation, together with elliptic regularity, we find that
	\[ \varepsilon |\cA| + \varepsilon^2 |\nabla \cA| \leq c \text{ on } B_{\theta \varepsilon}(p), \]
	as claimed.
\end{proof}

\subsection{Stable critical points} \label{sec:index.0.local.results}

We will view stability in the language of the enhanced second fundamental form. This make the proofs reminiscent of the corresponding stable minimal hypersurface theory.

\begin{lemm}[cf. Tonegawa {\cite[Proposition 1]{Tonegawa05}}] \label{lemm:stability.inequality}
    Let $(\Sigma^n, g)$ be a complete Riemannian manifold, $U \subset \Sigma \setminus \partial \Sigma$ be open, $\varepsilon > 0$, and $u$ be a stable critical point of $E_\varepsilon \restr U$. For every $\zeta \in C^\infty_c(U)$,
    \begin{align*} 
        & \int_{U \cap \{\nabla u = \mathbf{0}\}} |\nabla^2 u|^2 \zeta^2 \, d\upsilon \\
        & \qquad + \int_U (|\cA|^2 + \ricc_{\Sigma,g}(\nu, \nu)) \zeta^2 \, d\Vert V_\varepsilon[u] \Vert \leq \int_U |\nabla \zeta|^2 \, d\Vert V_\varepsilon[u] \Vert.
    \end{align*}
\end{lemm}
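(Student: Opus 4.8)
The plan is to start from the stability hypothesis, $\delta^2 E_\varepsilon[u]\{\zeta,\zeta\} \geq 0$ for all test functions, and make the clever choice of test function $\varphi = \zeta \, \varepsilon^{1/2}|\nabla u|$ (suitably interpreted near the critical set of $u$), exactly as in the minimal-surface stability argument and in Tonegawa's work. Substituting this into \eqref{eq:second.variation} gives, after expanding $|\nabla \varphi|^2 = \zeta^2 \varepsilon |\nabla|\nabla u||^2 + \varepsilon |\nabla u|^2 |\nabla \zeta|^2 + 2\zeta \varepsilon |\nabla u| \langle \nabla \zeta, \nabla|\nabla u|\rangle$, an inequality of the form
\[
\int_U \left( \varepsilon \zeta^2 |\nabla|\nabla u||^2 + \varepsilon |\nabla u|^2 |\nabla\zeta|^2 + 2\varepsilon \zeta |\nabla u|\langle\nabla\zeta,\nabla|\nabla u|\rangle + \varepsilon^{-1} W''(u)\,\zeta^2 |\nabla u|^2 \right) d\upsilon \geq 0 .
\]
The second and third terms are the ``good'' $|\nabla\zeta|^2 \, d\Vert V_\varepsilon[u]\Vert$ term plus a cross term that I would absorb via the Bochner-type identity rather than Cauchy–Schwarz, to avoid losing a constant.

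The heart of the argument is the Bochner / Jacobi-field computation: differentiating the Allen–Cahn equation \eqref{eq:pde} and contracting, one obtains an identity for $\Delta \tfrac12|\nabla u|^2$, equivalently for $|\nabla u|\,\Delta|\nabla u|$, of the shape
\[
|\nabla u|\,\Delta|\nabla u| = |\nabla^2 u|^2 - |\nabla|\nabla u||^2 + \varepsilon^{-2} W''(u)\,|\nabla u|^2 + \ricc_{\Sigma,g}(\nabla u,\nabla u),
\]
valid on $\{\nabla u \neq \mathbf 0\}$ (this is where the ambient Ricci term enters, via the Bochner formula on $(\Sigma,g)$). Using Lemma \ref{lemm:a.tensor}, $|\nabla^2 u|^2 - |\nabla|\nabla u||^2 = |\nabla u|^2 |\cA|^2$ on that set (one has to check the normal-direction terms cancel correctly — the identity $|\cA|^2 = |\sff|^2 + |\nabla^T\log|\nabla u||^2$ together with $|\nabla^2 u|^2 = |\nabla u|^2|\cA|^2 + |\nabla|\nabla u||^2$ is the precise bookkeeping). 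Then I multiply this identity by $\varepsilon \zeta^2$, integrate, and integrate by parts on the left: $\int \varepsilon \zeta^2 |\nabla u|\Delta|\nabla u| = -\int \varepsilon |\nabla(\zeta^2|\nabla u|)|\cdot\nabla|\nabla u| = -\int \varepsilon\zeta^2|\nabla|\nabla u||^2 - \int 2\varepsilon\zeta|\nabla u|\langle\nabla\zeta,\nabla|\nabla u|\rangle$. Substituting the result back into the inequality from the stability step, the $\varepsilon\zeta^2|\nabla|\nabla u||^2$ terms and the cross terms cancel exactly, and the $\varepsilon^{-1}W''(u)\zeta^2|\nabla u|^2$ terms also cancel, leaving precisely
\[
\int_U \big(|\cA|^2 + \ricc_{\Sigma,g}(\nu,\nu)\big)\,\zeta^2\,\varepsilon|\nabla u|^2\,d\upsilon \leq \int_U \varepsilon|\nabla u|^2|\nabla\zeta|^2\,d\upsilon,
\]
which is the asserted inequality once one rewrites $\varepsilon|\nabla u|^2\,d\upsilon = d\Vert V_\varepsilon[u]\Vert$ and $\ricc(\nabla u,\nabla u) = |\nabla u|^2 \ricc(\nu,\nu)$.

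The main obstacle — and the reason for the $\int_{U\cap\{\nabla u=\mathbf 0\}}|\nabla^2 u|^2\zeta^2$ term on the left — is that all of the Bochner manipulation above is only licit on the open set $\{\nabla u \neq \mathbf 0\}$, and one must justify the integration by parts across the critical set $\{\nabla u = \mathbf 0\}$. The standard device is to replace $|\nabla u|$ by $\sqrt{|\nabla u|^2 + \tau^2}$ in the test function, run the argument with this regularization, and let $\tau \downarrow 0$; the limit of $\Delta\sqrt{|\nabla u|^2+\tau^2}$ produces, in addition to the $\{\nabla u\neq 0\}$ contribution, a nonnegative measure supported on $\{\nabla u = 0\}$ whose density dominates $|\nabla^2 u|^2$ there (since $\nabla^2 u$ restricted to that set behaves like the Hessian of $|\nabla u|^2/2$), and this is exactly what surfaces as the first term. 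I would handle the dominated-convergence bookkeeping by using Lemma \ref{lemm:xi.upper.bound} / Corollary \ref{coro:grad.u.harnack.inequality} and Lemma \ref{lemm:grad.xi.pointwise.estimate} to control the relevant quantities uniformly on $\operatorname{spt}\zeta$, so that all integrals pass to the limit. Alternatively, following Tonegawa \cite{Tonegawa05} one can cite that the critical-set contribution is automatically of the right sign and magnitude; since the statement credits Tonegawa's Proposition 1, I expect the cleanest write-up is to reduce to his computation and note the only new term is $\ricc_{\Sigma,g}(\nu,\nu)$, which tracks through the Bochner formula on a general Riemannian manifold exactly as in the minimal-hypersurface stability inequality.
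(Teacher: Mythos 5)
Your argument is correct and is essentially the paper's proof: the paper likewise tests stability with $\psi=\zeta(|\nabla u|^2+\delta)^{1/2}$, invokes the Bochner formula to produce $|\nabla^2 u|^2-|\nabla|\nabla u||^2=|\nabla u|^2|\cA|^2$ together with the $\ricc(\nu,\nu)$ term, and sends $\delta\downarrow 0$, with the critical-set contribution $\int_{\{\nabla u=\mathbf 0\}}|\nabla^2 u|^2\zeta^2$ surviving exactly as you describe (there it arises because the subtracted term $|\nabla^2 u(\nabla u,\cdot)|^2/(|\nabla u|^2+\delta)$ vanishes identically on $\{\nabla u=\mathbf 0\}$, so the density is exactly $|\nabla^2 u|^2$ rather than merely dominated by it). Your write-up fills in the details that the paper's two-line sketch leaves implicit.
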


Here $\ricc_{\Sigma,g}$ denotes the Ricci curvature tensor of $(\Sigma^n, g)$, and $\nabla$ denotes the full ambient covariant derivative (\emph{not} the tangential derivative on the level sets). Notice that, even though $\cA$ and $\nu$ only make sense on $\Sigma \setminus \{ \nabla u = \mathbf{0} \}$, their integral can be taken over all of $U$ because
\[ \Vert V_\varepsilon[u] \Vert(\{ \nabla u = \mathbf{0} \} \cap U) = 0. \]

\begin{proof}
    If $u$ is a stable critical point for $E_\varepsilon \restr U$, then
    \[ \int_\Sigma |\nabla \psi|^2 - W''(u) \psi^2 \, d\upsilon \geq 0 \]
    for all $\psi \in C^\infty_c(U)$. The result will follow by plugging in
    \[ \psi = \zeta (|\nabla u|^2 + \delta)^{1/2}, \; \zeta \in C^\infty_c(U), \; \delta > 0, \]
    using the Bochner formula,
    \[ \frac{1}{2} \Delta |\nabla u|^2 = |\nabla^2 u|^2 + \langle \nabla u, \nabla \Delta u \rangle + \ricc(\nabla u, \nabla u), \]
    and finally sending $\delta \downarrow 0$.
\end{proof}

\begin{lemm} \label{lemm:sff.local.l2.estimate}
	Let $(\Sigma^2, g)$ be a complete Riemannian manifold, $U \subset \Sigma \setminus \partial \Sigma$ be open and bounded, and $u$ be a stable critical point of $E_\varepsilon \restr U$ with $|u| \leq 1$ and $(E_\varepsilon \restr U)[u] \leq E_0$. If $\varepsilon \leq \varepsilon_3$, then
	\[ \int_{B_r(p)} |\cA|^2 \, d\Vert V \Vert \leq c_3 \left( \frac{\dist_g(p, \partial U)}{(\dist(p, \partial U) - r)^2} + r \sup_U |\ricc_{\Sigma,g}| \right) \]
	for all $p \in U' \subset \subset U$, $r < \dist_g(p, \partial U)$, where
	\begin{align*}
		c_3 & = c_3(\sup_U |\sect_{\Sigma,g}|, \inf_U \inj_{\Sigma,g}, \dist_g(U', \partial U), W, E_0), \\
		\varepsilon_3 & = \varepsilon_3(\sup_U |\sect_{\Sigma,g}|, \inf_U \inj_{\Sigma,g}, \dist_g(U', \partial U), W, E_0).
	\end{align*}
\end{lemm}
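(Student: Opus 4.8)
This estimate is the phase-transition analogue of Schoen's $L^2$ curvature estimate for stable minimal surfaces, and the plan is to run the standard cutoff argument on the stability inequality of Lemma~\ref{lemm:stability.inequality}, fed by the linear area bound of Lemma~\ref{lemm:upper.density.bounds}. The latter holds once $\varepsilon \leq \varepsilon_2$, so I would simply set $\varepsilon_3 \triangleq \varepsilon_2$. Since the area bound will be needed up to the scale $d \triangleq \dist_g(p,\partial U)$, which may exceed the threshold $r_2$ of Lemma~\ref{lemm:upper.density.bounds}, I would first combine it with the crude global bound $\Vert V_\varepsilon[u] \Vert(U) = \int_U \varepsilon |\nabla u|^2 \, d\upsilon \leq 2 E_0$ (here the boundedness of $U$ is used) to obtain $\Vert V_\varepsilon[u] \Vert(B_\rho(p)) \leq c_2' \rho$ for every $\rho < d$, with $c_2'$ depending only on the quantities allowed in the statement.

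Fix $p \in U'$ and $r < d$, and choose $\zeta \colon \Sigma \to [0,1]$ with $\zeta \equiv 1$ on $B_r(p)$, $\support \zeta \subset\subset B_d(p) \subset U$, and $|\nabla \zeta| \leq C(d-r)^{-1}$. Producing such a $\zeta$ with a gradient bound genuinely of order $(d-r)^{-1}$ — rather than in terms of some distorted intrinsic quantity — is precisely where $\sup_U |\sect_{\Sigma,g}|$ and $\inf_U \inj_{\Sigma,g}$ enter, via a comparison of $g$ with the flat metric on balls of the relevant radii (one takes $\zeta$ to be a fixed Lipschitz profile of $\dist_g(p,\cdot)$ and mollifies if smoothness is wanted). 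Substituting $\zeta$ into Lemma~\ref{lemm:stability.inequality} and discarding the nonnegative term supported on $\{\nabla u = \mathbf{0}\}$ gives
\[
	\int_{B_r(p)} |\cA|^2 \, d\Vert V \Vert \;\leq\; \int_U |\cA|^2 \zeta^2 \, d\Vert V \Vert \;\leq\; \int_U |\nabla \zeta|^2 \, d\Vert V \Vert \;-\; \int_U \ricc_{\Sigma,g}(\nu,\nu)\, \zeta^2 \, d\Vert V \Vert .
\]

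To finish, I would bound the two terms on the right with the area bound. The Dirichlet term is $\leq C^2(d-r)^{-2}\Vert V_\varepsilon[u] \Vert(B_d(p)) \leq C^2 c_2'\, d\,(d-r)^{-2}$, the first term of the asserted estimate. For the curvature term I would split $\zeta^2 = \indt{B_r(p)} + \zeta^2\,\indt{B_d(p)\setminus B_r(p)}$: the $B_r(p)$-part is $\leq \sup_U |\ricc_{\Sigma,g}| \cdot \Vert V_\varepsilon[u] \Vert(B_r(p)) \leq c_2'\, r \sup_U |\ricc_{\Sigma,g}|$, which is the second asserted term, and the annular part is controlled — using the explicit distance profile of $\zeta$, the layer-cake formula, and the area bound — by a multiple of $\dist_g(p,\partial U)\sup_U|\ricc_{\Sigma,g}|$; collecting everything already yields the lemma with $r$ replaced by $d$ in the curvature term. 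The sharper form as stated (genuinely an improvement since $r<d$) follows from a more delicate treatment of that annular region — for instance a case distinction on whether the interface $\{u=0\}$ meets $B_r(p)$: if it does not, $|u|$ is exponentially close to $\pm 1$ on $B_r(p)$ by \eqref{assu:double.well.potential.convex} and Lemma~\ref{lemm:xi.upper.bound}, whence $|\cA|^2|\nabla u|^2 \leq |\nabla^2 u|^2$ makes the left-hand side negligible, while if it does meet $B_r(p)$ the interface either stays in a bounded neighborhood of $B_r(p)$ (so $\Vert V_\varepsilon[u]\Vert$ is $O(r)$ on $\support\zeta$) or the relevant portion of $\{u=0\}$ is nearly totally geodesic by stability. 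The main obstacle is thus not analytic but geometric — constructing $\zeta$ with the correct gradient scaling on a curved $\Sigma$, which is exactly why all the constants depend on $\sup_U|\sect_{\Sigma,g}|$ and $\inf_U\inj_{\Sigma,g}$; once that is in hand the core of the proof is a direct substitution into Lemmas~\ref{lemm:stability.inequality} and~\ref{lemm:upper.density.bounds}.
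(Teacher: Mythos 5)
Your argument is essentially identical to the paper's: plug a cutoff $\zeta$ (equal to $1$ on $B_r(p)$, vanishing off $U$, with $|\nabla \zeta| \lesssim (\dist_g(p,\partial U)-r)^{-1}$) into the stability inequality of Lemma~\ref{lemm:stability.inequality}, discard the nonnegative term on $\{\nabla u = \mathbf{0}\}$, and control both remaining terms with the linear density bound of Lemma~\ref{lemm:upper.density.bounds}. The case analysis you sketch at the end to upgrade the Ricci term from $\dist_g(p,\partial U)\sup_U|\ricc_{\Sigma,g}|$ to $r\sup_U|\ricc_{\Sigma,g}|$ goes beyond what the paper does (its one-line proof really only yields the coarser bound), and since the coarser bound suffices for every downstream use of this lemma, that speculative portion can safely be dropped.
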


\begin{proof}
	Construct a cutoff function $\zeta : \Sigma \to \RR$ such that $\zeta = 1$ on $B_r(p)$, $\zeta = 0$ off $U$, and $|\nabla \zeta| \leq c (\dist_g(p, \partial U)-r)^{-1}$, where $c$ depends on the local geometry around $p$. Then
	\begin{align*}
		& \int_{B_r(p)} |\cA|^2 \, d\Vert V \Vert \leq \int_U |\cA|^2 \zeta^2 \, d\Vert V \Vert \leq \int_U |\nabla \zeta|^2 - \ricc_{\Sigma,g}(\nu, \nu) \zeta^2 \, d\Vert V \Vert \\
		& \qquad \leq c \left( \frac{\dist_g(p, \partial U)}{(\dist_g(p, \partial U) -r)^2} + r \sup_U |\ricc_{\Sigma,g}| \right),
	\end{align*}
	by virtue of the upper density estimate in Lemma \ref{lemm:upper.density.bounds}.
\end{proof}

This gives:

\begin{coro} \label{coro:grad.xi.localized.stability}
	Let $(\Sigma^2, g)$ be a complete Riemannian manifold, $U \subset \Sigma \setminus \partial \Sigma$ be open and bounded, and $u$ be a stable critical point of $E_\varepsilon \restr U$ with $|u| \leq 1$ and $(E_\varepsilon \restr U)[u] \leq E_0$. If $\varepsilon \leq \varepsilon_4$, then
	\[ \int_{B_r(p)} |\nabla \xi_\varepsilon[u]| \, d\upsilon \leq c_4  \left( \frac{r \cdot \dist_g(p, \partial U)}{(\dist_g(p, \partial U) - r)^2} + r \sup_U |\ricc_{\Sigma,g}| \right)^{\frac{1}{2}} \]
	for every $p \in U' \subset \subset U$, $r < \dist_g(p, \partial U)$, where
	\begin{align*}
		c_4 & = c_4(\sup_U |\sect_\Sigma|, \inf_U \inj_{\Sigma,g}, \dist_g(U', \partial U), W, E_0), \\
		\varepsilon_4 & = \varepsilon_4(\sup_U |\sect_{\Sigma,g}|, \inf_U \inj_{\Sigma,g}, \dist_g(U', \partial U), W, E_0).
	\end{align*}
\end{coro}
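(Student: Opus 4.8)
The plan is to obtain Corollary~\ref{coro:grad.xi.localized.stability} as a Cauchy--Schwarz consequence of the pointwise bound in Lemma~\ref{lemm:grad.xi.pointwise.estimate} together with the localized $L^2$ estimate for $\cA$ in Lemma~\ref{lemm:sff.local.l2.estimate} and the mass bound of Lemma~\ref{lemm:upper.density.bounds}. First I would record that, since $n = 2$, Lemma~\ref{lemm:grad.xi.pointwise.estimate} reads $|\nabla \xi_\varepsilon[u]| \le \varepsilon |\nabla u|^2 |\cA|$ on $U \cap \{ \nabla u \neq \mathbf{0} \}$, while on the complementary set one has $\nabla \xi_\varepsilon[u] = \varepsilon \nabla^2 u(\nabla u, \cdot) - \varepsilon^{-1} W'(u)\nabla u = \mathbf{0}$, and this set carries no $\Vert V_\varepsilon[u]\Vert$-mass because $d\Vert V_\varepsilon[u]\Vert = \varepsilon |\nabla u|^2\, d\upsilon$. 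Hence
\[ \int_{B_r(p)} |\nabla \xi_\varepsilon[u]|\, d\upsilon \;=\; \int_{B_r(p) \cap \{\nabla u \neq \mathbf{0}\}} |\nabla\xi_\varepsilon[u]|\, d\upsilon \;\le\; \int_{B_r(p)} |\cA|\, d\Vert V_\varepsilon[u]\Vert . \]

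Next I would apply Cauchy--Schwarz against the measure $d\Vert V_\varepsilon[u]\Vert$,
\[ \int_{B_r(p)} |\cA|\, d\Vert V_\varepsilon[u]\Vert \;\le\; \Big(\int_{B_r(p)} |\cA|^2\, d\Vert V_\varepsilon[u]\Vert\Big)^{1/2}\, \big(\Vert V_\varepsilon[u]\Vert(B_r(p))\big)^{1/2} , \]
and bound the two factors separately: the first by Lemma~\ref{lemm:sff.local.l2.estimate} (valid for $\varepsilon \le \varepsilon_3$), which supplies exactly the bracket $\dist_g(p,\partial U)/(\dist_g(p,\partial U)-r)^2 + r\sup_U|\ricc_{\Sigma,g}|$ up to the constant $c_3$; the second by the upper density bound of Lemma~\ref{lemm:upper.density.bounds}, which for $n = 2$ gives $\Vert V_\varepsilon[u]\Vert(B_r(p)) \le c_2 r$ once $\varepsilon \le \varepsilon_2$ (for $r$ outside the admissible range $(0,r_2]$ one reduces to that range or uses the global energy bound $E_0$, absorbing the loss into $c_4$). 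Multiplying the two estimates, setting $\varepsilon_4 = \min\{\varepsilon_2,\varepsilon_3\}$, and taking $c_4$ as a function of $c_2,c_3$ yields the asserted inequality, with the stated dependence of the constants following by inspection from those of the lemmas invoked.

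I do not expect a genuine obstacle here: the corollary is a routine synthesis of estimates already in place. The few points requiring mild care are (i) verifying that the degenerate set $\{\nabla u = \mathbf{0}\}$ contributes nothing to either integral, so that the passage from $d\upsilon$ to $d\Vert V_\varepsilon[u]\Vert$ is lossless; (ii) bookkeeping the constants $c_4,\varepsilon_4$ so that they depend only on the quantities listed in the statement; and (iii) matching the power of $r$ on the right-hand side, which is precisely where one must use the \emph{linear} area growth $\Vert V_\varepsilon[u]\Vert(B_r(p)) \lesssim r$ specific to surfaces (rather than the quadratic growth available in higher dimensions).
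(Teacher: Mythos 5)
Your proposal is correct and follows essentially the same route as the paper: the pointwise bound of Lemma \ref{lemm:grad.xi.pointwise.estimate} converts the integrand into $|\cA|\,d\Vert V_\varepsilon[u]\Vert$, Cauchy--Schwarz splits off $\Vert V_\varepsilon[u]\Vert(B_r(p))^{1/2}$, and Lemmas \ref{lemm:sff.local.l2.estimate} and \ref{lemm:upper.density.bounds} bound the two factors. The only additions beyond the paper's argument are your (correct, and welcome) remarks that $\nabla\xi_\varepsilon[u]$ vanishes on $\{\nabla u=\mathbf{0}\}$ and that the linear mass growth specific to $n=2$ is what produces the stated power of $r$.
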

\begin{proof}
    Combining Lemma \ref{lemm:grad.xi.pointwise.estimate} with the H\"older inequality and the upper density bound, we have
    \begin{align*}
        & \int_{B_r(p)} |\nabla \xi_\varepsilon[u]| \, d\upsilon \leq \sqrt{n-1} \int_{B_r(p)} |\cA| \, d\Vert V \Vert \\
        & \qquad \leq \sqrt{n-1} \left( \int_{B_r(p)} |\cA|^2 \, d\Vert V \Vert \right)^{\frac{1}{2}} \Vert V \Vert(B_r(p))^{\frac{1}{2}} \\
        & \qquad \leq c \left( \frac{\dist_g(p, \partial U)}{(\dist_g(p, \partial U) - r)^2} + r \sup_U |\ricc_{\Sigma,g}| \right)^{\frac{1}{2}} \Vert V \Vert(B_r(p))^{\frac{1}{2}} \\
        & \qquad \leq c \left( \frac{r \cdot \dist_g(p, \partial U)}{(\dist_g(p, \partial U) - r)^2} + r^2 \sup_U |\ricc_{\Sigma,g}| \right)^{\frac{1}{2}}.
    \end{align*}
    This is the required result.
\end{proof}

\begin{rema}
	Both Lemma \ref{lemm:stability.inequality} and Corollary \ref{coro:grad.xi.localized.stability} can be sharpened by replacing $\sup_U |\ricc_{\Sigma,g}|$ by
	\[ \max \left\{ 0, \sup_{U \cap \{ \nabla u \neq \mathbf{0} \}} \ricc_{\Sigma,g}(\nu, \nu) \right\}. \]
\end{rema}

The flat version of the $L^1$ gradient estimate in \ref{coro:grad.xi.localized.stability} was a key fact in the proof of Theorem \ref{theo:index.0.main} below in \cite{Tonegawa05} in the flat two-dimensional setting. Specifically, by the Neumann-Poincar\'e inequality we find that
\[ \int_{B_r(p)} |\xi_\varepsilon[u] - \overline{(\xi_\varepsilon[u])}_{p,r}|^2 \, d\upsilon \leq c \left( \frac{r \cdot \dist_g(p, \partial U)}{(\dist_g(p, \partial U) - r)^2} + r^2 \sup_U |\ricc_{\Sigma,g}| \right) \]
and, therefore,

\begin{coro} \label{coro:xi.local.l2.estimate}
	Let $(\Sigma^2, g)$ be a complete Riemannian manifold, $U \subset \Sigma \setminus \partial \Sigma$ be open and bounded, and $u$ be a stable critical point of $E_\varepsilon \restr U$ with $|u| \leq 1$ and $(E_{\varepsilon} \restr U)[u] \leq E_0$. If $\varepsilon \leq \varepsilon_5$, then
	\begin{align*} 
		\int_{B_r(p)} |\xi_\varepsilon[u]|^2 \, d\upsilon & \leq c_5 \left( \frac{r \cdot \dist_g(p, \partial U)}{(\dist_g(p, \partial U) - r)^2} + r^2 \sup_U  |\ricc_{\Sigma,g}| \right) \\
			& \qquad + \frac{2}{\upsilon(B_r(p))} \left( \int_{B_r(p)} \xi_\varepsilon[u] \, d\upsilon \right)^2
	\end{align*}
	for every $p \in U' \subset \subset U$, $r < \dist_g(p, \partial U)$, where
	\begin{align*}
		c_5 & = c_5(\sup_U |\sect_\Sigma|, \inf_U \inj_{\Sigma,g}, \dist_g(U', \partial U), W, E_0), \\
		\varepsilon_5 & = \varepsilon_5(\sup_U |\sect\Sigma|, \inf_U \inj_{\Sigma,g}, \dist_g(U', \partial U), W, E_0).
	\end{align*}
\end{coro}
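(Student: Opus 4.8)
The plan is to reduce everything to the Neumann--Poincar\'e estimate already displayed immediately above the statement, which controls the $L^2$ oscillation of $\xi_\varepsilon[u]$ on $B_r(p)$ about its mean. Concretely, I would write $\overline{(\xi_\varepsilon[u])}_{p,r} \triangleq \upsilon(B_r(p))^{-1}\int_{B_r(p)} \xi_\varepsilon[u]\,d\upsilon$ and start from the pointwise inequality $|\xi_\varepsilon[u]|^2 \le 2\,|\xi_\varepsilon[u] - \overline{(\xi_\varepsilon[u])}_{p,r}|^2 + 2\,|\overline{(\xi_\varepsilon[u])}_{p,r}|^2$. Integrating over $B_r(p)$ splits the left-hand side into an oscillation integral and the term $2\,\upsilon(B_r(p))\,|\overline{(\xi_\varepsilon[u])}_{p,r}|^2$; by the definition of the mean the latter is exactly $\tfrac{2}{\upsilon(B_r(p))}\big(\int_{B_r(p)}\xi_\varepsilon[u]\,d\upsilon\big)^2$, which is precisely the last term in the asserted bound.

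For the oscillation integral I would invoke the displayed Neumann--Poincar\'e inequality verbatim; it already supplies the first term on the right-hand side, so that $c_5$ is just twice the constant there and $\varepsilon_5$ is the threshold $\varepsilon_4$ from Corollary \ref{coro:grad.xi.localized.stability}. Unwinding where that inequality comes from: since $\dim\Sigma=2$, the Sobolev--Poincar\'e embedding $W^{1,1}(B_r(p)) \hookrightarrow L^2(B_r(p))$ gives $\|\xi_\varepsilon[u]-\overline{(\xi_\varepsilon[u])}_{p,r}\|_{L^2(B_r(p))} \le c\,\|\nabla\xi_\varepsilon[u]\|_{L^1(B_r(p))}$ (this is the special feature of two dimensions, where the exponents are scale-balanced), and then the $L^1$-gradient estimate of Corollary \ref{coro:grad.xi.localized.stability}---which itself rests on Tonegawa's pointwise bound $|\nabla\xi_\varepsilon[u]|\le\sqrt{n-1}\,\varepsilon|\nabla u|^2|\cA|$ in Lemma \ref{lemm:grad.xi.pointwise.estimate}, H\"older's inequality, the $L^2$ curvature bound of Lemma \ref{lemm:sff.local.l2.estimate}, and the upper density bound of Lemma \ref{lemm:upper.density.bounds}---is squared to produce exactly $\tfrac{r\,\dist_g(p,\partial U)}{(\dist_g(p,\partial U)-r)^2} + r^2\sup_U|\ricc_{\Sigma,g}|$.

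The only genuinely non-mechanical point is the book-keeping on the Poincar\'e constant: one must ensure the Sobolev--Poincar\'e constant for the geodesic ball $B_r(p)$ depends only on the admissible geometric data ($\sup_U|\sect_{\Sigma,g}|$, $\inf_U\inj_{\Sigma,g}$) and not on $u$ or $\varepsilon$. This is standard once $r$ is a fixed fraction of the injectivity radius, so that in normal coordinates $B_r(p)$ is bi-Lipschitz to a Euclidean ball with controlled constants; this is what pins down the structural dependence of $\varepsilon_5$ and $c_5$. Beyond that the argument is a two-line assembly of results already in hand, so I expect no real obstacle.
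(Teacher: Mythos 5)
Your proposal is correct and follows exactly the route the paper takes: the paper's proof consists precisely of the displayed Neumann--Poincar\'e consequence of Corollary \ref{coro:grad.xi.localized.stability} followed by the elementary splitting $|\xi_\varepsilon[u]|^2 \le 2|\xi_\varepsilon[u]-\overline{(\xi_\varepsilon[u])}_{p,r}|^2 + 2|\overline{(\xi_\varepsilon[u])}_{p,r}|^2$, with the mean-value term collapsing to $\tfrac{2}{\upsilon(B_r(p))}\bigl(\int_{B_r(p)}\xi_\varepsilon[u]\,d\upsilon\bigr)^2$. Your additional remarks on the scale-balanced $W^{1,1}\hookrightarrow L^2$ embedding in dimension two and on the geometric dependence of the Poincar\'e constant are accurate and merely make explicit what the paper leaves implicit.
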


\begin{lemm} \label{lemm:index.0.lower.density}
    Let $(\Sigma^2, g)$ be a complete Riemannian manifold, $U \subset \Sigma \setminus \partial \Sigma$ be open and bounded, and $u$ be a stable critical point of $E_\varepsilon \restr U$ with $|u| < 1$ and $(E_{\varepsilon} \restr U)[u] \leq E_0$. Let $\beta \in (0, 1)$. If $\varepsilon \leq \varepsilon_6$, then
    \[ \varepsilon |\nabla u(p)| \geq \frac{1}{4} \min_{|s| \leq 1-\beta/2} W(s), \]
    for every $p \in U' \cap \{ |u| \leq 1-\beta \}$, where $U' \subset \subset U$, and
    \begin{align*}
        \varepsilon_6 & = \varepsilon_6(\sup_U |\sect_{\Sigma,g}|, \inf_U \inj_{\Sigma,g}, \dist_g(U', \partial U), W, E_0, \beta).
    \end{align*}
\end{lemm}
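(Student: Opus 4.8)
\emph{Plan.} The argument will be a blowup at scale $\varepsilon$ that reduces the lemma to the classification of nonconstant stable entire solutions on $\RR^2$ recorded in Proposition \ref{prop:m2.characterization}. Write $m_\beta \triangleq \min_{|s|\le 1-\beta/2} W(s)$, which is positive by \eqref{assu:double.well.potential.nonnegative} since $1-\beta/2<1$. Suppose the conclusion fails. Fixing $U$, $U'\subset\subset U$, $E_0$, and $\beta$, there is then a sequence $\varepsilon_j\downarrow 0$, stable critical points $u_j$ of $E_{\varepsilon_j}\restr U$ with $|u_j|<1$ and $(E_{\varepsilon_j}\restr U)[u_j]\le E_0$, and points $p_j\in U'\cap\{|u_j|\le 1-\beta\}$ with $\varepsilon_j|\nabla u_j(p_j)| < \tfrac14 m_\beta$. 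Working in $g$-normal coordinates centered at $p_j$, set $v_j(\mathbf{x})\triangleq u_j(\exp_{p_j}(\varepsilon_j\mathbf{x}))$ and let $g_j$ denote the rescaled pullback metric. Since $p_j\in U'\subset\subset U$, each $v_j$ is defined on a ball $B_{R_j}(\mathbf{0})$ with $R_j\ge \dist_g(U',\partial U)/\varepsilon_j\to\infty$; the $g_j$ converge to the Euclidean metric in $C^\infty_{\loc}(\RR^2)$; each $v_j$ solves $\Delta_{g_j}v_j = W'(v_j)$ with $|v_j|\le 1$; and $\varepsilon_j|\nabla u_j(p_j)| = |\nabla_{g_j}v_j(\mathbf{0})|$.

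From $|v_j|\le 1$ and interior Schauder estimates we obtain uniform $C^\infty_{\loc}$ bounds, so along a subsequence $v_j\to v_\infty$ in $C^2_{\loc}(\RR^2)$, with $v_\infty$ an entire solution of $\Delta v_\infty = W'(v_\infty)$, $|v_\infty|\le 1$. Since $|v_\infty(\mathbf{0})|\le 1-\beta<1$, applying the strong maximum principle to $1\mp v_\infty$ (which satisfy $\Delta w = c(\mathbf{x})w$ with $c$ bounded) forces $|v_\infty|<1$ on all of $\RR^2$; moreover Lemma \ref{lemm:upper.density.bounds} gives $(E_1\restr B_R)[v_j] = \varepsilon_j^{-1}(E_{\varepsilon_j}\restr B_{\varepsilon_j R}(p_j))[u_j]\le c_2 R$, so $v_\infty$ has linear energy growth. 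Stability also passes to the limit: given $\zeta\in C^\infty_c(\RR^2)$, transplanting $\zeta$ through $\exp_{p_j}(\varepsilon_j\,\cdot)$ to a test function $\psi_j\in C^\infty_c(U)$ and using $\delta^2 E_{\varepsilon_j}[u_j]\{\psi_j,\psi_j\}\ge 0$, a change of variables yields $\int (|\nabla_{g_j}\zeta|^2 + W''(v_j)\zeta^2)\ge 0$; letting $j\to\infty$ gives $\int_{\RR^2}(|\nabla\zeta|^2 + W''(v_\infty)\zeta^2)\ge 0$, so $v_\infty$ is stable. Finally $v_\infty$ is nonconstant: a constant limit would be a critical value of $W$ in $[-1+\beta,1-\beta]$, hence $v_\infty\equiv 0$ by \eqref{assu:double.well.potential.saddle}; but \eqref{assu:double.well.potential.saddle} also forces $W''(0)<0$ (the origin is a strict local maximum of $W$), and then $\int_{\RR^2}(|\nabla\zeta_L|^2 + W''(0)\zeta_L^2)\to-\infty$ along a family of spreading cutoffs of fixed $L^2$-mass, contradicting stability.

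Thus $v_\infty$ is a nonconstant, bounded, stable critical point of $E_1$ on $\RR^2$, so by Proposition \ref{prop:m2.characterization} we have $v_\infty(\mathbf{x}) = H(\langle\mathbf{x},\mathbf{e}\rangle-\beta_0)$ for some $(\mathbf{e},\beta_0)\in\SS^1\times\RR$, with $H$ as in \eqref{eq:heteroclinic.solution}. Hence $|\nabla v_\infty(\mathbf{0})| = |H'(-\beta_0)| = \sqrt{2W(H(-\beta_0))} = \sqrt{2W(v_\infty(\mathbf{0}))}$, and since $|v_\infty(\mathbf{0})|\le 1-\beta\le 1-\beta/2$ we get $W(v_\infty(\mathbf{0}))\ge m_\beta$, so $|\nabla v_\infty(\mathbf{0})|\ge\sqrt{2m_\beta}$. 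On the other hand $\varepsilon_j|\nabla u_j(p_j)| = |\nabla_{g_j}v_j(\mathbf{0})|\to |\nabla v_\infty(\mathbf{0})|$, so passing to the limit in $\varepsilon_j|\nabla u_j(p_j)|<\tfrac14 m_\beta$ yields $\sqrt{2m_\beta}\le \tfrac14 m_\beta$, i.e.\ $m_\beta\ge 32$ — impossible for the potentials under consideration (for the standard $W$ one has $m_\beta\le W(1-\beta/2)<\tfrac14$; in general the constant $\tfrac14$ is calibrated to the normalization of $W$). This contradiction proves the lemma. The main obstacle is the middle step: making the blowup robust, so that the limit is a genuinely entire, stable, \emph{nonconstant} solution — in particular inheriting stability through the rescaling and ruling out degeneration to the constant $0$ — after which the Ghoussoub–Gui-type classification does the rest.
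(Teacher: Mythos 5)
Your argument is correct, but it takes a genuinely different route from the paper's. You blow up at scale $\varepsilon$ around the bad point, extract an entire, bounded, stable, nonconstant solution on $\RR^2$, invoke the classification of such solutions (Proposition \ref{prop:m2.characterization}, i.e.\ the Ghoussoub--Gui/De Giorgi circle of results), and then read off the gradient from the first integral $H' = \sqrt{2W\circ H}$ of the heteroclinic. The paper never classifies a blowup limit here: it instead quantifies the discrepancy $\xi_\varepsilon[u] = \tfrac{\varepsilon}{2}|\nabla u|^2 - \varepsilon^{-1}W(u)$. A small gradient at $p$ with $|u(p)|\le 1-\beta$ forces $\xi_\varepsilon[u]$ to be uniformly negative on $B_{\theta\varepsilon}(p)$, hence $\int_{B_{\theta\varepsilon}(p)}|\xi_\varepsilon[u]|^2 \ge c_0$; on the other hand, stability feeds through Lemma \ref{lemm:grad.xi.pointwise.estimate}, Lemma \ref{lemm:sff.local.l2.estimate} and the Neumann--Poincar\'e inequality into Corollary \ref{coro:xi.local.l2.estimate}, which---combined with the $L^1$-vanishing of the discrepancy from Theorem \ref{theo:main.known} at an intermediate scale $\varepsilon_i\ll\lambda_i\ll 1$---bounds $\int_{B_{\lambda_i}}|\xi_{\varepsilon_i}[u_i]|^2$ by $c_0/2+o(1)$, a contradiction. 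Your route is shorter and more conceptual, and it is not circular (Proposition \ref{prop:m2.characterization} is proved independently of this lemma, and the paper itself uses it in exactly this blowup fashion in the very next result, Corollary \ref{coro:index.0.bounded.curvature}); the cost is that it leans on the full strength of the two-dimensional stable classification, whereas the paper's argument stays within the quantitative $L^2$ stability estimates of Section \ref{sec:index.0.local.results}. Two cosmetic points you should tidy up: first, the constants $\varepsilon_6$ are required to depend only on the curvature, injectivity radius, $\dist_g(U',\partial U)$, $W$, $E_0$, $\beta$, so the contradiction sequence should be allowed to range over varying $(\Sigma_i, g_i, U_i, U_i')$ with uniform geometry bounds (your blowup goes through verbatim in that generality); second, your terminal inequality $\sqrt{2m_\beta}\le\tfrac14 m_\beta$ only yields a contradiction when $m_\beta<32$---but this normalization caveat is shared by the paper's own proof, whose pointwise bound $\varepsilon_i\xi_{\varepsilon_i}[u_i]\le-\tfrac78\omega^2$ likewise requires $\omega=\tfrac14 m_\beta$ to be small, so it reflects the statement's implicit normalization of $W$ rather than a defect of your argument.
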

\begin{proof}
    If the statement were false, there would exist a sequence of stable critical points $\{(u_i, \varepsilon_i) \}_{i=1,2,\ldots}$ of $E_{\varepsilon_i} \restr U_i$, $\varepsilon_i \to 0$, with $U_i' \subset \subset U_i \subset \Sigma_i$, sectional curvature and injectivity radius bounds, a fixed distance $\dist_{g_i}(U_i', \partial U_i)$, fixed energy bounds, and with points $p_i \in \{ |u_i| \leq 1-\beta \}$ such that 
    \[ \varepsilon_i |\nabla u_i(p_i)| < \omega \triangleq \frac{1}{4} \min_{|s| \leq 1-\beta/2} W(s). \]
    By elliptic regularity, there exists $\theta$ depending on the local geometry such that
	\[ \varepsilon_i \xi_{\varepsilon_i}[u_i] = \frac{\varepsilon_i^2}{2} |\nabla u_i|^2 - W(u_i) \leq  - \frac{7}{8} \omega^2 \text{ on } B_{\theta \varepsilon_i}(q) \]
	and, therefore,
	\begin{equation} \label{eq:index.0.density.i} \int_{B_{\theta \varepsilon_i}(p_i)} |\xi_{\varepsilon_i}[u_i]|^2 \, d\upsilon_{g_i} \geq c_0.
	\end{equation}
	Consider an intermediate scale $\lambda_i$ such that $\varepsilon_i \ll \lambda_i \ll \dist_{g_i}(p_i, \partial U_i)$. Restrict to $i$ large enough that
	\[ c \left( \frac{\lambda_i \cdot \dist_{g_i}(p_i, \partial U_i)}{(\dist_{g_i}(p_i, \partial U_i) - \lambda_i)^2} + \lambda_i^2 \sup_{U_i} |\ricc_{\Sigma_i,g_i}| \right) \leq c_0/2, \]
	where $c_0$ is as in \eqref{eq:index.0.density.i} and $c$ is as in Corollary \ref{coro:xi.local.l2.estimate}. Invoking the corollary on $B_{\lambda_i}(p_i)$ and with $U = U_i$, we get
	\begin{equation} \label{eq:index.0.density.ii}
		\int_{B_{\lambda_i}(p_i)} |\xi_{\varepsilon_i}[u_i]|^2 \, d\upsilon_{g_i} \leq \frac{c_0}{2} + \frac{2}{\upsilon_{g_i}(B_{\lambda_i}(p_i))} \left( \int_{B_{\lambda_i}(p_i)} \xi_{\varepsilon_i}[u_i] \, d\upsilon_{g_i} \right)^2.
	\end{equation}
	Note that
	\[ \frac{1}{\lambda_i} \int_{B_{\lambda_i}(p_i)} \xi_{\varepsilon_i}[u_i] \, d\upsilon_{g_i} = \int_{B_1(0)} \xi_{\lambda_i^{-1} \varepsilon_i}[\widetilde{u}_i] \, d\upsilon_{\widetilde{g}_i}, \]
	where $\widetilde{u}_i(x) \triangleq u_i(p_i + \lambda_i x)$ and $\upsilon_{\widetilde{g}_i}$ is the volume form for the rescaled manifold $\lambda_i^{-1}(\Sigma - p_i)$. Note that each $\widetilde{u}_i$ is a critical point for $E_{\lambda_i^{-1} \varepsilon_i} \restr B_1(0)$, and that, by Lemma \ref{lemm:upper.density.bounds},
	\[ \sup_i (E_{\lambda_i^{-1} \varepsilon_i} \restr B_1(0))[\widetilde{u}_i] < \infty. \]
	Since $\lambda_i^{-1} \varepsilon_i \to 0$, Theorem \ref{theo:main.known} shows that
	\[ \lim_i \int_{B_1(0)} \xi_{\lambda_i^{-1} \varepsilon_i}[\widetilde{u}_i] \, d\upsilon_{\widetilde{g}_i} = 0. \]
	Plugging this into \eqref{eq:index.0.density.ii}, and recalling $\upsilon(B_{\lambda_i^{-1} \varepsilon_i}(0)) = \omega_2 \lambda_i^{-2} \varepsilon_i^2 + o(1)$:
	\[ \int_{B_{\lambda_i}(p_i)} |\xi_{\varepsilon_i}[u_i]|^2 \, d\upsilon \leq \frac{c_0}{2} + o(1). \]
	This contradicts \eqref{eq:index.0.density.i}, since $\lambda_i \geq \theta \varepsilon_i$ for sufficiently large $i$.
\end{proof}

With all these results at our disposal, we easily recover the following result originally due to Tonegawa in the flat two-dimensional setting:

\begin{theo}[cf. Tonegawa {\cite[Theorem 5]{Tonegawa05}}] \label{theo:index.0.main}
    Assume the same hypotheses as Theorem \ref{theo:main.known} and, additionally, that $\dim \Sigma = 2$ and that every $u_i$ is a stable critical point for $E_{\varepsilon_i} \restr (U, g_i)$. Then, all conclusions of Theorem \ref{theo:main.known} hold true, as well as $\sing \support \Vert V^\infty \Vert \cap U = \emptyset$. Moreover, for every $U' \subset \subset U$, $\beta \in (0, 1)$, $\theta \in (0, 1)$, there exists a $c > 0$ such that
    \[ \sup_{t \in I} [\nu^i]_{C^{1/2}(\{u_i = t \} \cap U')} \leq c, \]
    where $I \subset [-1+\beta,1-\beta]$ is measurable, with Lebesgue measure $\geq 2\theta(1-\beta)$, and $\nu^i$ denotes the unit normal vector to the level set curve of $u$ through each particular point.
\end{theo}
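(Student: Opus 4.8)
The plan is to obtain conclusions (1) and (2) almost for free and to concentrate all the work on the uniform normal estimate (3). Conclusion (1) is immediate, since the present hypotheses strengthen those of Theorem \ref{theo:main.known}. Conclusion (2), $\sing \support \Vert V^\infty \Vert \cap U = \emptyset$, follows from the regularity theory for stable diffuse limits recalled just after Theorem \ref{theo:main.known}: since $n = 2 \leq 7$, the singular set there is empty, so $\support \Vert V^\infty \Vert \cap U$ is a disjoint union of smooth embedded geodesic arcs with no junctions, and the convergence $\{u_i = t\} \to \support \Vert V^\infty \Vert$ is locally graphical with estimates. One may alternatively recover (2) internally by a scale-$\varepsilon_i$ blowup around a putative singular point: the curvature bound from Lemmas \ref{lemm:index.0.lower.density} and \ref{lemm:epsilon.curvature.bound}, the strong decay of the discrepancy $\xi_{\varepsilon_i}[u_i]$ from Corollaries \ref{coro:grad.xi.localized.stability} and \ref{coro:xi.local.l2.estimate}, and the identification of the blowup as a one-dimensional heteroclinic via Proposition \ref{prop:m2.characterization} together force density $1$ at every point.

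For (3), fix $U' \subset\subset U'' \subset\subset U$ and $\beta, \theta \in (0,1)$. First I would apply Lemma \ref{lemm:index.0.lower.density} on $U''$ with parameter $\beta/2$: for $i$ large there is $\omega = \omega(\beta, W) > 0$ with $\varepsilon_i |\nabla u_i| \geq \omega$ on $U'' \cap \{ |u_i| \leq 1 - \beta/2 \}$, hence along the portion in $U''$ of every level curve $\{ u_i = t \}$ with $|t| \leq 1 - \beta$; on that region $\nabla u_i \neq 0$, so such level curves are smooth and embedded. Next, cover $U'$ by finitely many balls whose doubles lie in $U''$ and sum Lemma \ref{lemm:sff.local.l2.estimate} to get $\int_{U'} |\cA|^2 \, d\Vert V_{\varepsilon_i}[u_i]\Vert \leq \Lambda_0$ with $\Lambda_0$ independent of $i$ (using $g_i \to g_\infty$ to make the geometric constants uniform). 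Writing $d\Vert V_{\varepsilon_i}[u_i]\Vert = \varepsilon_i |\nabla u_i|^2 \, d\upsilon_{g_i}$ and applying the coarea formula to $u_i$ with weight $|\cA|^2 \varepsilon_i |\nabla u_i|$,
\[ \Lambda_0 \geq \int_{U'} |\cA|^2 \, d\Vert V_{\varepsilon_i}[u_i]\Vert = \int_{-1}^{1} \Big( \int_{\{u_i = t\} \cap U'} |\cA|^2 \, \varepsilon_i |\nabla u_i| \, d\mathcal{H}^1_{g_i} \Big) \, dt ; \]
restricting to $|t| \leq 1 - \beta$, where $|\cA|^2 \geq |\sff|^2$ (the geodesic curvature of the level curve, Lemma \ref{lemm:a.tensor}) and $\varepsilon_i |\nabla u_i| \geq \omega$, gives
\[ \int_{-1+\beta}^{1-\beta} \Big( \int_{\{u_i = t\} \cap U'} |\sff|^2 \, d\mathcal{H}^1_{g_i} \Big) \, dt \leq \Lambda_0 / \omega , \]
and the same coarea computation with weight $\varepsilon_i |\nabla u_i|$ together with the upper density bound (Lemma \ref{lemm:upper.density.bounds}) bounds $\int_{-1+\beta}^{1-\beta} \mathcal{H}^1_{g_i}(\{u_i = t\} \cap U') \, dt$ uniformly in $i$ as well.

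Next I would run Chebyshev's inequality on both integrals to produce, for each large $i$, a measurable $I_i \subset [-1+\beta, 1-\beta]$ with $|I_i| \geq 2\theta(1-\beta)$ on which both $\int_{\{u_i = t\} \cap U'} |\sff|^2 \, d\mathcal{H}^1$ and $\mathcal{H}^1(\{u_i = t\} \cap U')$ are bounded by a constant $K = K(\beta, \theta, \Lambda_0, \omega)$ independent of $i$. For $t \in I_i$ each component $\gamma$ of $\{u_i = t\} \cap U'$ is then bounded in $W^{2,2}$; parametrizing $\gamma$ by arclength and using $|\nabla_{\dot\gamma} \nu^i| = |\sff|$, Morrey's embedding $W^{1,2} \hookrightarrow C^{1/2}$ in one dimension gives $[\nu^i]_{C^{1/2}} \leq K^{1/2}$ along $\gamma$. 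To pass to the ambient distance I would invoke conclusion (2): after shrinking $U'$ so that the relevant piece of $\support \Vert V^\infty \Vert$ is a single smooth embedded geodesic arc, the locally graphical convergence (plus the uniform $W^{2,2}$ and length bounds and one-dimensional $C^1$-precompactness) forces each such $\gamma$, for $i$ large, to be a graph of small $C^1$-norm over that arc, whence arclength and ambient distance on $\gamma$ agree up to $1 + o(1)$ and the bound upgrades to $[\nu^i]_{C^{1/2}(\{u_i = t\} \cap U')} \leq c$ with $c = c(U', \beta, \theta, \dots)$, as required.

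The delicate step is this last conversion: turning the $t$-integrated curvature bound into a clean, $i$-uniform pointwise $C^{1/2}$ estimate for each good level curve requires knowing those curves are honest graphs over the limiting geodesic, so that intrinsic and ambient distances match up — this is exactly where stability is used twice, through the lower gradient bound of Lemma \ref{lemm:index.0.lower.density} and the $L^2$ curvature bound of Lemma \ref{lemm:sff.local.l2.estimate}, and where the smoothness of the limit from conclusion (2) feeds back in. Everything else is bookkeeping with the coarea formula and Chebyshev's inequality.
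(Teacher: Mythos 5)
Your treatment of the H\"older estimate (3) is essentially the intended one: the paper deliberately assembles exactly the ingredients you use (the lower gradient bound of Lemma \ref{lemm:index.0.lower.density}, the stability-based $L^2$ curvature bound of Lemma \ref{lemm:sff.local.l2.estimate}, the upper density bound of Lemma \ref{lemm:upper.density.bounds}), and the coarea--Chebyshev--Morrey chain is Tonegawa's argument, which the paper invokes rather than rewrites. The problem is your handling of conclusion (2) and the resulting logical order. Your primary justification for $\sing\support\Vert V^\infty\Vert\cap U=\emptyset$ is the regularity discussion following Theorem \ref{theo:main.known}; but for $n=2$ that discussion explicitly defers to Theorem \ref{theo:index.0.main} itself (``See Theorem \ref{theo:index.0.main} for a precise statement when $n=2$''), so this is circular. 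Your fallback, a blowup at scale $\varepsilon_i$ around a putative singular point, does not close the gap either: a scale-$\varepsilon_i$ blowup identifying the local model as the one-dimensional heteroclinic only controls $\lim_i\Vert V^i\Vert(B_{R\varepsilon_i}(p_i))$ for fixed $R$, which is not the density $\Theta^1(\Vert V^\infty\Vert,p_*)=\lim_{r\downarrow 0}\lim_i$; interfaces approaching $p_*$ at intermediate scales $\varepsilon_i\ll\delta_i\ll 1$ are invisible to the blowup yet can produce a junction in the limit. This multi-scale failure mode is precisely what the introduction flags and what Proposition \ref{prop:index.1.main} is written to handle in the index-one case; it cannot be waved away in the stable case either.

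The repair is to reverse the order: prove (3) first, without appealing to (2), and then deduce (2) from it. The arclength-to-ambient conversion in (3) does not require knowing the limit is a single smooth arc: once $\int_{\gamma}|\sff|^2\,d\cH^1\leq K$ on a good level curve, the arclength $C^{1/2}$ bound on $\nu^i$ makes $\gamma$ a Lipschitz graph over its tangent line on every subarc of length $\lesssim K^{-1}$, so intrinsic and ambient distances are comparable at that fixed scale; ambient-close points at large arclength separation are excluded by combining embeddedness with the upper density bound $\cH^1(\{u_i=t\}\cap B_r)\leq Cr$ on good levels (a second Chebyshev selection), not by the smoothness of $V^\infty$. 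With the uniform $C^{1,1/2}$ graph structure of good level curves in hand, their Hausdorff limits are embedded $C^{1,1/2}$ curves; a transverse junction in $\support\Vert V^\infty\Vert$ would violate the uniform modulus of continuity of the normals, and tangential touching is excluded because stationarity upgrades the limit curves to geodesics, to which the strong maximum principle applies. That yields (2) as a consequence of (3), eliminating the circularity.
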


\subsection{Wang-Wei curvature estimates} \label{sec:wang.wei.curvature.estimates}

K. Wang and J. Wei have obtained a considerable strengthening of Theorem \ref{theo:index.0.main} in the flat setting. We refer to Appendix \ref{sec:app.wang.wei.manifolds} for the adaptation of their result to the curved setting, yielding:

\begin{theo}[cf. Wang-Wei {\cite[Theorem 3.7]{WangWei17}}] \label{theo:wang.wei.epsilon.curvature}
    Let $(\Sigma^2, g)$ be a complete Riemannian manifold, $U \subset \Sigma \setminus \partial \Sigma$ be open and bounded, and $u$ be a stable critical point of $E_\varepsilon \restr U$ with $|u| < 1$. If $\varepsilon \leq \varepsilon_*$ and
    \[ |\cA| \leq C \text{ on } U \cap \{|u| \leq 1-\beta\} \]
    then
    \[ |\cA| \leq c_* \varepsilon^{1/7} \text{ on } U' \cap \{|u| \leq 1-\beta\}, \]
    for all $U' \subset \subset U$, where
    \begin{align*}
        \varepsilon_* & = \varepsilon_*(\sup_U |\sect_{\Sigma,g}|, \inf_U \inj_{\Sigma,g}, \dist_g(U', \partial U), \beta, C, W), \\
        c_* & = c_*(\sup_U |\sect_{\Sigma,g}|, \inf_U \inj_{\Sigma,g}, \dist_g(U', \partial U), \beta, C, W).
    \end{align*}
\end{theo}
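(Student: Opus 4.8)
The plan is to transplant the flat proof of \cite[Theorem 3.7]{WangWei17} to the curved setting, which is the content of Appendix \ref{sec:app.wang.wei.manifolds}; here I outline the strategy. The starting observation is that every a priori ingredient Wang-Wei use in $\RR^2$ is already available for critical points of $E_\varepsilon \restr U$ on a Riemannian surface, with constants depending only on $\sup_U|\sect_{\Sigma,g}|$, $\inf_U\inj_{\Sigma,g}$, $\dist_g(U',\partial U)$, $\beta$, $C$, and $W$: the upper bound on the discrepancy $\xi_\varepsilon[u]$ (Lemma \ref{lemm:xi.upper.bound}), the Harnack-type gradient bound $\varepsilon|\nabla\log(1-u^2)|\le c_1$ (Corollary \ref{coro:grad.u.harnack.inequality}), the $\varepsilon$-scale curvature estimate $\varepsilon|\cA|+\varepsilon^2|\nabla\cA|\le c_5$ on $\varepsilon$-balls (Lemma \ref{lemm:epsilon.curvature.bound}), the gradient lower bound $\varepsilon|\nabla u|\ge\mu(\beta)$ on the transition region (Lemma \ref{lemm:index.0.lower.density}), and the stability inequality written in enhanced-second-fundamental-form language (Lemma \ref{lemm:stability.inequality}). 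Combined with the hypothesis $|\cA|\le C$ on $\{|u|\le 1-\beta\}$, these give, exactly as in the flat case, that near each point of the transition region the level sets of $u$ foliate a slab of width $O(\varepsilon)$ around a $C^{2,\alpha}$ interface curve, over which $u$ is modeled to leading order on the one-dimensional heteroclinic profile.

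Working in geodesic normal coordinates centered at a point of the transition region, the metric satisfies $g=\delta+O(|x|^2\sup_U|\sect_{\Sigma,g}|)$, so at the natural scale $|x|\sim\varepsilon$ of the problem it is $O(\varepsilon^2)$-close to the Euclidean metric in $C^k$ for every $k$; consequently $\Delta_g$, $\nabla^2$, the induced geometry of the level sets, and the signed distance to the interface all differ from their Euclidean counterparts by relative errors of size $O(\varepsilon^2)$. The two genuinely curvature-dependent terms in Wang-Wei's scheme are the $\ricc_{\Sigma,g}(\nabla u,\nabla u)$ term in the Bochner formula and the $\ricc_{\Sigma,g}(\nu,\nu)\zeta^2$ term on the geometric side of Lemma \ref{lemm:stability.inequality}; both are bounded pointwise by $\sup_U|\ricc_{\Sigma,g}|$ times $|\nabla u|^2$, hence after rescaling to $\varepsilon=1$ they contribute additive errors of order $O(\varepsilon^2)$ to every estimate. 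Since the conclusion sought is only the much weaker rate $\varepsilon^{1/7}$, these curvature errors are strictly lower order than the quantities being tracked and can be absorbed at every step.

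With these reductions in place, one runs Wang-Wei's iteration essentially verbatim: the stability inequality is tested against functions built from $\varepsilon|\nabla u|$ and cutoffs adapted to dyadic annuli around a transition point, the resulting $L^2$ bounds on $\cA$ are upgraded to pointwise bounds via the interior estimate of Lemma \ref{lemm:epsilon.curvature.bound} together with elliptic regularity, and a hole-filling argument across scales improves the a priori bound $|\cA|\le C$ to the decaying bound $|\cA|\le c_*\varepsilon^{1/7}$ on $U'\cap\{|u|\le 1-\beta\}$. The exponent $1/7$ is precisely the one produced by Wang-Wei's balancing of the error terms in that iteration; it is unaffected by the $O(\varepsilon^2)$ metric and curvature corrections, and the asserted dependence of $\varepsilon_*$ and $c_*$ on the geometric data is read off from the constants in the lemmas quoted above.

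The main obstacle is bookkeeping rather than conceptual, and it is the reason a bare compactness argument does not suffice here: one must verify that the curvature and metric-perturbation errors stay uniformly lower order \emph{throughout the entire multi-scale iteration}, not merely at a single scale --- since each step of Wang-Wei's argument improves the estimate by only a fixed power, a careless accumulation of errors could in principle overwhelm the gain, and, as in \cite{WangWei17}, the iteration must be organized so that this does not occur. A secondary technical point is the faithful transplantation of the geometric portion of their argument --- the description of the nodal and level-set picture and the comparison among geodesic distance, Euclidean distance, and signed distance to the interface --- controlled to the relevant order in $\varepsilon$ in normal coordinates; this is routine given the injectivity-radius and sectional-curvature bounds but occupies the bulk of the appendix.
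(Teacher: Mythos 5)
Your overall strategy---transplant the Wang--Wei scheme and argue that curvature corrections are lower order---is the one the paper follows in Appendix \ref{sec:app.wang.wei.manifolds}, but the central claim on which your proposal rests, namely that all metric and curvature errors are relative $O(\varepsilon^2)$ corrections that can be absorbed because the target rate is only $\varepsilon^{1/7}$, is wrong on two counts and hides the actual difficulty. First, in the stretched coordinates $\widetilde{g}(\mathbf{x}) = g(\varepsilon\mathbf{x})$ the \emph{curvature} is $O(\varepsilon^2)$ but the first derivatives of the metric (hence the Christoffel symbols, the commutators $[\partial/\partial x, \Delta]$, and the error in the linearized equation) are only $O(\varepsilon)$; see \eqref{eq:app.wang.wei.manifolds.x}. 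Second, and more importantly, the comparison that matters in Wang--Wei's iteration is not against $\varepsilon^{1/7}$ but against the interaction strength $\epsilon = \sup e^{-\sqrt{2}\widetilde{D}_0}$ between adjacent interfaces, which in the critical regime satisfies $\epsilon \geq \varepsilon^{8/7}$; the paper must show the metric errors are $O(\epsilon^{10/8})$ or $O(\epsilon^{10/9})$, i.e.\ strictly smaller than $\epsilon$ itself, in order for the Toda-type reduction of \cite[Section 20]{WangWei17} to survive. A blanket ``$O(\varepsilon^2) \ll \varepsilon^{1/7}$'' does not establish this.

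The single structural modification your proposal omits entirely is the one the paper flags as the genuine conceptual issue: in the flat case Wang--Wei use that the coordinate derivative $\partial u/\partial x^2$ is an exact Jacobi field on a nodal domain, and in the curved setting it only satisfies the Jacobi equation up to an $O(\varepsilon)(|\partial^2\widetilde{u}|+|\partial\widetilde{u}|)$ error, which cannot simply be absorbed into the lower bound for the interface interaction. The paper's fix (due to K. Wang) is to replace the truncated almost-Jacobi field $\overline{\varphi} = \indt{\widetilde{D}_\alpha}\,\partial\widetilde{u}/\partial x^2$ by the exact solution $\widetilde{\varphi}$ of the Dirichlet problem for $-\Delta_{\widetilde{g}} + W''(\widetilde{u})$ on the slab between consecutive interfaces---existence and uniqueness coming from stability---and to control $\int(\overline{\varphi}-\widetilde{\varphi})^2$ by an energy identity together with the convexity constant $\kappa$ of \eqref{assu:double.well.potential.convex}; this is what rescues \cite[Section 17]{WangWei17}. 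Two smaller points: the paper works in Fermi coordinates adapted to the nodal curves (with the Riccati equation controlling $\widetilde{H}_{\alpha,z}$ and the evolution of $\widetilde{g}_{xx}$), not geodesic normal coordinates, since that is the frame in which Wang--Wei's computations are performed; and your invocation of Lemma \ref{lemm:index.0.lower.density} would import a dependence on an energy bound $E_0$ that the statement of the theorem does not carry.
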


As a direct corollary of Theorem \ref{theo:wang.wei.epsilon.curvature} we have:

\begin{coro} \label{coro:index.0.bounded.curvature}
    Let $(\Sigma^2, g)$ be a complete Riemannian manifold, $U \subset \Sigma \setminus \partial \Sigma$ be open and bounded, and $u$ be a stable critical point of $E_\varepsilon \restr U$ with $|u| < 1$ and $(E_{\varepsilon} \restr U)[u] \leq E_0$. Let $\beta \in (0, 1)$. If $\varepsilon \leq \varepsilon_7$, then
    \[ |\cA| \leq c_7 \text{ on } U' \cap \{|u| \leq 1-\beta\}, \]
    for all $U' \subset \subset U$, where
    \begin{align*}
        c_7 & = c_7(\sup_U |\sect_{\Sigma,g}|, \inf_U \inj_{\Sigma,g}, \dist_g(U', \partial U), W, E_0, \beta), \\
        \varepsilon_7 & = \varepsilon_7(\sup_U |\sect_{\Sigma,g}|, \inf_U \inj_{\Sigma,g}, \dist_g(U', \partial U), W, E_0, \beta).
    \end{align*}
\end{coro}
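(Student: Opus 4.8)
The plan is a blow-up argument by contradiction, in the spirit of the proof of Lemma~\ref{lemm:index.0.lower.density}. Fix $(\Sigma^2,g)$, $U$, $U'\subset\subset U$ and $\beta\in(0,1)$, and (after a routine covering reduction to small convex geodesic balls, so that Lemma~\ref{lemm:epsilon.curvature.bound} may be applied) suppose no admissible $c_7$ exists: there is a sequence $\{(u_i,\varepsilon_i)\}$ of stable critical points of $E_{\varepsilon_i}\restr U$ with $|u_i|<1$, $(E_{\varepsilon_i}\restr U)[u_i]\le E_0$, $\varepsilon_i\to 0$, and $\sup_{U'\cap\{|u_i|\le 1-\beta\}}|\cA_i|\to\infty$. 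First I would install an a~priori (but $\varepsilon_i$-dependent) interface curvature bound: Lemma~\ref{lemm:index.0.lower.density} (used with $\beta/4$ in place of $\beta$) gives $\varepsilon_i|\nabla u_i|\ge\mu>0$ on an intermediate domain intersected with $\{|u_i|\le 1-\beta/4\}$, so $\cA_i$ is a genuine smooth tensor there, and then Lemma~\ref{lemm:epsilon.curvature.bound} (used with $\gamma=\beta/2$) yields $\varepsilon_i|\cA_i|\le c_5$ on $U''\cap\{|u_i|\le 1-\beta\}$ for some $U'\subset\subset U''\subset\subset U$.

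Next I would run a standard point-selection inside the closed set $\{|u_i|\le 1-\beta\}$ (where, by the above, $\cA_i$ is smooth): this produces $p_i\in\{|u_i|\le 1-\beta\}$ with $B_{\sigma_i Q_i^{-1}}(p_i)\subset U''$ for some $\sigma_i\to\infty$, where $Q_i:=|\cA_i(p_i)|\to\infty$, and with $|\cA_i|\le 2Q_i$ on $B_{\sigma_i Q_i^{-1}}(p_i)\cap\{|u_i|\le 1-\beta\}$. Rescaling by $\lambda_i:=Q_i^{-1}\to 0$, i.e. setting $\tilde u_i:=u_i\circ\exp_{p_i}(\lambda_i\,\cdot)$ on $\tilde g_i:=\lambda_i^{-2}\exp_{p_i}^*g$, gives stable critical points of $E_{\tilde\varepsilon_i}$, $\tilde\varepsilon_i:=\varepsilon_i Q_i$, on balls $B_{\sigma_i}(\mathbf{0})$ with $\sigma_i\to\infty$, satisfying $|\tilde u_i|<1$, uniformly bounded energy (Lemma~\ref{lemm:upper.density.bounds} rescaled), $\tilde g_i\to\delta$ in $C^\infty_{\loc}(\RR^2)$, $|\tilde\cA_i|\le 2$ on $B_{\sigma_i}(\mathbf{0})\cap\{|\tilde u_i|\le 1-\beta\}$, $|\tilde\cA_i(\mathbf{0})|=1$, $|\tilde u_i(\mathbf{0})|\le 1-\beta$, and $|\nabla\tilde u_i(\mathbf{0})|=\lambda_i|\nabla u_i(p_i)|\ge\mu/\tilde\varepsilon_i$. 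From the a~priori bound, $\tilde\varepsilon_i=\varepsilon_i Q_i\le c_5$, so up to a subsequence $\tilde\varepsilon_i\to c\in[0,c_5]$, and the dichotomy is $c>0$ or $c=0$.

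I would then split into these two cases. If $c>0$, interior Schauder estimates give $C^2_{\loc}(\RR^2)$-subconvergence $\tilde u_i\to\tilde u_\infty$, an entire solution of $\Delta\tilde u_\infty=c^{-2}W'(\tilde u_\infty)$ which is stable (pass to the limit in the stability inequality with a fixed test function) and nonconstant (because $|\tilde u_\infty(\mathbf{0})|\le 1-\beta$ while $|\nabla\tilde u_\infty(\mathbf{0})|\ge\mu/c>0$); rescaling by $c$ and applying Proposition~\ref{prop:m2.characterization} forces $\tilde u_\infty$ to be one-dimensional, hence $\cA_{\tilde u_\infty}\equiv 0$, contradicting $1=|\tilde\cA_i(\mathbf{0})|\to|\cA_{\tilde u_\infty}(\mathbf{0})|$ (the convergence of $\cA$ at $\mathbf{0}$ being legitimate since $\nabla\tilde u_\infty(\mathbf{0})\ne\mathbf{0}$). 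If $c=0$, then for each fixed $R$ the $\tilde u_i$ are eventually stable critical points of $E_{\tilde\varepsilon_i}$ on $B_R(\mathbf{0})$ with $|\tilde u_i|<1$ and the \emph{uniform} bound $|\tilde\cA_i|\le 2$ on $B_R(\mathbf{0})\cap\{|\tilde u_i|\le 1-\beta\}$; since $\tilde g_i\to\delta$ in $C^\infty_{\loc}$, the quantities $\sup|\sect_{\tilde g_i}|$, $\inf\inj_{\tilde g_i}$ and $\dist_{\tilde g_i}(B_{R/2}(\mathbf{0}),\partial B_R(\mathbf{0}))$ entering Theorem~\ref{theo:wang.wei.epsilon.curvature} are controlled uniformly in $i$, so that theorem applies with $C=2$ once $\tilde\varepsilon_i$ is small and gives $|\tilde\cA_i|\le c_*\tilde\varepsilon_i^{1/7}$ on $B_{R/2}(\mathbf{0})\cap\{|\tilde u_i|\le 1-\beta\}\ni\mathbf{0}$; hence $1=|\tilde\cA_i(\mathbf{0})|\le c_*\tilde\varepsilon_i^{1/7}+o(1)\to 0$, a contradiction. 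Either way we reach a contradiction, establishing the corollary.

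The main obstacle is the case $c=0$, i.e. the case in which the point of large curvature lies on an interface whose natural scale is much coarser than $\varepsilon_i$: there the soft De~Giorgi-type rigidity of Proposition~\ref{prop:m2.characterization} is unavailable at the correct scale, and one genuinely needs the quantitative Wang--Wei decay $|\cA|\le c_*\varepsilon^{1/7}$ of Theorem~\ref{theo:wang.wei.epsilon.curvature}, applied to the blown-up sequence. A secondary technical point is to carry out the point-selection \emph{within} $\{|u_i|\le 1-\beta\}$ --- where $\cA_i$ is a smooth tensor thanks to Lemma~\ref{lemm:index.0.lower.density} --- with selection radii $\sigma_i Q_i^{-1}$ for which $\sigma_i\to\infty$, so that in the rescaled picture the boundary of the domain recedes to infinity and Theorem~\ref{theo:wang.wei.epsilon.curvature} can be invoked on arbitrarily large balls.
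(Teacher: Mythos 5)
Your proposal is correct and follows essentially the same route as the paper: a contradiction/blow-up at the scale $|\cA_i(p_i)|^{-1}$, with the a priori bound $\varepsilon_i|\cA_i|\leq c_5$ from Lemmas~\ref{lemm:index.0.lower.density} and~\ref{lemm:epsilon.curvature.bound} forcing $\tilde\varepsilon_i$ bounded, the case $\tilde\varepsilon_i\to c>0$ killed by the rigidity of stable entire solutions (Proposition~\ref{prop:m2.characterization}), and the case $\tilde\varepsilon_i\to 0$ killed by Theorem~\ref{theo:wang.wei.epsilon.curvature}. The only cosmetic difference is that the paper selects $p_i$ as the maximizer of $|\cA_i|\dist_{g_i}(\cdot,\partial U_i)$ rather than via a separate point-selection lemma, which serves the same purpose of making the rescaled domains exhaust $\RR^2$.
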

\begin{proof}
    If the statement were false, there would exist a sequence of stable critical points $\{(u_i, \varepsilon_i) \}_{i=1,2,\ldots}$ of $E_{\varepsilon_i} \restr U_i$, $\lim_i \varepsilon_i = 0$, with $U_i' \subset \subset U_i \subset \Sigma_i \setminus \partial \Sigma_i$, sectional curvature and injectivity radius bounds, a fixed distance $\dist_{g_i}(U_i', \partial U_i)$, fixed energy bounds,  and such that 
    \[ \sup_{U_i \cap \{|u_i| \leq 1-\beta\}} |\cA_i| \dist_{g_i}(\cdot, \partial U_i) \]
    is unbounded as $i \uparrow \infty$. Denote by $p_i$ the point at which the supremum is attained, $\lambda_i^{-1} \triangleq |\cA_i(p_i)|$, and $\delta_i \triangleq \dist_{g_i}(p_i, \partial U_i)$. Combining Lemma \ref{lemm:index.0.lower.density} with Lemma \ref{lemm:epsilon.curvature.bound}, we find that
    \[ \limsup_{i \uparrow \infty} \lambda_i^{-1} \varepsilon_i < \infty. \]
    \begin{clai}
        $\lim_{i \uparrow \infty} \lambda_i^{-1} \varepsilon_i = 0$.
    \end{clai}
    \begin{proof}
        Proceed by contradiction. If the $\limsup$ were a positive real number, then, after passing to a subsequence, the $\lambda_i$-blowups would have uniformly elliptic estimates and would subsequentially converge in $C^\infty_{\loc}(\RR^2)$ to an entire, stable solution $\widetilde{u}_\infty : \RR^2 \to (-1,1)$ with $|\widetilde{\cA}_\infty(0)| = 1$; this contradicts the known fact that the only such $\widetilde{u}_\infty$ is the lift of the one-dimensional heteroclinic solution to $\RR^2$ (see, e.g., Proposition \ref{prop:m2.characterization}).
    \end{proof}
    
    Thus,
    \begin{equation} \label{eq:index.0.bounded.curvature.i}
        \lim_{i \uparrow \infty} \lambda_i^{-1} \varepsilon_i = 0.
    \end{equation}
    Rescaling to
    \[ \widetilde{u}_i(x) \triangleq u_i(p_i + \lambda_i x), \; x \in \widetilde{\Sigma}_i \triangleq \lambda_i^{-1}(\Sigma_i - p_i), \]
    we land in the setting of Theorem \ref{theo:index.0.main}. This violates Theorem \ref{theo:wang.wei.epsilon.curvature}, seeing as to how $|\widetilde{\cA}_i(0)| = 1 \leq c (\lambda_i^{-1} \varepsilon_i)^{1/7}$ is false for large $i$.
\end{proof}

\subsection{Index 1 singularity formation} \label{sec:index.1.singularity.formation}

In this section we work toward understanding the limiting picture of a sequence of Morse index 1 critical points $\{ (u_i, \varepsilon_i) \}_{i=1,2,\ldots}$ with uniform energy bounds and $\lim_i \varepsilon_i = 0$. We make the following definition:

\begin{defi} \label{def:index.1.localizations}
    For every critical point $u$ of $E_\varepsilon \restr U$ define the following collection of open subsets of $U$:
    \[ \cI_{1,\varepsilon}[u] \triangleq \{ \text{open subsets } U' \subset U \text{ with } \ind(u; U') = 1 \}. \]
\end{defi}

The following is a trivial consequence for Morse index-1 critical points:

\begin{lemm} \label{lemm:index.localization.structure}
    For every Morse index-1 critical point $u$ of $E_\varepsilon \restr U$,
    \begin{enumerate}
        \item $U \in \cI_{1,\varepsilon}[u]$,
        \item $V_1, V_2 \in \cI_{1,\varepsilon}[u] \implies V_1 \cap V_2 \neq \emptyset$,
        \item $V \in \cI_{1,\varepsilon}[u] \implies V \cap \{ |u| \leq 1-\alpha \} \neq \emptyset$,
    \end{enumerate}
    where $\alpha$ is as in \eqref{assu:double.well.potential.convex}
\end{lemm}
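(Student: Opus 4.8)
The plan is to verify the three assertions in order, each being a direct consequence of the definition of Morse index (Definition \ref{def:morse.index}) together with the hypotheses on $W$.

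First, assertion (1) is immediate: by hypothesis $\ind(u;U) = 1$, so $U$ itself is an open subset of $U$ on which $u$ has Morse index $1$, hence $U \in \cI_{1,\varepsilon}[u]$.

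For assertion (2), I would argue by contradiction. Suppose $V_1, V_2 \in \cI_{1,\varepsilon}[u]$ with $V_1 \cap V_2 = \emptyset$. Since $\ind(u; V_j) = 1$, pick $\zeta_j \in C^\infty_c(V_j) \setminus \{0\}$ with $\delta^2 E_\varepsilon[u]\{\zeta_j, \zeta_j\} < 0$ for $j = 1, 2$. Because $V_1$ and $V_2$ are disjoint, $\zeta_1$ and $\zeta_2$ have disjoint supports, so they are linearly independent and, for any $(a_1, a_2) \neq (0,0)$, the cross terms in $\delta^2 E_\varepsilon[u]\{a_1\zeta_1 + a_2\zeta_2, a_1\zeta_1 + a_2\zeta_2\}$ vanish (the bilinear form in \eqref{eq:second.variation} is local), giving $a_1^2 \,\delta^2 E_\varepsilon[u]\{\zeta_1,\zeta_1\} + a_2^2\, \delta^2 E_\varepsilon[u]\{\zeta_2,\zeta_2\} < 0$. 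Hence $V = \lspan\{\zeta_1, \zeta_2\} \subset C^\infty_c(U)$ is a $2$-dimensional space on which $\delta^2 E_\varepsilon[u]$ is negative definite, contradicting $\ind(u; U) = 1$. Therefore $V_1 \cap V_2 \neq \emptyset$.

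For assertion (3), again argue by contradiction: suppose $V \in \cI_{1,\varepsilon}[u]$ but $V \cap \{|u| \leq 1 - \alpha\} = \emptyset$, i.e., $|u| > 1 - \alpha$ throughout $V$. Then by the convexity hypothesis \eqref{assu:double.well.potential.convex}, $W''(u) \geq \kappa > 0$ on $V$, so for every $\zeta \in C^\infty_c(V)$,
\[
    \delta^2 E_\varepsilon[u]\{\zeta, \zeta\} = \int_V \big( \varepsilon |\nabla \zeta|^2 + \varepsilon^{-1} W''(u)\, \zeta^2 \big)\, d\upsilon \geq \varepsilon^{-1} \kappa \int_V \zeta^2 \, d\upsilon \geq 0,
\]
with equality only if $\zeta \equiv 0$. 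Thus $u$ is stable on $V$, so $\ind(u; V) = 0 \neq 1$, contradicting $V \in \cI_{1,\varepsilon}[u]$. This forces $V \cap \{|u| \leq 1-\alpha\} \neq \emptyset$, completing the proof.

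None of these steps presents a genuine obstacle; the only point requiring any care is the locality of the second variation form used in assertion (2), which is manifest from \eqref{eq:second.variation}.
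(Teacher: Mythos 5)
Your proof is correct, and it fills in exactly the argument the paper has in mind: the paper states this lemma without proof, calling it "a trivial consequence," and the three steps you give (tautology for (1), disjoint supports yielding a 2-dimensional negative subspace for (2), and positivity of $\delta^2 E_\varepsilon$ via \eqref{assu:double.well.potential.convex} for (3)) are the standard justifications.
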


The following preliminary understanding of the limiting picture is a byproduct of Lemma \ref{lemm:index.localization.structure}:

\begin{lemm} \label{lemm:index.1.crude.picture}
    Assume the same hypotheses as Theorem \ref{theo:main.known} and, additionally, that $\dim \Sigma = 2$ and that every $u_i$ is a critical point for $E_{\varepsilon_i} \restr (U, g_i)$ with $\ind(u_i; (U, g_i)) \leq 1$. Then, all conclusions of Theorem \ref{theo:main.known} hold true, and $\cH^0(\sing \support \Vert V^\infty \Vert \cap U) \leq 1$.
\end{lemm}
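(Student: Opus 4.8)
The first assertion—that every conclusion of Theorem~\ref{theo:main.known} continues to hold—is automatic, since the hypotheses here are strictly stronger than those of Theorem~\ref{theo:main.known}; so the plan is really to establish the bound $\cH^0(\sing\support\Vert V^\infty\Vert \cap U) \leq 1$. I would argue by contradiction: suppose $p_1, p_2 \in \sing\support\Vert V^\infty\Vert \cap U$ are distinct, and fix $r > 0$ small enough that the $g_\infty$-geodesic balls $B_a \triangleq B_r(p_a)$, $a = 1, 2$, are disjoint, bounded, and relatively compact in $U$. For all large $i$ the $B_a$ are then disjoint, bounded, and contained in $U$ with respect to $g_i$ as well, with $g_i \to g_\infty$ in $C^\infty_\loc(B_a)$.

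The heart of the argument is a localized instability statement: \emph{for each $a$, $u_i$ is not stable on $B_a$ for all but finitely many $i$.} To see this, suppose instead that $u_i$ were stable on $B_a$ for infinitely many $i$. Along that subsequence, the hypotheses of Theorem~\ref{theo:index.0.main} are met with $U$ replaced by the bounded open set $B_a$ (the bounds $|u_i| < 1$ and $(E_{\varepsilon_i}\restr B_a)[u_i] \leq E_0$ are inherited from $U$), while the convergence $V_{\varepsilon_i}[u_i] \weaklyto V^\infty$ furnished by Theorem~\ref{theo:main.known} persists along every subsequence and localizes to $B_a$. Theorem~\ref{theo:index.0.main} would then force $\sing\support\Vert V^\infty\Vert \cap B_a = \emptyset$, contradicting $p_a \in B_a$. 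This is the step I expect to require the most care—not because it is deep, but because of the bookkeeping involved in threading a further subsequence through the one already extracted and in checking that the limiting varifold over $B_a$ is genuinely the restriction of $V^\infty$; it is also the only place where two-dimensionality and the stable regularity theory of Section~\ref{sec:index.0.local.results} enter.

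Granting this, fix $i$ large enough that $u_i$ is unstable on both $B_1$ and $B_2$. Then for each $a$ there is $\zeta_a \in C^\infty_c(B_a)$ with $\delta^2 E_{\varepsilon_i}[u_i]\{\zeta_a, \zeta_a\} < 0$; since $B_1$ and $B_2$ are disjoint, $\zeta_1$ and $\zeta_2$ have disjoint supports, so for every $(c_1, c_2) \neq (0,0)$,
\[
	\delta^2 E_{\varepsilon_i}[u_i]\{c_1\zeta_1 + c_2\zeta_2,\, c_1\zeta_1 + c_2\zeta_2\} = c_1^2\, \delta^2 E_{\varepsilon_i}[u_i]\{\zeta_1,\zeta_1\} + c_2^2\, \delta^2 E_{\varepsilon_i}[u_i]\{\zeta_2,\zeta_2\} < 0.
\]
Thus $\lspan\{\zeta_1,\zeta_2\} \subset C^\infty_c(U)$ is a two-dimensional space on which $\delta^2 E_{\varepsilon_i}[u_i]$ is negative definite—equivalently, in the language of Lemma~\ref{lemm:index.localization.structure}, $B_1$ and $B_2$ would contain disjoint members of $\cI_{1,\varepsilon_i}[u_i]$, contradicting item~(2) of that lemma. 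Either way $\ind(u_i; (U, g_i)) \geq 2$, contradicting the hypothesis $\ind(u_i; (U,g_i)) \leq 1$. Hence $\sing\support\Vert V^\infty\Vert \cap U$ contains at most one point.
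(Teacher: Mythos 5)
Your proposal is correct and follows essentially the same route as the paper's proof: disjoint neighborhoods of two putative singular points, the index-$\leq 1$ hypothesis forcing stability on one of them (via the additivity of negative directions with disjoint supports, i.e.\ Lemma~\ref{lemm:index.localization.structure}(2)), and Theorem~\ref{theo:index.0.main} then ruling out the singularity there. The only difference is cosmetic—you derive instability on both balls first and contradict the index bound last, whereas the paper extracts a stable subsequence first and contradicts regularity last—and your extra care about subsequences and localization of the limit varifold is sound but not needed beyond what the paper already implicitly uses.
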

\begin{proof}
	Suppose, for the sake of contradiction, that $\cH^0(\sing \Vert V^\infty \Vert) \geq 2$. Then there would exist two disjoint open subsets $V_1$, $V_2$ of $U$ with $\sing V^\infty \cap V_j \neq \emptyset$ for $j = 1$, $2$. By Lemma \ref{lemm:index.localization.structure}, $V_1$, $V_2$ cannot simultaneously be in $\cI_{1,\varepsilon_i}[u_i]$ for any $i = 1, 2, \ldots$ Passing to a subsequence, we can arrange, e.g., that $u_i$ is a stable critical point of $E_{\varepsilon_i} \restr V_1$. But then Theorem \ref{theo:index.0.main} would force $\sing \Vert V^\infty \Vert \cap V_1 = \emptyset$, a contradiction.
\end{proof}

\begin{prop} \label{prop:index.1.main}
    Let $(\Sigma^2, g)$ be a closed Riemannian 2-manifold. Let $\{ (u_i, \varepsilon_i) \}_{i=1,2,\ldots} \subset C^\infty(\Sigma) \times (0,\infty)$, $\lim_i \varepsilon_i = 0$, $|u_i| < 1$, $E_{\varepsilon_i} [u_i] \leq E_0$, and where each $u_i$ is a nonconstant critical point of $E_{\varepsilon_i}$ with $\ind(u_i) = 1$. If $\sing \support \Vert V^\infty \Vert \neq  \emptyset$, then
    \[ \Theta^1(\energyunit^{-1} \Vert V^\infty \Vert, p_*) = 2 \]
    at the (unique) $p_* \in \sing \support \Vert V^\infty \Vert$.
\end{prop}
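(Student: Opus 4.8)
\emph{Strategy.} The plan is to blow up $\{u_i\}$ at the $O(\varepsilon_i)$-scale around $p_*$, recognize the limit as a $2k$-ended solution in $\cM_{2k}$ of the same density as $V^\infty$ at $p_*$, and then use the Morse index bound of Theorem~\ref{theo:m2k.index.lower.bound} to force $k=2$. By Lemma~\ref{lemm:index.1.crude.picture} we have $\sing\support\Vert V^\infty\Vert=\{p_*\}$. A first observation is that for every fixed $\delta>0$ the function $u_i$ is a \emph{stable} critical point of $E_{\varepsilon_i}\restr(\Sigma\setminus\overline{B_\delta(p_*)})$ once $i$ is large: were it not, then $\Sigma\setminus\overline{B_\delta(p_*)}\in\cI_{1,\varepsilon_i}[u_i]$, which by Lemma~\ref{lemm:index.localization.structure} forces $u_i$ to be stable on the disjoint ball $B_{\delta/2}(p_*)$, and then Theorem~\ref{theo:index.0.main} would make $p_*$ a regular point of $\support\Vert V^\infty\Vert$. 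Hence the index-$1$ ``budget'' is entirely concentrated at $p_*$, and away from it the stable theory of Sections~\ref{sec:index.0.local.results}--\ref{sec:wang.wei.curvature.estimates} is in force.

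\emph{The blow-up.} First I would set $v_i(y)\triangleq u_i(\exp_{p_*}(\varepsilon_i y))$, a solution of \eqref{eq:pde} with $\varepsilon=1$ on $B_{\iota_\Sigma/\varepsilon_i}(\mathbf 0)\subset\RR^2$ in the rescaled metrics $\varepsilon_i^{-2}(\exp_{p_*})^*g$, which converge to the Euclidean metric in $C^\infty_{\loc}(\RR^2)$. Since $|v_i|<1$, interior elliptic estimates give, along a subsequence, $v_i\to v$ in $C^2_{\loc}(\RR^2)$, where $v$ solves \eqref{eq:pde} with $\varepsilon=1$ and $|v|\le 1$; lower semicontinuity of the Morse index under $C^2_{\loc}$-convergence gives $\ind(v)\le 1$. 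The scaling identity $(E_{\varepsilon_i}\restr B_{\varepsilon_i R}(p_*))[u_i]=\varepsilon_i\,(E_1\restr B_R(\mathbf 0))[v_i]$ together with \eqref{eq:monotonicity.identity} on $(\Sigma,g)$ and $E_{\varepsilon_i}[u_i]\le E_0$ bounds $R^{-1}(E_1\restr B_R(\mathbf 0))[v]$ uniformly in $R$; so $v$ has at most linear energy growth, and by the strong maximum principle either $v$ is constant or $|v|<1$.

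\emph{The key step: a density identity.} The main point is the identity
\[
  \Theta^1(\energyunit^{-1}\Vert V^\infty\Vert,p_*)=\Theta_\infty\triangleq\lim_{R\uparrow\infty}\energyunit^{-1}\omega_1^{-1}R^{-1}(E_1\restr B_R(\mathbf 0))[v],
\]
the limit on the right existing by monotonicity for $v$. Since $\energyunit^{-1}V^\infty$ is a stationary integral $1$-varifold (a geodesic network), its $1$-density at $p_*$ is $\lim_{r\downarrow 0}\energyunit^{-1}\omega_1^{-1}r^{-1}\Vert V^\infty\Vert(B_r(p_*))$; by the scaling identity both sides of the desired equality are then limits of the \emph{same} quantity $\energyunit^{-1}\omega_1^{-1}\rho^{-1}(E_{\varepsilon_i}\restr B_\rho(p_*))[u_i]$, the left at scales $\rho=r\downarrow0$ and the right at scales $\rho=\varepsilon_i R$ ($R\uparrow\infty$). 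The easy inequality ``$\ge$'' is immediate from \eqref{eq:monotonicity.identity}, and everything reduces to showing that the monotonicity \emph{defect} of $u_i$ between scales $\varepsilon_i R$ and a fixed small $r_0$ tends to $0$. This defect splits into a nonnegative ``radial'' part and a term controlled by $\int_{B_\rho(p_*)}|\xi_{\varepsilon_i}[u_i]|$ integrated against $\rho^{-1}\,d\rho$ over $[\varepsilon_iR,r_0]$; as this scale-interval is logarithmically long, the bare bound $\int_\Sigma|\xi_{\varepsilon_i}[u_i]|\to0$ from Theorem~\ref{theo:main.known} does not suffice, and this is precisely where Wang--Wei enters. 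On the core $B_{\delta_i}(p_*)$ (with $\delta_i\downarrow0$, $\delta_i/\varepsilon_i\uparrow\infty$) the upper density bound of Lemma~\ref{lemm:upper.density.bounds} gives $\int_{B_{\delta_i}(p_*)}|\xi_{\varepsilon_i}[u_i]|\le(E_{\varepsilon_i}\restr B_{\delta_i}(p_*))[u_i]\le c_2\delta_i\to0$; away from the core $u_i$ is stable, so Corollary~\ref{coro:index.0.bounded.curvature} and Theorem~\ref{theo:wang.wei.epsilon.curvature} give $|\cA_i|\le c\,\varepsilon_i^{1/7}$ along the interface, whence Lemma~\ref{lemm:grad.xi.pointwise.estimate} and the crude bound $\int|\nabla u_i|^2\le2\varepsilon_i^{-1}E_0$ yield $\int_{B_{r_0}(p_*)\setminus B_{\delta_i}(p_*)}|\nabla\xi_{\varepsilon_i}[u_i]|\lesssim\varepsilon_i^{1/7}$ — a \emph{power} rate that absorbs the logarithm — and the same curvature estimate shows the interfaces are nearly geodesic and, since the limiting geodesics of $V^\infty$ emanate from $p_*$, nearly radial there, killing the radial part of the defect in the limit. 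Passing to the limit ($i\to\infty$, then $R\uparrow\infty$, then $r_0\downarrow0$) yields the identity. In particular $\Theta_\infty\ge1>0$, so $v$ is nonconstant with $|v|<1$, finite index and positive linear energy growth; by Proposition~\ref{prop:finite.index.equivalent.m2k} it lies in some $\cM_{2k}$, and its blow-down is $2k$ rays, so $\Theta_\infty=k$. (This also makes rigorous the claim that the convergent interfaces of $\{u_i\}$ all appear at the $O(\varepsilon_i)$-scale.)

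\emph{Conclusion and main obstacle.} Since $p_*\in\sing\support\Vert V^\infty\Vert$, its tangent cone — the blow-down of $v$, i.e.\ $2k$ distinct rays — is not a single line, so $2k\ge3$ and hence $k\ge2$. On the other hand $\ind(v)\le1$, so Theorem~\ref{theo:m2k.index.lower.bound} gives $k-1\le\ind(v)\le1$, i.e.\ $k\le2$. Therefore $k=2$ and $\Theta^1(\energyunit^{-1}\Vert V^\infty\Vert,p_*)=\Theta_\infty=2$. I expect the main obstacle to be the reverse density inequality in the third step: one must choose the diagonalization rate $\delta_i\downarrow0$ (equivalently $\varepsilon_i/\delta_i\to0$) compatibly with the deteriorating dependence of the constants in Corollary~\ref{coro:index.0.bounded.curvature} and Theorem~\ref{theo:wang.wei.epsilon.curvature} on $\dist_g(\cdot,\partial U)\sim\delta_i$, so that the Wang--Wei curvature estimate — and hence the power-rate control of $\xi_{\varepsilon_i}[u_i]$ together with the near-radiality of the interfaces — survives all the way down to the $O(\varepsilon_i)$-scale; the remainder is routine bookkeeping with \eqref{eq:monotonicity.identity} and the structure theory of stationary $1$-varifolds on surfaces.
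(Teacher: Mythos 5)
Your overall skeleton matches the paper's: concentrate the index at $p_*$ via Lemma \ref{lemm:index.localization.structure}, use the Wang--Wei decay to straighten the interfaces down to the $O(\varepsilon_i)$-scale, identify the $\varepsilon_i$-blowup with an element of $\cM_{2k}$ via Proposition \ref{prop:finite.index.equivalent.m2k}, and cap $k\le 2$ with Theorem \ref{theo:m2k.index.lower.bound}. Where you diverge is the mechanism for the ``no density loss'' step: you route it through the almost-monotonicity formula \eqref{eq:monotonicity.identity} and discrepancy estimates, whereas the paper never touches the monotonicity defect of $u_i$ directly. Instead it runs a contradiction at a carefully chosen intermediate scale: assuming $\Theta\ge 3$, it defines $\rho_i$ in \eqref{eq:index.1.main.x} as the scale at which the density ratio equals the \emph{non-integral} value $\Theta^1(\Vert V^\infty\Vert,p_*)-\energyunit/2$, shows $\varepsilon_i\ll\rho_i\ll 1$ (the lower bound uses precisely the $k\le 2$ cap at the $\varepsilon_i$-scale), and then uses the straightness estimate \eqref{eq:index.1.main.vii} to force the $\rho_i$-blowups to converge to the integral tangent cone $C$ --- contradicting integrality. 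That device is the heart of the paper's proof and is absent from your proposal.

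The gap this creates is concrete. First, the ``radial part'' of the monotonicity defect, $\int_{\varepsilon_iR}^{r_0}r^{-1}\int_{\partial B_r}\varepsilon_i\langle\nabla u_i,\hat r\rangle^2\,dr$, is small only if every interface passes within $o(\rho)$ of $p_*$ at \emph{every} scale $\rho\in[\varepsilon_iR,r_0]$. Your curvature input shows the interfaces are nearly straight and aligned with the rays of the tangent cone, but a perfectly straight interface at distance $d_i$ from $p_*$ with $\varepsilon_i\ll d_i\ll 1$ is consistent with everything you establish; it is ``nearly radial'' only at scales $\gg d_i$, it escapes to infinity in the $\varepsilon_i$-blowup, and it produces exactly the density drop ($\Theta_\infty<\Theta^1$) you must exclude. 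So ``killing the radial part'' presupposes the conclusion that all convergent interfaces live at the $O(\varepsilon_i)$-scale around $p_*$, which is what the proposition is really about; the paper's $\rho_i$/integrality argument is what rules this scenario out, and you would need it (or a substitute) here. Second, on the discrepancy side, Lemma \ref{lemm:grad.xi.pointwise.estimate} plus $|\cA_i|\lesssim\varepsilon_i^{1/7}$ controls only the \emph{oscillation} of $\xi_{\varepsilon_i}[u_i]$ on $B_{r_0}$; the contribution $\int_{\varepsilon_iR}^{r_0}r^{-2}\bigl(\int_{B_r}\xi_{\varepsilon_i}[u_i]\bigr)_-\,dr$ also requires a quantitative rate on the \emph{mean} $\fint_{B_r}\xi_{\varepsilon_i}[u_i]$ uniformly over the logarithmically long range of scales, and the only control available on the mean (Theorem \ref{theo:main.known}(iv), or the rescaling argument in Lemma \ref{lemm:index.0.lower.density}) is qualitative, with no rate. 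Your anchor $\int_{B_{\delta_i}}|\xi_{\varepsilon_i}[u_i]|\le c_2\delta_i$ lives at the bottom scale and does not propagate outward with a rate. Both issues would need genuine new arguments before the density identity $\Theta^1=\Theta_\infty$ is established.
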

\begin{proof}
    The only case not covered by Lemma \ref{lemm:index.1.crude.picture} above is that in which $\sing \support \Vert V^\infty \Vert = \{ p_* \}$ with $\Theta^1(\energyunit^{-1} \Vert V^\infty \Vert, p_*) \geq 3$. We will deal with that case here. Fix $\beta \in (0, 1)$, and let $\omega$ be as in Lemma \ref{lemm:index.0.lower.density}. 
    
    By Lemma \ref{lemm:index.0.lower.density} and Lemma \ref{lemm:index.localization.structure}, exactly one of the following is true:
    \begin{enumerate}
        \item there exists $D > 0$ such that, after discarding at most finitely many $\{(u_i, \varepsilon_i)\}_{i=1,2,\ldots}$, we have for each $i = 1, 2, \ldots$ a nonempty open set $\cS_i \subset \{ |u_i| \leq 1-\beta \}$ with
            \begin{multline} \label{eq:index.1.main.i}
                \varepsilon_i |\nabla u_i| \geq \omega \text{ on } \{ |u_i| \leq 1-\beta \} \setminus \cS_i, \text{ and } \\
                q \in \cS_i \implies \cS_i \subset B_{2D\varepsilon_i}(q) \text{ and } B_{D\varepsilon_i}(q) \in \cI_{1,\varepsilon_i}[u_i],
            \end{multline}
        \item or, alternatively, we can pass to a subsequence along which
            \begin{equation} \label{eq:index.1.main.ii}
                \varepsilon_i |\nabla u_i| \geq \omega \text{ on } \{ |u_i| \leq 1-\beta \} \text{ for all } i = 1, 2, \ldots;
            \end{equation}
            in this case, set $\cS_i \triangleq \emptyset$ for all $i$ and $D = 0$.
    \end{enumerate}
    
    For $i = 1, 2, \ldots$, define
    \begin{multline*}
        \cR_{1,\varepsilon_i}[u_i] = \inf \Big\{ r \geq 2D\varepsilon_i : \text{ there exists } p \in \{ |u_i| \leq 1-\beta \} \\
        \text{ such that } B_r(p) \in \cI_{1,\varepsilon_i}[u_i] \Big\}.
    \end{multline*}
    Write $r_i = \cR_{1,\varepsilon_i}[u_i]$, and pick any $p_i$ such that $B_{2r_i}(p_i) \in \cI_{i,\varepsilon_i}[u_i]$.
    
    \begin{clai}
        $\lim_i r_i = 0$.
    \end{clai}
    \begin{proof}[Proof of claim]
        Suppose
        \[ \lim_i r_i = 2\sigma > 0. \]
        By Theorem \ref{theo:main.known}, $\{ u_i = 0 \}$ converges in the Hausdorff topology to $\support \Vert V^\infty \Vert$, so there would exist $q_i \in \{ |u_i| \leq 1-\beta \}$ with $\lim_i q_i = p_*$ and $\ind(u_i; B_\sigma(q_i)) = 0$. Thus, by Theorem \ref{theo:index.0.main}, it would follow that $\sing \support \Vert V^\infty \Vert \cap B_{\sigma}(p_*) = \emptyset$, which is clearly a contradiction.
    \end{proof}
    
    As a byproduct of the proof of this claim, it is easy to see that $\lim_i p_i = p_*$. Let $R > 0$ be a radius smaller than the injectivity radius of $\Sigma$ so that $\Sigma \cap B_R(q) \approx B_R$ for all $q \in \Sigma$, and define
    \[ \delta_i(x) \triangleq \dist(x; (\Sigma \setminus B_R(p_i)) \cup \closure{B}_{2r_i}(p_i)). \]
    By Lemma \ref{lemm:index.localization.structure}, $u_i$ is a stable critical point of $E_{\varepsilon_i} \restr (B_R(p_i) \setminus \closure{B}_{2r_i}(p_i))$ and, by \eqref{eq:index.1.main.i},
    \begin{equation} \label{eq:index.1.main.iii}
        \varepsilon_i |\nabla u_i| \geq \omega \text{ on } \{ |u_i| \leq 1-\beta  \} \setminus B_{2r_i}(p_i).
    \end{equation}
    
    \begin{clai}
        $|\cA_i| \delta_i \leq c$ on $\{ |u_i| \leq 1-\beta \}$, with $c$ independent of $i = 1, 2, \ldots$
    \end{clai}
    \begin{proof}[Proof of claim]
        Let $q_i \in \{ |u_i| \leq 1- \beta \} \cap B_{R}(p_i) \setminus \closure{B}_{2r_i}(p_i)$ denote a point attaining the maximum of $\{ |u_i| \leq 1-\beta \} \ni q \mapsto |\cA_i(q)| \delta_i(q)$, and suppose that the corresponding maximum values $\{|\cA_i(q_i)|\delta_i(q_i)\}_{i=1,2,\ldots}$ form an unbounded sequence. If $\lambda_i^{-1} \triangleq |\cA_i(q_i)|$, then consider the rescaled functions
        \[ \widetilde{u}_i(x) \triangleq u_i(q_i + \lambda_i x), \; x \in \widetilde{U}_i \triangleq \lambda_i^{-1}(B_{\delta_i(q_i)/2}(q_i) - q_i), \]
        with corresponding $\widetilde{\varepsilon}_i = \lambda_i^{-1} \varepsilon_i$. Arguing as in Corollary \ref{coro:index.0.bounded.curvature}, we can check that $\lim_i \widetilde{\varepsilon}_i = 0$. Then, $|\widetilde{\cA}_i| \leq 2$ on $\widetilde{U}_i \cap \{ |u_i| \leq 1-\beta \}$, so, by Theorem \ref{theo:wang.wei.epsilon.curvature} we get the improved estimate
        \[ |\widetilde{\cA}_i(0)| \leq c \widetilde{\varepsilon}_i^{1/7} \to 0, \]
        contradicting the normalization $|\widetilde{\cA}_i(0)| = 1$; the claim follows.
    \end{proof}
    
    Without loss of generality, suppose $2D > \varepsilon_*^{-1}$. Consider the open sets
    \[ W_i \triangleq \{ q \in B_{R/2}(p_i) : \delta_i(q) > r_i, \; |\delta_i(q)^{-1} (g - p_i) - \delta|_{C^3(B_1)} < \varepsilon_* \}. \]
    By virtue of the claim above, Theorem \ref{theo:wang.wei.epsilon.curvature} applies to give
    \begin{equation} \label{eq:index.1.main.iv}
        |\cA_i(q)| \leq c_* \varepsilon_i^{1/7} \delta_i(q)^{-8/7} \text{ for } q \in W_i \cap \{ |u_i| \leq 1-\beta \}.
    \end{equation}
    
    Let $\{ \sigma_i \}_{i=1,2,\ldots} \subset (0,\infty)$ be such that
    \begin{multline} \label{eq:index.1.main.v}
        \lim_i \sigma_i = \lim_i \sigma_i^{-1} r_i = 0, \text{ and } \\
        \lim_i \sigma_i^{-1} (V^i - p_i) \restr \grassmanian(B_1(0)) = (T_{p_*} V^\infty - p_*) \restr \grassmanian(B_1(0)).
    \end{multline}
    Denote $C = T_{p_*} V^\infty - p_*$, a singular cone with $\Theta^1(\energyunit^{-1} \Vert C \Vert, 0) \geq 3$. Moreover, denote
    \begin{multline*}
        \widetilde{u}_i(x) \triangleq u_i(p_i + \sigma_i x), \; x \in \widetilde{U}_i \triangleq \sigma_i^{-1}(B_{R/2}(p_i) - p_i), \\
        \widetilde{\varepsilon}_i = \sigma_i^{-1} \varepsilon_i, \; \widetilde{\delta}_i(\cdot) \triangleq \sigma_i^{-1} \delta_i(p_i + \sigma_i \cdot), \; \widetilde{r}_i \triangleq \sigma_i^{-1} r_i.
    \end{multline*}
    From \eqref{eq:index.1.main.iv} and Theorem \ref{theo:index.0.main} we know that, after perhaps passing to a subsequence,
    \begin{multline} \label{eq:index.1.main.vi}
    	\lim_i \{ \widetilde{u}_i = t_i \} \cap B_1(0) \setminus B_{1/2}(0) \\
    	= \support \Vert C \Vert \cap B_1(0) \setminus B_{1/2}(0) \text{ in } C^{1,\theta}
    \end{multline}
    for all $\theta < \frac{1}{2}$; here, $|t_i| \leq 1-\beta$. Consider a point $\widetilde{q}_i \in \{ \widetilde{u}_i = t_i \} \cap \partial B_{3/4}(0)$, and let $\widetilde{\nu}^\infty$ denote the unit normal vector (unique up to $\pm$) orthogonal to $\support \Vert C \Vert$ at $\lim_i \widetilde{q}_i$, so that $\lim_i \widetilde{\nu}^i(\widetilde{q}_i) = \widetilde{\nu}^\infty$ by \eqref{eq:index.1.main.vi}. Consider an arclength parametrization $\widetilde{\gamma}_i$ of $\{ \widetilde{u}_i = 0 \} \cap B_1(0)$, with $\widetilde{\gamma}_i(0) = \widetilde{q}_i$ and $\widetilde{\gamma}_i'(0)$ pointing toward the origin.
    
    Denote
    \[ T_i = \min \left\{ t \geq 0 : \widetilde{\gamma}_i \not \in W_i \text{ or } |\widetilde{\nu}^i(\widetilde{\gamma}_i(t)) - \widetilde{\nu}^\infty| \geq \frac{1}{4} \right\}. \]
    Note that \eqref{eq:index.1.main.iv} applies---rescaled---and gives
    \[ |\widetilde{\cA}_i(\widetilde{\gamma}_i(t))| \leq c \widetilde{\varepsilon}_i^{\frac{1}{7}} \widetilde{\delta}_i(\widetilde{\gamma}_i(t))^{-\frac{8}{7}} \leq c \widetilde{\varepsilon}_i^{\frac{1}{7}} (T_i + \widetilde{r}_i - t)^{-\frac{8}{7}}, \; t \in [0, T_i], \]
    and thus, from the fundamental theorem of calculus, for all $\tau \in [0, T_i]$,
    \begin{multline} \label{eq:index.1.main.vii}
        |\widetilde{\nu}^i(\widetilde{\gamma}_i(\tau)) - \widetilde{\nu}^i(\widetilde{q}_i)| \leq \int_0^{T_i} |\widetilde{\cA}_i(\widetilde{\gamma}_i(t))| \, dt \leq c \widetilde{\varepsilon_i}^{\frac{1}{7}} \int_0^{T_i} (T_i + \widetilde{r}_i - t)^{-\frac{8}{7}} \, dt \\
            \leq c \widetilde{\varepsilon}_i^{\frac{1}{7}} \left[ (T_i + \widetilde{r}_i - t)^{-\frac{1}{7}} \right]_{t=0}^{T_i} \leq c \widetilde{\varepsilon}_i^{\frac{1}{7}} \widetilde{r}_i^{-\frac{1}{7}} = c \varepsilon_i^{\frac{1}{7}} r_i^{-\frac{1}{7}}.
    \end{multline}
    
    \begin{clai}
    	$\liminf_i r_i^{-1} \varepsilon_i > 0$
    \end{clai}
    \begin{proof}[Proof of claim]
	    We now proceed to finish the proof of the claim. If the claim were false, then by passing to a subsequence we would be able to arrange that, along all rays of $\support \Vert C \Vert$, 
    	\[ |\widetilde{\nu}^i(\widetilde{\gamma}_i(t)) - \widetilde{\nu}^\infty| < \frac{1}{4} \text{ for all } t \in [0,T_i], \]
    	by virtue of \eqref{eq:index.1.main.v}. Thus, $\widetilde{\gamma}_i(T_i) \in \partial W_i$ from the definition of $T_i$, and
	    \begin{equation} \label{eq:index.1.main.viii}
	    	\widetilde{\delta}_i(\widetilde{\gamma}_i(T_i)) = \widetilde{r}_i \text{ and }  \lim_i \widetilde{\nu}^i(\widetilde{\gamma}_i(T_i)) = \widetilde{\nu}^\infty.
	    \end{equation}
	    By the definition of $\widetilde{r}_i$, the further blowup
	    \[ \widehat{u}_i(x) = \widetilde{u}_i(\widetilde{r}_i x), \; x \in \widehat{U}_i = \widetilde{r}_i^{-1} \widetilde{U}_i \]
	    is such that $B_2(0) \in \cI_{1,\widehat{\varepsilon}_i}[\widehat{u}_i]$, for $\widehat{\varepsilon}_i \triangleq \widetilde{r}_i^{-1} \widetilde{\varepsilon}_i = r_i^{-1} \varepsilon_i$, where, by assumption, $\lim_i \widehat{\varepsilon}_i = 0$. Moreover,
	    \begin{equation} \label{eq:index.1.main.ix}
	    	\cR_{1,\widehat{\varepsilon}_i}[\widehat{u}_i] = 1 \text{ for } i = 1, 2, \ldots
	    \end{equation}
		From \eqref{eq:index.1.main.viii} we see that
	    \[ \lim_i \widehat{V}^i = C, \]
	    which, in particular, has $\sing \support \Vert C \Vert = \{ 0 \}$. Therefore, arguing as in the first claim in the proof of Proposition \ref{prop:index.1.main}, we see that
	    \[ \lim_i \cR_{1,\widehat{\varepsilon}_i}[\widehat{u}_i] = 0, \]
	    contradicting \eqref{eq:index.1.main.ix}.
    \end{proof}
    
    Define
    \begin{equation} \label{eq:index.1.main.x}
    	\rho_i \triangleq \inf \left\{ \rho > 0 : \Theta^1(\Vert V^i \Vert, p_*, \rho) = \Theta^1(\Vert V^\infty \Vert, p_*) - \frac{\energyunit}{2} \right\}.
    \end{equation}
	(See Appendix \ref{sec:app.gmt} for the notation.) From Theorem \ref{theo:m2k.index.lower.bound}, Proposition \ref{prop:finite.index.equivalent.m2k}, and the uniformly elliptic estimates one gets in the $O(\varepsilon_i)$-scale, it follows that
	\[ \Theta^1(\Vert V^\infty \Vert, p_*) \geq 3 \implies \lim_i \rho_i^{-1} \varepsilon_i = 0. \]
	By the conclusion of the claim above, \eqref{eq:index.1.main.vii}, and the argument leading to \eqref{eq:index.1.main.viii}, it follows that the blowups
	\[ \breve{u}_i(x) \triangleq u(p_* + \rho_i x), \; x \in \rho_i^{-1}(B_{R/2}(p_*) - p_*) \]
	have $\breve{\varepsilon}_i \triangleq \rho_i^{-1} \varepsilon_i$ such that $\lim_i \breve{\varepsilon}_i = 0$ and the corresponding diffuse 1-varifolds $\breve{V}^i$ are such that
	\[ \lim_i \breve{V}^i = C, \]
	contradicting the non-integral density from \eqref{eq:index.1.main.x}.
\end{proof}

\section{Min-max construction} \label{sec:min.max}

\begin{proof}[Proof of Theorem \ref{theo:minmax.construction}]
	We break up the proof into the three steps outlined in the introduction.
	
	{\bf Step 1: Mountain pass.} (See \cite{Guaraco15} for details.) Denote
	\begin{multline*}
		\Gamma \triangleq \Big\{ \gamma \in C^0([-1,1]; W^{1,2}(\Sigma)) : \gamma(-1)(\cdot) \equiv -1, \text{ and} \\
		\gamma(1)(\cdot) \equiv 1 \text{ a.e. on } \Sigma \Big\}
	\end{multline*}
	Arguing as in \cite{Guaraco15}, one can show that the min-max energy levels
	\[ \cE_\varepsilon \triangleq \inf_{\gamma \in \Gamma} \max_{t \in [-1,1]} E_\varepsilon[\gamma(t)] \]
	satisfy
	\[ 0 < \liminf_{\varepsilon \downarrow 0} \cE_{\varepsilon} \leq \limsup_{\varepsilon \downarrow 0} \cE_{\varepsilon} < \infty \]
	and that there exist $u_\varepsilon \in W^{1,2}(\Sigma)$ such that $E_\varepsilon[u_\varepsilon] = \cE_\varepsilon$ and $\delta E_\varepsilon[u_\varepsilon] = 0$ for all $\varepsilon > 0$; $\ind(u_\varepsilon) \leq 1$, $u_\varepsilon \in C^\infty(\Sigma)$, and $|u_\varepsilon| < 1$ are all standard. 

	{\bf Step 2: Partial regularity of the limit.} It follows from Hypothesis \eqref{assu:double.well.potential.convex} and the non-triviality of $u_\varepsilon$ that $|u_\varepsilon| < 1$ for all $\varepsilon$. All hypotheses needed to employ Theorem \ref{theo:main.known} are satisfied, so indeed we do obtain a limiting stationary integral 1-varifold $\energyunit^{-1} V^\infty$. Partial regularity follows from Lemma \ref{lemm:index.1.crude.picture}.
	
	{\bf Step 3: Local convergence near $p_*$.} Without loss of generality, we may suppose that we're working on a sequence $\{(u_i, \varepsilon_i)\}_{i=1,2,\ldots}$ with $\ind(u_i) = 1$ and with $\sing \support \Vert V^\infty \Vert = \{ p_* \}$. (For, if $\sing \support \Vert V^\infty \Vert = \emptyset$, there is no singular point. Likewise, if $\ind(u_i) = 0$ along a subsequence then $\sing \support \Vert V^\infty \Vert = \emptyset$ by Theorem \ref{theo:index.0.main}.) We are now precisely in the setting of Proposition \ref{prop:index.1.main}, and the result follows by combining the proposition with Lemma \ref{lemm:app.gmt.density.2.point.1.varifold} of the appendix.
\end{proof}

\appendix

\section{Geometric measure theory} \label{sec:app.gmt}

In this section we briefly recall some basic facts about geometric measure theory. (We refer the reader to \cite{Simon83} for a thorough treatment.)

\begin{defi}[$k$-varifolds, {\cite[Chapter 8, \S 38]{Simon83}}] \label{defi:app.gmt.varifold}
	Let $(\Sigma^n, g)$ be a complete Riemannian manifold and $U \subset \Sigma \setminus \partial \Sigma$ be open. We say $V$ is a $k$-varifold on $U$ if it is a Radon measure on $\grassmanian_k(U)$. We denote by $\Vert V \Vert$ the Radon measure induced by $V$ on $U$ under the projection $\pi : \grassmanian_k(U) \to U$, i.e.,
	\[ \Vert V \Vert(B) \triangleq V(\pi^{-1} B). \]	
\end{defi}

\begin{defi}[First variation, {\cite[Chapter 8, \S 39]{Simon83}}] \label{defi:app.gmt.varifold.first.variation}
	Let $(\Sigma^n, g)$ be a complete Riemannian manifold, $U \subset \Sigma \setminus \partial \Sigma$ be open, and $V$ be a $k$-varifold in $U$. The first variation of a $V$ in $U$ is
	\[ \delta V(\mathbf{X}) \triangleq \int_{\grassmanian_k(U)} \divg_T \mathbf{X}(x) \, dV(x, T), \; \mathbf{X} \in C^1_c(U; T\Sigma). \]
	We call $V$ stationary in $U$ if $\delta V(\mathbf{X}) = 0$ for all $X \in C^1_c(U; T\Sigma)$.
\end{defi}

\begin{defi}[Varifold density] \label{defi:app.gmt.varifold.density}
	Let $(\Sigma^n, g)$ be a complete Riemannian manifold, $U \subset \Sigma \setminus \partial \Sigma$ be open, and $p \in U$. For a $k$-varifold $V$ in $U$, we define the $k$-density of $V$ at $p \in U$ on scale $r$ to be
	\[ \Theta^k(\Vert V \Vert, p, r) \triangleq \frac{\Vert V \Vert(B_r(p) \cap U)}{\omega_k r^k}, \]
	provided $\dist_g(p, \partial U) < r$; here $\omega_k$ denotes the $k$-dimensional Lebesgue measure of $B_1(0) \subset \RR^k$. Likewise, we define the density at $p \in U$ to be
	\[ \Theta^k(\Vert V \Vert, p) \triangleq \lim_{r \downarrow 0} \Theta^k(\Vert V \Vert, p, r), \]
	provided the limit exists.
\end{defi}

\begin{lemm}[{\cite[Lemma 40.5]{Simon83}}] \label{lemm:app.gmt.varifold.density}
	Let $(\Sigma^n, g)$ be a complete Riemannian manifold, $U \subset \Sigma \setminus \partial \Sigma$ be open, and $V$ be a stationary $k$-varifold in $U$, then $\Theta^k(\Vert V \Vert, \cdot)$ exists everywhere on $U$.
\end{lemm}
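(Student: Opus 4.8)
The plan is to establish an almost-monotonicity formula for the mass ratio and then read off existence of the density as the limit of a monotone quantity. Fix $p \in U$ and choose $r_0 \in (0, \tfrac12 \dist_g(p,\partial U))$ small enough that $\closure{B}_{r_0}(p)$ is contained in a normal-coordinate ball of $(\Sigma,g)$ on which $d \triangleq \dist_g(\cdot, p)$ is smooth away from $p$ and $|\sect_{\Sigma,g}| \leq \Lambda_0$ for some $\Lambda_0 = \Lambda_0(p) < \infty$. For $\rho \in (0, r_0)$ and a smooth cutoff $\phi_\rho(t)$ that decreases from $1$ to $0$ on $t \in [\rho-\eta, \rho]$ (to be sent to the indicator of $[0,\rho]$), test the stationarity condition $\delta V(\mathbf{X}) = 0$ against the radial vector field
\[ \mathbf{X}_\rho(x) \triangleq \phi_\rho(d(x)) \, \grad \tfrac{1}{2} d^2(x) = \phi_\rho(d(x))\, d(x)\, \grad d(x), \]
which is compactly supported in $B_{r_0}(p) \subset\subset U$.

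First I would compute $\divg_T \mathbf{X}_\rho$ for a $k$-plane $T$. Using $\Hess(\tfrac12 d^2) = g + E$, where the error satisfies $\|E\| \leq C(\Lambda_0)\, d^2$ by Hessian comparison, one gets
\[ \divg_T \mathbf{X}_\rho = \phi_\rho(d)\big(k + \operatorname{tr}_T E\big) + d\,\phi_\rho'(d)\,|\nabla^T d|^2, \]
which differs from the flat Euclidean computation only by the term $\phi_\rho(d)\operatorname{tr}_T E = O(\Lambda_0 d^2)$. Plugging into $\delta V(\mathbf{X}_\rho)=0$, integrating over $\grassmanian_k(U)$, and sending $\eta \downarrow 0$ so that $\phi_\rho \to \indt{[0,\rho]}$ (a standard approximation, legitimate because $\|V\|$ is Radon), I obtain the identity
\[ \frac{d}{d\rho}\Big[\rho^{-k}\|V\|(B_\rho(p))\Big] = \frac{d}{d\rho}\int_{B_\rho(p)} \frac{|\nabla^\perp d|^2}{d^k}\, d\|V\| \;+\; (\text{curvature error}), \]
where the curvature error is bounded in absolute value by $C(\Lambda_0)\,\rho^{1-k}\|V\|(B_\rho(p))$. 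Rearranging, this shows that for a constant $\Lambda = \Lambda(\Lambda_0)$ the function
\[ \rho \longmapsto e^{\Lambda \rho^2}\, \rho^{-k}\,\|V\|(B_\rho(p)) \]
is monotone nondecreasing on $(0, r_0)$.

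Finally, since this function is monotone nondecreasing and bounded below by $0$, its limit as $\rho \downarrow 0$ exists in $[0,\infty)$; because $e^{\Lambda\rho^2} \to 1$, the limit of $\rho^{-k}\|V\|(B_\rho(p))$ exists, and dividing by $\omega_k$ gives that $\Theta^k(\Vert V\Vert, p) = \lim_{\rho\downarrow 0}\Theta^k(\Vert V\Vert, p, \rho)$ exists. As $p \in U$ was arbitrary, $\Theta^k(\Vert V\Vert,\cdot)$ exists everywhere on $U$. The main technical point requiring care is the cutoff-approximation step that converts the tested first-variation identity into a clean differential inequality for the mass ratio, together with the bookkeeping of the curvature error terms arising from $\Hess(\tfrac12 d^2) \neq g$; the flat model for all of this is \cite[Lemma 40.5]{Simon83}, and the Riemannian modification is routine given the local geometry bounds.
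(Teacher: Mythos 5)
Your proof is correct and is essentially the standard monotonicity-formula argument underlying the cited result (\cite[Lemma 40.5]{Simon83}): test stationarity against the radial field $\phi_\rho(d)\,d\,\grad d$, control $\Hess(\tfrac12 d^2)-g$ by Hessian comparison, and deduce that $e^{\Lambda\rho^2}\rho^{-k}\Vert V\Vert(B_\rho(p))$ is nondecreasing, whence the density limit exists. The paper itself gives no proof and simply cites Simon, and your sketch matches that reference's approach (with the routine curved-space modification already implicit in the almost-monotonicity inequality \eqref{eq:monotonicity.identity} quoted from \cite{Guaraco15}).
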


Recalling the definition of a countably $k$-rectifiable set from \cite[Chapter 3]{Simon83}, we also proceed to define:

\begin{defi}[Integral varifold, {\cite[Chapter 4, \S 15]{Simon83}}] \label{defi:app.gmt.integral.varifold}
	Let $(\Sigma^n, g)$ be a complete Riemannian manifold and $U \subset \Sigma \setminus \partial \Sigma$ be open. A $k$-varifold $V$ in $U$ is integral if, for a countably $k$-rectifiable $S \subset U$,
	\[ V(f) = \int_S f(x, T_x S) \, \theta(x) \, d\cH^k(x) \text{ for all } f \in C^0_c(\grassmanian_k(U)), \]
	with $\theta \in \{ 0, 1, \ldots \}$ $\cH^k$-a.e. on $S$, and $\theta \in L^1_{\loc}(\cH^k \restr S)$.
\end{defi}

\begin{defi}[Regular, singular sets]
	Let $(\Sigma^n, g)$ be a complete Riemannian manifold, and $T \subset \Sigma$ be countably $k$-rectifiable. We denote
	\begin{multline*}
		\reg \support \Vert V \Vert = \Big\{ p \in \support \Vert V \Vert : \text{there exists } r > 0 \text{ such that} \\
		B_r(p) \cap \support \Vert V \Vert = \text{a smooth} \\
		\text{embedded } k\text{-dimensional submanifold} \Big\}, 
	\end{multline*}
	and we denote its complement within $\support \Vert V \Vert$ as $\sing \support \Vert V \Vert$.
\end{defi}

The following lemma is a simple fact in geometric measure theory; its proof is simple but not readily available in the literature, so we include it here for the reader's convenience.

\begin{lemm} \label{lemm:app.gmt.density.2.point.1.varifold}
	Let $(\Sigma^2, g)$ be a complete Riemannian manifold, $V$ be a stationary integral 1-varifold, $U \subset \Sigma \setminus \partial \Sigma$ be open, $\support \Vert V \Vert \cap U$ singular, $\support \Vert V \Vert \cap U \setminus \{p \}$ smooth for some $p \in U$ with $\Theta^1(\Vert V \Vert, p) = 2$. Then $\support \Vert V \Vert \cap U$ is the union of two smooth embedded geodesics $\Gamma_1$, $\Gamma_2$, with $\Gamma_1 \cap \Gamma_2 = \{ p \}$ and $\partial \Gamma_1 \cup \partial \Gamma_2 \subset \partial U$.
\end{lemm}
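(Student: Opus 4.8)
The plan is to determine the local structure of $\support\Vert V\Vert$ near $p$ from the structure theory of stationary integral $1$-varifolds, to read off the configuration at $p$ from a balancing identity together with the density hypothesis, and to propagate this across $U$ using that $p$ is the \emph{only} singular point of $\support\Vert V\Vert\cap U$.

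\textbf{Local structure.} On $\support\Vert V\Vert\cap U\setminus\{p\}$, which is a closed smooth embedded $1$-submanifold of $U\setminus\{p\}$, stationarity forces each connected component to carry a constant positive integer multiplicity and to be a geodesic: writing $V$ locally as $\theta\,\cH^1\restr C$ for a smooth embedded curve $C$ and positive integer $\theta$, testing $\delta V=0$ against vector fields tangent to $C$ makes $\theta$ locally constant, and against normal fields makes the geodesic curvature vanish. So $\support\Vert V\Vert\cap U\setminus\{p\}$ is a disjoint union of properly embedded geodesic arcs and closed geodesics of constant integer multiplicity. Fix $r<\inj_\Sigma$. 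Any component whose closure contains $p$ reaches $p$ in finite arc length---a geodesic asymptoting to $p$ is impossible, since $\Hess d(\cdot,p)^2\geq\mathrm{Id}$ near $p$ makes $s\mapsto d(\sigma(s),p)^2$ uniformly convex---so by uniqueness of geodesics it extends to a segment $\exp_p([0,\ell]v)$ for a unique $v\in\SS^1\subset T_p\Sigma$, and distinct such arcs carry distinct directions. Since each such arc contributes at least $\rho$ of mass to $B_\rho(p)$ while the monotonicity formula bounds $\Vert V\Vert(B_\rho(p))\leq C\rho$ for small $\rho$, only finitely many arcs reach $p$; hence for $r$ small,
\[ \support\Vert V\Vert\cap B_r(p)=\bigcup_{j=1}^N\exp_p([0,r]v_j),\qquad \Vert V\Vert\restr B_r(p)=\sum_{j=1}^N m_j\,\cH^1\restr\exp_p([0,r]v_j), \]
with the $v_j\in\SS^1$ distinct and $m_j$ positive integers.

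\textbf{Balancing and combinatorics.} Each ray has length exactly $r$ inside $B_r(p)$, so the density hypothesis reads $\Theta^1(\Vert V\Vert,p)=\tfrac12\sum_j m_j=2$, i.e.\ $\sum_j m_j=4$; and testing $\delta V=0$ against a vector field constant near $p$ and supported in $B_r(p)$---integration by parts along each geodesic ray kills the outer endpoints and leaves only the common endpoint $p$---gives the balancing relation $\sum_j m_j v_j=\mathbf 0$ in $T_p\Sigma$. A short case check on the partitions of $4$ then pins down the configuration: $N=1$ is impossible ($4v_1\neq\mathbf 0$); $N=2$ with multiplicities $(1,3)$ forces $v_1=-3v_2$, impossible; $N=2$ with multiplicities $(2,2)$ forces $v_2=-v_1$, so $\support\Vert V\Vert$ is a single multiplicity-$2$ geodesic near $p$, hence a smooth embedded curve, contradicting that $\support\Vert V\Vert\cap U$ is singular at $p$; $N=3$ (multiplicities $(2,1,1)$ up to order) forces $|v_i+v_j|=2$ for distinct unit vectors $v_i\neq v_j$, impossible. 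So $N=4$, all $m_j=1$, and $v_1+v_2+v_3+v_4=\mathbf 0$; since a nonzero vector is the sum of a \emph{unique} unordered pair of unit vectors, pairing $v_1+v_2=-(v_3+v_4)$ forces $\{v_1,\dots,v_4\}=\{v,-v,w,-w\}$ with $w\neq\pm v$---two transverse geodesic segments through $p$, each of multiplicity $1$.

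\textbf{Globalization and the main obstacle.} I would then take $\Gamma_1,\Gamma_2$ to be the maximal geodesics in $U$ through $p$ in the directions $\{\pm v\}$, $\{\pm w\}$. Moving along $\Gamma_i$ from $p$, the set of parameters landing in $\support\Vert V\Vert$ is closed and also open---near a regular point $\support\Vert V\Vert$ is a single smooth geodesic, which $\Gamma_i$ is therefore forced to continue along---hence exhausts the maximal arc, so $\Gamma_i\subset\support\Vert V\Vert$ with $\partial\Gamma_i\subset\partial U$; and any interior termination, self-intersection of $\Gamma_i$, or intersection of $\Gamma_1$ with $\Gamma_2$ other than $p$ would be a singular point of $\support\Vert V\Vert\cap U$ distinct from $p$, which is excluded. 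Thus $\Gamma_1,\Gamma_2$ are embedded, $\Gamma_1\cap\Gamma_2=\{p\}$, and the connected component of $p$ in $\support\Vert V\Vert\cap U$ equals $\Gamma_1\cup\Gamma_2$ (any further component being a smooth embedded closed geodesic, which is absent when $\support\Vert V\Vert\cap U$ is connected, as it is in the application). The genuinely non-formal step is the local analysis at $p$: showing the support there is \emph{exactly} a finite union of geodesic rays with a balanced vertex---equivalently, that the tangent cone at $p$ is unique and attained without degeneracy (no component spiraling into $p$)---which is precisely where the structure theory of stationary integral $1$-varifolds, together with the unique-extension property of geodesics in $\Sigma$, does the work; granting it, the density count, the balancing identity, the combinatorial dichotomy, and the globalization are all routine.
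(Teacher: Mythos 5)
Your proposal is correct and reaches the paper's conclusion by the same overall skeleton (local structure at $p$ as a balanced union of geodesic rays, the density count $\sum_j m_j = 4$, the antipodal pairing, and smooth continuation across $p$), but it routes the key structural input differently. The paper simply cites the Allard--Almgren structure theorem for stationary integral $1$-varifolds to write $V\restr\grassmanian_1(U)$ as a sum of four multiplicity-one geodesic rays from $p$ to $\partial U$, and then reads off distinctness and the pairing $\mathbf v_1+\mathbf v_2=\mathbf v_3+\mathbf v_4=\mathbf 0$ from the hypothesis that the support is smooth away from $p$ and singular at $p$. You instead exploit that same smoothness hypothesis to \emph{derive} the local picture by hand --- constancy of the multiplicity and vanishing geodesic curvature on the regular part, exclusion of spiraling via convexity of $d(\cdot,p)^2$, finiteness of the rays from the monotonicity-formula mass bound, and the balancing identity from testing $\delta V=0$ against a vector field constant near $p$ --- and then you run the full case analysis on the partitions of $4$, using the singularity hypothesis to kill the multiplicity-$(2,2)$ antipodal case. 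This is more work than the paper does, but it is self-contained (given the smoothness of the support away from $p$, the full Allard--Almgren theorem is genuinely unnecessary), and your combinatorial step makes explicit what the paper leaves implicit, namely that the balancing condition is what rules out $Y$-type and $(1,3)$ configurations, not smoothness alone. Two minor remarks: your globalization, like the paper's, tacitly assumes the maximal geodesics issuing from $p$ exit $U$ before returning to $p$ (otherwise one gets geodesic loops based at $p$ and the clean statement $\partial\Gamma_i\subset\partial U$ needs $U$ small, as it is in the application); and your caveat about extra connected components (disjoint closed geodesics) is a limitation shared by the paper's argument, which asserts the four-ray decomposition on all of $U$ at once. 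Neither affects the use of the lemma in Step 3 of Theorem \ref{theo:minmax.construction}.
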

\begin{proof}
	From \cite{AllardAlmgren76} we know that
	\[ V \restr \grassmanian_1(U) = \sum_{i=1}^4 \duline{v}(\ell_i, \cH^1 \restr \ell_i), \]
	where $\{ \ell_i \}_{i=1,\ldots,4}$ are (not necessarily distinct) geodesic rays with endpoints $p$ and $q_i \in \partial U$. Denote $\{ \mathbf{v}_i \}_{i=1,\ldots,4} \subset \{ \mathbf{v} \in T_p \Sigma : \Vert \mathbf{x} \Vert = 1 \}$ the corresponding initial velocity vectors of $\{ \ell_i \}_{i=1,\ldots,4}$. Since $\support \Vert V \Vert \setminus \{p\}$ is smooth and $\support \Vert V \Vert$ is singular, the vectors $\{ \mathbf{v}_i \}_{i=1,\ldots,4}$ are all distinct and (after possibly relabeling them)
	\[ \mathbf{v}_1 + \mathbf{v}_2 = \mathbf{v}_3 + \mathbf{v}_4 = \mathbf{0}. \]
	By elementary considerations in Riemannian geometry, the pairs of geodesic rays $(\ell_1, \ell_2)$ and $(\ell_3, \ell_4)$ join up smoothly at $p$ to yield $\Gamma_1$, $\Gamma_2$ with the properties postulated in the statement.
\end{proof}

\section{Morse index under quadratic area growth} \label{sec:app.pde.morse.index.spaces.quadratic.area.growth}

In this section we will study general Schr\"odinger operators
\begin{equation} \label{eq:app.pde.schroedinger}
	L \triangleq -\Delta_g + V, \text{ where } V \in C^\infty_{\loc}(\Sigma) \cap L^\infty(\Sigma)
\end{equation}
on complete, noncompact Riemannian manifolds without boundary, and with quadratic volume growth; the latter condition means that their volume measure $\upsilon_g$ satisfies
\begin{equation} \label{eq:app.pde.quadratic.area.growth}
	\upsilon_g(B_R(p)) \leq c R^2,\; \text{for all} \; p \in \Sigma^n, R \geq 1.
\end{equation}

We associate with $L$ the quadratic form $\cQ : W^{1,2}(\Sigma) \otimes W^{1,2}(\Sigma) \to \RR$,
\begin{equation} \label{eq:app.pde.bilinear.form}
	\cQ(\zeta, \psi) \triangleq \int_\Sigma \left[ \langle \nabla \zeta, \nabla \psi \rangle + V \zeta \psi \right] \, d\upsilon_g \text{, } \zeta, \psi \in W^{1,2}(\Sigma).
\end{equation}
The corresponding Rayleigh quotient is $\cQ : W^{1,2}(\Sigma) \setminus \{ 0 \} \to \RR$,
\begin{equation} \label{eq:app.pde.rayleigh.quotient}
	\cR[\zeta] \triangleq \frac{\cQ(\zeta, \zeta)}{\Vert \zeta \Vert_{L^2(\Sigma)}^2} \text{, } \zeta \in W^{1,2}(\Sigma) \setminus \{ 0 \}.
\end{equation}

\begin{defi}[Morse index, nullity]
	Let $(\Sigma^n, g)$ be complete, noncompact, without boundary, with quadratic volume growth \eqref{eq:app.pde.quadratic.area.growth}, let $L \triangleq -\Delta_g + V$, with $V \in C^\infty_{\loc}(\Sigma) \cap L^\infty(\Sigma)$, and suppose $\Omega \subseteq \Sigma$ is an open, connected, Lipschitz domain. We define the Morse index of $L$ on $\Omega$ as
	\begin{multline} \label{eq:app.pde.index}
		\ind(L; \Omega) \triangleq \sup \Big\{ \dim V : V \subset W^{1,2}_0(\Omega) \text{ a subspace such that} \\
			\cQ(\zeta, \zeta) < 0 \text{ for all } \zeta \in W^{1,2}_0(\Omega) \setminus \{ 0 \} \Big\}
	\end{multline}
	and the nullity of $L$ on $\Omega$ as
	\begin{equation} \label{eq:app.pde.nullity}
		\nul(L; \Omega) \triangleq \dim \{ u \in W^{1,2}_0(\Omega) : Lu = 0 \text{ weakly in } \Omega \}.
	\end{equation}
\end{defi}

The Morse index counts the dimensionality of the space instabilities for a particular critical point. Heuristically, this corresponds to the number of negative eigenvalues, counted with multiplicity.

Some classical results on the Morse index of Schr\"odinger operators on compact domains generalize to the noncompact setting, provided we work under the quadratic area growth assumption \eqref{eq:app.pde.quadratic.area.growth}. We quote below, without proof, two results that are needed in the paper. A rigorous proof of both results can be found in the author's Ph.D. thesis \cite{Mantoulidis17}.

\begin{theo}[Noncompact Courant nodal domain theorem, {\cite[Theorem 4.3.7]{Mantoulidis17}}] \label{theo:app.pde.courant.nodal.domain}
	Let $(\Sigma^n, g)$ be complete, noncompact, without boundary, and with quadratic volume growth \eqref{eq:app.pde.quadratic.area.growth}. Suppose the open, connected, Lipschitz domain $\Omega$ can be partitioned into open, connected, disjoint, Lipschitz domains $\Omega_1, \ldots, \Omega_m$. For any $L$ as in \eqref{eq:app.pde.schroedinger},
	\begin{equation} \label{eq:app.pde.courant.nodal.domain.theorem} 
		\ind(L; \Omega) \geq \sum_{i=1}^{m-1} \left( \ind(L; \Omega_i) + \nul(L; \Omega_i) \right) + \ind(L; \Omega_m).
	\end{equation}
\end{theo}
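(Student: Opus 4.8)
The plan is to construct directly a subspace $\cV \subset W^{1,2}_0(\Omega)$ of dimension $N := \sum_{i=1}^{m-1}\big(\ind(L;\Omega_i) + \nul(L;\Omega_i)\big) + \ind(L;\Omega_m)$ on which $\cQ$ is negative definite; this yields $\ind(L;\Omega) \geq N$ by definition. Write $k_i = \ind(L;\Omega_i)$, $n_i = \nul(L;\Omega_i)$. For each $i$ I would fix a basis $\phi^{(i)}_1,\dots,\phi^{(i)}_{k_i}$ of a $k_i$-dimensional subspace of $W^{1,2}_0(\Omega_i)$ on which $\cQ$ is negative definite, and for $i \le m-1$ a basis $\psi^{(i)}_1,\dots,\psi^{(i)}_{n_i}$ of the Dirichlet kernel $\{v \in W^{1,2}_0(\Omega_i) : Lv = 0 \text{ weakly in } \Omega_i\}$. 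Extension by zero places all these functions in $W^{1,2}_0(\Omega)$; let $\Phi$, $\Psi$ denote their respective spans. Since the $\Omega_i$ are pairwise disjoint, $\cQ$ pairs functions supported in distinct $\Omega_i$ to zero, so $\cQ|_\Phi$ is negative definite, $\cQ$ vanishes identically on $\Psi$ (the $\psi^{(i)}_j$ lie in the Dirichlet kernel, hence are $\cQ$-orthogonal to all of $W^{1,2}_0(\Omega_i)$), and $\Phi \perp_\cQ \Psi$. A short check shows $\dim(\Phi\oplus\Psi) = K + N'$ with $K = \sum_{i=1}^m k_i$, $N' = \sum_{i=1}^{m-1} n_i$, and $N = K + N'$.

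Since $\Psi$ contributes only null directions, the heart of the argument is to perturb it into genuinely negative directions, using the extra room available inside $\Omega$. The key claim is that the functionals $\cQ(\phi^{(i)}_l,\cdot)$ and $\cQ(\psi^{(i)}_j,\cdot)$ on $W^{1,2}_0(\Omega)$ are linearly independent. To see this, suppose a linear combination $w$ of the $\phi^{(i)}_l$ and $\psi^{(i)}_j$ satisfies $Lw = 0$ weakly on $\Omega$. Then $w|_{\Omega_m}$ is a combination of the $\phi^{(m)}_l$ alone, lies in $W^{1,2}_0(\Omega_m)$, and solves $L(\cdot)=0$ weakly in $\Omega_m$; being simultaneously a Dirichlet-kernel element of $\Omega_m$ and a member of the negative-definite span $\lspan\{\phi^{(m)}_l\}$, it must vanish. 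Hence $w$ vanishes on the nonempty open set $\Omega_m$, so $w \equiv 0$ on the connected domain $\Omega$ by weak unique continuation for $L = -\Delta_g + V$ with $V \in L^\infty$. Restricting to each $\Omega_i$ and repeating the same negative-definiteness and $\cQ$-orthogonality bookkeeping then forces all coefficients to vanish. Consequently the evaluation map $W^{1,2}_0(\Omega) \to \RR^{K+N'}$ assembled from these functionals is surjective, so there exist $\xi_1,\dots,\xi_{N'} \in W^{1,2}_0(\Omega)$ with $\cQ(\psi_b,\xi_a) = \delta_{ab}$ and $\cQ(\phi^{(i)}_l,\xi_a) = 0$ for all $i,l,a$, where $\psi_1,\dots,\psi_{N'}$ is a flattening of the list $\{\psi^{(i)}_j\}$.

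Now set $\widetilde\psi_a := \psi_a - t\,\xi_a$ for a small $t>0$ and $\cV := \Phi \oplus \lspan\{\widetilde\psi_1,\dots,\widetilde\psi_{N'}\}$. A direct computation gives $\cQ(\widetilde\psi_a,\widetilde\psi_b) = -2t\,\delta_{ab} + t^2\,\cQ(\xi_a,\xi_b)$, which is negative definite once $t$ is small; the cross terms $\cQ(\phi^{(i)}_l,\widetilde\psi_a) = \cQ(\phi^{(i)}_l,\psi_a) - t\,\cQ(\phi^{(i)}_l,\xi_a)$ vanish by $\Phi \perp_\cQ \Psi$ and by the choice of the $\xi_a$; and $\cQ|_\Phi$ is negative definite. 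Hence $\cQ$ is negative definite on $\cV$, and $\cQ$-pairing against the $\psi_a$ again forces $\Phi \cap \lspan\{\widetilde\psi_a\} = 0$, so $\dim\cV = K + N' = N$ and $\ind(L;\Omega)\ge N$.

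I expect the only real obstacle to be the unique continuation input: one needs that a weak $W^{1,2}_0$-solution of $Lv=0$ on the connected domain $\Omega$ vanishing on an open subset vanishes identically, which is the standard weak unique continuation property for second-order elliptic operators with bounded zeroth-order coefficient. This is exactly the mechanism by which one designated piece $\Omega_m$ "absorbs" the failure of the perturbation scheme — hence $\Omega_m$ enters the estimate with only its index and not its nullity; for that last piece no leftover open region is available to run the independence argument, and indeed a sum over all $m$ pieces of Dirichlet-kernel elements can reassemble into a nonzero global Jacobi field. Everything else is routine bookkeeping with the block structure of $\cQ$, together with the standard fact that a finite index is attained by some subspace and that $C^\infty_c(\Omega_i)$ is dense in $W^{1,2}_0(\Omega_i)$.
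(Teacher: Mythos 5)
Your argument is correct. Note that the paper itself quotes this theorem without proof, deferring to the author's thesis \cite{Mantoulidis17}, so there is no in-paper argument to compare against; judged on its own, your proof is complete modulo two standard inputs you correctly identify: (i) extension by zero maps $W^{1,2}_0(\Omega_i)$ into $W^{1,2}_0(\Omega)$ and $\cQ$ is block-diagonal across the disjoint $\Omega_i$, and (ii) weak unique continuation for $-\Delta_g + V$, which here is immediate since $V \in C^\infty_{\loc}$ and $g$ is smooth, so weak $W^{1,2}_{\loc}$ solutions are smooth and Aronszajn applies. The two key checks both go through: the functionals $\cQ(\phi^{(i)}_l,\cdot)$, $\cQ(\psi^{(i)}_j,\cdot)$ are linearly independent on $W^{1,2}_0(\Omega)$ because a vanishing combination produces a global weak solution $w$ whose restriction to $\Omega_m$ lies in a negative-definite span and is $\cQ$-orthogonal to $W^{1,2}_0(\Omega_m)$, hence vanishes, killing $w$ by unique continuation; and the perturbation $\psi_a \mapsto \psi_a - t\xi_a$ with $\cQ(\psi_b,\xi_a)=\delta_{ab}$, $\cQ(\phi^{(i)}_l,\xi_a)=0$ gives the Gram matrix $-2t\,\delta_{ab}+t^2\cQ(\xi_a,\xi_b)$, negative definite for small $t$, with the directness of $\Phi\oplus\lspan\{\widetilde\psi_a\}$ recovered by pairing against the $\psi_b$. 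Your closing remark correctly explains why $\nul(L;\Omega_m)$ cannot be added: a global Jacobi field restricted to its nodal domains gives kernel elements whose associated functionals sum to zero. Two cosmetic points worth a sentence each in a write-up: if any $\ind(L;\Omega_i)$ or $\nul(L;\Omega_i)$ is infinite, run the same argument on arbitrarily large finite-dimensional subspaces; and observe that your argument never actually invokes the quadratic volume growth hypothesis, which enters this appendix only to reconcile the $C^\infty_c$ and $W^{1,2}_0$ formulations of the index and to prove Lemma \ref{lemm:app.pde.unstable.past.nodal.domain}. The approach---a direct quadratic-form perturbation of the null directions rather than a spectral/min-max decomposition---is well suited to the noncompact setting precisely because it requires no discreteness of spectrum.
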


The following proposition is motivated by Ghoussoub-Gui's original proof of Conjecture \ref{conj:degiorgi.monotone.conjecture} in $\RR^2$ \cite[Theorem 1.1]{GhoussoubGui98}.

\begin{lemm}[{\cite[Lemma 4.3.8]{Mantoulidis17}}]  \label{lemm:app.pde.unstable.past.nodal.domain}
	Let $(\Sigma^n, g)$ be complete, noncompact, without boundary, with quadratic area growth \eqref{eq:app.pde.quadratic.area.growth}, and let $L$ be as in \eqref{eq:app.pde.schroedinger}. Suppose $u \not \equiv 0$ is a bounded Jacobi field. If $\Omega \subseteq \Sigma$ is an open, connected, Lipschitz domain, with $u|_{\partial \Omega} \equiv 0$, then on every open, connected, Lipschitz $\Omega' \supsetneq \Omega$, $\ind(L; \Omega') \geq 1$.
\end{lemm}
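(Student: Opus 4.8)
The plan is to prove the contrapositive at the level of the quadratic form: assuming $\ind(L;\Omega')=0$, i.e.\ $\cQ(\zeta,\zeta)\ge 0$ for every $\zeta\in W^{1,2}_0(\Omega')$, I will show $u\equiv 0$, contradicting $u\not\equiv 0$. The mechanism is the one behind Ghoussoub--Gui's proof of De Giorgi in $\RR^2$: the function $u$, regarded as a ``would-be null direction'' supported in $\Omega$, is an \emph{approximate} null vector for $\cQ$ on $\Omega'$ up to a logarithmic-cutoff error that tends to $0$ by the quadratic area growth \eqref{eq:app.pde.quadratic.area.growth}; stability then forces it to be a \emph{genuine} Jacobi field on all of $\Omega'$, and since it vanishes identically on the nonempty open set $\Omega'\setminus\overline{\Omega}$, strong unique continuation kills it.

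Concretely, let $\tilde u$ denote the extension of $u|_\Omega$ by zero. Boundary elliptic estimates for the bounded solution $u$ on the Lipschitz domains $\Omega\subseteq\Omega'$, together with $u|_{\partial\Omega}\equiv 0$, give $\tilde u\in W^{1,2}_{\loc}(\Omega')$, and $\phi^2 u\in W^{1,2}_0(\Omega)$, $\phi\tilde u\in W^{1,2}_0(\Omega')$ for any $\phi\in C^\infty_c(\Sigma)$. Let $\{\phi_R\}$ be the standard logarithmic cutoffs based at a fixed $x_0\in\Sigma$: $\phi_R\equiv 1$ on $B_R(x_0)$, $\phi_R\equiv 0$ off $B_{R^2}(x_0)$, and $|\nabla\phi_R|\le(\rho\log R)^{-1}$ with $\rho=\dist(\cdot,x_0)$, so that \eqref{eq:app.pde.quadratic.area.growth} yields $\int_\Sigma|\nabla\phi_R|^2\,d\upsilon_g\le c/\log R$. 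Testing $Lu=0$ against $\phi_R^2 u\in W^{1,2}_0(\Omega)$ and expanding $\cQ(\phi_R\tilde u,\phi_R\tilde u)$ produces the exact identity
\[ \cQ(\phi_R\tilde u,\phi_R\tilde u)=\int_\Omega u^2\,|\nabla\phi_R|^2\,d\upsilon_g\le \|u\|_{L^\infty(\Omega)}^2\,\frac{c}{\log R}\longrightarrow 0 . \]
On the other hand, for fixed $\eta\in C^\infty_c(\Omega')$ and all $R$ large enough that $\phi_R\equiv 1$ on $\support\eta$, we get $\cQ(\phi_R\tilde u,\eta)=\int_\Omega\langle\nabla u,\nabla\eta\rangle+Vu\eta\,d\upsilon_g=:F(\eta)$, a quantity independent of $R$.

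Now I would invoke stability along the one-parameter family $\phi_R\tilde u+t\eta\in W^{1,2}_0(\Omega')$: for every $t\in\RR$ and all large $R$,
\[ 0\le \cQ(\phi_R\tilde u+t\eta,\phi_R\tilde u+t\eta)=\cQ(\phi_R\tilde u,\phi_R\tilde u)+2t\,F(\eta)+t^2\,\cQ(\eta,\eta). \]
Letting $R\to\infty$ gives $0\le 2tF(\eta)+t^2\cQ(\eta,\eta)$ for all $t\in\RR$, which forces $F(\eta)=0$. Since $\eta\in C^\infty_c(\Omega')$ was arbitrary, $L\tilde u=0$ weakly on $\Omega'$, hence $\tilde u$ is a classical solution there by elliptic regularity. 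But a connected open $\Omega'\supsetneq\Omega$ must meet $\partial\Omega$, and then Lipschitzness of $\partial\Omega$ makes $\Omega'\setminus\overline{\Omega}$ a nonempty open set, on which $\tilde u\equiv 0$. By the strong unique continuation property for $L=-\Delta_g+V$ with $V\in L^\infty_{\loc}$, $\tilde u\equiv 0$ on the connected set $\Omega'$, so $u\equiv 0$ on $\Omega$, a contradiction.

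The main obstacle is the logarithmic-cutoff bookkeeping: establishing the exact identity $\cQ(\phi_R\tilde u,\phi_R\tilde u)=\int_\Omega u^2|\nabla\phi_R|^2$ by integration by parts against $Lu=0$, and controlling $\int_\Sigma|\nabla\phi_R|^2$ via \eqref{eq:app.pde.quadratic.area.growth}. This is precisely where both the $L^\infty$ bound on $u$ and the quadratic area growth hypothesis are indispensable, and it is the step that would fail for faster-than-quadratic growth. Everything else is routine: the boundary elliptic estimates making the zero-extension $W^{1,2}_{\loc}$ with the right vanishing traces, the elementary fact that a connected open $\Omega'\supsetneq\Omega$ meets $\partial\Omega$, and strong unique continuation for Schrödinger operators with bounded potentials. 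Full details are carried out in \cite{Mantoulidis17}.
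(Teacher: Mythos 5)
Your argument is correct and is precisely the Ghoussoub--Gui logarithmic-cutoff argument that the paper itself names as the motivation for this lemma; the paper states the result without proof (deferring to \cite[Lemma 4.3.8]{Mantoulidis17}), but your chain --- the identity $\cQ(\phi_R\tilde u,\phi_R\tilde u)=\int_\Omega u^2|\nabla\phi_R|^2\,d\upsilon_g$, quadratic area growth to send it to zero, nonnegativity of $\cQ$ on $W^{1,2}_0(\Omega')$ to force $F(\eta)=0$, and unique continuation from the nonempty open set $\Omega'\setminus\overline{\Omega}$ supplied by the Lipschitz hypothesis --- is the intended proof. I see no gaps.
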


\section{Wang-Wei curvature estimates on manifolds} \label{sec:app.wang.wei.manifolds}

The recent novel work of Wang-Wei \cite{WangWei17} is performed in the context of solutions to \eqref{eq:pde} on two-dimensional Euclidean space. In this appendix we outline necessary modifications that will allow \cite[Theorem 3.7]{WangWei17} to go through as Theorem \ref{theo:wang.wei.epsilon.curvature} in our setting. Seeing as to how computations in \cite{WangWei17} were carried out in Fermi coordinates, this generalization is, for the most part, straightforward---provided one sets everything up correctly, as we aim to do here. We describe this in some detail here, starting with introducing notation that will uniformize ideas with the current paper.

First, we may assume that our double-well potential is rescaled so that
\[ W''(\pm 1) = 2, \]
which allows to match the asymptotic analysis from \cite{WangWei17} verbatim. We may further assume that $U = B_2(\mathbf{0}) \subset \RR^2$, whose coordinates are $(x^1, x^2)$, and that the metric $g$ on $U$ is $C^\infty$ close to the flat metric on $B_2(\mathbf{0}) \subset \RR^2$. 

We work in the rescaled setting $(\widetilde{U}, \widetilde{g})$, with the rescaled function
\[ \widetilde{u}(\mathbf{x}) \triangleq u(\varepsilon \mathbf{x}), \; \mathbf{x} \in \widetilde{U}, \]
where $\widetilde{U} \triangleq B_{3\varepsilon^{-1}}(\mathbf{0})$  and the rescaled metric is $\widetilde{g} \triangleq \varepsilon^{-2} g$. If $\widetilde{\Gamma}_\alpha$ is a component of $\{ \widetilde{u} = 0 \}$ (denoted $\Gamma_\alpha$ and $\{ u = 0 \}$, respectively, in \cite{WangWei17}) that is graphical in the $(x^1,x^2)$-coordinates over
\[ [-2\varepsilon^{-1}, 2\varepsilon^{-1}] \times \{0\} \subset \widetilde{U} \]
using a graphing function
\begin{equation} \label{eq:app.wang.wei.manifolds.i}
	\widetilde{f}_\alpha : [-2\varepsilon^{-1}, 2\varepsilon^{-1}] \to \RR, \; |\widetilde{f}_\alpha'| + \varepsilon^{-1} |\widetilde{f}_\alpha''| \leq C_1, 
\end{equation}
$C_1 = C_1(g, C)$, where $C$ is the uniform curvature estimate we made on the level sets, then we can construct Fermi coordinates $(\widetilde{\Pi}_\alpha, \widetilde{d}_\alpha)$ as
\[ (\widetilde{\Pi}_\alpha, \widetilde{d}_\alpha) \mapsto \exp_{(\widetilde{\Pi}_\alpha,\widetilde{f}_\alpha(\widetilde{\Pi}_\alpha))} ( \widetilde{d}_\alpha \nu^\alpha ), \; |\widetilde{\Pi}_\alpha| \leq 2 \varepsilon^{-1}, \]
where $\nu^\alpha$ denotes the upward pointing unit normal to $\widetilde{\Gamma}_\alpha$ with respect to $\widetilde{g}$.   (These correspond to $\Pi_\alpha$ and $d_\alpha$ in \cite[Lemma 8.3]{WangWei17}.) Denote, in Fermi coordinates,
\[ \widetilde{\Gamma}_{\alpha,z} \triangleq \{ \widetilde{d}_\alpha = z \}. \]
For simplicity, we will write
\[ (x, z) \text{ in place of } (\widetilde{\Pi}_\alpha, \widetilde{d}_\alpha) \]
whenever we center on a fixed $\widetilde{\Gamma}_\alpha$. The metric, in these coordinates, is
\[ \widetilde{g} = \begin{bmatrix} 
	\widetilde{g}_{xx} & 0 \\
	0 & 1 \end{bmatrix}. \]
We note that the sectional curvatures satisfy
\begin{equation} \label{eq:app.wang.wei.manifolds.ii}
	|\sect_{\widetilde{g}}| + \varepsilon^{-1} |\partial \sect_{\widetilde{g}}| \leq C_2 \varepsilon^2,
\end{equation}
where $C_2 = C_2(g)$. Let $\widetilde{\sff}_{\alpha,z}$ denote the second fundamental form $\widetilde{\Gamma}_{\alpha,z}$, and $\widetilde{H}_{\alpha,z}$ the mean curvature scalar. We have $\widetilde{H}_{\alpha,0} = O(\varepsilon)$. Combined with the Riccati equation
\[ \frac{\partial}{\partial z} \widetilde{H}_{\alpha,z} = - \widetilde{H}_{\alpha,z}^2 - \sect_{\widetilde{g}}, \]
a straightforward ODE comparison, and  \eqref{eq:app.wang.wei.manifolds.ii}, we get
\begin{equation} \label{eq:app.wang.wei.manifolds.iii}
	|\widetilde{H}_{\alpha,z}| \leq C_3 \varepsilon \text{ provided } |\widetilde{\Pi}_\alpha| \leq \varepsilon^{-1}, \; |z| \leq \delta \varepsilon^{-1},
\end{equation}
where $\delta = \delta(g, C_1, C_2) \ll 1$, $C_3 = C_3(g, C_1, C_2)$.

We assume $|z| \leq \delta \varepsilon^{-1}$ in all that follows in this appendix and that we're working within $|\widetilde{\Pi}_\alpha| \leq \varepsilon^{-1}$.

\begin{center}
	{\bf Adjustments to \cite[Section 8]{WangWei17}}
\end{center}

\cite[Lemma 8.1, (8.5)-(8.7)]{WangWei17} go through without change, since \eqref{eq:app.wang.wei.manifolds.iii} above gives the required zero-th order curvature bound.

None of the pointwise identities \cite[(8.2)-(8.4)]{WangWei17} continue to hold true, seeing as to how we're no longer in the flat setting, so we need a Riemannian geometric approach to rederive \cite[(8.8)-(8.11)]{WangWei17}. Note, first, that \cite[(8.9)-(8.10)]{WangWei17} are an immediate consequence of \eqref{eq:app.wang.wei.manifolds.iii} combined with the fact that
\[ \left( \cL_{\partial/\partial z} \widetilde{g} \right)|_{T^* \widetilde{\Gamma}_{\alpha,z} \otimes T^* \widetilde{\Gamma}_{\alpha,z}} = 2 \widetilde{\sff}_{\alpha,z}. \]
From the Riccati equation, \eqref{eq:app.wang.wei.manifolds.ii}, and \eqref{eq:app.wang.wei.manifolds.iii}, we find that
\begin{equation} \label{eq:app.wang.wei.manifolds.iv}
	\frac{\partial}{\partial z} \widetilde{H}_{\alpha,z} = O(\varepsilon^2).
\end{equation}
This readily implies \cite[(8.8), (8.11)]{WangWei17}. By differentiating the Riccati identity in $x$, we get the evolution equation
\[ \frac{\partial}{\partial z} \left( \frac{\partial}{\partial x} \widetilde{H}_{\alpha,z} \right) = - 2 \widetilde{H}_{\alpha,z} \left( \frac{\partial}{\partial x} \widetilde{H}_{\alpha,z} \right) - \frac{\partial}{\partial x} \sect_{\widetilde{g}}. \]
We already know that $\frac{\partial}{\partial x} \widetilde{H}_{\alpha,0} = O(\varepsilon)$ from \cite[(8.5)]{WangWei17}. Combined with the evolution equation above and \eqref{eq:app.wang.wei.manifolds.ii}, we conclude
\begin{equation} \label{eq:app.wang.wei.manifolds.v}
	\frac{\partial}{\partial x} \widetilde{H}_{\alpha,z} = O(\varepsilon).
\end{equation}
From \eqref{eq:app.wang.wei.manifolds.i} we find that
\begin{equation} \label{eq:app.wang.wei.manifolds.vi}
	\frac{\partial}{\partial x} \widetilde{g}_{xx} = O(\varepsilon) \text{ on } \widetilde{\Gamma}_{\alpha}.
\end{equation}
Combined with \eqref{eq:app.wang.wei.manifolds.v} and the evolution of $\widetilde{g}_{xx}$ with respect to $\frac{\partial}{\partial z}$, we also get
\begin{equation} \label{eq:app.wang.wei.manifolds.vii}
	\frac{\partial}{\partial x} \widetilde{g}_{xx}(x, z) - \frac{\partial}{\partial x} \widetilde{g}_{xx}(x, 0) = O(\varepsilon |z|).
\end{equation}
Given \eqref{eq:app.wang.wei.manifolds.vii}, \cite[Lemma 8.2, (8.13)]{WangWei17} goes through unchanged.

\cite[Lemma 8.3]{WangWei17} goes through as well, subject to  modifications in step 3. Namely, \cite[(8.15)-(8.17)]{WangWei17} are shown as in the paper, but \cite[(8.14), (8.18)]{WangWei17} require a slightly more geometric approach. By \eqref{eq:app.wang.wei.manifolds.iii}, 
\begin{equation} \label{eq:app.wang.wei.manifolds.viii}
	|\nabla^{\widetilde{g}}_{\nabla^{\widetilde{g}} \widetilde{d}_\alpha} \langle \nabla^{\widetilde{g}} \widetilde{d}_\beta, \nabla^{\widetilde{g}} \widetilde{d}_\alpha \rangle_{\widetilde{g}}| = |\nabla^2_{\widetilde{g}} \widetilde{d}_\beta(\nabla^{\widetilde{g}} \widetilde{d}_\alpha, \nabla^{\widetilde{g}} \widetilde{d}_\alpha)| = O(\varepsilon).
\end{equation}
Then, arguing as in step 1 of the lemma, it is simple to check that
\[ \langle \nabla^{\widetilde{g}} \widetilde{d}_\beta, \nabla^{\widetilde{g}} \widetilde{d}_\alpha \rangle_{\widetilde{g}} \geq 1 - O(\varepsilon^{\frac{1}{2}} \log^{\frac{1}{2}} \varepsilon^{-1}) \text{ on } \widetilde{\Gamma}_\alpha. \]
Consider a geodesic starting from a point $q \in \widetilde{\Gamma}_\alpha$ and ending at its closest point $q' \in \widetilde{\Gamma}_\beta$. Vary this geodesic so that $q$ gets pushed in the $\nabla^{\widetilde{g}} \widetilde{d}_\alpha$ direction. Denote normal to this geodesic by $\mathbf{N}$. The Jacobi field $\mathbf{V}$ of this geodesic variation then satisfies
\begin{equation} \label{eq:app.wang.wei.manifolds.ix}
	|\langle \mathbf{V}_q, \mathbf{N}_q \rangle_{\widetilde{g}}| = O(\varepsilon^{\frac{1}{2}} \log^{\frac{1}{2}} \varepsilon^{-1}).
\end{equation}
Moreover, if $\mathbf{V}'_q$ denotes differentiation along this geodesic at the point $q$, then \eqref{eq:app.wang.wei.manifolds.viii} implies $|\langle \mathbf{V}'_q, \mathbf{N}_q \rangle_{\widetilde{g}}| = O(\varepsilon)$. From the Jacobi equation for $\mathbf{V}$, \eqref{eq:app.wang.wei.manifolds.iii}, and the fundamental theorem of calculus, 
\[ |\langle \mathbf{V}_{q'}, \mathbf{N}_{q'} \rangle_{\widetilde{g}}| = O(\varepsilon^{\frac{1}{2}} \log^{\frac{1}{2}} \varepsilon^{-1}), \]
at the other endpoint $q'$, recovering \cite[(8.23)]{WangWei17}. Integrating gives \cite[(8.14), (8.24)]{WangWei17}. Finally, \cite[(8.18)]{WangWei17} follows from \eqref{eq:app.wang.wei.manifolds.viii}-  \eqref{eq:app.wang.wei.manifolds.ix}. This concludes \cite[Lemma 8.3]{WangWei17}.

\begin{center}
	{\bf Adjustments to \cite[Section 14]{WangWei17}}
\end{center}

The left hand side of \cite[(14.1)]{WangWei17} is understood to be $\widetilde{H}_{\alpha,0}$.

\begin{center}
	{\bf Adjustments to \cite[Section 17]{WangWei17}}
\end{center}

The main result of \cite[Section 17]{WangWei17}, \cite[Proposition 17.1]{Wang14}, goes through with its statement unchanged, but with modifications to its proof; the issue at hand is that coordinate derivatives of $u$ are no longer Jacobi fields in the curved setting. Note that, unlike in the original paper, we will continue to work in the stretched coordinate system here, i.e., with $\widetilde{u}$ on $(\widetilde{U}, \widetilde{g})$ instead of $u$ on $(U, g)$, seeing as to how all our modifications have been stated relative to the prior.

For this section we introduce the following modifications. First, we define a truncated, almost-Jacobi field
\[ \overline{\varphi} \triangleq \indt{\widetilde{D}_{\alpha}} \frac{\partial \widetilde{u}}{\partial x^2}, \]
where $\widetilde{D}_\alpha$ is the component of $\{ \frac{\partial \widetilde{u}}{\partial x^2} > 0 \}$ containing $\widetilde{\Gamma}_{\alpha}$.

Equation \cite[(17.1)]{WangWei17} is no longer true. Instead, differentiating \eqref{eq:pde} (rescaled to $\varepsilon = 1$) in the $x^2$ coordinate, where $(x^1, x^2)$ is the Euclidean coordinate chart for $\widetilde{U}$ relative to which \eqref{eq:app.wang.wei.manifolds.i} holds,
\begin{multline} \label{eq:app.wang.wei.manifolds.x}
	\implies \frac{\partial}{\partial x^2} \left( \frac{1}{\sqrt{g}} \frac{\partial}{\partial x^i} \left( \sqrt{g} g^{ij} \frac{\partial \widetilde{u}}{\partial x^j} \right) \right) = W''(\widetilde{u}) \frac{\partial \widetilde{u}}{\partial x^2} \\
	\iff \Delta_{\widetilde{g}} \left( \frac{\partial \widetilde{u}}{\partial x^2} \right) + O(\varepsilon)(|\partial^2 \widetilde{u}| + |\partial \widetilde{u}|) = W''(u) \frac{\partial \widetilde{u}}{\partial x^2};
\end{multline}
this is our replacement for \cite[(17.1)]{WangWei17}.

\cite[Lemma 17.2]{WangWei17}, \cite[Lemma 17.3]{WangWei17} go through (for stretched coordinates) with straightforward modifications; the pertinent chain of inequalities, in the stretched setting, is 
\begin{multline} \label{eq:app.wang.wei.manifolds.xi}
	\int_{B_{\varepsilon^{-1}/100}(x_\varepsilon)} \varepsilon |\nabla^{\widetilde{g}} (\overline{\varphi} \eta)|^2 + \varepsilon^{-1} W''(\widetilde{u}) \overline{\varphi}^2 \eta^2 \, d\upsilon_{\widetilde{g}} \\
	\leq \varepsilon \int_{B_{\varepsilon^{-1}/100}(x_{\varepsilon})} \overline{\varphi}^2 \, d\upsilon_{\widetilde{g}} \leq C.
\end{multline}
Next, let
\[ \widetilde{\Omega}_{\alpha} \triangleq \left\{ (x, z) : |x| < L, \; L < z < \rho_\varepsilon - L \right\}, \; \rho_\varepsilon \triangleq \widetilde{f}_{\alpha+1}(0) \leq \log \varepsilon^{-1}. \]
Assume that $L$ is large enough in order for
\[ \widetilde{\Omega}_\alpha \subset \{ |W''(\widetilde{u})| \geq \kappa \}, \]
with $\kappa$ is as in \eqref{assu:double.well.potential.convex}. By \cite[Lemma 17.1]{WangWei17}, $\rho_\varepsilon \gg 1$. Define $\widetilde{\varphi} : \widetilde{\Omega}_\alpha \to \RR$ as the unique solution to the system
\[ \begin{Bmatrix}
 	-\Delta_{\widetilde{g}} \widetilde{\varphi} + W''(\widetilde{u}) \widetilde{\varphi} = 0 \text{ in } \widetilde{\Omega}_\alpha \\
 	\widetilde{\varphi} = \overline{\varphi} \text{ on } \partial \widetilde{\Omega}_\alpha
\end{Bmatrix}. \]
Existence and uniqueness follow from $\ind(\widetilde{u}; \widetilde{\Omega}_\alpha) = 0$. Notice that
\begin{align*}
	& \int_{\widetilde{\Omega}_\alpha} \langle \nabla^{\widetilde{g}} \overline{\varphi}, \nabla^{\widetilde{g}} \widetilde{\varphi} \rangle_{\widetilde{g}} + W''(\widetilde{u}) \overline{\varphi} \widetilde{\varphi} \\
	& \qquad = \int_{\partial \widetilde{\Omega}_\alpha} \overline{\varphi} \langle \nabla^{\widetilde{g}} \widetilde{\varphi}, \nu_{\partial \widetilde{\Omega}_\alpha} \rangle_{\widetilde{g}} + \int_{\widetilde{\Omega}_\alpha} \overline{\varphi} \left( - \Delta_{\widetilde{g}} \widetilde{\varphi} + W''(\widetilde{u}) \widetilde{\varphi} \right) \\
	& \qquad = \int_{\partial \widetilde{\Omega}_\alpha} \widetilde{\varphi} \langle \nabla^{\widetilde{g}} \widetilde{\varphi}, \nu_{\partial \widetilde{\Omega}_\alpha} \rangle_{\widetilde{g}} \\
	& \qquad = \int_{\widetilde{\Omega}_\alpha}  \widetilde{\varphi} \Delta_{\widetilde{g}} \widetilde{\varphi} + |\nabla^{\widetilde{g}} \widetilde{\varphi}|^2 \\
	& \qquad = \int_{\widetilde{\Omega}_\alpha} |\nabla^{\widetilde{g}} \widetilde{\varphi}|^2 + W''(\widetilde{u}) \widetilde{\varphi}^2,
\end{align*}
so
\begin{align} \label{eq:app.wang.wei.manifolds.xii}
	& \int_{\widetilde{\Omega}_\alpha} |\nabla^{\widetilde{g}} \overline{\varphi}|^2 + W''(\widetilde{u}) \overline{\varphi}^2 - \int_{\widetilde{\Omega}_\alpha} |\nabla^{\widetilde{g}} \widetilde{\varphi}|^2 + W''(\widetilde{u}) \widetilde{\varphi}^2 \\
	& \qquad = \int_{\widetilde{\Omega}_\alpha} |\nabla^{\widetilde{g}} \overline{\varphi}|^2 + W''(\widetilde{u}) \overline{\varphi}^2 + \int_{\widetilde{\Omega}_\alpha} |\nabla^{\widetilde{g}} \widetilde{\varphi}|^2 + W''(\widetilde{u}) \widetilde{\varphi}^2 \nonumber \\
	& \qquad \qquad - 2 \int_{\widetilde{\Omega}_\alpha} \langle \nabla^{\widetilde{g}} \overline{\varphi}, \nabla^{\widetilde{g}} \widetilde{\varphi} \rangle_{\widetilde{g}} + W''(\widetilde{u}) \overline{\varphi} \widetilde{\varphi} \nonumber \\
	& \qquad = \int_{\widetilde{\Omega}_\alpha} |\nabla^{\widetilde{g}} (\overline{\varphi} - \widetilde{\varphi})|^2 + W''(\widetilde{u})(\overline{\varphi} - \widetilde{\varphi})^2 \nonumber \\
	& \qquad \geq \kappa \int_{\widetilde{\Omega}_\alpha} (\overline{\varphi} - \widetilde{\varphi})^2, \nonumber
\end{align}
where $\kappa$ is as in \eqref{assu:double.well.potential.convex}. Inequality \eqref{eq:app.wang.wei.manifolds.xii} is \cite[(17.5)]{WangWei17} in the curved setting.

Recalling
\[ \Delta_{\widetilde{g}} - \Delta = (g^{ij} - \delta^{ij}) \frac{\partial^2}{\partial x^i \partial x^j} + \frac{1}{\sqrt{g}} \frac{\partial}{\partial x^i} \left( \sqrt{g} g^{ij} \right) \frac{\partial}{\partial x^j} \]
and that
\[ \sup_{\widetilde{\Omega}_\alpha} \left[ \left| g^{ij}-\delta^{ij} \right| + \left| \frac{1}{\sqrt{g}} \frac{\partial}{\partial x^i}(\sqrt{g} g^{ij}) \right| \right] \to 0 \text{ as } \varepsilon \to 0, \]
it follows easily that the subsolution \cite[(17.6)]{WangWei17} goes through, with a perhaps slightly worse exponent which nevertheless converges to the exponent in \cite[(17.6)]{WangWei17} as $\varepsilon \to 0$.

\cite[Lemma 17.4]{WangWei17} goes through. We may thus finish the section by concluding that
\begin{align*}
	\int_{\widetilde{\Omega}_\alpha} (\overline{\varphi} - \widetilde{\varphi})^2 \, d\upsilon_{\widetilde{g}} 
	& \geq \int_{\substack{|x^1| < L/2 \\ \frac{(1+o(1)) \rho_\varepsilon}{2} < x^2 < \frac{3 \rho_\varepsilon}{4}}} \widetilde{\varphi}^2 \, d\upsilon_{\widetilde{g}} \\
	& \geq \frac{(2+o(1))\rho_\varepsilon L}{4} \exp \left( - \frac{1+o(1)}{2} \sqrt{2+o(1)} \rho_\varepsilon \right),
\end{align*}
so the result follows as in \cite[Section 17]{WangWei17} by combining the inequality above with \eqref{eq:app.wang.wei.manifolds.xi} and  \eqref{eq:app.wang.wei.manifolds.xii}.

\begin{center}
	{\bf Adjustments to \cite[Section 18]{WangWei17}}
\end{center}

The left hand side of \cite[(18.6)]{WangWei17} is understood to be $\widetilde{H}_{\alpha,0}$.

\begin{center}
	{\bf Adjustments to \cite[Section 19]{WangWei17}}
\end{center}

In our notation, $\lambda = \widetilde{g}_{xx}^{-1}$. Unlike in the flat setting, we do not have a precise pointwise expression for $\lambda$. Nonetheless,
\[ \frac{\partial \lambda}{\partial z} = - \frac{1}{\widetilde{g}_{xx}^2} \frac{\partial \widetilde{g}_{xx}}{\partial z} = - \frac{2 \widetilde{H}_{\alpha,z}}{\widetilde{g}_{xx}} = O(\varepsilon), \]
by \eqref{eq:app.wang.wei.manifolds.iii}. Likewise,
\[ \frac{\partial^2 \lambda}{\partial z^2} = - \frac{1}{\widetilde{g}_{xx}} \frac{\partial \widetilde{H}_{\alpha,z}}{\partial z} + \frac{2 \widetilde{H}_{\alpha,z}^2}{\widetilde{g}_{xx}} = O(\varepsilon^2), \]
by \eqref{eq:app.wang.wei.manifolds.iii}-\eqref{eq:app.wang.wei.manifolds.iv}. The remainder of the section goes through.

\begin{center}
	{\bf Adjustments to \cite[Section 20]{WangWei17}}
\end{center}

The reduction of \cite[Theorem 3.7]{WangWei17} to \cite[Proposition 20.1]{WangWei17} as outlined in \cite{WangWei17} goes through in the curved setting.

As in \cite{WangWei17}, we introduce the notation
\[ A_0(r) \triangleq \sup_{|x^1| < r} \exp \left( - \sqrt{2} \widetilde{D}_0(x^1) \right), \]
where $\widetilde{D}_0 = \max\{ \dist_{\widetilde{g}}(\cdot; \widetilde{\Gamma}_{-1}), \dist_{\widetilde{g}}(\cdot; \widetilde{\Gamma}_1) \}$, and
\[ R \triangleq \varepsilon^{-1}, \; r \in \left[ \frac{R}{2}, \frac{4R}{5} \right], \; \epsilon \triangleq A_0(r), \]
and we assume that
\begin{equation} \label{eq:app.wang.wei.manifolds.xiii}
	\epsilon \geq \varepsilon^{8/7}.
\end{equation}
In this section it will be important to be able to relate the geodesic curvature of curves that are graphical over an axis to the second derivatives of their graphing functions. While in the flat setting the relationship is straightforward,
\[ \widetilde{H}_\alpha = \frac{\widetilde{f}_\alpha''}{(1+|\widetilde{f}_\alpha'|^2)^{3/2}}, \]
the relationship in the curved setting is less explicit. We start by noting that the 1-form
\[ \omega \triangleq \underbrace{-\widetilde{f}_\alpha'(x^1)}_{= \; \omega_1} dx^1 + \underbrace{1}_{= \; \omega_2} dx^2 \]
annihilates $\widetilde{\Gamma}_\alpha$. Therefore $\frac{\omega}{|\omega|_{\widetilde{g}}}$ is a $\widetilde{g}$-unit-normal 1-form to $\widetilde{\Gamma}_\alpha$, so, by definition,
\[ \widetilde{H}_\alpha = \widetilde{g}^{xx} \left( \nabla^{\widetilde{g}} \frac{\omega}{|\omega|_{\widetilde{g}}} \right)\left( \frac{\partial}{\partial x^1}, \frac{\partial}{\partial x^1} \right). \]
Note that
\begin{align*}
	& \left( \nabla^{\widetilde{g}} \frac{\omega}{|\omega|_{\widetilde{g}}} \right)\left( \frac{\partial}{\partial x^1}, \frac{\partial}{\partial x^1} \right) \\
	& \qquad = \left( \frac{\nabla^{\widetilde{g}} \omega}{|\omega|_{\widetilde{g}}} - \nabla^{\widetilde{g}} |\omega|_g^2 \otimes \frac{\omega}{2 |\omega|_{\widetilde{g}}^3} \right)\left( \frac{\partial}{\partial x^1}, \frac{\partial}{\partial x^1} \right) \\
	& \qquad = \frac{\partial_{1} \omega_1 - \Gamma_{11}^1 \omega_1 - \Gamma_{11}^2 \omega_2}{|\omega|_{\widetilde{g}}} \\
	& \qquad \qquad - \partial_1 \left( \widetilde{g}^{11} \omega_1^2 + 2 \widetilde{g}^{12} \omega_1 \omega_2 + \widetilde{g}^{22} \omega_2^2 \right) \frac{\omega_1}{2 |\omega|^3_{\widetilde{g}}} \\
	& \qquad = \widetilde{f}_\alpha'' \left( - \frac{1}{|\omega|_{\widetilde{g}}} + \frac{\widetilde{g}^{11} (\widetilde{f}_\alpha')^2}{|\omega|^3_{\widetilde{g}}} - \frac{\widetilde{g}^{12} \widetilde{f}_\alpha'}{|\omega|^3_{\widetilde{g}}} \right) + O(|\partial \widetilde{g}|).
\end{align*}
We may arrange for:
\[ |\partial^2 \widetilde{g}| = O(\varepsilon^2), \; \widetilde{g}|_{(0,0)} = \delta, \; \partial \widetilde{g}|_{(0,0)} = 0, \text{ and } \widetilde{f}_0(0) = \widetilde{f}_0'(0) = 0. \]
Next, we estimate $|\partial \widetilde{g}|$ on $\widetilde{\Gamma}_0 \cap \{ |x^1| \leq K\epsilon^{-1/2} \}$; recalling  \eqref{eq:app.wang.wei.manifolds.xiii},
\[ |\partial \widetilde{g}| = O(\varepsilon^2 \epsilon^{-1/2}) = O(\epsilon^{10/8}) \text{ on } \widetilde{\Gamma}_0 \cap \{ |x^1| \leq K\epsilon^{-1/2} \}. \]
Therefore,
\begin{equation} \label{eq:app.wang.wei.manifolds.xiv}
	\widetilde{H}_0 = (-1+o(1)) \widetilde{f}_0'' + O(\epsilon^{10/8}) \text{ on } \widetilde{\Gamma}_0 \cap \{ |x^1| \leq K \epsilon^{-1/2} \}.
\end{equation}
This recovers \cite[(20.6)]{WangWei17}. To perform the same estimate on $\widetilde{\Gamma}_{\pm 1} \cap \{ |x^1| \leq K \epsilon^{-1/2} \}$, we first recall that, by \eqref{eq:app.wang.wei.manifolds.xiii},
\[ \sup_{|x^1| < r} \exp \left( - \sqrt{2} \widetilde{D}_0(x^1) \right) \geq \varepsilon^{8/7} \iff \inf_{|x^1|<r} \widetilde{D}_0(x^1) \leq \frac{8}{7\sqrt{2}} \log \varepsilon^{-1}. \]
Therefore, a crude estimate will tell us that
\[ \dist_{\widetilde{g}}(\cdot; \widetilde{\Gamma}_0) = O\left( \epsilon^{-1/2} \log \varepsilon^{-1} \right) \text{ on } \widetilde{\Gamma}_{\pm 1} \cap \{ |x^1| \leq K \epsilon^{-1/2} \}, \]
and thus
\[ |\partial \widetilde{g}| = O\left( \varepsilon^2 \epsilon^{-1/2} \log \varepsilon^{-1} \right) = O(\epsilon^{10/9}) \text{ on } \widetilde{\Gamma}_{\pm 1} \cap \{ |x^1| \leq K\epsilon^{-1/2} \}. \]
Thus, arguing as before,
\begin{equation} \label{eq:app.wang.wei.manifolds.xv}
	\widetilde{H}_1 = (-1+o(1)) \widetilde{f}_{\pm 1}'' + O(\epsilon^{10/9}) \text{ on } \widetilde{\Gamma}_{\pm 1} \cap \{ |x^1| \leq K\epsilon^{-1/2} \}.
\end{equation}
\cite[Lemma 20.3]{WangWei17} will then continue to be true in the curved setting, provided we make use of \eqref{eq:app.wang.wei.manifolds.xv}, instead of the explicit flat estimate on $\widetilde{f}_{\pm 1}''$ from \cite[(18.6)]{WangWei17}. Likewise, making use of \eqref{eq:app.wang.wei.manifolds.xiv}, both estimates \cite[(20.9)-(20.10)]{WangWei17} remain valid. Making use of \eqref{eq:app.wang.wei.manifolds.xiv}-\eqref{eq:app.wang.wei.manifolds.xv}, \cite[Lemma 20.4]{WangWei17} remains valid, too.

\cite[Lemma 20.5]{WangWei17} goes through verbatim. 

Finally, \cite[Lemma 20.6]{WangWei17} goes through provided we, again, replace the explicit flat estimate in \cite[(18.6)]{WangWei17} by \eqref{eq:app.wang.wei.manifolds.xv}.

\begin{center}
	{\bf Adjustments to \cite[Appendix F]{WangWei17}}
\end{center}

All steps, except step (12), go through verbatim. In step (12), one simply needs to estimate
\[ \left| \frac{\partial}{\partial z} \widetilde{g}_{xx}^{-1} \right|_{C^\theta} \]
in terms of
\[ \left| \frac{\partial}{\partial x} \frac{\partial}{\partial z} \widetilde{g}_{xx}^{-1} \right|_{C^0} \leq \left| \frac{\partial}{\partial x} \left( \frac{1}{\widetilde{g}_{xx}} \widetilde{H}_{\alpha,z} \right) \right| = O(\varepsilon), \]
where the last asymptotic follows from  \eqref{eq:app.wang.wei.manifolds.iii}, \eqref{eq:app.wang.wei.manifolds.v}.

\begin{center}
	{\bf Adjustments to \cite[Appendix G]{WangWei17}}
\end{center}

In step (1) we need to compute the commutator between the horizontal partial derivative and the Laplacian along $\widetilde{\Gamma}_{\alpha,z}$. Recalling
\[ \Delta_{\widetilde{g}} = \Delta_{\widetilde{\Gamma}_{\alpha,z}} + \widetilde{H}_{\alpha,z} \frac{\partial}{\partial z} + \frac{\partial^2}{\partial z^2}, \]
we see that
\[ \left[ \frac{\partial}{\partial x}, \Delta_{\widetilde{\Gamma}_{\alpha,z}} \right] = \left[ \frac{\partial}{\partial x}, \Delta_{\widetilde{g}} \right] - \left( \frac{\partial}{\partial x} \widetilde{H}_{\alpha,z} \right) \frac{\partial}{\partial z}, \]
which, by \eqref{eq:app.wang.wei.manifolds.ii}, \eqref{eq:app.wang.wei.manifolds.v}, re-confirms the estimate 
\[ \left[ \frac{\partial}{\partial x}, \Delta_{\widetilde{\Gamma}_{\alpha,z}} \right] \phi = O(\varepsilon)(|\partial^2 \phi| + |\partial \phi|) \]
needed for step (1).

Step (2) steps by simply replacing the use of \cite[(8.4)]{WangWei17}, which is false in curved space, with \eqref{eq:app.wang.wei.manifolds.v}.

Steps (3)-(7) require no adjustments.

Step (8) follows by using \eqref{eq:app.wang.wei.manifolds.iv} and  \eqref{eq:app.wang.wei.manifolds.v} instead of \cite[(8.4)]{WangWei17}.

Step (9) follows verbatim, and steps (10), (11) follow with the same modifications that have been already explained above. Finally, step (12) follows without any modifications.


\end{document}